\numberwithin{equation}{section}
\theoremstyle{plain}
\newtheorem{theorem}[equation]{Theorem}
\newtheorem{proposition}[equation]{Proposition}
\newtheorem{corollary}[equation]{Corollary}
\newtheorem{lemma}[equation]{Lemma}
\newtheorem{question}[equation]{Question}
\newtheorem{hypothesis}[equation]{Hypothesis}
\newtheorem{example}[equation]{Example}
\theoremstyle{definition}
\newtheorem{definition}[equation]{Definition}
\newtheorem{notation}[equation]{Notation}
\theoremstyle{remark}
\newtheorem{remark}[equation]{Remark}
\newcommand{\beq}{\begin{equation}}
\newcommand{\eeq}{\end{equation}}
\newcommand{\st}{\left\vert\right.}
\newcommand{\bbar}[1]{\overline{#1}}
\DeclareMathOperator{\Hom}{{Hom}}
\DeclareMathOperator{\Ext}{{Ext}}
\DeclareMathOperator{\Tor}{Tor}
\DeclareMathOperator{\Aut}{{Aut}}
\DeclareMathOperator{\pd}{pd}
\DeclareMathOperator{\GK}{GKdim}
\DeclareMathOperator{\Div}{Div}
\DeclareMathOperator{\Ann}{Ann}
\DeclareMathOperator{\sat}{sat}
\DeclareMathOperator{\Pic}{Pic}
\newcommand{\mc}{\mathcal}
\newcommand{\mf}{\mathfrak}
\newcommand{\kk}{{\Bbbk}}
\newcommand{\ZZ}{{\mathbb Z}}
\newcommand{\NN}{{\mathbb N}}
\newcommand{\mb}{\mathbb}
\newcommand{\wt}{\widetilde}
\newcommand{\sC}{\mc{C}}
\newcommand{\sF}{\mc{F}}
\newcommand{\sI}{\mc{I}}
\newcommand{\sJ}{\mc{J}}
\newcommand{\sK}{\mc{K}}
\newcommand{\sL}{\mc{L}}
\newcommand{\sM}{\mc{M}}
\newcommand{\sO}{\mc{O}}
\newcommand{\sP}{\mc{P}}
\newcommand{\divb}{\mathbf{b}}
\newcommand{\divd}{\mathbf{d}}
\newcommand{\dive}{\mathbf{e}}
\newcommand{\divc}{\mathbf{c}}
\newcommand{\divx}{\mathbf{x}}
\newcommand{\divy}{\mathbf{y}}
\newcommand{\divz}{\mathbf{z}}
\newcommand{\divr}{\mathbf{r}}
\newcommand{\divw}{\mathbf{w}}
\DeclareMathOperator{\fd}{fd-\!}
\DeclareMathOperator{\rgr}{gr-\!}
\DeclareMathOperator{\lgr}{\!-gr}
\DeclareMathOperator{\rGr}{Gr-\!}
\DeclareMathOperator{\rmod}{mod-\!}
\DeclareMathOperator{\lmod}{\!-mod}
\DeclareMathOperator{\lqgr}{\!-qgr}
\DeclareMathOperator{\rQgr}{Qgr-\!}
\DeclareMathOperator{\rqgr}{qgr-\!}
\DeclareMathOperator{\rTors}{Tors-\!}
\DeclareMathOperator{\rtors}{tors-\!}
\DeclareMathOperator{\GKdim}{GKdim}
\newcommand{\spe}{sporadic}
\newcommand{\Rbar}{\overline{R}}
  \newcommand{\ppe}{{\  \buildrel\bullet\over =  \  }}
\newcommand{\wh}{\widehat}
\DeclareMathOperator{\tors}{tors}
\newcommand{\tg}{\tors_g}
\DeclareMathOperator{\coh}{coh}
\begin{document}
\title[Noncommutative blowups of elliptic algebras]{Noncommutative blowups of elliptic algebras}
\author{D. Rogalski,  S. J. Sierra, and J. T. Stafford}
\address{(Rogalski)
Department of Mathematics, UCSD, La Jolla, CA 92093-0112, USA. }
\email{drogalsk@math.ucsd.edu}
 \address{(Sierra) School of Mathematics,
University of Edinburgh, Edinburgh EH9 3JZ, U.K.}
\email{s.sierra@ed.ac.uk}
\address{(Stafford) School of Mathematics,  The University of Manchester,   Manchester M13 9PL,
U.K..}
\email{Toby.Stafford@manchester.ac.uk}

\thanks{The first author is partially supported by NSF grants DMS-0900981 and DMS-1201572.}
\thanks{The second author was  partially   supported by an NSF Postdoctoral Research
Fellowship, grant DMS-0802935.}
\thanks{The third
   author is a Royal Society Wolfson Research Merit Award Holder.}
%\date{\today}

 \keywords{Noncommutative projective geometry,  noncommutative surfaces, 
 Sklyanin algebras,  noetherian  graded rings,
noncommutative  blowing~up}
  \subjclass[2000]{14A22, 14H52, 16E65, 16P40, 16S38, 16W50, 18E15}
   \begin{abstract}  
We develop a ring-theoretic approach for blowing up many noncommutative projective surfaces.  
Let $T$ be an elliptic algebra (meaning that, for some central element $g\in T_1$, 
$T/gT$  is a twisted homogeneous coordinate ring of an elliptic curve $E$ at an infinite order automorphism).
Given  an effective divisor  $\divd$    on $E$  whose degree is not too big, 
we construct a blowup $T(\divd)$ of $T$ at $\divd$  and show that it is also an elliptic algebra. 
Consequently it has many good properties:  for example, it is strongly noetherian, Auslander-Gorenstein,   and has a balanced dualizing complex.  We also show that  the ideal structure of $T(\divd)$  is quite rigid.
Our results generalise those of the first author in \cite{Rog09}.  

In the companion paper \cite{RSS2}, we apply our results to classify orders in (a Veronese subalgebra of) a generic cubic or quadratic Sklyanin algebra. 
\end{abstract}
 
 \maketitle   
 \tableofcontents
 
  \clearpage

\section{Introduction}\label{INTRO}  
Blowing up a  projective surface at a point, or more formally  applying a monoidal transformation, is a 
 fundamental operation in commutative algebraic geometry.  The aim of this paper is to study noncommutative 
 analogues of this concept.

There are two basic approaches to noncommutative projective algebraic geometry:  one may study graded rings which have some of the properties
 of homogeneous coordinate rings of projective varieties, or one may study categories which are similar to the category of coherent sheaves on 
 a projective variety.  Here we take the first approach.  
We show, in a unified and ring-theoretic way, that there is a noncommutative analogue of the blowing-up process, and we establish many of
 the properties of the resulting algebras. 
 Our results are applied in the companion paper \cite{RSS2} to classify orders in (Veronese subalgebras of)  Sklyanin algebras.

Fix an algebraically closed ground field $\kk$ and let $T = \kk \oplus T_1 \oplus T_2 \oplus \dots$ be a connected graded $\kk$-algebra 
that is a domain.   We say that $T$ is an {\em elliptic algebra} if there exists a central element $g \in T_1$ so that $T/gT$ is a {\em twisted
 homogeneous coordinate ring} $B(E, \sM, \tau)$ on an elliptic curve $E$, where $\sM$ is  ample and $\tau \in \Aut(E)$ has infinite order.  
(See Section~\ref{BACKGROUND} for background material on these coordinate rings.)  
Elliptic algebras are noncommutative analogues of  anticanonical coordinate rings of projective (Fano or del Pezzo) surfaces; the best-known 
examples are Veronese subalgebras of generic Sklyanin algebras.
As we will see in a moment, they are extremely well-behaved.

Let $T$ be an elliptic algebra.  We define the {\em degree} of $T$ to be $\mu = \deg \sM = \dim T_1 - 1$. 
Our main result is that we can use the points of $E$ to ``blow up'' $T$ and create further elliptic algebras.
More specifically, let  $\divd$ be an effective divisor on $E$, with $\deg \divd < \mu$.  
We define a graded subalgebra $T(\divd)$ of $T$, and show that $T(\divd)$ has the   properties
described in the following theorem. Here, if $R$ is a noetherian graded domain   its {\em graded quotient ring} $Q_{gr}(R)$ is formed 
by inverting all nonzero homogeneous elements  (\cite[Corollary~8.1.21]{MR}
and \cite[C.I.1.6]{NV}).  The ring $R$ is a \emph{maximal order} if there exists no order $R\subsetneqq R'\subseteq Q_{gr}(R)$
with $aR'b\subseteq R$ for some nonzero $a,b\in R$.
 Undefined terms  in the theorem can be found in Section~\ref{BACKGROUND} or in \cite{Rog09}.

\begin{theorem}\label{ithm:main}
{\rm (Theorem~\ref{thm:Rnoeth})}
Let $T$, $\mu$, $\divd$ be  as above, and write   $d = \deg \divd$. Set $R = T(\divd)$.
Then:
\begin{enumerate}
\item $R$ is an elliptic algebra of degree $\mu-d$, with $R/gR \cong B(E, \sM(-\divd), \tau)$.  
As such,   $R$ is strongly noetherian, Auslander-Gorenstein, Cohen-Macaulay, and a maximal order.  In addition, $R$ 
satisfies the Artin-Zhang $\chi$ conditions  
and possesses a  balanced dualizing complex. 
\item $R$ has Hilbert series $h_{R}(t) = \sum (\dim_{\kk} R_i)t^i = h_{T}(t) - d/(1-t)^3$, and  $Q_{gr}(R) = Q_{gr}(T)$.
\item $R(\divd)$ is generated in degree $1$ if $\mu - d \geq 2$.  If $\mu - d = 1$, then  $R$ is generated in degrees
 $\leq 2$, and $R^{(2)}$ is generated in degree $1$.
\item {\rm (Proposition~\ref{prop:iterate})} Further,  $T(\divd)$ can be obtained by iterating a series of  
one-point blowups. 
\end{enumerate}
\end{theorem}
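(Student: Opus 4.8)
The plan is to peel the points of $\divd$ off one at a time. Since $T(0)=T$ we may assume $d=\deg\divd\ge 1$; write $\divd=\dive+\divp$ with $\divp$ a single point of $E$ in the support of $\divd$ and $\dive$ effective of degree $d-1$. Everything then reduces to the one--step identity
\[
  T(\dive+\divp)\;=\;\bigl(T(\dive)\bigr)(\divp)
\]
(with $\divp$ on the right replaced by a suitable $\tau$--translate, should the conventions of the construction require it): granting this, Proposition~\ref{prop:iterate} follows by induction on $d$, since by induction $T(\dive)$ is a tower of one--point blowups of $T$ and one further one--point blowup yields $T(\divd)$. At each stage the ring being blown up has degree at least $\mu-(d-1)\ge 2$ (as $d<\mu$), so every step is legitimate.

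Write $S=T(\dive)$. By Theorem~\ref{ithm:main}(1), $S$ is again an elliptic algebra, of degree $\mu-(d-1)$, with the same central element $g$ and with $S/gS\cong B(E,\sM(-\dive),\tau)$; since $\deg\divp=1<\mu-(d-1)$, the one--point blowup $S(\divp)$ is defined, and another application of Theorem~\ref{ithm:main}(1) shows $S(\divp)$ is an elliptic algebra of degree $\mu-d$ with
\[
  S(\divp)/gS(\divp)\;\cong\;B\!\left(E,\sM(-\dive)(-\divp),\tau\right)\;=\;B(E,\sM(-\divd),\tau),
\]
the same twisted homogeneous coordinate ring as the one attached to $T(\divd)$. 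By the construction of the blowup, imposing more vanishing yields a smaller subalgebra of $T$, so $T(\divd)\subseteq T(\dive)=S$, and both $T(\divd)$ and $S(\divp)$ are $g$--divisible graded subalgebras of $S$. Finally Theorem~\ref{ithm:main}(2) shows their Hilbert series coincide,
\[
  h_{S(\divp)}(t)=h_S(t)-\tfrac{1}{(1-t)^3}=h_T(t)-\tfrac{d-1}{(1-t)^3}-\tfrac{1}{(1-t)^3}=h_T(t)-\tfrac{d}{(1-t)^3}=h_{T(\divd)}(t),
\]
and both have image $B(E,\sM(-\divd),\tau)$ in $S/gS$, where the identification $S/gS\hookrightarrow T/gT$ is the canonical embedding $B(E,\sM(-\dive),\tau)\hookrightarrow B(E,\sM,\tau)$ of twisted homogeneous coordinate rings --- so the two ``images'' really are the same subspace of $T/gT$.

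It now suffices to show that one of $T(\divd)$, $S(\divp)$ contains the other; equality of Hilbert series then upgrades this to $T(\divd)=S(\divp)$, and this containment is the main obstacle. The difficulty is that the blowup is \emph{not} the full preimage in $T$ of a space of global sections: its Hilbert function grows quadratically, hence is far smaller than the dimension of the preimage of $B(E,\sM(-\divd),\tau)_n$ under the surjection $T_n\to (T/gT)_n$, so ``same image modulo $g$, same Hilbert series'' does not by itself force equality, and one must feed in the defining recipe for $T(-)$. The cleanest route is to use that $T(\divd)$ is the largest $g$--divisible graded subalgebra $R\subseteq T$ whose image in $T/gT$ is contained in $B(E,\sM(-\divd),\tau)$: any such $R$ automatically has image in the larger ring $B(E,\sM(-\dive),\tau)$, hence lies in $S=T(\dive)$ by the same characterization applied to $\dive$, so the largest such $R$ in $T$ coincides with the largest $g$--divisible graded subalgebra of $S$ with image in $B(E,\sM(-\dive)(-\divp),\tau)$, which is exactly $S(\divp)$. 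Alternatively, using the bound on degrees of generation in Theorem~\ref{ithm:main}(3), it is enough to verify $T(\divd)_i=S(\divp)_i$ in the one or two degrees $i$ that generate both rings --- each being cut out in $T_i$ by the vanishing of the image along the degree--$i$ orbit divisor of $\divd=\dive+\divp$ --- and then invoke generation. Either way the geometric content is the tautology that blowing up $\dive$ and then $\divp$ imposes precisely the combined vanishing conditions of $\dive$ and of $\divp$, i.e.\ those of $\divd$; the real work lies in matching this against the ring--theoretic construction of the blowup and in the bookkeeping of orbit divisors.
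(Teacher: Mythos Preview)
Your inductive framework matches the paper's: reduce to the one--step identity $T(\dive+\divp)=\bigl(T(\dive)\bigr)(\divp)$, and you correctly observe that parts (1)--(3) are available as inputs since they are established independently in Theorem~\ref{thm:Rnoeth}. You also correctly identify that matching Hilbert series and images modulo $g$ is not enough, and that the real issue is proving one containment. But neither of your two proposed routes is actually carried out, and each has a genuine gap.

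\textbf{Route (a).} You assert that $T(\divd)$ is ``the largest $g$--divisible graded subalgebra $R\subseteq T$ whose image in $T/gT$ is contained in $B(E,\sM(-\divd),\tau)$''. This is nowhere proved in the paper and is far from obvious. The space $T(\divd)_n=M(n,\divd)_n$ is \emph{not} the full preimage of $H^0(E,\sM_n(-\divd_n))$; it is singled out by the multi--layer divisor sequence $(\divd_n,\tau^{-1}\divd_{n-1},\dots)$, which constrains not just the image mod $g$ but the images of successive $g$--slices. A priori there are many $g$--divisible subspaces with the same image mod $g$ and the same intersection with $gT$, and one must use the subalgebra condition in a substantive way to pin down $T(\divd)$. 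When $\mu-d\ge 2$ one can in fact derive this characterization from generation in degree~1, but in the critical case $\mu-d=1$ (where $T(\divd)$ is only generated in degrees $\le 2$) it is not clear, and the paper does not proceed this way. Indeed, Remark~\ref{uniqueness-remark} shows the paper has to work to prove $T(\divd)$ is even well--defined; if your characterization were available, uniqueness would be immediate.

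\textbf{Route (b).} This \emph{is} the paper's approach, and in degree~1 the argument is exactly as you say: both $T(\divd)_1$ and $S(\divp)_1$ equal $\{x\in T_1:\bar x\in H^0(E,\sM(-\divd))\}$ by Lemma~\ref{lem:layer}(3). But your description of degree~2 --- ``cut out in $T_2$ by the vanishing of the image along the degree--2 orbit divisor of $\divd$'' --- is wrong. By definition $T(\divd)_2=M(2,\divd)_2=J_T(\divd_2,\tau^{-1}\divd)_2$ is determined by a two--layer divisor sequence, not by a single vanishing condition on $\bar T_2$. Identifying this with $M_{T(\dive)}(2,\divp)_2$ is exactly where the work lies: the paper proves the auxiliary Lemma~\ref{lem:1point} to re--express $J_{T(\dive)}(q)$ in terms of $J_T$--data, then does a careful computation with the functors $G_{\divd}$ of Notation~\ref{not:Gd} and Lemmas~\ref{lem-Fcomp}, \ref{lem-Fcomp2}, \ref{lem-compose} to show $M_{T(\dive)}(2,\divp)_{k+2}$ lands in the correct $J_T(\cdots)_{k+2}$ for $k\gg 0$, compares Hilbert series via Lemma~\ref{hs-lem}(3), and finally uses saturation to descend to degree~2. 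Your proposal acknowledges ``the real work lies in \dots\ the bookkeeping of orbit divisors'' but does not do any of it; this bookkeeping is the entire content of the hard case.
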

\noindent
 
  For    reasons why it is appropriate to regard $T(\divd)$ as a blowup, see Section~\ref{BLOWUP}.
In particular  various basic properties  of a commutative blowup $\pi: \wt{X} \to X$  do have noncommutative analogues.    
For one-point blowups, we prove:

\begin{proposition}\label{iprop:subtle}
{\rm (Proposition~\ref{prop:subtle})}
Let $T$ be as above, with $\deg T = \mu > 1$.  Let $p \in E$ and set $R = T(p)$. 
Then there is a finitely generated right $R$-module $L$  so that 
$T/R \cong \bigoplus_{i \geq 1} L[-i]$ as right $R$-modules.  
Further, $L$ is  {\em $g$-torsionfree} in the obvious sense, and has Hilbert series $h_L(t)  = 1/(1-t)^2$. 
\end{proposition}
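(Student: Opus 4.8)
The plan is to reduce modulo the central element $g$, analyse what results over a twisted homogeneous coordinate ring on $E$, and then lift. Write $M=T/R$ as a graded right $R$-module and set $\overline{R}=R/gR$, $\overline{T}=T/gT$. The construction of $R=T(p)$ is arranged so that the isomorphism $\overline{R}\cong B(E,\sM(-p),\tau)$ of Theorem~\ref{ithm:main} is the one induced by the inclusion $R\hookrightarrow T$; equivalently, $R\cap gT=gR$. Hence $M$ is $g$-torsionfree and, being bounded below, is a free graded $\kk[g]$-module; moreover $M/Mg\cong\overline{T}/\overline{R}$, with $\overline{R}$ identified with the subring $B(E,\sM(-p),\tau)$ of $B(E,\sM,\tau)\cong\overline{T}$.

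The next step is to identify $\overline{M}:=\overline{T}/\overline{R}$ as a right $\overline{R}$-module. In degree $n$ the inclusion $\overline{R}_n\subseteq\overline{T}_n$ is $H^0(E,\sM_n(-\mathbf{p}_n))\subseteq H^0(E,\sM_n)$, where $\mathbf{p}_n=p+\tau^{-1}p+\dots+\tau^{-(n-1)}p$ is the effective divisor of degree $n$ on the (infinite) $\tau$-orbit of $p$. Since $\deg\sM_n(-\mathbf{p}_n)=n(\mu-1)>0$ we have $H^1(E,\sM_n(-\mathbf{p}_n))=0$, so $\overline{M}_n\cong H^0(E,\sM_n|_{\mathbf{p}_n})=\bigoplus_{j=0}^{n-1}(\sM_n)_{\tau^{-j}p}$. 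A short calculation with the twisted multiplication of $B(E,\sM(-p),\tau)$ shows that multiplication by $\overline{R}_m$ carries the summand at $\tau^{-j}p$ into the summand at $\tau^{-j}p$ of $\overline{M}_{n+m}$, so $\overline{M}$ is the direct sum of its ``orbit strands'', and the standard description of point modules over a twisted homogeneous coordinate ring on $E$ identifies the strand indexed by $j$ with $\overline{L}[-j-1]$, where $\overline{L}$ is the point module of $B(E,\sM(-p),\tau)$ at $p$, re-indexed to begin in degree $0$. Hence $\overline{M}\cong\bigoplus_{i\ge1}\overline{L}[-i]$, and $h_{\overline{L}}(t)=1/(1-t)$.

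It remains to lift this decomposition from $\overline{R}$-modules to $R$-modules, and this is the heart of the matter. Lifting generators $\overline{e}_i\in\overline{M}_i$ of the strands to $e_i\in M_i$, the graded Nakayama lemma (applicable because $g\in R_1$, so that $M/MR_{\ge1}=\overline{M}/\overline{M}\,\overline{R}_{\ge1}$) shows that the $e_i$ generate $M$. One then wants an exhaustive filtration $0=F_0\subseteq F_1\subseteq\dots$ of $M$ by $R$-submodules with each $F_N$ $g$-torsionfree and $F_N/F_Ng$ equal to the sum of the first $N$ orbit strands of $\overline{M}$; granting this, $g$-torsionfreeness forces $h_{F_N}(t)=(t-t^{N+1})/(1-t)^3$ and hence $h_{F_N/F_{N-1}}(t)=t^N/(1-t)^2$, so $F_N/F_{N-1}$ is a $g$-torsionfree right $R$-module reducing modulo $g$ to the point module $\overline{L}[-N]$. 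Such a module is automatically cyclic of Hilbert series $t^N/(1-t)^2$, i.e.\ a shift of a line module over $R$; by the classification of line modules over the elliptic algebra $R$ it is isomorphic to $L[-N]$ for one fixed line module $L$, with $L/Lg\cong\overline{L}$ and therefore $h_L(t)=1/(1-t)^2$. Finally the filtration splits, giving $M\cong\bigoplus_{i\ge1}L[-i]$ with $L$ finitely generated and $g$-torsionfree, as required.

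The obstacles are concentrated in the last step: producing the filtration with each $F_N$ correctly $g$-saturated inside $M$ (so that the Hilbert series bookkeeping closes up), and splitting it — equivalently, checking that the subquotients are genuinely shifts of a single line module $L$, which should be the exceptional line of the blowup $R=T(p)$. I expect both points to rely on the finer module theory of elliptic algebras developed earlier and in \cite{Rog09}, in particular on rigidity and $\Ext^1$-vanishing statements for $g$-torsionfree lifts of the relevant point and line modules.
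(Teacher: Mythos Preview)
Your reduction modulo $g$ and the decomposition of $\overline{T}/\overline{R}$ into orbit strands is correct and matches what the paper computes. But the lifting step, which you rightly flag as the crux, has two genuine gaps. First, there is no ``classification of line modules over $R$'' in this paper or in \cite{Rog09} guaranteeing that a $g$-torsionfree line module is determined up to isomorphism by its reduction modulo $g$; for a general elliptic algebra this can fail. Second, splitting your filtration requires $\Ext^1_R(L[-i],F_{i-1})=0$, and no such vanishing is established anywhere here. The ``rigidity and $\Ext^1$-vanishing statements'' you anticipate are simply not available.

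The paper avoids both issues by running the argument in the opposite direction. Rather than decomposing $\overline{T}/\overline{R}$ and lifting, it builds the summands directly inside $T$: using the divisor-layering calculus of Sections~\ref{FUNCTORIAL}--\ref{SOMEIDEALS}, it defines explicit right $R$-submodules $P^{(j)}\subseteq T$ (whose $n$th piece is defined by the layering for $R_n=M(n,p)_n$ with one diagonal of vanishing conditions deleted), proves via Lemma~\ref{lem:lattice} that the $M^{(j)}=P^{(j)}/R$ are independent inside $T/R$, and checks by Hilbert series that they exhaust it. Thus the direct sum holds by construction, with no $\Ext$-vanishing required. That all $M^{(j)}$ are shifts of a single $L$ is then proved not by classification but by exhibiting, for each $j$, an element $z\in T_1$ for which left multiplication gives a nonzero map $M^{(j-1)}[-1]\to M^{(j)}$; since both sides are $2$-critical of the same Hilbert series (Lemma~\ref{lem:crit}), any nonzero map is forced to be an isomorphism.
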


\begin{remark}
It is natural to consider $L$ as \emph{the exceptional line} for the noncommutative blowup $T(p) $ of $ T$.   
\end{remark}

The elliptic algebras that we are most interested in are blowups of Veronese subalgebras of generic Sklyanin algebras.  
We  show that these particular elliptic algebras have a relatively rigid ideal structure.

\begin{theorem}\label{ithm:sporadic}
{\rm(Theorem~\ref{8-special})}
 Let  $S$ be a generic cubic or quadratic Sklyanin algebra, with central element $g \in S_{\gamma}$. 
Let $T = S^{(\gamma)}$ and $\mu = \deg T$. 
Let $\divd$ be an effective divisor on $E$ with $\deg \divd < \mu$.  Then $T(\divd)$ has
 a {\em minimal \spe\ ideal $I$} in the following sense:  $T/I$ has Gelfand-Kirillov dimension 
 $\leq 1$, and for any graded ideal $J \subseteq T$ with $\GKdim T/J = 1$, we have 
 $J_n \supseteq I_n $ for $n \gg 0$.  
\end{theorem}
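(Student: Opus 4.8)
Write $R := T(\divd)$. By Theorem~\ref{ithm:main}, $R$ is an elliptic algebra with $R/gR\cong B := B(E,\sM(-\divd),\tau)$, it is strongly noetherian and Cohen--Macaulay with $\GKdim R=3$, and $\tau$ (induced from the generic Sklyanin automorphism) has no nonempty finite $\tau$-invariant subscheme of $E$. The first step is a reduction that locates where \spe\ ideals can live. Since $\tau$ has no invariant subscheme, every nonzero graded two-sided ideal of $B$ is cofinite. Together with the centrality of $g$ this gives a dichotomy for a graded two-sided ideal $J\subseteq R$: if $J\subseteq gR$ then $\GKdim R/J\ge2$ (as $R/J$ surjects onto $B$); while if $J\not\subseteq gR$ then $R_n=J_n+gR_{n-1}$ for $n\gg0$, hence $R_n=J_n+g^{\,n-n_0}R_{n_0}$, so that $R/J$ has bounded Hilbert series and $\GKdim R/J\le1$. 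Consequently any \spe\ ideal $J$ (meaning $\GKdim R/J=1$) has $R/J$ supported off the curve $E=V(g)$: its $g$-torsion part is a module over $B$ and so is non-sporadic, and after discarding it one may assume $g$ acts locally invertibly on $R/J$.

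The heart of the proof is to classify the finitely generated $R$-modules of $\GKdim\le1$ on which $g$ acts locally invertibly, and here the hypothesis that $T=S^{(\gamma)}$ is a Veronese of a \emph{generic} cubic or quadratic Sklyanin algebra is used decisively. One invokes the rigidity of such $S$ — its point scheme is exactly $E$, it carries no fat points or curves away from $E$, and its point modules have annihilator $gS$ (see \cite{Rog09} and the references cited there) — together with the explicit description of the blowup and of its exceptional line $L$ supplied by Theorem~\ref{ithm:main} and Proposition~\ref{iprop:subtle}. Since $R$ is strongly noetherian, the relevant point modules of $R$ are parametrised by a projective scheme (Artin--Zhang); the assertion is that, beyond the ``expected'' family coming from the exceptional divisor of $\divd$, this scheme is $0$-dimensional, and that the finitely many modules $M_1,\dots,M_k$ it picks out are rigid (the relevant self- and cross-extensions vanish). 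Granting this, every $\GKdim\le1$ module of $R$ on which $g$ acts locally invertibly is built, up to finite length, from the $M_i$ together with the non-sporadic exceptional modules; so, writing $P_i\subseteq R$ for the graded ideal with $R/P_i\cong M_i$, every \spe\ ideal $J$ satisfies $J_n\supseteq\bigl(\bigcap_{i\in A}P_i\bigr)_n$ for $n\gg0$ and some $A\subseteq\{1,\dots,k\}$.

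Now set $I:=P_1\cap\cdots\cap P_k=\ker\bigl(R\to\bigoplus_{i=1}^k R/P_i\bigr)$. Then $R/I$ embeds, up to a finite-dimensional correction, into $\bigoplus_i M_i$, so $\GKdim R/I\le1$; and for a \spe\ ideal $J$ the preceding paragraph gives $J_n\supseteq\bigl(\bigcap_{i\in A}P_i\bigr)_n\supseteq I_n$ for $n\gg0$. (The remaining graded ideals $J$ with $\GKdim R/J=1$ are non-sporadic, their quotients being modules over $B$, so they play no role in the statement; and any finite-dimensional $R/J$ has $J_n=R_n\supseteq I_n$ for $n\gg0$ anyway.) Thus $I$ is the required minimal \spe\ ideal, and it is visibly unique with this property.

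The genuine obstacle is the finiteness claimed in the middle paragraph: ruling out a positive-dimensional family of \spe\ point modules of $R$ and establishing the vanishing of the relevant extensions among those that survive. The rigidity of the generic Sklyanin algebra — its unique curve $E$ and the absence of fat points off $E$ — is exactly the input that forces this, but it has to be transported through the blowup construction of Theorem~\ref{ithm:main}: one must show that passing from $T$ to $T(\divd)$ enlarges the point scheme only by the exceptional divisor together with a finite set of \spe\ points. Once that geometric fact is in hand, the reduction of the first paragraph and the bookkeeping of the last are routine.
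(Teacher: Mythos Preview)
Your proposal correctly isolates the difficulty but does not overcome it. The reduction in the first paragraph is essentially right (though the $g$-torsion part of $R/J$ is a module over $R/g^kR$, not over $B$; it is nonetheless finite-dimensional because $B/\bbar J$ is, so $R/J$ is finite over $\kk[g]$). But the second paragraph---the asserted finiteness of ``sporadic point modules'' $M_1,\dots,M_k$---is not proved, and you say so yourself in the final paragraph. Invoking the rigidity of $S$ and stating that it ``has to be transported through the blowup construction'' names the problem rather than solving it. Two smaller points: the extension-vanishing claim is a red herring (for $J\supseteq\bigcap P_i$ in large degree you only need that every $1$-critical $g$-torsionfree subquotient of $R/J$ is a shift of some $M_i$, which follows from finiteness alone), and the parenthetical in your third paragraph is confused---by definition every graded ideal with $\GKdim R/J=1$ is sporadic, and your own first paragraph shows no such $J$ can have $R/J$ a $B$-module.

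The paper's argument is structurally different and avoids any direct classification of point modules. It proceeds by \emph{induction on one-point blowups}: $T(\divd)$ is an iterated one-point blowup of $T$ (Proposition~\ref{prop:iterate}), and at each step $R=\wt R(p)\subseteq\wt R$ the minimal sporadic ideal of $R$ is built explicitly as $G(H\cap R)I$ (Proposition~\ref{prop-endgame}), where $H$ is the minimal sporadic ideal of $\wt R$ and $I$, $G$ are sporadic ideals controlling the $g$-torsionfree GK-$1$ quotients of the right and left exceptional \emph{line} modules $L$, $L'$ (Lemma~\ref{lem:specann}). The finiteness input is not a count of point modules but of the set of divisors $\Div L'$ as $L'$ ranges over $g$-torsionfree cyclic line modules for $R$ (Lemma~\ref{lem:finitediv}), and this is forced by the \emph{countability} of $\Div(R)$: proved for $T$ via a Grothendieck-group/Betti-number argument using finite global dimension of $S$ (Lemma~\ref{lem:countable}), then propagated through one-point blowups (Proposition~\ref{prop-induct}), and finally converted to finiteness by an uncountable-base-field argument on the line scheme. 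The base case---that $T$ itself has no sporadic ideals---follows from the simplicity of $T[g^{-1}]_0$ established in \cite{ATV2}.
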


The above results generalise  those of the first author in \cite{Rog09}.  
In that paper, Theorems~\ref{ithm:main} and \ref{ithm:sporadic}~are proved where $T=S^{(3)}$ is the 3rd Veronese of a generic quadratic Sklyanin algebra, and $\deg \divd \leq 7 = \mu-2$.  
Allowing $\deg \divd = \mu -1$ requires one to overcome  technical difficulties, as the resulting rings are no longer  generated in degree 1.   
The rings $T(\divd)$ also appear in Van den Bergh's memoir \cite{VdB}.  
Van den Bergh, however, works mostly  on the categorical level; in contrast, our approach is   more elementary and is more amenable to computation.

The main motivation of our work  is to be able to answer the following question:
\begin{question}\label{intro-qu}
 {\rm (\cite[Question~1.3]{Rog09})}
  Can one classify all  maximal orders $R$ inside a generic (quadratic) Sklyanin algebra $S$ with 
$Q_{gr} (R) = Q_{gr} (S^{(n)} )$ for some $n$? 
\end{question}
\noindent
(We remark that maximal orders, the noncommutative analogues of integrally closed rings, are a natural class to consider in
such  a classification problem.)
In the companion paper \cite{RSS2}, we solve Question~\ref{intro-qu} for $n\in 3\mathbb{N}$.  
In particular, we prove:  

\begin{theorem}\label{ithm:maxnoeth}
{\rm(\cite[Theorem~1.2, Corollary~1.4, and Theorem~5.25]{RSS2})}
Let $T = S^{(3)}$ be as above, and let $R $ be a maximal order in $T$ such that $R \not \subseteq \kk + gT$.  Then $R$ is noetherian and is obtained from $T$ by the analogue of a  blowup of $T$ at a possibly non-effective divisor $\divx$.  Further, $R$ is equivalent to some  $T(\divd)$, where $\divd$ is effective with $\deg \divd \leq 8 = \mu-1$.
\end{theorem}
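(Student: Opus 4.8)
The plan is to reconstruct such a maximal order $R$ from its reduction modulo the central element $g$; the point is that modulo $g$ everything is governed by the geometry of the elliptic curve $E$. Write $B = B(E,\sM,\tau) = T/gT$ and set $\overline{R} := (R+gT)/gT \cong R/(R\cap gT)$, a connected graded subalgebra of $B$. The hypothesis $R \not\subseteq \kk + gT$ says exactly that $\overline{R} \neq \kk$, hence --- $\overline{R}$ being a domain --- that $\overline{R}$ is infinite-dimensional; combined with $Q_{gr}(R) = Q_{gr}(T)$ and the maximality of $R$, one shows that $\overline{R}$ has GK-dimension $2$ (equivalently $\GKdim R = 3$).

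The heart of the argument is to identify $\overline{R}$ inside $B$. For this I would invoke the classification of connected graded domains of GK-dimension two that embed in a twisted homogeneous coordinate ring of a smooth projective curve --- the Artin--Stafford theory of ``noncommutative curves'', applied to elliptic curves already in \cite{Rog09}: up to equivalence of orders, $\overline{R}$ is of the form $B(E,\sN,\tau)$ with $\deg \sN \ge 1$, and since $\overline{R}_n \subseteq B_n = H^0(E, \sM_n)$ for the relevant degree-$n\mu$ line bundle $\sM_n$, one may --- after adjusting within the equivalence class --- take $\sN = \sM(-\divd)$ for an \emph{effective} divisor $\divd$ on $E$. Comparing degrees gives $\deg\sN = \mu - \deg\divd \ge 1$, that is, $\deg\divd \le \mu-1 = 8$. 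The weaker first assertion, that $R$ is itself a blowup $T(\divx)$ at a possibly non-effective $\divx$, is obtained by taking $\divx$ to be the literal ``deficiency divisor'' comparing $\overline{R}_n$ with $B_n$ in each degree, which a priori can have negative components; trading $\divx$ for the effective $\divd$ is precisely what replaces equality by equivalence of orders.

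Next I would lift this description from $\overline{R}$ back up to $R$. After possibly replacing $R$ by its $g$-divisible closure in $T$ (one checks this is an equivalent order), we may assume $R\cap gT = gR$, so that $R/gR \cong \overline{R}$; since $\overline{R}$ is a GK-dimension-two domain trapped between two twisted homogeneous coordinate rings of $E$, it is noetherian, and hence so is $R$, by the usual lifting argument through the positive-degree central nonzerodivisor $g$. By Theorem~\ref{ithm:main}, $T(\divd)$ is a noetherian maximal order with $T(\divd)/gT(\divd) \cong B(E,\sM(-\divd),\tau)$ --- equivalent to $\overline{R}$ --- and with $Q_{gr}(T(\divd)) = Q_{gr}(T) = Q_{gr}(R)$. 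The key rigidity is that a $g$-divisible connected graded algebra is pinned down in all high degrees by its reduction modulo $g$ together with its graded quotient ring; applying this to $R$ and $T(\divd)$, and keeping careful track of the exceptional data encoded by the module $L$ of Proposition~\ref{iprop:subtle} and its $\tau$-twists, yields that $R$ is equivalent to $T(\divd)$.

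The hard part is the lifting step, and within it the extremal case $\deg\sN = 1$, i.e.\ $\deg\divd = \mu-1 = 8$. There neither $\overline{R}$ nor $T(\divd)$ is generated in degree one, so the degree-one reconstruction of \cite{Rog09} is unavailable. The remedy is to run the classification first for the Veronese $R^{(2)}$, which \emph{is} generated in degree one by Theorem~\ref{ithm:main}(3), and then descend from $R^{(2)}$ to $R$; the delicate point is to control how this Veronese descent interacts both with the passage to the $g$-divisible closure and with the exceptional module $L$, so as not to overlook an order lying strictly between $R$ and $T(\divd)$. Managing these two enlargements simultaneously --- and thereby removing the restriction $\deg\divd \le \mu-2$ present in \cite{Rog09} --- is the principal obstacle, and constitutes the main technical work of \cite{RSS2}.
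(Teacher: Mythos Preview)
This theorem is not proved in the present paper. It is stated here as a result of the companion paper \cite{RSS2} (the citation in the theorem header is explicit: \cite[Theorem~1.2, Corollary~1.4, and Theorem~5.25]{RSS2}), and the surrounding text makes clear that the present paper supplies the \emph{tools} --- the construction and properties of $T(\divd)$, Theorem~\ref{ithm:main}, Proposition~\ref{iprop:subtle}, Theorem~\ref{ithm:sporadic} --- while the classification of maximal orders is carried out in \cite{RSS2}. So there is no ``paper's own proof'' to compare your proposal against.

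What you have written is a plausible high-level sketch of a strategy, and you yourself acknowledge at the end that the lifting step ``constitutes the main technical work of \cite{RSS2}.'' A few cautions about your outline, in case you intend to flesh it out. First, the reduction $\overline{R}\subseteq B$ need not itself be a maximal order, nor a twisted homogeneous coordinate ring on the nose; the Artin--Stafford classification gives you control only up to finite index and Veronese, so the sentence ``one may take $\sN=\sM(-\divd)$ for an effective $\divd$'' hides real work. Second, passage to the $g$-divisible closure is not automatically an equivalence of orders without further argument; one must check that the closure stays inside $T$ and that it is an equivalent order. Third, your proposed handling of the case $\deg\divd=8$ via $R^{(2)}$ is speculative: the actual argument in \cite{RSS2} does not run the classification for a Veronese first and then descend, and the role of the minimal sporadic ideal (Theorem~\ref{ithm:sporadic}) in constraining the two-sided ideal structure --- which is one of the main contributions of the present paper toward \cite{RSS2} --- does not appear in your sketch at all.
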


\noindent
In \cite{RSS2}, we  also obtain a classification (up to Veronese subalgebras) of arbitrary orders in $S^{(3n)}$.

One of the authors' ultimate goals is to classify all algebras with the same graded quotient ring as a Sklyanin algebra.
Such a result would contribute significantly to solving    
the fundamental problem  of the  classification of noncommutative projective surfaces \cite{Ar}. 
From a ring-theoretic perspective, one can  frame the problem as:  
\emph{what are 
the graded domains with GK-dimension  $3$}?

   Let $R$ be such an algebra.  Then $Q_{gr}(R) \cong D[z, z^{-1}; \tau]$ 
for some division ring $D$, called the {\em function skewfield} of $R$,  and some  $\tau \in \Aut_{\kk}(D)$.  
The study of  birationally equivalent,  irreducible projective varieties   can be interpreted  ring-theoretically as   the study of different graded 
domains $R$ with the same function skewfield.
As Artin has argued in  \cite{Ar}, one hopes that algebras (or at least maximal orders)  in the same birational class are related through some 
sort of blowing up and blowing down processes.  By Theorem~\ref{ithm:maxnoeth}, this is true for 
 orders in $T$.
   
\subsection*{Organisation of the paper}  
In outline,  Section~\ref{BACKGROUND} contains general background material and basic properties of elliptic algebras while   Section~\ref{FUNCTORIAL}  reviews the necessary  technical machinery  from \cite{VdB}.
  This leads directly in Section~\ref{SOMEIDEALS} to the construction of the blowups $T(\divd)$.  We prove Theorem~\ref{ithm:main} and other basic properties of these rings in Section~\ref{BLOWUP}.  Section~\ref{RIGHT-LEFT} contains   results relating left and right ideals of $T(\divd)$.
  In Section~\ref{DIVISORS} we construct the exceptional line module of a one-point blowup.
In Section~\ref{8POINTS2} we study the  ideal theory of $T(\divd)$ and  prove Proposition~\ref{iprop:subtle} and Theorem~\ref{ithm:sporadic}.  
 Finally, in Section~\ref{SPECIAL} 
we determine circumstances when  a blowup $T(\divd)$  has no nontrivial \spe\ ideals.   
At the end of the paper we provide an index of notation.

\subsection*{Acknowledgements}
    Part of this material is based upon work supported by the National Science Foundation under Grant No. 0932078 000, while the authors were in residence at the Mathematical Science Research Institute (MSRI) in Berkeley, California, during the Spring 
   semester of 2013.   During this trip, Sierra was also partially supported by the Edinburgh Research Partnership in Engineering and Mathematics, and Stafford was partially supported by the Clay Mathematics Institute
   and  Simons Foundation.    The authors gratefully acknowledge the   support of all these organisations.
 
%%%%%%%%%%%%%%%%%%%%%%
\section{Basic properties} \label{BACKGROUND}

 We begin by reviewing some definitions and basic background material for the paper.

 Throughout, fix an 
algebraically  closed base field $\kk$.  Let $E$ be a projective $\kk$-scheme
(which will always be a (necessarily smooth) elliptic curve), with     an invertible sheaf $\mc{L}$
  and   an automorphism $\tau$.   
For any sheaf $\mc{F}$ on $E$ we  write
$\mc{F}^{\tau} = \tau^*(\mc{F})$   for the pullback along $\tau$.
Using this data we can define a \emph{TCR} or  \emph{twisted homogeneous coordinate ring}
$B(E, \mc{L}, \tau) = \bigoplus_{n \geq 0} H^0(E, \mc{L}_n)$.  Here, 
 $\mc{L}_n = \mc{L} \otimes \mc{L}^{\tau} \otimes \dots \otimes  \mc{L}^{\tau^{n-1}}$ and multiplication 
  is defined     by $f \star g  = f \otimes (\tau^m)^*(g)$ for $f \in B_m$ and  $g \in B_n$.   
We will typically use bold notation 
$\divd$ for   divisors on $E$ and write  $\divd^{\tau} = \tau^{-1}(\divd)$, 
so that, if   $\mc{L} \cong \mc{O}_E(\divd)$, then $\mc{L}^{\tau} = \mc{O}_E(\divd^{\tau})$.   
Similarly,  we write $\divd_n = \divd + \divd^{\tau} + \dots + \divd^{\tau^{n-1}}$.
Some of the special features of  $B(E, \mc{L}, \tau)$   are described in  \cite[Section~3]{Rog09}.

 In this paper we always assume that $|\tau|=\infty$, which  has the following useful   consequences.

 \begin{lemma}  {\rm(\cite[Lemma~3.1]{Rog09})}
\label{lem:sec-mult}
Let $E$ be an elliptic curve with an  automorphism $\tau$ of infinite order.  
  \begin{enumerate}\item Given invertible sheaves $\mc{L}, \mc{M}$ on $E$, the natural map
\[
H^0(E, \mc{L}) \otimes H^0(E, \mc{M}) \to H^0(E, \mc{L} \otimes \mc{M})
\]
is surjective if $\deg \mc{L} \geq 2$ and $\deg \mc{M} \geq 2$, except in the case when $\deg \mc{L}   = 2$ and 
$\mc{L} \cong \mc{M}$. 
\item If $\deg \mc{L}\geq 2$  then $B(E, \mc{L}, \tau)$ is generated in degree $1$.  \qed
\end{enumerate}
 \end{lemma}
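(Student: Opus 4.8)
This is a classical statement about elliptic curves, so I would not expect any noncommutative input; the whole point is to reduce to the well-known multiplication theorems on curves. For part (1), the plan is to invoke the standard result (e.g.\ Mumford's theorem on projective normality, or Mumford's ``Varieties defined by quadratic equations,'' or the treatment in Lazarsfeld) that on a curve $E$ of genus $g$, the multiplication map $H^0(E,\sL)\otimes H^0(E,\sM)\to H^0(E,\sL\otimes\sM)$ is surjective whenever $\deg\sL\ge 2g+1$ and $\deg\sM\ge 2g$. On an elliptic curve $g=1$, so the clean hypothesis $\deg\sL\ge 3$, $\deg\sM\ge 2$ gives surjectivity immediately. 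The remaining work is therefore only the boundary case $\deg\sL=\deg\sM=2$ with $\sL\not\cong\sM$. Here I would argue directly: both $|\sL|$ and $|\sM|$ are base-point-free pencils (degree $2$ on an elliptic curve), each defining a degree-$2$ map $E\to\PP^1$; since $\sL\not\cong\sM$ these two maps are distinct, so the product map $E\to\PP^1\times\PP^1$ is not composed with the diagonal and $H^0(\sL)\cdot H^0(\sM)$ cannot be annihilated by a single quadratic relation—more precisely one computes that the image of the multiplication map has dimension $4=h^0(\sL\otimes\sM)$ (here $\deg(\sL\otimes\sM)=4$, so $h^0=4$), by checking that no nonzero element of $H^0(\sL)\otimes H^0(\sM)$ maps to $0$. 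The excluded case $\sL\cong\sM$ of degree $2$ is genuinely excluded: $H^0(\sL)\otimes H^0(\sL)$ has a $1$-dimensional kernel (the alternating part is $1$-dimensional and lands in $\operatorname{Sym}$ only after accounting for the relation), and $h^0(\sL^{\otimes2})=4<\binom{3}{2}$... so surjectivity would force $\dim\ge 4$ from a $3$-dimensional... wait, $h^0(\sL)=2$, so $H^0(\sL)^{\otimes2}$ is $4$-dimensional and $h^0(\sL^{\otimes 2})=4$, and the map is genuinely not surjective because its image is the $3$-dimensional space of symmetric tensors' image, i.e.\ it misses the hyperelliptic-type quadric; this is exactly the standard fact that the $2$-uple image of a conic-type pencil is a conic in $\PP^3$.

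For part (2), the plan is a routine induction on $n$ using part (1). I want to show $B_1\cdot B_{n-1}=B_n$ for all $n\ge 2$, i.e.\ $H^0(E,\sL)\otimes H^0(E,\sL_{n-1})\to H^0(E,\sL_n)$ is surjective, where $\sL_n=\sL\otimes\sL^\tau\otimes\cdots\otimes\sL^{\tau^{n-1}}$. Since $\deg\sL\ge 2$ and each $\sL^{\tau^i}$ has degree $\deg\sL\ge 2$, we get $\deg\sL\ge 2$ and $\deg\sL_{n-1}=(n-1)\deg\sL\ge 2(n-1)\ge 2$ for $n\ge 2$. Part (1) then applies directly \emph{unless} we are in the excluded case $\deg\sL=2$ and $\sL\cong\sL_{n-1}$. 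So the only thing to rule out is: $\deg\sL=2$ and $\sL\cong\sL_{n-1}=\sL\otimes\sL^\tau\otimes\cdots\otimes\sL^{\tau^{n-2}}$, i.e.\ $\sL^\tau\otimes\cdots\otimes\sL^{\tau^{n-2}}\cong\struct_E$. For $n=2$ this condition is vacuous ($\sL_1=\sL$), so one must instead check surjectivity of $H^0(\sL)\otimes H^0(\sL)\to H^0(\sL\otimes\sL^\tau)$; this is \emph{not} the excluded case of part (1) precisely because $\sL^\tau\not\cong\sL$ (as $|\tau|=\infty$, $\tau$ fixes no point of $\Pic$, in particular $\sL^\tau\not\cong\sL$). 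For $n\ge 3$, if $\sL^\tau\otimes\cdots\otimes\sL^{\tau^{n-2}}\cong\struct_E$ then taking degrees gives $2(n-2)=0$, impossible. Hence the excluded case never arises in the Veronese filtration, and induction closes: $B_1^{\,n}=B_1\cdot B_1^{\,n-1}=B_1\cdot B_{n-1}=B_n$.

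The one genuinely substantive piece is the boundary analysis in part (1): the degree-$2$/degree-$2$ case and the careful statement of \emph{why} the $\sL\cong\sM$ subcase fails while $\sL\not\cong\sM$ succeeds. I expect that to be the main obstacle, and the cleanest way to dispatch it is geometrically via the pencils/maps to $\PP^1$ as sketched, or alternatively by a direct Riemann--Roch bookkeeping of the kernel of the multiplication map together with the observation that $h^0$ of the product is forced. Everything else—part (2)'s induction and the two non-vanishing checks $\sL^\tau\not\cong\sL$ and $2(n-2)\ne 0$—is bookkeeping that follows from $|\tau|=\infty$ and degree counting. Since this is cited as \cite[Lemma~3.1]{Rog09}, in the write-up I would simply refer to the standard curve results plus the short boundary argument rather than reproving projective normality from scratch.
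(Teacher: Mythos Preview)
The paper gives no proof of this lemma: it is stated with a \qed\ and attributed to \cite[Lemma~3.1]{Rog09}. Your plan is sound and would reproduce that result, so there is no divergence of approach to discuss. A few small imprecisions are worth flagging before you write it up.

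In part~(2), the multiplication $B_1\cdot B_{n-1}\to B_n$ is the \emph{twisted} map $H^0(\sL)\otimes H^0(\sL_{n-1}^{\tau})\to H^0(\sL\otimes\sL_{n-1}^{\tau})=H^0(\sL_n)$, not $H^0(\sL)\otimes H^0(\sL_{n-1})\to H^0(\sL_n)$ as you wrote. You clearly know this, since at $n=2$ you correctly examine $H^0(\sL)\otimes H^0(\sL^{\tau})\to H^0(\sL\otimes\sL^{\tau})$; just carry the twist through consistently. With the twist in place, the excluded case of part~(1) would be $\deg\sL=2$ and $\sL\cong\sL_{n-1}^{\tau}$, which forces $n=2$ by degree and then reduces to $\sL\cong\sL^{\tau}$, exactly as you say.

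Your justification ``$|\tau|=\infty$, so $\tau$ fixes no point of $\Pic$'' is not quite right: an infinite-order automorphism of an elliptic curve is a translation $x\mapsto x+\alpha$ (any automorphism with nontrivial component in the finite group $\Aut(E,0)$ has finite order), and translation by $\alpha$ acts \emph{trivially} on $\Pic^0(E)$. What you actually need is that on $\Pic^d(E)$ with $d\neq 0$ the action is translation by $-d\alpha$, hence fixed-point-free since $\alpha$ has infinite order; for $\sL$ of degree $2$ this gives $\sL^{\tau}\not\cong\sL$. That is the correct reason.

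Finally, your boundary analysis in part~(1) is the right idea but meanders. The cleanest way to handle all cases at once is the base-point-free pencil trick: if $\deg\sL=2$ (so $\sL$ is base-point-free), a basis of $H^0(\sL)$ gives $0\to\sL^{-1}\to\sO_E^{2}\to\sL\to 0$; tensoring with $\sM$ and taking cohomology shows the multiplication map is surjective iff $H^1(E,\sM\otimes\sL^{-1})=0$, which on an elliptic curve holds exactly when $\deg\sM>2$ or ($\deg\sM=2$ and $\sM\not\cong\sL$). This disposes of the $\deg\sL=2$ case completely; the case $\deg\sL\ge 3$ follows by swapping the roles of $\sL$ and $\sM$.
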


In this paper we will consider the following special class of rings.  
  Here, an $\NN$-graded $\kk$-algebra $R$ is {\em connected graded} (cg) if $R_0 = \kk$ and $\dim_\kk R_n < \infty $ for all $n$.
  
\begin{hypothesis}
\label{hyp:main}
Let $T$ be a   cg $\kk$-algebra which is a domain 
with a central   element $g \in T_1$  such that  
$T/gT \cong B  = B(E, \mc{M}, \tau)$ for an elliptic curve $E$, invertible sheaf $\mc{M}$ of 
degree $\mu \geq 2$, and infinite order automorphism $\tau$.  
By \cite[Lemma~2.7]{RSS2} (or by Lemma~\ref{lem:sec-mult} and induction),  $T$ is  finitely  generated in degree~$1$.

Given   $X \subseteq T$, we always write  $\bbar{X}=(X+gT)/gT \subseteq B(E, \sM, \tau)$.  
\end{hypothesis}

One of the main ways of producing such a ring $T$ is to start 
with either the   \emph{quadratic Sklyanin algebra} or the   \emph{cubic Sklyanin algebra} $S$
of dimension 3.   This  has a  central element $g\in S_\gamma$ (for $\gamma=3$, respectively $4$)
 such that $S/gS \cong B(E, \mc{L}, \sigma)$. 
We always assume that $|\sigma|=\infty$, in which case, by
  a slight abuse of notation, we call $S$ a \emph{generic Sklyanin algebra.}
 Then the Veronese ring  $T=S^{(\gamma)}$  satisfies Hypothesis~\ref{hyp:main}.
   The hypothesis also holds for Veronese rings of generic Stephenson's algebras \cite{Ste}.
See \cite[Examples~2.2]{RSS2} for the explicit definitions of these  algebras.

The rings $T$ in Hypothesis~\ref{hyp:main} automatically have   many good  properties, as  we next show.   This requires the following definitions. 
The remaining terms in the proposition are 
not defined since they do not play a significant r\^ole in this paper;  the relevant definitions can be found, for example, in 
 \cite[Section~2]{Rog09}.
 
 \begin{definition}\label{AG-defn}
A ring $A$ is called  \emph{Auslander-Gorenstein} if it has finite injective dimension and satisfies the
 {\it Gorenstein condition}: if $p<q$ 
are non-negative integers and  $M$ is a finitely generated $A$-module, then
$\Ext_A^p(N,\,A)=0$ for every submodule $N$ of 
$\Ext_A^q(M,\,A)$.
Set
 $j(M)= \min\{ r | \Ext^r_A(M,A)\not= 0\}$ for the \emph{homological grade} of $M$.  
An Auslander-Gorenstein ring $A$ of finite GK-dimension is
 called \emph{ Cohen-Macaulay} (or CM), provided 
 that  $j(M)+\GKdim(M)=\GKdim(A)$ holds
for every finitely generated
 $A$-module $M$.    
Finally, an $A$-module $M$ is called \emph{Cohen-Macaulay} (or CM) if $\Ext^i_A(M,A) = 0$ for all $i \neq j(M)$.
\end{definition}

\begin{proposition}\label{prop:foo}
Let $S$ be a cg $\kk$-algebra which is a domain with a central element $g \in S_{\gamma}$ for some $\gamma \geq 1$, such that 
$S/gS \cong B(E, \mc{L}, \sigma)$ for some elliptic curve $E$, invertible sheaf $\mc{L}$ with $\deg \mc{L} \geq 1$, and 
infinite order automorphism $\sigma$.  

Then $S$ is strongly noetherian, Auslander-Gorenstein, CM and a maximal order.
Also, $S$ satisfies the Artin-Zhang $\chi$ conditions,  has cohomological dimension $2$ and possesses a balanced dualizing complex. 
\end{proposition}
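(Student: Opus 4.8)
The plan is to deduce everything from the corresponding (classical) properties of the twisted homogeneous coordinate ring $B = B(E,\mc{L},\sigma)$ and then transfer them to $S$ across the central regular element $g$.

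First I would record what is known about $B$. Since $E$ is a smooth projective curve, $\deg\mc{L}\geq 1$, and $|\sigma|=\infty$, the sheaf $\mc{L}$ is $\sigma$-ample, so $B$ is a noetherian domain with $\GKdim B=2$; moreover $B$ is strongly noetherian, Auslander-Gorenstein of injective dimension $2$, Cohen--Macaulay, a maximal order, satisfies $\chi$, and has $\cohdim B = 1$ (i.e.\ $H^i_{\mathfrak m}(B)=0$ for $i>2$). In particular, by Van den Bergh's existence theorem, the finiteness of $\cohdim$ together with $\chi$ equips $B$ with a balanced dualizing complex. All of this is collected in \cite[Section~3]{Rog09} and the references there; the one point to note is that none of it requires $\deg\mc{L}\geq 2$, since everything used follows from $\sigma$-ampleness of $\mc{L}$, which holds once $\deg\mc{L}\geq 1$ on an elliptic curve.

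Next I would transfer these properties to $S$. Because $S$ is connected graded and $\deg g=\gamma\geq 1$, we have $g^nS\subseteq S_{\geq n\gamma}$, so the $g$-adic filtration is separated and finite on each $S_m$; and since $S$ is a domain, $g$ is regular. Hence $\gr_g S\cong B[t]$, a graded polynomial extension of $B$ by a central indeterminate, and adjoining such a variable preserves all the properties above while raising $\GKdim$, injective dimension and $\cohdim$ by one; thus $B[t]$ is strongly noetherian, Auslander-Gorenstein of injective dimension $3$, Cohen--Macaulay, a maximal order, and satisfies $\chi$. Now I apply the standard lifting theorems for a central regular homogeneous element of positive degree: (i) strong noetherianity passes from $S/gS = B$ to $S$; (ii) the Auslander-Gorenstein and Cohen--Macaulay conditions pass from $S/gS$ to $S$, giving $\operatorname{injdim} S = \GKdim S = 3$; (iii) the $\chi$ condition passes from $S/gS$ to $S$; (iv) since $\gr_g S = B[t]$ is a noetherian maximal order which is a domain, $S$ is a maximal order; and (v) $\cohdim S \leq \cohdim(S/gS) + 1 = 2$, with equality since $\GKdim S = 3$. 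Finally, $\chi$ and the finiteness of $\cohdim S$ give $S$ a balanced dualizing complex by Van den Bergh's theorem.

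I expect the maximal-order assertion to be the main obstacle. Unlike the homological conditions and $\chi$, being a maximal order is not inherited directly from the quotient $S/gS$, so one must route through $\gr_g S = B[t]$: one needs that a polynomial extension of a noetherian maximal order is again a maximal order, and that a filtered algebra whose associated graded ring is a noetherian maximal order and a domain is itself a maximal order. The remaining work is largely bookkeeping --- pinning down the precise forms of the lifting lemmas for ``strongly noetherian'', ``Auslander-Gorenstein / Cohen--Macaulay'', ``$\chi$'' and ``finite $\cohdim$'' across a central regular element, all of which are in the literature and were already invoked in \cite{Rog09}.
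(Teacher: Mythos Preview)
Your proposal is correct and follows essentially the same route as the paper. The paper's proof simply cites \cite[Theorems~6.3 and~6.7]{Rog09} (noting that the hypothesis $\deg\mc{L}\geq 2$ there is unnecessary) together with \cite[Theorem~6.3]{VdB4} for the balanced dualizing complex; the lifting arguments you sketch---properties of $B$, then transfer across the central regular element $g$ via $\gr_g S\cong B[t]$---are precisely what those cited theorems contain, so you have reconstructed rather than diverged from the intended argument.
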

\begin{proof}  
Use the proofs of \cite[Theorems~6.3  and 6.7]{Rog09} (the hypothesis assumed there that 
$\deg \mc{L} \geq 2$ is not needed in the proofs). 
The one exception to this assertion  is the  fact that $S$ has a  balanced dualizing complex,
which follows from   \cite[Theorem~6.3]{VdB4}.
\end{proof}

Next, we review   some of the  categories used in the paper.   
Let $R$ be a cg noetherian $\kk$-algebra and write $\rGr R$ for the category of $\mb{Z}$-graded right $R$-modules, with  
$\Hom_{\rGr R}(-,-)$ being  graded homomorphisms of degree zero. 
Let $\rTors R\subseteq \rGr R$ be the full subcategory of \emph{torsion} modules; thus modules  $M$
with the property that for every $m \in M$, $m A_{\geq n} = 0$ for
some $n \geq 1$.     Write $\rgr R$ for the full subcategory of $\rGr R$
of finitely generated  $R$-modules, with torsion subcategory $\rtors R$.
The quotient category $\rqgr R = \rgr R/\rtors R$ has the same objects as $\rgr R$, 
but morphisms are given by $\Hom_{\rqgr R}(\pi(M), \pi(N)) = \lim_{n \to \infty} \Hom_{\rgr R}(M_{\geq n}, N)$, 
for $M,N \in \rgr R$.  The quotient category $Y = \rQgr R = \rGr R/\rTors R$ is defined similarly, and $\rqgr R$ may be 
identified with the subcategory of noetherian objects in $Y$; see \cite[p. 234-5]{AZ}.
The quotient functor 
$\pi: \rGr R \to Y $ has a right adjoint, the \emph{section functor} $\omega: Y \to \rGr R$ \label{section-defn}   given 
by $\omega(\mc{F}) = \bigoplus_{n \in \mb{Z}} \Hom_Y(\mc{O}_Y, \mc{F}[n])$. Here $\mc{O}_Y  = \pi(R)$ 
and $[n]$ is the shift functor on $Y$, induced by the shift of graded modules $M \mapsto M[n]$
 where $M[n]_m = M_{n+m}$.

If $J  $ is a  graded right ideal of $R$, write  
$J^{\sat} = \big\{ x \in R \st x R_{\geq n} \subseteq J\ \text{for some}\ n \geq 0 \big\}$ 
for  the \emph{saturation} of $J$.  It is the unique largest  right ideal  $K\supseteq J$ such that $\dim_\kk(K/J)<\infty$. 
We call $J$ \emph{saturated} if $J = J^{\sat}$.
 
\begin{lemma}
\label{lem:T-props}
Assume that $T$ satisfies Hypothesis~\ref{hyp:main}.
  \begin{enumerate}
 \item The natural map $T \to \omega(\pi(T))$ is an isomorphism of rings.
 \item Given any right ideal $J$ of $T$, we have $\omega(\pi(J))  = J^{\sat} \subseteq T$.
\end{enumerate}
\end{lemma}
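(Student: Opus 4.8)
The plan is to deduce both statements from general properties of the section functor, once I know that $T$ satisfies the Artin--Zhang $\chi$ conditions (in particular $\chi_1$) and that $T$ is noetherian --- both of which hold by Proposition~\ref{prop:foo}, applied to $S=T$, since $T/gT\cong B(E,\sM,\tau)$ with $\deg\sM=\mu\geq 2\geq 1$. Recall the general fact (e.g.\ \cite[Section~3--4]{AZ}) that for a cg noetherian algebra $R$ satisfying $\chi_1$, and any finitely generated graded right module $M$, the canonical map $M\to\omega(\pi(M))$ has torsion kernel and cokernel, and $\omega(\pi(M))$ is the (unique) largest essential extension of $M/(\text{torsion})$ inside $\rGr R$ with finite-dimensional quotient in each comparison --- more precisely $\omega(\pi(M))=\varinjlim_n\Hom_{\rGr R}(R_{\geq n},M)$, which for $M$ a submodule of a torsionfree module computes exactly the saturation.

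For part (2): let $J\subseteq T$ be a graded right ideal. Since $\pi(J)=\pi(J^{\sat})$ (the two differ by a finite-dimensional, hence torsion, module) it suffices to compute $\omega(\pi(J))$. Now $T$ is a domain, so $J$ and $J^{\sat}$ are torsionfree; thus the natural map $J^{\sat}\to\omega(\pi(J^{\sat}))=\omega(\pi(J))$ is injective. Conversely, using the description $\omega(\pi(J))=\varinjlim_n\Hom_{\rGr T}(T_{\geq n},J^{\sat})$ and the fact that $J^{\sat}\subseteq T$ with $T\to\omega(\pi(T))$ an isomorphism (which is part (1)), any element of $\omega(\pi(J))$ is represented by a degree-zero map $T_{\geq n}\to J^{\sat}$; composing with $J^{\sat}\hookrightarrow T$ and using part~(1) identifies it with an element $x\in T$ satisfying $xT_{\geq n}\subseteq J^{\sat}$, and enlarging $n$ if necessary $xT_{\geq n}\subseteq J$, i.e.\ $x\in J^{\sat}$. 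Hence $\omega(\pi(J))=J^{\sat}$ as right $T$-modules; one checks the identification is the natural one. (The identification $J^{\sat}=\{x\in T: xT_{\geq n}\subseteq J,\ n\gg0\}$ as the largest right ideal $K\supseteq J$ with $\dim_\kk K/J<\infty$ is already recorded in the excerpt.)

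For part (1): this is the case $J=T$ of the argument above, but it needs to be established first, so I would do it directly. The map $T\to\omega(\pi(T))$ is a graded ring homomorphism by general nonsense, and it is injective because $T$ is a domain (hence $T$ is torsionfree, so $T\hookrightarrow\omega(\pi(T))$). It is surjective because $T$ satisfies $\chi_1$ and, crucially, is \emph{saturated as a right ideal over itself}: indeed if $x$ lies in the graded quotient ring $Q_{gr}(T)$ with $xT_{\geq n}\subseteq T$, then since $T$ is a maximal order (Proposition~\ref{prop:foo}) and $xT_{\geq n}$ is a nonzero right ideal, one gets $x\in T$; equivalently $T^{\sat}=T$. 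More concretely, $\omega(\pi(T))$ sits inside $Q_{gr}(T)$ as the union $\bigcup_n\{x: xT_{\geq n}\subseteq T\}$, and maximality of the order $T$ forces this union to equal $T$.

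The main obstacle --- really the only subtle point --- is showing $\omega(\pi(T))=T$, i.e.\ that no ``extra'' sections appear; every other step is bookkeeping with the defining colimit of $\omega$. I expect to handle this either via the maximal order property as sketched, or alternatively by invoking that $T$ is strongly noetherian with $\chi$ and finite cohomological dimension, so that the standard cohomological comparison $\underline{\omega}\,\underline{\pi}(T)=T\oplus H^1_{\mf m}(T)^*$-type exact sequence (\cite[Section~7]{AZ}) reduces the claim to $H^0_{\mf m}(T)=0$, which is clear since $T$ is a domain, plus the fact that $T$ has no nonzero negatively-graded or degree-zero ``shift'' contributions because $T$ is connected graded and $\chi_1$ holds. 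I would cite \cite[Proposition~7.2(2)]{AZ} or the analogous statement in \cite{Rog09} for the clean version.
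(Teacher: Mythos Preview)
Your overall strategy matches the paper's: invoke Proposition~\ref{prop:foo} to get the Artin--Zhang $\chi$ conditions and then use the machinery of \cite{AZ}. For part~(2) your argument and the paper's are essentially the same --- both use that $\omega$ is left exact (so $\omega\pi(J)\subseteq\omega\pi(T)=T$), that $\omega\pi(J)/J$ is torsion, and that $\omega\pi(J)$ is automatically saturated.

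For part~(1), however, there is a gap in the maximal-order argument as written. You assert that for a single homogeneous $x\in Q_{gr}(T)$ with $xT_{\geq n}\subseteq T$, maximality forces $x\in T$; but the definition of maximal order is about equivalent \emph{orders}, not individual elements, so you must first exhibit an order $T'\supseteq T$ containing $x$ with $aT'b\subseteq T$ for some nonzero $a,b$. The fix is to take $T'=\omega\pi(T)$ itself: by $\chi_1$ (which you do invoke) the cokernel $T'/T$ is finite-dimensional, hence $T'_{\geq N}=T_{\geq N}$ for some $N$, so $T'\cdot T_{\geq N}\subseteq T'_{\geq N}=T_{\geq N}\subseteq T$, and now maximality gives $T'=T$. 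So the approach is salvageable, but the step ``maximality $\Rightarrow x\in T$'' needs this extra sentence.

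The paper takes a slightly different and more direct route for~(1): it applies the exact sequence
\[
0\to t(T)\to T\to\omega\pi(T)\to\varinjlim_n\Ext^1(T/T_{\geq n},T)\to 0
\]
from \cite[(3.12.3)]{AZ} with $M=T$, and kills the right-hand term not via maximality but via the Auslander--Gorenstein/CM property: $T$ admits no nontrivial extensions by finite-dimensional modules (citing \cite[Lemma~4.11(2)]{RSS2}). Both arguments ultimately draw on Proposition~\ref{prop:foo}, just different conclusions of it; the paper's route avoids having to identify $\omega\pi(T)$ inside $Q_{gr}(T)$ at all.
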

\begin{proof}
(1) By \cite[(3.12.3)]{AZ}, for any $M \in \rgr T$ there is an exact sequence
\[
0 \to t(M) \to M \to \omega \pi(M) \to \lim_{n \to \infty} \Ext^1(T/T_{\geq n}, M) \to 0,
\]
where $t(M)$ is the largest finite-dimensional submodule of $M$.  Apply this equation with $M = T$.
Then $t(T) = 0$ since $T$ is a domain. By Proposition~\ref{prop:foo} and  \cite[Lemma~4.11(2)]{RSS2},  
  $T$ has no non-trivial extensions by finite-dimensional modules   and 
so $\lim_{n \to \infty} \Ext^1(T/T_{\geq n}, T)$ = 0.  Thus the natural map $T \to \omega \pi(T)$ is an isomorphism.
As in \cite[Section~4]{AZ},  it is also a ring homomorphism. 

(2) Since $\omega$ is left exact, $J' = \omega(\pi(J)) \subseteq T$ is a right ideal of $T$
   and $J'/J$ is a torsion module by applying 
the   exact sequence  to $M = J$.     It is easy to see that $\omega(\pi(J))$ 
is already saturated, so that $J' = J^{\sat}$. 
\end{proof}

The following  definitions will be used frequently.
\begin{definition}
Let $S$ be  a cg $\kk$-algebra with homogeneous central element $g \in S$.  Let $M$ be a (right) $S$-module.  The \emph{$g$-torsion submodule} of $M$ is 
$\operatorname{tors}_g(M) = \{m \in M \st m g^n  = 0\ \text{for some}\ n \geq 1 \}$.  The module $M$ is 
\emph{$g$-torsion} if $\operatorname{tors}_g(M) = M$ and \emph{$g$-torsionfree} if $\operatorname{tors}_g(M) = 0$.
A graded vector subspace $V$ of $S$ is {\em $g$-divisible}\label{g-div-defn}
 if $V \cap Sg = Vg$; if $V$ is a right ideal, this is equivalent to $\operatorname{tors}_g(S/V) = 0$. 
\end{definition}

\begin{definition}\label{point-defn}
We say that $M \in \rgr R$ is a \emph{point module} if it has Hilbert series 
$h_M(t) = 1/(1-t)$.  A \emph{shifted point module} 
is a module of the form $M[n]$ for some integer $n$ and point module $M$.  We remark that point modules are traditionally assumed to be cyclic, necessarily generated in degree $0$. However, 
that assumption is not convenient for us, since it is really only appropriate 
for algebras generated in degree one.  
\end{definition}

Gelfand-Kirillov (GK) dimension will be the main dimension function in this paper.   
Recall that $M \in \rgr R$ is \emph{$\alpha$-pure} if \label{pure-defn}
$\GKdim(N) = \alpha$ for all nonzero submodules $N \subseteq M$, and 
\emph{$\alpha$-critical} if $\GKdim(M/N) < \GKdim(M)$ 
for all such $N$.  
The next lemma clarifies the relationship between   $1$-critical modules and  point modules over
$B(E,\sL,\sigma)$.
Here, we  write $\mc{I}_p$ for the ideal sheaf defining a closed  point $p\in E$, so that $ \mc{O}_E/\mc{I}_p=\kk(p)$, the corresponding skyscraper sheaf.
Since $\rqgr B \sim \coh E$, the category of coherent sheaves on $E$ (see \cite[Theorem~1.3]{AV}),
 there is also a corresponding simple object $\mc{O}_p \in \rqgr B$. Explicitly, 
$\mc{O}_p = \pi(P(p))$, where $P(p)= \bigoplus_{n \geq 0} H^0(E, \kk(p) \otimes \mc{L}_n)$
 is the right point module of $B$ corresponding to $p$.

\begin{lemma}
\label{lem:pt-crit}
Let $B = B(E, \mc{L}, \sigma)$, where $E$ is   elliptic, $\deg \mc{L} \geq 1$, and $\sigma$ has infinite order.
\begin{enumerate}
\item If $M \in \rgr B$ is a point module, then $M_{\geq n}$ is $1$-critical for some $n \geq 0$.
\item For any $p \in E$, $P(p) $ is a (not necessarily cyclic) point module for $B$.
If $M \in \rgr B$ is $1$-critical, then $\pi(M) = \sO_p$ in $\rqgr B$ for some $p\in E$.
\item If $\deg \mc{L} \geq 2$, then $M \in \rgr B$ is  $1$-critical if and only if it is a shifted cyclic point module.

\end{enumerate}
\end{lemma}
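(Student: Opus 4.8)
\textbf{Proof proposal for Lemma~\ref{lem:pt-crit}.}

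The plan is to treat the three parts in order, relying on the equivalence $\rqgr B \sim \coh E$ throughout, since sheaf-theoretic arguments on the elliptic curve $E$ are the cleanest way to control Hilbert series and criticality.

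For part (1): Let $M$ be a point module, so $h_M(t) = 1/(1-t)$. First I would note that $\GKdim M = 1$. Since $\rqgr B \sim \coh E$, the image $\pi(M)$ is a coherent sheaf $\sF$ on $E$, and it must have rank $0$ (otherwise $M$ would have GK-dimension $2$); so $\sF$ is a torsion sheaf, i.e.\ a finite-length sheaf. Now $M$ agrees with $\omega(\sF) = \bigoplus_n H^0(E, \sF \otimes \mc{L}_n)$ in all large degrees up to finite-dimensional discrepancy. If $\ell = \operatorname{len}(\sF) \geq 1$, then for $n \gg 0$ we have $\dim H^0(E, \sF \otimes \mc{L}_n) = \ell$ (a finite-length sheaf twisted by any line bundle has the same length, and $H^0$ of a finite-length sheaf is the whole space). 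Comparing with $h_M(t)$, which gives $\dim M_n = 1$ for all $n$, forces $\ell = 1$. Thus for $n$ large enough, $M_{\geq n} \cong \omega(\sF)_{\geq n}$ where $\sF$ has length $1$, hence $\sF = \mc{O}_p$ for some $p \in E$; then $M_{\geq n}$ is a submodule of $P(p)$ agreeing with it in large degree, and any such module is $1$-critical because any nonzero submodule $N$ has $\pi(N) = \sF = \mc{O}_p$ too (a nonzero subsheaf of a simple sheaf is everything), so $N$ agrees with $M_{\geq n}$ in large degree, whence $\GKdim(M_{\geq n}/N) \le 0 < 1$. (I may need to first shift to arrange $M_{\geq n}$ has no finite-dimensional submodule, which is automatic once $h_{M_{\geq n}}(t) = t^n/(1-t)$ and $M_{\geq n} \hookrightarrow P(p)[\text{shift}]$.)

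For part (2): That $P(p)$ has Hilbert series $1/(1-t)$ is immediate: $\kk(p) \otimes \mc{L}_n$ is the skyscraper sheaf $\kk(p)$ regardless of $n$ (as $\mc{L}_n$ is locally free of rank $1$), so $\dim H^0 = 1$ in every degree $\geq 0$; this is exactly a point module in the sense of Definition~\ref{point-defn} (not assumed cyclic). For the second assertion, let $M$ be $1$-critical. Then $\sF := \pi(M) \in \coh E$ is nonzero of rank $0$, so a finite-length sheaf; if $\operatorname{len}(\sF) \geq 2$ it has a proper nonzero subsheaf $\sF'$, and $\omega(\sF')$ pulls back to a submodule $N$ of (a large truncation of) $M$ with $\sF/\sF'$ of positive length, giving $\GKdim(M/N) = 1 = \GKdim M$, contradicting criticality. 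So $\operatorname{len}(\sF) = 1$, i.e.\ $\sF = \mc{O}_p$ for some $p$, as claimed.

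For part (3): Assume $\deg \mc{L} \geq 2$. The backward direction: a shifted cyclic point module $M[-n]$ where $M$ is cyclic is $1$-critical by part (1) together with the observation that a \emph{cyclic} point module generated in degree $0$ has all truncations critical — actually it is cleanest to argue that $M$ cyclic implies $M = \omega(\mc{O}_p)_{\geq 0}$ has no proper finite-codimension-zero issues, and then run the criticality argument of part (1) directly (criticality is inherited by shifts). The forward direction is the substantive one: given $M$ $1$-critical, by part (2), $\pi(M) = \mc{O}_p$, so $M$ agrees with $P(p)$ (up to a shift $[-n]$) in all large degrees. I must upgrade this to: $M$ \emph{is} a shifted cyclic point module on the nose. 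The point is that $\deg \mc{L} \geq 2$ makes $B$ generated in degree $1$ (Lemma~\ref{lem:sec-mult}(2)), and one checks that $P(p)$ is then itself cyclic, generated in degree $0$: the multiplication $H^0(E, \kk(p)) \otimes H^0(E, \mc{L}) \to H^0(E, \kk(p) \otimes \mc{L})$ is surjective because $\mc{L}$ is globally generated (degree $\geq 1$ on an elliptic curve suffices here, or invoke Lemma~\ref{lem:sec-mult}), and inductively $P(p)$ is generated in degree $0$. So $P(p)$ is a cyclic point module. Now $M$, being $1$-critical with $\pi(M) = \mc{O}_p$, embeds into $P(p)[-n]$ as a submodule of finite colength $0$ in large degrees; since both have Hilbert series $t^n/(1-t)$ and $M \hookrightarrow P(p)[-n]$ with torsion-free ($1$-critical, hence no finite-dimensional sub) quotient being forced to vanish — here I use that $M$ is $1$-pure so it has no finite-dimensional submodule, but to conclude $M = P(p)[-n]$ I instead compare dimensions: $M_{\ge n} = P(p)[-n]_{\ge n}$ for large $n$, and $1$-purity of $M$ plus the fact that $P(p)$ is generated in degree $0$ forces the embedding $M \hookrightarrow P(p)[-n]$ to be onto in every degree $\geq n$, hence an isomorphism after truncation; finally since $M$ is $1$-critical it equals its saturation which is $P(p)[-n]$ itself. (If $M$ starts in a degree other than what the shift predicts, the indexing $n$ absorbs it.) Hence $M$ is a shifted cyclic point module.

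\textbf{Main obstacle.} The genuinely delicate step is the forward direction of part (3): passing from the \emph{asymptotic} statement "$M$ agrees with a shifted $P(p)$ in large degrees," which is all that $\pi(M) = \mc{O}_p$ gives, to the \emph{exact} statement "$M$ is a shifted cyclic point module." This requires knowing that $P(p)$ is cyclic (using $\deg \mc{L} \geq 2$ and generation in degree $1$, via Lemma~\ref{lem:sec-mult}) and then using $1$-criticality to rule out $M$ being a proper submodule of $P(p)[-n]$ or having extra stuff in low degrees; the hypothesis $\deg\mc{L}\ge 2$ enters precisely here, and in low degree $1$ one expects genuine counterexamples, which is why the hypothesis is needed.
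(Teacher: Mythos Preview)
Your arguments for (1) and (2) are correct and take the same route as the paper, using the equivalence $\rqgr B \sim \coh E$ to reduce to statements about length-one sheaves; the paper is terser, outsourcing the key facts to \cite[Corollary~3.7]{RSS2}, but the content matches.

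The forward direction of (3) has a genuine gap. You assert that $M$ embeds into $P(p)[-n]$ and that ``both have Hilbert series $t^n/(1-t)$'', but neither is justified: the equality $\pi(M) = \mc{O}_p$ only gives $\dim_{\kk} M_m = 1$ for $m \gg 0$ and says nothing about low degrees, and it only gives an embedding of some tail $M_{\geq m}$ into a shift of $P(p)$, not of $M$ itself. Your closing step, ``since $M$ is $1$-critical it equals its saturation'', is false in general (the proper tail $P(p)_{\geq 1}$ is $1$-critical but not saturated).

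The missing fact is that $\dim_{\kk} M_n \leq 1$ for \emph{all} $n$, which the paper takes from \cite[Corollary~3.7(2)]{RSS2}. In your framework you can recover it by observing that $M$, being $1$-critical, has no finite-dimensional submodule, so the canonical map $M \to \omega\pi(M) = \omega(\mc{O}_p)$ is injective; each graded piece of $\omega(\mc{O}_p)$ is one-dimensional, giving the bound. Once this is in hand, the paper's finish is shorter than your embedding argument: since $B$ is generated in degree $1$ (this is where $\deg \mc{L} \geq 2$ is used) and $M$ is $1$-pure, there can be no gap in the Hilbert function---if $M_j \neq 0$ but $M_j B_1 = 0$ then $M_j$ would be a finite-dimensional submodule---so there is some $j$ with $\dim_{\kk} M_n = 1$ exactly for $n \geq j$, and $M$ is then cyclic, generated in degree $j$.
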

\begin{proof}

(1) Since $\GKdim(M)=1$ and $M$ has a unique largest finite dimensional submodule, 
$M_{\geq n}$ is 1-pure for $n\gg 0$. As such, $M_{\geq n}$ has a cyclic  1-critical submodule, say $Q$
(use the proof of  \cite[Proposition~6.2.20]{MR}).
But $  \dim_{\kk} Q_m = 1$ for all $m \gg 0$ by  \cite[Corollary~3.7(2)]{RSS2}. 
So $M_{\geq m}=Q_{\geq m}$ is 1-critical for all such $m$.

(2)  This follows from the equivalence of categories $\coh E \to \rqgr B$ and the fact that $\pi(M)$ is a simple object in $\rqgr B$
(see \cite[Corollary~3.7(1)]{RSS2} and its proof).

(3) This  is standard. In more detail, assume that $M$ is 1-critical.
By \cite[Corollary~3.7(2)]{RSS2},    $\dim_{\kk} M_n \leq 1$ for all $n \in \mb{Z}$, with equality for $n \gg 0$.  
As  $B$ is now  generated in degree $1$ (see Lemma~\ref{lem:sec-mult}) and $M$ is 1-pure, 
this means there is $j \in \mb{Z}$ such that $\dim_{\kk} M_n = 1$ for $n \geq j$ and $\dim_{\kk} M_n = 0$ for $n < j$.  
The module $M$ is  then forced to be cyclic, generated in degree $j$, and so $M$ is a shifted cyclic point module.
\end{proof}

%%%%%%%%%%%%%%%%%%%%%%%
\section{A categorical equivalence for right ideals}\label{FUNCTORIAL}

For the whole of this section, fix a ring $T$ satisfying Hypothesis~\ref{hyp:main} and its 
associated notation,   in particular $E$ is an elliptic curve with an  infinite order 
automorphism $\tau$.

The main goal of this section is to review a categorical equivalence developed by Van den Bergh and Van Gastel \cite{VV}, which 
gives a formally useful way to describe and work with cyclic $T$-modules of GK-dimension~$1$ and their defining ideals.  
We then use this to associate certain important right $T$-ideals to lists of divisors on $E$, and begin to analyse 
their properties.

  Let $X = \rqgr T$ and $Y = \rqgr B$, for $B = T/gT$.  
Then $Y  $ is a subcategory of $X$ with $ \coh E\sim Y$ via the map  
$\mc{F} \mapsto \pi (\bigoplus_{n \geq 0} H^0(E, \mc{F} \otimes \mc{M}_n))$ (see \cite{AV}).  
Thus, we have the main hypothesis of \cite[Chapter~5]{VdB}; namely, in the language of that paper, $Y$ is a 
commutative CM curve embedded as a divisor in the noetherian quasi-scheme~$X$.   Moreover, \cite{VdB}, Hypothesis $(*)$ (which demands that objects in $\rqgr B$ have finite injective dimension in
$\rqgr T$) holds automatically; see \cite{VdB}, Hypothesis $(*')$ and the discussion there.

One would like to understand the category $\sC_f$ of  finite length objects in $X$ whose composition factors lie in $Y$.
Following \cite[Section~5]{VdB}, we let $\mc{C}_{f,p}$ be the subcategory of $X$ consisting of finite-length objects whose Jordan-Holder 
quotients are all simple objects of the form $\mc{O}_{\tau^i(p)}$ for various $i\in\ZZ$.  
One may show that $\dim_\kk \Ext^1_{\rqgr B}(\mc{O}_p, \mc{O}_q) = \delta_{p,q}$, but 
 $\dim_\kk \Ext^1_{\rqgr T}(\mc{O}_p, \mc{O}_q) = 1$ when $p = \tau(q)$.
   In some sense, these two types of extensions generate the entire category $\sC_{f,p}$, 
in a way which will be made precise by the next theorem.

Let $C_p$ be the ring of $\mb{Z} \times \mb{Z}$ lower triangular matrices with entries in $k [[ x]]$.   Write $e_{ij}$ for the matrix~units in $C_p$. 
Thus     $C_p$ is a locally noetherian ring, with idempotents $e_i=e_{ii}$  and   
$C_p \cong \prod_i e_i C_p$, as right $C_p$-modules.   Each $e_i C_p$ has a unique simple factor module $S_i$ and we write  
$S_i = C_p/\mf{n}_i$ for the appropriate ideal $\mf{n}_i$.
  Let $N$ be the matrix with a $1$ in the $(i, i-1)$-position for all $i \in \mb{Z}$, and $0$'s elsewhere. 
Then $N$ is normal in $C_p$ and the ideal $N C_p$ consists of all matrices in $C_p$ which are $0$ along the main diagonal.  Let $\rmod C_p$ 
be the category of   finitely generated right $C_p$-modules, and  
$ \fd C_p$  the category of $C_p$-modules of finite $\kk$-dimension.

The next result from \cite{VV,VdB},  which  relates  $\mc{C}_{f,p}$  to  $\fd C_p$, forms the starting point to this section.  It provides
 a very convenient way of analysing   the graded $T$-modules of GK-dimension~1.

\begin{theorem}  
Let $X = \rqgr T$ for $T$ satisfying Hypothesis~\ref{hyp:main},  and keep the above  notation.  
Fix $p \in E$.    
\label{thm:vdb}
\begin{enumerate}
\item There is an equivalence of categories $( \wh{-} )_p: \mc{C}_{f,p} \to   \fd C_p$.
We have $(\wh{\mc{O}_{\tau^i(p)}})_p = S_i$ for each $i \in \mb{Z}$.

\item  More generally, there is an exact functor $( \wh{-} )_p: X = \rqgr T \to \rmod C_p$, 
which restricts on $\mc{C}_{f,p}$ to the equivalence of Part (1).
One has $(\wh{\sO_X})_p = C_p$, and for any right ideal $L$ of $T$, $\wh{\pi(Lg)}_p = \wh{\pi(L)}_p N$.

\item Given a noetherian object $\mc{F} \in X$, $( \wh{-} )_p$ defines a one-to-one 
correspondence between subobjects  $\mc{G}$ of $\mc{F}$ such that 
$\mc{F}/\mc{G} \in \mc{C}_{f,p}$ and $C_p$-submodules $H$ of $\wh{\mc{F}}_p$ 
such that $\wh{\mc{F}}_p/H \in \fd C_p$.   Under this correspondence, 
$\mc{F}/\mc{G} \in Y$ if and only if 
$\wh{\mc{F}}_p N \subseteq  \wh{\mc{G}}_p \subseteq \wh{\mc{F}}_p$.  Similarly, under this correspondence 
$\mc{F}/\mc{G}$ is a direct sum of copies of $\mc{O}_{\tau^i(p)}$ 
if and only if $\wh{\mc{F}}_p \mf{n}_i \subseteq \wh{\mc{G}}_p  \subseteq \wh{\mc{F}}_p$.  
\end{enumerate}
\end{theorem}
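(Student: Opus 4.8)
The plan is to obtain all three parts by combining the results of Van den Bergh and Van Gastel in \cite{VV} and \cite[Chapter~5]{VdB}, and then translating them into the notation of this paper. First I would check that the hypotheses of \cite[Chapter~5]{VdB} are satisfied: as noted in the paragraph before the theorem, $Y = \rqgr B \sim \coh E$ is a commutative CM curve of dimension $1$, and it is embedded as an (effective Cartier) divisor in the noetherian quasi-scheme $X = \rqgr T$, the divisor being cut out by the image of $g$ — the relevant exact sequence $0 \to \sO_X[-1] \xrightarrow{g} \sO_X \to \sO_Y \to 0$ in $X$ follows from $0 \to T[-1] \xrightarrow{g} T \to B \to 0$ and exactness of $\pi$. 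Once this is in place, \cite{VdB} attaches to the point $p \in E$ the ``local data'': the complete local ring $\sO_{E,p} \cong \kk[[x]]$, the $\ZZ$-indexed family $\{\tau^i(p)\}$, and from these builds the ring $C_p$ of $\ZZ\times\ZZ$ lower-triangular matrices over $\kk[[x]]$ together with the normal element $N$ encoding the $\Ext^1$ between consecutive points (here the computation $\dim_\kk \Ext^1_{\rqgr T}(\sO_p,\sO_q) = \delta_{p,\tau(q)}$, quoted above, is exactly what makes the off-diagonal part of $C_p$ one-dimensional over $\kk[[x]]$ and forces the lower-triangular shape).

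For part (1), I would identify $\mc{C}_{f,p}$ with the category of finite-length objects of $X$ supported (set-theoretically) at the $\tau$-orbit of $p$; \cite[Chapter~5]{VdB} (building on \cite{VV}) produces an equivalence of this category with $\fd C_p$, the finite-dimensional $C_p$-modules, and under it the simple object $\sO_{\tau^i(p)}$ goes to the simple $C_p$-module $S_i$ supported at the $i$-th diagonal idempotent. One should record that this equivalence is exact and sends Jordan--Hölder filtrations to composition series, which is automatic for an equivalence of abelian categories.

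For part (2), the functor $(\wh{-})_p : X \to \rmod C_p$ is the completed local sections / $\Ext$-functor of \cite[Chapter~5]{VdB} — concretely one takes $\wh{\sF}_p = \bigoplus_{i\in\ZZ} \wh{\Hom}_X(\sO_X, \sF \text{ twisted by } \tau^i)$ completed at $p$, assembled into a right $C_p$-module via the $C_p$-bimodule structure coming from composition — and one must verify: exactness on $X$ (it is a composite of localization at $p$, which is exact since $\sO_{E,p}$ is flat over $\sO_E$, and a left-exact Hom-functor whose relevant higher derived functor vanishes because $X$ has cohomological dimension governed by the curve, cf. Proposition~\ref{prop:foo}); that $(\wh{\sO_X})_p = C_p$ (this is the definition of $C_p$); that it restricts to the equivalence of (1); and the normalization identity $\wh{\pi(Lg)}_p = \wh{\pi(L)}_p N$. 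This last identity is the key structural input: multiplication by $g$ on a right ideal corresponds, after applying $(\wh{-})_p$, to right multiplication by the normal element $N$, because $g$ realizes precisely the ``shift by one along the orbit'' that $N$ implements on $C_p$ (the sequence $0 \to \sF[-1]\xrightarrow{g}\sF$ maps to $0 \to \wh{\sF}_p N \hookrightarrow \wh{\sF}_p$); this is stated in \cite{VdB} and I would cite it, checking only that the grading conventions match.

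Part (3) is then a formal consequence of (1), (2) and the lattice-theoretic dictionary. Given a noetherian $\sF \in X$, subobjects $\sG \subseteq \sF$ with $\sF/\sG \in \mc{C}_{f,p}$ correspond, by exactness of $(\wh{-})_p$ plus the equivalence of (1) applied to the quotient, bijectively to $C_p$-submodules $H \subseteq \wh{\sF}_p$ with $\wh{\sF}_p/H$ finite-dimensional; one checks the correspondence is $H = \wh{\sG}_p$ and is order-preserving and mutually inverse because $(\wh{-})_p$ is exact and faithful on the relevant subquotients. The refinements then read off the description of the simples: $\sF/\sG \in Y$ exactly when every Jordan--Hölder factor is a point module over $B$, i.e.\ is killed by $g$, i.e.\ (via the identity in (2)) the corresponding $C_p$-quotient is killed by $N$, i.e.\ $\wh{\sF}_p N \subseteq \wh{\sG}_p$; and $\sF/\sG$ is a direct sum of copies of $\sO_{\tau^i(p)}$ exactly when it is annihilated by the maximal ideal $\mf{n}_i$ of $C_p$ at the $i$-th idempotent and is $\mf{n}_i$-torsion, i.e.\ $\wh{\sF}_p \mf{n}_i \subseteq \wh{\sG}_p$ (here one uses that $C_p/\mf{n}_i = S_i$ is simple, so $\mf{n}_i$-torsion finite-dimensional modules are semisimple isotypic of type $S_i$).

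I expect the main obstacle to be part (2): setting up $(\wh{-})_p$ precisely enough to make the identifications $(\wh{\sO_X})_p = C_p$ and $\wh{\pi(Lg)}_p = \wh{\pi(L)}_p N$ literally correct requires care with the $\ZZ$-grading/shift bookkeeping and with which completions and Hom-spaces are taken, and with checking the higher-cohomology vanishing that makes the functor exact rather than merely left exact. All of this is done in \cite{VdB} (and \cite{VV}), so in the write-up the proof will largely consist of citing those references and verifying that our Hypothesis~\ref{hyp:main} and our sign/grading conventions put us exactly in their framework; parts (1) and (3) then follow formally.
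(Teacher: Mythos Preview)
Your proposal is correct and takes essentially the same approach as the paper: both treat the theorem as a restatement of results from \cite{VV} and \cite[Chapter~5]{VdB}, with the main work being to check that Hypothesis~\ref{hyp:main} places us in Van den Bergh's framework and that the conventions match. One small point you should be aware of when consulting \cite{VdB}: the results there are phrased in the language of topological rings and pseudo-compact modules (the co-finite-length topology on $C_p$), and the paper explicitly notes that one can sidestep this because the objects in the image of $(\wh{-})_p$ are finitely generated and open submodules coincide with co-finite-dimensional ones---your concrete sketch of the functor in part~(2) is more speculative than what is actually needed, since the paper simply cites \cite[Section~5.3, Lemma~5.3.3, Theorem~5.3.1, Corollary~5.3.5]{VdB} rather than unpacking the construction.
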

\begin{proof}
This is a combination and restatement of several results in \cite{VdB}, which are themselves 
a generalisation of the original result from \cite{VV}.
These results are stated in the language of topological rings and modules, since the ring $C_p$ is 
a topological ring with the co-finite-length topology.  We will not  use this language, other than to explain 
how our result follows from the results in \cite{VV, VdB}.
 
(1)  This follows from \cite[Theorem~1.1]{VV} or from the more general result \cite[Theorem~5.1.4]{VdB}.

(2)  The functor $( \wh{-} )_p$ from Part (1) is extended to an exact functor $X = \rqgr T \to \rmod C_p$ 
in \cite[Section~5.3]{VdB}.  There the image is taken to lie in the category %$\operatorname{PC}(C_p)$ 
of pseudo-compact $C_p$-modules
as defined in \cite[Section~4]{VdB}, but it is also noted that the objects in the image of the functor 
are finitely generated.
The fact that $(\wh{\sO_X})_p = C_p$ is \cite[Lemma~5.3.3]{VdB}, and the last statement follows from
 %the generalization of \cite[Theorem~5.1.4(4)]{VdB} to this extended functor
  \cite[Theorem~5.3.1]{VdB}.  

(3)  By \cite[Corollary~5.3.5]{VdB}, $( \wh{-} )_p$ induces a bijection between subobjects 
$\mc{G}$ of $\mc{F}$ such that 
$\mc{F}/\mc{G} \in \mc{C}_{f,p}$ and open subobjects of $( \wh{\mc{F}} )_p$.  However, such open subobjects 
are the same  as co-finite-dimensional $C_p$-submodules    (use \cite[Lemma~4.2]{VdB}
 and the fact that      finite length $C_p$-modules are finite-dimensional).   

Now by the equivalence of Part (1), $(\mc{F}/\mc{G})$ is a direct sum of copies of $\mc{O}_{\tau^i(p)}$ if and only if 
$\wh{\mc{F}}_p/\wh{\mc{G}}_p$ is a direct sum of copies of $S_i$.  
The latter condition  is equivalent to $\wh{\mc{F}}_p \mf{n}_i \subseteq \wh{\mc{G}}_p  \subseteq \wh{\mc{F}}_p$,
as desired. 
The other statement is proved similarly:  use the fact that an object is in $\mc{C}_{f, p} \cap Y$  
  if and only if its image under $(\wh{-})_p$ is a $C_p/(N)$-module \cite[Theorem~5.1.4(4, 5)]{VdB}.
\end{proof}

Using Theorem~\ref{thm:vdb} we can associate some   important right ideals of $T$   to sequences of divisors on $E$.

\begin{definition}  
A finite sequence of effective divisors $(\divd^0, \dots \divd^{k-1})$ on $E$ is 
called an \emph{allowable divisor layering} if it satisfies the condition
\beq \label{allowable}
\tau^{-1}(\divd^{i-1}) \geq \divd^{i} \quad \text{for all $1 \leq i \leq k-1$.}
\eeq
We adopt the convention that $\divd^m=0$ for all $m \geq k$, and we use the notation $\divd^\bullet = (\divd^0, \dots, \divd^{k-1})$ 
to represent the divisor layering as a whole.
If $q\in E$, let $\mb O(q) = \{ \tau^i (q) \st i \in \ZZ \}$ be the $\tau$-orbit of $q$.  
\end{definition}

\begin{definition}
\label{def:J}  
Assume Hypothesis~\ref{hyp:main}, and let $\divd^\bullet$ be an allowable divisor layering.  
 To  $\divd^\bullet$ we will associate a subobject $\mc{J}(\divd^\bullet)$ of $\mc{O}_X$, where $X = \rqgr T$, and 
a  saturated right ideal $J(\divd^\bullet)$ of $T$.  

  Suppose first that each $\divd^i$ is supported  on $\mb{O}(p)$. 
Then $\divd^i = \sum_j a_{j + i, j} \tau^{j}(p)$ 
for some unique  integers $a_{k, \ell}\geq 0$ and 
  we   define   $\mf{J}= \mf{J}(\divd^i)\subseteq C_p$ by putting $(x)^{a_{k, \ell}}$ in the $(k, \ell)$-spot
of the lower triangular matrix.   To see that $\mf{J}$ is  a right ideal, note that, by condition \eqref{allowable}, 
 $a_{k, \ell} \leq a_{k, \ell+1}$ for  all $k, \ell$.  Under the correspondence given in 
Theorem~\ref{thm:vdb}(3), $\mf{J} = \wh{\mc{J}}_p$ for a subobject $\mc{J} = \mc{J}(\divd^\bullet)$ of 
$\mc{O}_X$ such that $\mc{O}_X/\mc{J}$ has finite length with composition factors among the 
$\mc{O}_{\tau^i(p)}$.   Thus   $J(\divd^\bullet) = \omega(\mc{J}(\divd^\bullet))$ is a saturated right ideal  of $T$.  
 
Given a general allowable divisor layering, choose  representative points $p_1, \dots p_s$ on distinct $\tau$-orbits, 
such that every point involved in $\divd^\bullet$ lies in    $\mb{O}({p_k})$, for some $k$.  
For each $p_k$ we define $\divd^i_k = \divd^i|_{\mb{O}(p_k)}$.  Then $\divd^\bullet_k$ is  an allowable divisor layering
and we put $\mc{J}(\divd^\bullet) = \bigcap_k \mc{J}(\divd^\bullet_k)$ and $J(\divd^\bullet) = \bigcap_k J(\divd^\bullet_k) 
= \omega(\bigcap_k \mc{J}(\divd^\bullet_k))$.
\end{definition}

The following result explains the idea behind the name divisor layering: each divisor $\divd^j$ tracks which points (with multiplicities)
occur in the $j^{\text{th}}$ layer $Mg^j/Mg^{j+1}$ of the module $M = T/J(\divd^\bullet)$.

\begin{lemma} 
\label{lem:layer}
Let $\divd^{\bullet}$ be an allowable divisor layering and let $J = J(\divd^{\bullet})$ and $M = T/J$.
\begin{enumerate}
\item If $M^j = Mg^j/Mg^{j+1}$, then as objects in $\rqgr B$ we have 
\[
\pi(M^j) \cong \pi \bigg(\bigoplus_{n \geq 0} H^0(E, (\mc{O}_E/\mc{O}_E(-\divd^j)) \otimes \mc{M}_n) \bigg).
\]
In particular, the divisor $\divd^j$ determines  
%which simple objects in $\rqgr B \simeq \coh E$, with multiplicity, 
%occur in a composition series of this finite length object, or equivalently 
the (tails of) point modules that occur in a filtration of $M^j$.% by tails of point modules.
\item $(\overline{J})^{\sat} = \bigoplus_{n \geq 0} H^0(E, \mc{M}_n(-\divd^0)).$ 
\item If $\divd^{\bullet} = (\divd)$ has length $1$, then 
$J(\divd) = \bigoplus_{n \geq 0} \{ x \in T_n \st \bbar{x} \in H^0(E, \sM_n(-\divd)) \}$.  
\end{enumerate}
\end{lemma}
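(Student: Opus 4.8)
The statement is Lemma~\ref{lem:layer}, and I will prove the three parts in the order (3), then (2), then (1), since (3) is the most concrete and the others can be bootstrapped from it together with Theorem~\ref{thm:vdb}. For Part (3): when $\divd^{\bullet} = (\divd)$ has length $1$, the associated matrix ideal $\mf{J}(\divd)\subseteq C_p$ (in the case $\divd$ is supported on $\mb O(p)$) is, by construction, the ideal with $(x)^{a_{j,0}}$ in the $(j,0)$-spot and $(x)^0 = C_p$-entries elsewhere along each column past the first — more precisely, since $\divd^i = 0$ for $i\ge 1$, condition \eqref{allowable} forces all columns except the $0$th to be full, so $\wh{\mc J}_p = \mf J$ sits between $\wh{\sO_X}_p = C_p$ and $\wh{\sO_X}_p N = C_p N$, i.e.\ $\mc O_X/\mc J \in Y$ by Theorem~\ref{thm:vdb}(3). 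So $\mc J(\divd)$ is actually a subobject of $\mc O_X$ with quotient in $Y = \rqgr B$, which means $J(\divd) = \omega(\mc J(\divd))$ contains $gT$ and $J(\divd)/gT$ is a right ideal of $B$ whose cokernel, as a coherent sheaf on $E$, is supported at the points of $\divd$ with the right multiplicities. Translating through the equivalence $\coh E \sim \rqgr B$ and using that $B(E,\sM,\tau)$ is saturated (Lemma~\ref{lem:sec-mult}/standard TCR facts), the right ideal of $B$ cut out is exactly $\bigoplus_{n\ge 0} H^0(E,\sM_n(-\divd))$; pulling back to $T$ and using $g$-divisibility gives $J(\divd) = \bigoplus_n \{x\in T_n \st \bbar x \in H^0(E,\sM_n(-\divd))\}$. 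For a general (length-one) allowable layering one intersects over the orbit representatives $p_k$, and the intersection of the spaces $H^0(E,\sM_n(-\divd|_{\mb O(p_k)}))$ is $H^0(E,\sM_n(-\divd))$, so (3) follows in general.

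For Part (2): $(\bbar J)^{\sat}$ is the saturation, inside $B$, of the image of $J(\divd^\bullet)$ modulo $gT$. The $0$th divisor $\divd^0$ is precisely the piece of the layering seen in the ``top'' layer $M/Mg$, equivalently the cokernel of $\bbar J \hookrightarrow B$ as a sheaf. From the construction, $\wh{\mc J(\divd^\bullet)}_p / \wh{\sO_X N}_p$ encodes the column-$0$ exponents $a_{j,0}$, which are exactly the coefficients of $\divd^0$. So modulo $g$ the situation reduces to a right ideal of the TCR $B$ whose quotient sheaf is $\sO_E/\sO_E(-\divd^0)$; its saturation is the (automatically saturated, again by Lemma~\ref{lem:sec-mult}) ideal $\bigoplus_{n\ge 0} H^0(E,\sM_n(-\divd^0))$. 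For Part (1): the $j$th layer $M^j = Mg^j/Mg^{j+1}$ is, after applying $\pi$ and using $\wh{\pi(Lg)}_p = \wh{\pi(L)}_p N$ from Theorem~\ref{thm:vdb}(2), governed by the quotient $\wh{\mc J}_p N^j / \wh{\mc J}_p N^{j+1}$, i.e.\ by column $j$ of the matrix ideal, whose exponents are the coefficients of $\divd^j$. Since the subquotient $\pi(M^j)$ then has image under $(\wh{-})_p$ killed by $N$, it lies in $Y = \rqgr B$, and tracing through $\coh E \sim \rqgr B$ identifies it with $\pi(\bigoplus_n H^0(E,(\sO_E/\sO_E(-\divd^j))\otimes \sM_n))$. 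The ``in particular'' clause is then the observation that the point modules (or their tails) occurring as composition factors of $M^j$ correspond, under this equivalence, exactly to the points in $\Supp \divd^j$ counted with multiplicity.

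\textbf{Main obstacle.} The genuinely delicate point is keeping the bookkeeping straight between three different descriptions of the same ideal: (a) the combinatorial matrix ideal $\mf J\subseteq C_p$ with its column-indexed exponents $a_{j,\ell}$; (b) the subobject $\mc J \subseteq \mc O_X$ and its torsion quotient in $X = \rqgr T$; and (c) the honest graded right ideal $J(\divd^\bullet) = \omega(\mc J)\subseteq T$ and its reduction mod $g$. In particular one must check carefully that multiplication by $g$ on $T$-modules corresponds to right multiplication by $N$ on $C_p$-modules (this is exactly the content of the identity $\wh{\pi(Lg)}_p = \wh{\pi(L)}_p N$ in Theorem~\ref{thm:vdb}(2)), so that the ``$g^j/g^{j+1}$''-layer on the $T$-side matches the ``$N^j/N^{j+1}$''-column on the $C_p$-side, and hence matches the $j$th divisor $\divd^j$ on the $E$-side. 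The rest is an application of the equivalence $\coh E\sim \rqgr B$ and the saturation/global-sections facts for twisted homogeneous coordinate rings from Section~\ref{BACKGROUND}; I expect no difficulty there, but the orbit-by-orbit reduction in Definition~\ref{def:J} means one should state the single-orbit case cleanly first and then take intersections, rather than trying to handle everything at once.
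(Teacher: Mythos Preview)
Your approach is essentially the paper's: pass to $C_p$ via Theorem~\ref{thm:vdb}, identify the $g$-filtration on $M$ with the $N$-filtration on $C_p/\mf J$, and read off the answer in $\coh E$. The paper proves (1) first and derives (2) and (3) as immediate corollaries (set $j=0$ for (2); for a length-one layering observe $gT\subseteq J$ and invoke (2) for (3)); your reordering is cosmetic, since your arguments for (1) and (2) repeat the same matrix computation rather than genuinely bootstrapping from~(3).

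There is, however, a consistent bookkeeping error you should fix before writing this out. In Definition~\ref{def:J} one has $\divd^i=\sum_j a_{j+i,j}\tau^j(p)$, so $\divd^i$ is encoded along the $i$th \emph{subdiagonal} of $\mf J$ (the entries $(k,\ell)$ with $k-\ell=i$), not along column~$i$; in particular, for a length-one layering the nontrivial exponents sit on the main diagonal. Relatedly, your expression $\wh{\mc J}_p N^j/\wh{\mc J}_p N^{j+1}$ for the $j$th layer is wrong: $\pi(Mg^j)$ is the \emph{image} of $\pi(Tg^j)$ in $\pi(M)=\mc O_X/\mc J$, so under $(\wh{-})_p$ it becomes $(N^jC_p+\mf J)/\mf J$, and hence $\wh{\pi(M^j)}_p\cong(\mf J+N^jC_p)/(\mf J+N^{j+1}C_p)$. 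With that correction the computation $(\mf J+N^jC_p)/(\mf J+N^{j+1}C_p)\cong\bigoplus_\ell C_p/\mf n_\ell^{b_\ell}$, where $\divd^j=\sum_\ell b_\ell\tau^\ell(p)$, goes through as intended and the rest of your plan is fine.
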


\begin{proof} 
(1) By definition, $\mc{J}(\divd^{\bullet}) = \bigcap_k \sJ(\divd_k^{\bullet})$ where $\divd^i_k = \divd^i \vert_{\mb{O}({p_k})}$ and 
the orbits   $\mb{O}(p_k)$ are distinct.  Also, the composition factors of $\mc{O}_X/\mc{J}(\divd_k^{\bullet})$ are among the 
$\mc{O}_{\tau^i(p_k)}$ by construction.  Thus the   $\mc{J}(\divd_k^{\bullet})$ must be pairwise comaximal inside $\mc{O}_X$, 
and so $\pi(M) = \mc{O}_X/\mc{J}(\divd^{\bullet}) \cong \bigoplus_k \mc{O}_X/\mc{J}(\divd_k^{\bullet})$.  
Since $\bigoplus_k \mc{O}_E/\mc{O}_E(-\divd^j_k) \cong \mc{O}_E/\mc{O}_E(-\divd^j)$, we 
can  reduce to the case that the divisor layers $\divd^i$ are all supported on a single $\tau$-orbit $\mb{O}(p)$.

Now since $\pi(M) \in X$ corresponds  via Theorem~\ref{thm:vdb}(1) to 
$(\wh {\pi(M)})_p = C_p/\mf{J}(\divd^\bullet)$, 
it suffices to study the filtration of the latter module by $C_p/(N)$-modules.   
From the definition 
of the ideal $\mf J = \mf{J}(\divd^\bullet)$, one   sees that writing $\divd^i = \sum b_j \tau^j(p)$ gives  
$(\mf J + N^i C_p)/(\mf J+ N^{i+1}C_p) \cong     
C_p/(\prod_j \mf{n}_j^{b_j})  \cong \bigoplus_j C_p/\mf{n}_j^{b_j}$.  
Using Theorem~\ref{thm:vdb}, again, the object in $\rqgr B \sim \coh(E)$ corresponding to this 
object under the equivalence $( \wh{-} )_p$ is $\mc{O}_E/\mc{O}_E(- \divd^i)$.

(2) Applying Part (1) with $j = 0$, shows that $M/gM = T/(gT + J) = \overline{T}/\overline{J}$ satisfies 
$$\pi(\overline{T}/\overline{J}) \cong \pi(\bigoplus_{n \geq 0}
 H^0(E, \mc{O}_E/\mc{O}_E(-\divd^0) \otimes \mc{M}_n)).$$ 
This shows that $\overline{J}$ is equal in large degree to
 $\bigoplus_{n \geq 0} H^0(E, \mc{O}_E(-\divd^0) \otimes \mc{M}_n)$.  But 
the latter right ideal is easily seen to be saturated.

(3)  Since $J$ is saturated and there is only one layer, $Tg \subseteq J$.  Now the result follows from Part~(2).
\end{proof}

We always use the partial ordering on   divisors on $E$ where $\divc \leq \divd$ means that $\divd - \divc$ is effective.  
This induces a $\max$ and $\min$ operation on divisors, where explicitly 
$\max(\sum a_p p, \sum b_p p) = \sum \max(a_p, b_p) p$ and similarly for the $\min$. 
 These    operations are extended to divisor layerings coordinatewise.
It is   easy to see that the association of right ideals to divisors respects the lattice structures, as follows.

\begin{lemma}
\label{lem:lattice}
Let $\divc^\bullet$ and $\divd^\bullet$ be allowable divisor layerings, and set $\dive^\bullet = \max(\divc^\bullet, \divd^\bullet)$
and $\divb^\bullet = \min(\divc^\bullet, \divd^\bullet)$.
Then $\mc{J}(\divc^\bullet) \cap \mc{J}(\divd^\bullet) = \mc{J}(\dive^\bullet)$ and 
$\mc{J}(\divc^\bullet) + \mc{J}(\divd^\bullet) = \mc{J}(\divb^\bullet)$.  
Furthermore, $J(\divc^\bullet) \cap J(\divd^\bullet) = J(\dive^\bullet)$ and $(J(\divc^\bullet) + J(\divd^\bullet))^{\sat} = J(\divb^\bullet)$.
\end{lemma}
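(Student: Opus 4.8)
The strategy is to reduce everything to the single-orbit case, where the statement becomes a direct computation with the matrix rings $C_p$, and then to assemble the general case from the comaximality observations already used in the proof of Lemma~\ref{lem:layer}. First I would observe that $\max$ and $\min$ of allowable divisor layerings are again allowable: the condition \eqref{allowable} is preserved because $\tau^{-1}(\divc^{i-1}) \geq \divc^i$ and $\tau^{-1}(\divd^{i-1}) \geq \divd^i$ together give $\tau^{-1}(\max(\divc^{i-1},\divd^{i-1})) = \max(\tau^{-1}\divc^{i-1}, \tau^{-1}\divd^{i-1}) \geq \max(\divc^i,\divd^i)$, and similarly for $\min$. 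So $\dive^\bullet$ and $\divb^\bullet$ do have associated ideals.

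Next I would handle the single-orbit case, say everything supported on $\mb O(p)$. Writing $\divc^i = \sum_j a_{j+i,j}\tau^j(p)$ and $\divd^i = \sum_j b_{j+i,j}\tau^j(p)$, the ideals $\mf J(\divc^\bullet), \mf J(\divd^\bullet) \subseteq C_p$ are the ``monomial'' ideals with $(x)^{a_{k,\ell}}$, resp.\ $(x)^{b_{k,\ell}}$, in the $(k,\ell)$-spot. Then $\mf J(\divc^\bullet) \cap \mf J(\divd^\bullet)$ has $(x)^{\max(a_{k,\ell},b_{k,\ell})}$ in the $(k,\ell)$-spot — that is exactly $\mf J(\dive^\bullet)$ — and $\mf J(\divc^\bullet) + \mf J(\divd^\bullet)$ has $(x)^{\min(a_{k,\ell},b_{k,\ell})}$ there, i.e.\ $\mf J(\divb^\bullet)$; one should note the sum of two such matrix ideals really is the coordinatewise one, which is clear since $k[[x]]$ is a DVR so $(x^a) + (x^b) = (x^{\min(a,b)})$. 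Applying the correspondence of Theorem~\ref{thm:vdb}(3) — it is an exact functor, hence takes intersections to intersections, and $( \wh{-})_p$ restricted to subobjects of $\mc O_X$ with finite-length $\mc C_{f,p}$-cokernel is an order-preserving bijection — transports these identities back to $\mc J(\divc^\bullet) \cap \mc J(\divd^\bullet) = \mc J(\dive^\bullet)$ and $\mc J(\divc^\bullet) + \mc J(\divd^\bullet) = \mc J(\divb^\bullet)$ in $X$. (A small point: the functor need not obviously be exact on sums of subobjects, but since $C_p/\mf J$ and $C_p/\mf J'$ are finite-dimensional, $\mf J + \mf J'$ is still cofinite, and additivity of $( \wh{-})_p$ on the relevant short exact sequences gives what is needed; alternatively invoke that $(\wh{-})_p$ is exact, so $\wh{(\mc J + \mc J')}_p = \wh{\mc J}_p + \wh{\mc J'}_p$ via the surjection $\mc J \oplus \mc J' \to \mc J + \mc J'$.)

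For a general allowable layering, choose representatives $p_1,\dots,p_s$ on distinct orbits covering all points involved in both $\divc^\bullet$ and $\divd^\bullet$, so $\mc J(\divc^\bullet) = \bigcap_k \mc J(\divc^\bullet_k)$, $\mc J(\divd^\bullet) = \bigcap_k \mc J(\divd^\bullet_k)$, and likewise for $\dive^\bullet,\divb^\bullet$ (note $\max$ and $\min$ restrict orbit-by-orbit, and $\dive^\bullet_k = \max(\divc^\bullet_k,\divd^\bullet_k)$, $\divb^\bullet_k = \min(\divc^\bullet_k,\divd^\bullet_k)$). For the intersection, $\mc J(\divc^\bullet)\cap\mc J(\divd^\bullet) = \bigcap_{k,\ell}\bigl(\mc J(\divc^\bullet_k)\cap\mc J(\divd^\bullet_\ell)\bigr)$; when $k\neq\ell$ the two factors are comaximal (their cokernels have composition factors in disjoint orbit sets, exactly as argued in Lemma~\ref{lem:layer}(1)), so that intersection equals $\mc O_X$ and drops out, leaving $\bigcap_k \bigl(\mc J(\divc^\bullet_k)\cap\mc J(\divd^\bullet_k)\bigr) = \bigcap_k \mc J(\dive^\bullet_k) = \mc J(\dive^\bullet)$ by the single-orbit case. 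For the sum, the same comaximality gives $\mc J(\divc^\bullet) + \mc J(\divd^\bullet) = \bigl(\bigcap_k \mc J(\divc^\bullet_k)\bigr) + \bigl(\bigcap_k \mc J(\divd^\bullet_k)\bigr) = \bigcap_k\bigl(\mc J(\divc^\bullet_k) + \mc J(\divd^\bullet_k)\bigr)$ — the distributive law for a finite intersection of pairwise-comaximal ideals — which is $\bigcap_k \mc J(\divb^\bullet_k) = \mc J(\divb^\bullet)$.

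Finally, pass from $X = \rqgr T$ back to right ideals of $T$ via $J(-) = \omega(\mc J(-))$. The section functor $\omega$ is left exact, so it commutes with the finite intersections: $J(\divc^\bullet)\cap J(\divd^\bullet) = \omega(\mc J(\divc^\bullet))\cap\omega(\mc J(\divd^\bullet)) = \omega(\mc J(\divc^\bullet)\cap\mc J(\divd^\bullet)) = \omega(\mc J(\dive^\bullet)) = J(\dive^\bullet)$. For the sum, $J(\divc^\bullet) + J(\divd^\bullet)$ and $J(\divb^\bullet)$ have the same image under $\pi$ — namely $\mc J(\divb^\bullet)$, using $\pi\omega = \mathrm{id}$ on $X$ and the sum identity just proved — hence agree in all large degrees, and since $J(\divb^\bullet)$ is saturated (by construction via $\omega$, cf.\ Lemma~\ref{lem:T-props}) we get $(J(\divc^\bullet) + J(\divd^\bullet))^{\sat} = J(\divb^\bullet)$. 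The only genuinely delicate point, and the one I would take care over, is the comaximality/distributivity bookkeeping in the multi-orbit case — specifically justifying that $\sum_k(\bigcap_k A_k) = \bigcap_k(A_k + B_k)$ when the various subobjects pair up comaximally across orbits — but this is exactly the pattern of reasoning already deployed in the proof of Lemma~\ref{lem:layer}(1), so it should go through cleanly.
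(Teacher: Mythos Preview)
Your proposal is correct and follows essentially the same approach as the paper's proof: reduce to a single orbit, compute the intersection and sum of the matrix ideals $\mf J(-)$ in $C_p$ entrywise, transport back to $X$ via exactness of $(\wh{-})_p$, and then apply the section functor $\omega$. You have simply filled in details that the paper leaves implicit---the allowability of $\max$ and $\min$, the comaximality/distributivity bookkeeping across orbits, and the distinction between how $\omega$ handles intersections (left exactness) versus sums (pass through $\pi$ and use saturatedness of $J(\divb^\bullet)$).
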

\begin{proof}
First, it is routine to reduce to the case that $\divc^{\bullet}$, $\divd^{\bullet}$, and $\dive^{\bullet}$ 
are supported on a single orbit $\mb{O}(p)$.

Considering the right ideals $\mf{J}(\divd^\bullet)$ and $\mf{J}(\divc^\bullet)$ of $C_p$ as in Definition~\ref{def:J}, we have $\mf{J}(\divc^\bullet) \cap \mf{J}(\divd^\bullet) = \mf{J}(\dive^\bullet)$ because the intersection of two right ideals in $C_p$ of the form $( (x)^{a_{i,j}})$ is clearly formed by taking the maximum of the $a_{i,j}$ at each spot.
Similarly, $\mf{J}(\divc^\bullet) + \mf{J}(\divd^\bullet) = \mf{J}(\divb^\bullet)$ because the sum of two such right ideals 
is formed by taking the minimum of the $a_{i,j}$ at each spot.

The functor $(\wh{-})_p$ of Theorem~\ref{thm:vdb} is exact, and so the correspondence of 
Theorem~\ref{thm:vdb}(3) must preserve intersections and sums of subobjects of a fixed object.   Thus 
 $\mc{J}(\divc^\bullet) \cap \mc{J}(\divd^\bullet) = \mc{J}(\dive^\bullet)$ and 
$\mc{J}(\divc^\bullet) + \mc{J}(\divd^\bullet) = \mc{J}(\divb^\bullet)$, by the definition of the $\mc{J}$'s.
  Applying the section functor $\omega$ gives the last statement.
\end{proof}

Right ideals of the form $J(\divd^\bullet)$ represent only some of the right ideals $J$ for which $T/J$ 
is $g$-torsion with $\GKdim(T/J)=1$.  To see this, use Theorem~\ref{thm:vdb} to pass to some $C_p$.  
Note that $C_p/\mf{n}_0 \mf{n}_{-1}\cong\left(\begin{smallmatrix}\kk&0\\ \kk & \kk\end{smallmatrix}\right)$,  and so     there are  
 infinitely many right ideals $\mf{K}\subset C_p$ such that $C_p/\mf{K}$ is an essential extension of $S_{-1}$ by $S_0$. However only one of these can correspond to some $J(\divd^\bullet)$;
in this case Lemma~\ref{lem:layer} implies that $\divd^\bullet =(p,\tau^{-1}(p))$.

For the rest of this section, we consider certain functors on $X = \rqgr T$ and their actions on the   $\mc{J}(\divd^\bullet)$.

\begin{definition}\label{F-defn}
Fix a point $p$,  and consider Theorem~\ref{thm:vdb} applied to the $\tau$-orbit of this point.
Given $q= \tau^i(p)$, we define a functor $F_{q}: X \to X$ as follows.  For an object $\mc{G} \in X$, let  $F_q(\mc{G})$ be the smallest subobject $\mc{G}'$ of $\mc{G}$ such that $\mc{G}/\mc{G}'$ is isomorphic to a direct sum of copies of the simple object $\mc{O}_{q}$.  That is, by Theorem~\ref{thm:vdb}, $\mc{G}'$ is the subobject of $\mc{G}$ corresponding under the functor $( \wh{-} )_p$ to $\wh{\mc{G}}_p \mf{n}_i$.  
Given a morphism $f \in \Hom(\mc{G}, \mc{H})$,  we define $F_q(f) = f \vert _{F_q(\mc{G})}  \in \Hom(F_q(\mc{G}), F_q(\mc{H}))$.  
To see that this is well-defined, note that $\mc{G}/f^{-1}(F_q(\mc{H}))$ embeds in $\mc{H}/F_q(\mc{H})$ and 
thus is also a direct sum of copies of $\mc{O}_q$.   Thus $f^{-1}(F_q(\mc{H}))$ contains the smallest such subobject $F_q(\mc{G})$ of $\mc{G}$, which implies that $f(F_q(\mc{G})) \subseteq F_q(\mc{H})$.

If $G = F_{q_1} \circ \dots \circ F_{q_n}$ is a composition of such functors,  write $\tau^j(G)$ for the functor $F_{\tau^j(q_1)} \circ \dots \circ F_{\tau^j(q_n)}$.   
\end{definition}

\begin{notation}\label{not:Gd}
Given an effective divisor $\divd$  on $E$, we define a functor $G_{\divd}: X \to X$  as follows.  
  Break $ \divd$ up by orbit so that $\divd = \sum_{j=1}^n \divd_j$, where $\divd_j$ is supported entirely on $\mb{O}(p_j)$ for some $p_j \in E$.   
  If needed, reindex $\mb O (p_j)$ so that $\divd_j = \sum_{i=0}^m b_i \tau^i(p_j)$  and define  $G_j =  F_{\tau^m(p_j)}^{b_m}\circ \dots \circ F_{\tau(p_j)}^{b_1} \circ F_{p_j}^{b_0}$.  Finally, set $G_\divd = G_1 \circ G_2 \circ \dots \circ G_n$.
 \end{notation}
 
\begin{lemma}\label{lem-Fcomp}
 Let $\divz^\bullet = (\divz^0, \dots, \divz^{k-1})$ be an allowable divisor layering, and let $\mc{J} = \mc{J}(\divz^\bullet)$.  Let $\divd$ be an effective divisor on $ E$.
Then $G_{\divd}(\mc{J})$ is defined by the allowable divisor layering $\divx^\bullet = (\divx^0, \dots, \divx^k)$, 
where 
\[ \divx^i = \begin{cases} \divz^0 +\divd & \text{if $i=0$} \\
               \min( \divz^i + \divd, \tau^{-1}(\divz^{i-1})) & \text{if $1 \leq i \leq k$.}
              \end{cases}\]

\end{lemma}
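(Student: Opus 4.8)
The plan is to work orbit-by-orbit and reduce everything to a computation with lower-triangular matrix ideals in $C_p$ via Theorem~\ref{thm:vdb}. First I would observe that both $G_\divd$ and the operation $\divz^\bullet \mapsto \divx^\bullet$ are defined by breaking $\divd$ into its orbit components $\divd = \sum_j \divd_j$ and composing the corresponding functors/operations, and likewise the divisor layering $\divz^\bullet$ decomposes as an intersection $\mc J(\divz^\bullet) = \bigcap_k \mc J(\divz^\bullet_k)$ over distinct orbits. Since the functors $F_q$ for $q$ in one orbit commute with (indeed act as the identity outside) the factors supported on other orbits, and since $\omega$ and $\bigcap$ interact correctly (as in Lemmas~\ref{lem:layer} and \ref{lem:lattice}), it is routine to reduce to the case that $\divd$ and all the $\divz^i$ are supported on a single $\tau$-orbit $\mb O(p)$.

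Next, with everything supported on $\mb O(p)$, I would pass through $(\wh{-})_p$ to the ring $C_p$. Write $\divz^i = \sum_j c_{i,j}\,\tau^j(p)$, so that by Definition~\ref{def:J} the ideal $\mf J = \wh{\mc J}_p$ has the entry $(x)^{a_{k,\ell}}$ in the $(k,\ell)$-spot where $a_{k,\ell}$ is read off from the $\divz^i$ via the shift convention $a_{j+i,j} = c_{i,j}$; condition~\eqref{allowable} translates to $a_{k,\ell}\le a_{k,\ell+1}$. Writing $\divd = \sum_i b_i\,\tau^i(p)$, the functor $G_\divd$ corresponds, by Definition~\ref{F-defn} and Theorem~\ref{thm:vdb}(3) (applied repeatedly: $F_q$ replaces $\wh{\mc G}_p$ by $\wh{\mc G}_p\,\mf n_i$ for $q=\tau^i(p)$), to right-multiplying $\mf J$ by the product $\prod_i \mf n_i^{\,b_i}$. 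The key step is then the purely combinatorial identity: the $(k,\ell)$-entry exponent of $\mf J \cdot \prod_i \mf n_i^{\,b_i}$ equals $\min\bigl(a_{k,\ell} + (\text{contribution from }\divd),\ a_{k,\ell-1}\bigr)$ — the $a_{k,\ell-1}$ term arising because the product of a lower-triangular ideal with the $\mf n_i$'s can never drop the diagonal entries below what the next column already forces, since $N C_p$ (matrices zero on the diagonal) is absorbed. Translating this back through the shift convention $a_{k,\ell-1}$ corresponds to a $\tau^{-1}$-shift of $\divz^{i-1}$, which is exactly the stated formula $\divx^i = \min(\divz^i + \divd,\ \tau^{-1}(\divz^{i-1}))$, with $\divx^0 = \divz^0 + \divd$ since there is no column to the left of the first layer.

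I expect the main obstacle to be pinning down the matrix computation precisely — specifically, verifying that right-multiplying the matrix ideal $\mf J$ by $\prod_i \mf n_i^{\,b_i}$ produces exactly the exponents $\min(a_{k,\ell} + b_{k-\ell},\ a_{k,\ell-1})$ at each spot, including getting the index bookkeeping (which $b_i$ multiplies which column, and how the composition order of the $F_q$'s in Notation~\ref{not:Gd} matches the product $\prod_i \mf n_i^{b_i}$) exactly right. Once that identity is established, checking that $\divx^\bullet$ is itself an allowable divisor layering (i.e.\ $\tau^{-1}(\divx^{i-1}) \ge \divx^i$) is immediate from the $\min$ in its definition, and the statement for $G_\divd(\mc J)$ follows by the correspondence of Theorem~\ref{thm:vdb}(3); applying $\omega$ would give the analogous statement for the ideal $J(\divx^\bullet)$ if needed. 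One should also take care that $\divx^\bullet$ can have length $k+1$ rather than $k$ — the new top layer $\divx^k = \min(\divd, \tau^{-1}(\divz^{k-1})) = \min(\divd,\tau^{-1}(\divz^{k-1}))$ may be nonzero — but this is consistent with the convention $\divz^k = 0$ and causes no difficulty.
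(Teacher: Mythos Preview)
Your overall strategy---reduce to one orbit, push through $(\wh{-})_p$, and compute the effect of $G_\divd$ as right-multiplication by a product of the $\mf n_i$---is sound and will work. However, the key formula you write down has the column direction inverted: the cap should be $a_{k,\ell+1}$, not $a_{k,\ell-1}$. Recall that allowability translates to $a_{k,\ell} \le a_{k,\ell+1}$ (the exponent increases as you move \emph{toward} the diagonal), so when you multiply by $\mf n_\ell$ the $\ell$-th column entry $a_{k,\ell}$ can increase only up to $a_{k,\ell+1}$; the column $\ell-1$ plays no role. Correspondingly, under the shift convention $a_{j+i,j} \leftrightarrow (\divz^i$ at $\tau^j(p))$, it is $a_{k,\ell+1}$ that gives the coefficient of $\tau^{-1}(\divz^{i-1})$ at the relevant point, not $a_{k,\ell-1}$. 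Likewise the ``contribution from $\divd$'' at position $(k,\ell)$ is $b_\ell$ (the coefficient of $\tau^\ell(p)$ in $\divd$), not $b_{k-\ell}$. Once these are fixed, the argument goes through: multiplying by $\mf n_s^{b_s}\mf n_{s+1}^{b_{s+1}}\cdots$ in the order dictated by Notation~\ref{not:Gd} updates each column $\ell$ to $\min(a_{k,\ell}+b_\ell,\,a_{k,\ell+1})$, and the later multiplications never revisit earlier columns, so the formula is exactly as stated.

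The paper takes a different and somewhat cleaner route: it proves only the one-point case $\divd = p$ by matrix computation (this is \cite[Proposition~5.2.2]{VdB}), and then for general $\divd$ it inducts on $\deg\divd$ purely at the level of divisors, writing $\divd = q + \divd'$ with $q \notin \tau^{-1}(\divd')$ so that $G_\divd = F_q \circ G_{\divd'}$, and simplifying the resulting nested $\min$. This confines all the matrix-index bookkeeping to the base case, which is exactly the part you flagged as the main obstacle. Your direct approach is more uniform but requires tracking the indices throughout; the paper's induction trades that for a short divisor-level simplification at the end.
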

\begin{proof}
We prove this by induction.  If $\divd=p$ is a single point, this 
 is  basically a restatement of \cite[Proposition~5.2.2]{VdB}.  
 To see this, note that, by construction, $\divx^\bullet$ is allowable; note that since $\divz^i \leq \tau^{-1} \divz^{i-1}$, therefore $\divz^i+p \leq \tau^{-1}(\divz^{i-1}+p)$ if and only if $\divz^i +p \leq \tau^{-1} (\divz^{i-1})$.  We must  prove that $\divx^\bullet$ defines $F_p\mc{J}$.  

It is clear from the definition of $F_p$ that to compute $F_p(\mc{J})$ we only need to consider the orbit of $p$:  that is, without loss of generality we may assume that $\divz^0$ is supported on the orbit of $p$.  Define natural numbers $a_{\ell,m}$ by writing
$\divz^i = \sum_j a_{j+i,j} \tau^j(p)$.  Then $\wh{\mc{J}}_p = \prod_{\ell \in \ZZ} P_\ell$, where
$P_\ell \subseteq e_\ell C_p$ is the infinite row vector defined by putting $(x)^{a_{\ell,m}}$ in the $(\ell,m)$ spot.
As remarked in the definition of $\mc{J}( \divd^\bullet)$, each $P_\ell$ is  a right ideal of $C_p$.

The object $F_p \mc{J}$ corresponds under $( \wh{-} )_p$ to $\wh{\mc{J}}_p \cdot \mf{n}_0 = \prod_\ell P_\ell\cdot \mf{n}_0$.  
But $P_\ell \cdot \mf{n}_0$ is easy to describe:  namely, by \cite[Proposition~5.2.2]{VdB} or a direct computation, 
$P_\ell \cdot \mf{n}_0$ is defined by $(x)^{b_{\ell,m}}$ where
\[ b_{\ell,m} = \begin{cases} a_{\ell,m} & \text{if $m \neq 0$} \\
                  a_{\ell,0}+1 & \text{if $a_{\ell,0} < a_{\ell,1}$ or if $\ell=0$} \\
		  a_{\ell,0} & \text{if $\ell \geq 1$ and $a_{\ell,0} = a_{\ell,1}$.}
                 \end{cases}\]
                 
     Now let $a'_{\ell,m}$ be defined by $\divx^i = \sum_j a'_{i+j,j} \tau^j(p)$.  The $a'_{\ell,m}$ satisfy
     $a'_{0,0} = a_{0,0}+1$, $a'_{\ell,m} = a_{ \ell,m} $ if $m \neq 0$, and if $\ell>0$ then 
\[ a'_{\ell,0} = \begin{cases} a_{\ell,0}+1 & \text{if $\tau^{-1}(\divz^{\ell-1}) \geq \divz^\ell +p$, that is, 
 if $a_{\ell,1} > a_{\ell,0}$} \\ a_{\ell,0} & \text{otherwise}. \end{cases} \]
We see that $a'_{\ell,m} = b_{ \ell,m}$.  It follows from Theorem~\ref{thm:vdb} that $F_p \mc{J} = \mc{J}( \divx^\bullet)$.

Now assume that  $\deg \divd  \geq 2$.  Choose $q \in E$ and an effective divisor $\divd'$ on $E$ so that 
 $\divd = q + \divd'$ and  $\tau^{-1}( \divd')$ does not contain $q$.  Note that $G_{\divd} = F_q \circ G_{\divd'}$.  By induction,
$G_{\divd'} \mc{J}$ is defined by the allowable divisor layering 
\[ \divy^i = \begin{cases} \divz^0 + \divd' & \text{if $i = 0$} \\
\min(\divz^i + \divd', \tau^{-1}( \divz^{i-1})) & \text{$ 1 \leq i \leq k$}.
\end{cases}\]
Applying the one-point case, we find that 
$ G_{\divd}(\mc{J})$ is defined by
\[ \divx^i = \begin{cases}
\divz^0 + \divd' +q = \divz_0 + \divd & \text{if $i = 0$}\\
\min( \divz^1 + \divd, \tau^{-1}(\divz^0)+q, \tau^{-1} (\divz^0 + \divd')) & \text{if $i=1$} \\
  \min(\divz^i+\divd, \tau^{-1}(\divz^{i-1})+q, \tau^{-1}(\divz^{i-1}+\divd'), \tau^{-2} (\divz^{i-2})) & \text{if $2 \leq i \leq k+1$},
  \end{cases} \]
  and that this is allowable.  
  Since $\tau^{-1}(\divd')$ does not meet $q$ and $\divz^{i-1} \leq \tau^{-1} (\divz^{i-2})$, this simplifies to 
\[ \divx^i = \begin{cases} \divz^0 + \divd & \text{if $i=0$} \\
\min(\divz^i+\divd, \tau^{-1} (\divz^{i-1})) & \text{if $1 \leq i \leq k+1$}. \end{cases}\]
Finally, since $\divz^k = 0$, therefore  $\divx^{k+1}=0$.  The lemma is proved.
\end{proof}

We also extend the previous result to calculate the result of applying  certain compositions of $G_{\divd}$'s.

\begin{lemma}\label{lem-Fcomp2}
Let $\divd$ be an effective divisor, and let $\divd_n = \divd+ \tau^{-1}\divd + \dots + \tau^{-(n-1)}\divd$ if $n \geq 1$.  If $n \leq 0$, 
define $\divd_n = 0$.  
 Let $\mc{J} = \mc{J}(\divz^\bullet)$, where $\divz^\bullet$ is an allowable divisor sequence.

 Define $G_{\divd}$ as in Notation~\ref{not:Gd}.  
Then for any $n \in \NN$, the object $(\tau^{-(n-1)}G_{\divd} \circ \dots \circ \tau^{-1}G_{\divd} \circ G_{\divd}) (\mc{J})$  is equal to
 $\mc{J}(\divw^\bullet)$,   where
\[ \divw^i = \min_{j=0}^i (\tau^{-j}(\divz^{i-j}+\divd_{n-j})).\]
\end{lemma}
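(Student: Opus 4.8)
The plan is to prove Lemma~\ref{lem-Fcomp2} by induction on $n$, using Lemma~\ref{lem-Fcomp} as the one-step engine. For $n=0$ (or $n=1$) the claimed formula reads $\divw^i = \divz^i$ (respectively $\divw^i = \min(\divz^i + \divd_1, \dots)$, which is just the $n=1$ instance of Lemma~\ref{lem-Fcomp} after observing $\divd_1 = \divd$), so the base case is either trivial or a direct citation of the previous lemma. For the inductive step, set $\mc{K} = (\tau^{-(n-1)}G_{\divd} \circ \dots \circ G_{\divd})(\mc{J})$, which by the inductive hypothesis equals $\mc{J}(\divw^\bullet)$ with $\divw^i = \min_{j=0}^i \tau^{-j}(\divz^{i-j} + \divd_{n-j})$. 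Then the object at level $n+1$ is $\tau^{-n}(G_{\divd})(\mc{K})$.

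The key step is to apply Lemma~\ref{lem-Fcomp} to $\mc{K}$, but with the divisor $\tau^{-n}(\divd)$ in place of $\divd$ and $\divw^\bullet$ in place of $\divz^\bullet$; equivalently, one applies $\tau^{-n}$ to the identity of Lemma~\ref{lem-Fcomp}, which is legitimate because the functors $G_{\divd}$ and the ideals $\mc{J}(\divd^\bullet)$ transform equivariantly under $\tau$ (this is exactly the convention $\tau^j(G)$ and $\divd^\tau$, $\divd_n$ notation set up before the lemma). This yields that the new layering $\divx^\bullet$ satisfies $\divx^0 = \divw^0 + \tau^{-n}(\divd)$ and, for $i \geq 1$, $\divx^i = \min(\divw^i + \tau^{-n}(\divd),\ \tau^{-1}(\divw^{i-1}))$. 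So the heart of the proof is the purely combinatorial verification that substituting the inductive formula for $\divw^i$ and $\divw^{i-1}$ into these two cases yields exactly $\min_{j=0}^i \tau^{-j}(\divz^{i-j} + \divd_{n+1-j})$.

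I expect this combinatorial check to be the main obstacle — not deep, but requiring care with indices. Concretely: $\divx^0 = \divz^0 + \divd_n + \tau^{-n}\divd = \divz^0 + \divd_{n+1}$ since $\divd_{n+1} = \divd_n + \tau^{-n}\divd$, which matches the $j=0$ term (and is the only term when $i=0$). For $i \geq 1$, the term $\divw^i + \tau^{-n}(\divd) = \min_{j=0}^i \tau^{-j}(\divz^{i-j} + \divd_{n-j}) + \tau^{-n}\divd$; since $\tau^{-n}\divd$ is "deeper" than all $\tau^{-j}\divd$ appearing for $j \leq i \leq n$ in the relevant regime, one shows $\tau^{-j}\divd_{n-j} + \tau^{-n}\divd = \tau^{-j}(\divd_{n-j} + \tau^{-(n-j)}\divd) = \tau^{-j}\divd_{n+1-j}$, so this becomes $\min_{j=0}^i \tau^{-j}(\divz^{i-j} + \divd_{n+1-j})$. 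Meanwhile $\tau^{-1}(\divw^{i-1}) = \min_{j=0}^{i-1}\tau^{-(j+1)}(\divz^{i-1-j} + \divd_{n-j}) = \min_{j'=1}^{i}\tau^{-j'}(\divz^{i-j'} + \divd_{n+1-j'})$ after reindexing $j' = j+1$ (using $\divd_{n-j} = \divd_{n+1-j'}$). Taking the minimum of these two expressions recovers $\min_{j=0}^{i}\tau^{-j}(\divz^{i-j} + \divd_{n+1-j})$, as the first contributes the full range $j=0,\dots,i$ and the second is subsumed. The allowability of $\divx^\bullet$ is automatic from Lemma~\ref{lem-Fcomp}. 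One should also note, as in the previous proof, that eventually $\divz^{i-j} = 0$ for $i$ large forces the layering to have finite length, so the notation $\divw^\bullet$ makes sense; but since the displayed formula is stated for all $i \in \NN$ with the convention $\divz^m = 0$ for $m \geq k$, no separate truncation argument is needed beyond remarking that $\divw^i = \divd_{n}$-type terms vanish appropriately — this is a minor bookkeeping point rather than a real difficulty.
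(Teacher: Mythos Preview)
Your proposal is correct and follows essentially the same approach as the paper: induction on $n$ with Lemma~\ref{lem-Fcomp} as the one-step engine, applied to $\tau^{-(n-1)}G_{\divd} = G_{\tau^{-(n-1)}\divd}$, followed by the same index reshuffling. The paper indexes the induction as $n-1 \to n$ rather than $n \to n+1$, but the computation is identical. One small point: your identity $\tau^{-j}\divd_{n-j} + \tau^{-n}\divd = \tau^{-j}\divd_{n+1-j}$ only holds for $j \leq n$ (for $j > n$ the left side is $\tau^{-n}\divd$ while the right side is $0$), so the first minimum is not literally the full target range; however, the second minimum $\tau^{-1}(\divw^{i-1})$ already supplies the correct terms for $j \geq 1$, and the extraneous terms from the first are each $\geq$ their target counterparts, so the overall $\min$ is unaffected --- the paper glosses over the same point.
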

\begin{proof}
The case $n=1$ is Lemma~\ref{lem-Fcomp}, since $\tau^{-1}(\divz^{i-1}) \leq \tau^{-2}(\divz^{i-2}) \leq \dots$.  Assume by induction that
$(\tau^{-(n-2)}G_{\divd} \circ \dots \circ  G_{\divd})(\mc{J})$ is defined by $\divx^{\bullet}$, where
\[ \divx^i = \min_{j=0}^i (\tau^{-j}(\divz^{i-j}+\divd_{n-1-j})).\]
Applying Lemma~\ref{lem-Fcomp}, we obtain that $\divw^0 = \divz^0  +\divd_{n-1}+\tau^{-(n-1)}\divd = \divz^0 + \divd_n$, 
and for $i \geq 1 $ we have
\begin{align*}
\divw^i \ = & \min(\divx^i+\tau^{-(n-1)}\divd, \tau^{-1}(\divx^{i-1})) \\  
&=\ \min\left(\min_{j=0}^i(\tau^{-j}(\divz^{i-j}+\divd_{n-1-j})+\tau^{-(n-1)}(\divd)),
\min_{j=0}^{i-1}(\tau^{-j-1}(\divz^{i-1-j}+\divd_{n-1-j}))\right).
\end{align*}
Since $\tau^{-j}(\divd_{n-1-j})+\tau^{-(n-1)}(\divd) = \tau^{-j}(\divd_{n-j})$, this expression reduces to the desired expression for $\divw^i$.
\end{proof}

\section{Certain right ideals of   \texorpdfstring{$T$}{LG}   and their Hilbert series}\label{SOMEIDEALS}

We continue to assume throughout this section that $T$ is a ring satisfying Hypothesis~\ref{hyp:main}.

Starting in this section, we work more directly in the graded ring $T$.   
We begin by defining an important special case of the right ideals $J( \divd^\bullet)$, which will be crucial in describing the blowups $T(\divd)$ later.
Recall   the notation $\divd_n = \divd + \tau^{-1}(\divd) + \dots  + \tau^{-n+1}(\divd)$ from Section~\ref{BACKGROUND}. 

\begin{definition}\label{def:M(k,D)}
Fix an effective divisor $\divd$ on $E$ and for each $k \geq 1$, let 
$M(k,\divd)$  be the ideal $J( \divd^\bullet)$ for the (necessarily allowable) divisor layering $\divd^\bullet= (\divd^0, \divd^1, \dots, \divd^{k-1})$, 
where 
\[
\divd^0 = \divd_k, \ \ \divd^1 = \tau^{-1}(\divd_{k-1}), \  
\dots, \ \divd^{k-1} = \tau^{-k+1}(\divd).
\] 
By convention, if $k \leq 0$, then $M(k, \divd) =T$, regardless of $\divd$.
 
Note that  $M(k, \divd)$ and $J(\divd^\bullet)$ depend on the ambient ring $T$,  and if this ring is not clear from the context, we will write 
$M(k, \divd)=M_T(k,\divd)$ and $J(\divd^\bullet) =J_T(\divd^\bullet)$.

 \end{definition} 
 
 The $M(k, \divd)$ can be easily described in terms of the functors developed in the previous section, as follows.

\begin{lemma}
\label{lem:Mmult1}
Let $\divd$ be an effective divisor on $E$, and let $G_{\divd}$ be the functor defined in Notation~\ref{not:Gd}.  
If $M(k,\divd)$ is defined as above, then its image $\pi(M(k,\divd)) \in X$ is equal to 
\[
[ \tau^{-k+1}(G_\divd) \circ \tau^{-k+2}(G_\divd) \circ \dots \circ \tau^{-1}(G_\divd) \circ G_\divd](\mc{O}_X).
\]
\end{lemma}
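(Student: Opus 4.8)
The plan is to compute the effect of the composition of functors on $\mc{O}_X = \mc{J}(\divz^\bullet)$ with $\divz^\bullet$ the trivial layering $\divz^0 = 0$, using Lemma~\ref{lem-Fcomp2} directly. Indeed, $\mc{O}_X$ corresponds under $(\wh{-})_p$ to $C_p = \mf{J}(0^\bullet)$, so $\mc{O}_X = \mc{J}(\divz^\bullet)$ where $\divz^\bullet$ is the length-one (or empty) allowable layering with $\divz^0 = 0$, hence $\divz^i = 0$ for all $i \geq 0$.

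Now I apply Lemma~\ref{lem-Fcomp2} with this $\divz^\bullet$ and with $n = k$. That lemma tells us that
\[
[\tau^{-(k-1)}(G_\divd) \circ \dots \circ \tau^{-1}(G_\divd) \circ G_\divd](\mc{O}_X) = \mc{J}(\divw^\bullet),
\]
where $\divw^i = \min_{j=0}^i \tau^{-j}(\divz^{i-j} + \divd_{k-j})$. Since $\divz^{i-j} = 0$ for all $i,j$, this simplifies to $\divw^i = \min_{j=0}^i \tau^{-j}(\divd_{k-j})$. The key computational step is then to check that $\tau^{-j}(\divd_{k-j}) = \tau^{-j}(\divd) + \tau^{-j-1}(\divd) + \dots + \tau^{-k+1}(\divd)$, which is a decreasing-in-$j$ family of effective divisors (each obtained from the next by adding the effective divisor $\tau^{-j}(\divd)$). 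Therefore the minimum over $0 \le j \le i$ is achieved at $j = i$, giving $\divw^i = \tau^{-i}(\divd_{k-i})$ for $0 \le i \le k-1$ (and $\divw^i = 0$ for $i \ge k$, since $\divd_{k-i} = 0$ there by the convention in Lemma~\ref{lem-Fcomp2}). But $\tau^{-i}(\divd_{k-i})$ is exactly $\divd^i$ in Definition~\ref{def:M(k,D)}, so $\mc{J}(\divw^\bullet) = \mc{J}(\divd^\bullet)$ and $\pi(M(k,\divd)) = \omega$-adjoint image $\pi(J(\divd^\bullet)) = \mc{J}(\divd^\bullet)$ as required.

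I expect no serious obstacle here; the content is essentially bookkeeping with divisor layerings, and everything reduces to an application of Lemma~\ref{lem-Fcomp2} plus the elementary observation that the family $\{\tau^{-j}(\divd_{k-j})\}_{j \ge 0}$ is monotone decreasing so the relevant minimum is trivial. The only point requiring a little care is to confirm that $\mc{O}_X$ really is $\mc{J}(\divz^\bullet)$ for the zero layering — this follows because $(\wh{\mc{O}_X})_p = C_p$ by Theorem~\ref{thm:vdb}(2) and $\mf{J}(0^\bullet) = C_p$ by the recipe in Definition~\ref{def:J} (all exponents $a_{k,\ell}$ are zero). One should also note that $\pi(M(k,\divd)) = \mc{J}(\divd^\bullet)$ by definition of $M(k,\divd) = J(\divd^\bullet) = \omega(\mc{J}(\divd^\bullet))$ together with the fact (Lemma~\ref{lem:T-props}) that $\pi\omega = \id$ on $X$, so the two objects of $X$ genuinely agree.
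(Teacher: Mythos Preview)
Your proof is correct and is exactly the approach the paper takes: the paper simply states that the lemma follows directly from Lemma~\ref{lem-Fcomp2}, and you have filled in the routine details of that application (identifying $\mc{O}_X$ with $\mc{J}(0^\bullet)$, computing the resulting $\divw^i$, and observing the minimum is achieved at $j=i$).
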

\begin{proof}
This follows directly from Lemma~\ref{lem-Fcomp2}.  
\end{proof}

We want to study the properties of the right ideals $M(k, \divd)$, and 
our first goal is to   compute their Hilbert series precisely. 
We start with a routine lemma. 

\begin{lemma}\label{lem:nondecreasing}
If $J$ is a saturated right ideal of $T$ such that $M = T/J$ is $g$-torsion with $\GKdim M = 1$, then 
$M$ has a filtration  whose factors   are shifted point modules.  Also
$\dim_\kk M_n \leq \dim_\kk M_{n+1}$ for all $n \in \ZZ$.
\end{lemma}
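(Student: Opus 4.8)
The plan is to use the $g$-adic filtration of $M = T/J$ together with Lemma~\ref{lem:layer} and Lemma~\ref{lem:pt-crit}. First I would observe that since $M$ is $g$-torsion and finitely generated, there is some $k$ with $Mg^k = 0$, so $M$ has the finite filtration $M \supseteq Mg \supseteq Mg^2 \supseteq \dots \supseteq Mg^k = 0$ with subquotients $M^j = Mg^j/Mg^{j+1}$, each of which is a module over $\bar T = B = B(E,\sM,\tau)$. Since $J$ is saturated and $\GKdim M = 1$, each $M^j$ has GK-dimension $\leq 1$; more precisely, $J$ being saturated forces $M$ to have no nonzero finite-dimensional submodule, and I would want to deduce that each $M^j$ is $1$-pure in large degree (or directly handle the finite-dimensional parts). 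Writing $J = J(\divd^\bullet)$ for the appropriate allowable divisor layering — noting that any saturated $J$ with $M$ $g$-torsion of GK-dimension $1$ supported on finitely many orbits is captured, but in fact we only need a filtration, not that $J$ has this form — the cleanest route is: Lemma~\ref{lem:layer}(1) identifies $\pi(M^j)$ as $\pi$ of $\bigoplus_n H^0(E, (\struct_E/\struct_E(-\divd^j))\otimes \sM_n)$, which is a successive extension of structure sheaves $\kk(\tau^i(p))$, hence $\pi(M^j)$ is a successive extension of simple objects $\sO_{\tau^i(p)}$ in $\rqgr B$.

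Next I would pull this back from $\rqgr B$ to $\rgr B$: each such simple object $\sO_q$ lifts to a point module $P(q)$ of $B$ by Lemma~\ref{lem:pt-crit}(2), and a filtration of $\pi(M^j)$ by copies of various $\sO_{\tau^i(p)}$ lifts, up to finite-dimensional error, to a filtration of $M^j$ (or of $M^j_{\geq n}$ for $n \gg 0$) whose factors are tails of point modules. To get an honest filtration by \emph{shifted point modules} (i.e.\ modules with Hilbert series $1/(1-t)$, allowing noncyclic ones, per Definition~\ref{point-defn}), I would argue degree by degree: since everything is $1$-dimensional in each relevant degree asymptotically and the finite-dimensional discrepancies can be absorbed, one can refine the filtration so every factor is a shifted point module. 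Concatenating these filtrations of the $M^j$ over $j = 0, \dots, k-1$ gives a filtration of $M$ itself whose factors are shifted point modules.

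For the second assertion, $\dim_\kk M_n \leq \dim_\kk M_{n+1}$, the natural approach is multiplication by a suitable element. Since $T$ is generated in degree $1$ and is a domain, I would like to find $x \in T_1$ with $\dim_\kk M_{n+1} \geq \dim_\kk (M_n x)$ and $M_n x$ of the same dimension as $M_n$; equivalently I want right multiplication $\cdot x : M_n \to M_{n+1}$ to be injective for a generic $x \in T_1$. This is where the filtration by shifted point modules pays off: for a point module $P$, each graded piece is $1$-dimensional (in the relevant range), and right multiplication by a generic degree-$1$ element is injective on $P$ since $P$ is $1$-critical in large degree and a generic hyperplane section misses the finitely many associated points; using the filtration and a dimension count (choosing $x$ avoiding the finitely many "bad" points arising from all the point-module factors and from the finite-dimensional parts), one gets injectivity of $\cdot x$ on all of $M_n$ for $n$ large, and the small degrees are handled separately or are vacuous. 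Alternatively, and perhaps more cleanly, one can use that $\GKdim M = 1$ with $M$ $1$-pure in large degree forces the Hilbert function to be eventually constant equal to $d := \deg \divd^0 + \dots$, and then check the initial segment directly; I would combine both viewpoints.

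The main obstacle I anticipate is the bookkeeping around finite-dimensional submodules and the passage between $\rqgr$ and $\rgr$: Lemma~\ref{lem:layer} gives information only up to $\pi$, i.e.\ up to finite-dimensional modules, so producing a genuine filtration of $M$ (not just of $M_{\geq n}$) by shifted point modules, and then establishing the \emph{monotonicity} of $\dim_\kk M_n$ in \emph{all} degrees $n \in \ZZ$ rather than just $n \gg 0$, requires care. The monotonicity in low degrees should follow because $J$ saturated means $M$ has no finite-dimensional submodule, so the Hilbert function cannot drop; I would make this precise by noting that if $\dim_\kk M_n > \dim_\kk M_{n+1}$ then the kernel of $\cdot x: M_n \to M_{n+1}$ is nonzero for every $x \in T_1$, and since $T_1$ generates $T$ this would produce a nonzero element of $M$ killed by all of $T_{\geq 1}$, i.e.\ a finite-dimensional submodule, contradicting saturation of $J$. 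That last argument in fact gives monotonicity rather directly and may let me streamline the proof, using the shifted-point-module filtration only for the first assertion.
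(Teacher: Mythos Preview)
Your proposal has two genuine gaps. First, you invoke Lemma~\ref{lem:layer} to analyse the $g$-layers $M^j$, but that lemma is stated and proved only for right ideals of the special form $J = J(\divd^{\bullet})$; the present lemma is for an \emph{arbitrary} saturated right ideal $J$ with $T/J$ $g$-torsion of GK-dimension $1$, so you cannot appeal to Lemma~\ref{lem:layer} here (you notice the issue but do not resolve it). Second, your direct monotonicity argument at the end is not valid: from $\dim_\kk M_n > \dim_\kk M_{n+1}$ you correctly deduce that $\ker(\cdot\, x)\neq 0$ for each individual $x\in T_1$, but this does \emph{not} produce a single $0\neq m\in M_n$ annihilated by all of $T_1$; the intersection $\bigcap_{x\in T_1}\ker(\cdot\, x)$ may well be zero even though each kernel is nonzero. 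So neither your filtration nor your Hilbert-function inequality is established as written.

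The paper's argument avoids both problems in one stroke. Since $J$ is saturated, $M$ has no finite-dimensional submodule, hence is $1$-pure and admits a finite chain with $1$-critical factors (a standard consequence of Krull dimension theory, cf.\ \cite[(6.2.19), (6.8.25)]{MR}). Each $1$-critical factor $N$ is $g$-torsion (being a subquotient of $M$), and a $1$-critical $g$-torsion $T$-module is actually killed by $g$ (see \cite[Lemma~3.8]{RSS2}), so $N$ is a $1$-critical $B$-module. Now $B$ is generated in degree $1$, and Lemma~\ref{lem:pt-crit}(3) gives that $N$ is a shifted cyclic point module. This establishes the filtration for general $J$ without recourse to Lemma~\ref{lem:layer}. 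Monotonicity of $\dim_\kk M_n$ is then immediate: a shifted cyclic point module has Hilbert series $t^j/(1-t)$ for some $j$, so the Hilbert function of $M$ is a finite sum of such step functions and is nondecreasing.
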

\begin{proof}   
As $M$ has no finite-dimensional submodules, it has a finite chain of submodules, each factor of which is 1-critical (see \cite[(6.2.19) and 
(6.8.25)]{MR}) and by induction it suffices to prove the result when $M$ is $1$-critical.
   As noted in \cite[Lemma~3.8]{RSS2},  $M$ is then  killed by $g$ and so $M$ is a module over 
$T/gT \cong B(E, \mc{M}, \tau)$.  
Since $T/gT$ is generated in degree $1$ (see Hypothesis~\ref{hyp:main}), the result follows from  Lemma~\ref{lem:pt-crit}(3).  
\end{proof}

\begin{lemma}
\label{hs-lem}  
  Let $\divd^\bullet = (\divd^0, \dots, \divd^{k-1})$ be an allowable divisor layering.
  Set $J = J( \divd^\bullet) $ and $s =  \sum_i \deg \divd^i$.
\begin{enumerate}
\item $\dim_\kk (T/J)_n \leq s$ for all $n \geq 0$, with equality for $n \gg 0$.

\item Let $L = J(\dive^\bullet)$ where $\dive^\bullet = (\tau(\divd^1), \tau(\divd^2), \dots, \tau(\divd^{k-1}))$.  Then 
$J \cap Tg = Lg$ and 
\begin{equation}\label{hs-equ}
h_{T/J}(t) = h_{\overline{T}/\overline{J}}(t) + t \, h_{T/L}(t).
\end{equation}
\item  Suppose that $\deg \divd^i < \mu (\ell -i)$ for all $0 \leq i \leq k-1$, and some $\ell \geq k$. 
Then $\dim_\kk (T/J)_n = s$ for all $n \geq \ell$ and 
$\overline{J}_n = \bigoplus_{n \geq 0} H^0(E, \mc{M}_n(-\divd^0))$ for $n \geq \ell$. \end{enumerate}
\end{lemma}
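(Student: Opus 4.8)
\textbf{Proof proposal for Lemma~\ref{hs-lem}.}

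The plan is to prove the three parts roughly in order, with Part (2) doing most of the structural work and Parts (1) and (3) following by bookkeeping. First I would set $M = T/J$ and recall from the construction in Definition~\ref{def:J} that $J$ is saturated and $M$ is $g$-torsion with $\GKdim M = 1$ (the latter because, by Lemma~\ref{lem:layer}(1), the graded pieces $M^j = Mg^j/Mg^{j+1}$ are finite-dimensional in each degree and vanish for $j \geq k$, so $M$ has a finite filtration by modules of GK-dimension $\leq 1$, and it is nonzero precisely when some $\divd^i \neq 0$). For Part (2): the identity $\bbar x = (x + gT)/gT$ together with the fact that $g$ is central and $T$ is a domain gives the short exact sequence $0 \to Tg/(J\cap Tg) \to M \to \bbar T/\bbar J \to 0$, and multiplication by $g$ identifies $T/(J:g)$ with $Tg/(J\cap Tg)$ as graded modules shifted by $1$, so that $h_M(t) = h_{\bbar T/\bbar J}(t) + t\, h_{T/(J:g)}(t)$. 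The real content is to identify $(J:g)$ — equivalently $J\cap Tg = (J:g)g$ — with $L = J(\dive^\bullet)$ for $\dive^\bullet = (\tau(\divd^1),\dots,\tau(\divd^{k-1}))$. I would do this orbit by orbit, pass to $C_p$ via Theorem~\ref{thm:vdb}, and use the key relation $\wh{\pi(Lg)}_p = \wh{\pi(L)}_p N$ from Theorem~\ref{thm:vdb}(2): in the matrix description of $\mf J = \mf J(\divd^\bullet)$ from Definition~\ref{def:J}, right-multiplying by $N$ (the shift matrix) precisely shifts columns, which corresponds on the divisor side to passing from $(\divd^0,\divd^1,\dots)$ to $(\tau(\divd^1),\tau(\divd^2),\dots)$. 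Applying $\omega$ and using Lemma~\ref{lem:T-props}(2) to see that saturations behave correctly then yields $J\cap Tg = Lg$ and hence \eqref{hs-equ}.

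For Part (1), I would argue by induction on $k$. The base case $k=1$ is Lemma~\ref{lem:layer}(3) combined with $\dim_\kk H^0(E,\sM_n(-\divd^0)) = n\mu - \deg\divd^0$ for $n$ large (by Riemann–Roch on the elliptic curve $E$, since $\deg\sM_n(-\divd^0) = n\mu - \deg\divd^0 > 0$ eventually), and $\dim_\kk(T/J)_n = \dim_\kk T_n - \dim_\kk J_n = \dim_\kk \bbar T_n - \dim_\kk\bbar J_n$ using $g$-divisibility of $J$ from saturation plus Lemma~\ref{lem:layer}(2); this gives exactly $\deg\divd^0 = s$ for $n\gg 0$, and the inequality $\dim_\kk(T/J)_n \leq s$ for all $n\geq 0$ follows since $h^0$ of a line bundle of degree $d$ on $E$ can only drop below $d$, never exceed it (again Riemann–Roch, with $h^1 \geq 0$). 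For the inductive step, combine \eqref{hs-equ} with the inductive hypothesis applied to $\bbar J$ (a rank-one statement on $B(E,\sM,\tau)$, handled by the $k=1$ analysis) and to $L = J(\dive^\bullet)$, which has length $k-1$ and total degree $s - \deg\divd^0$; summing $\deg\divd^0 + (s - \deg\divd^0) = s$ gives the result, and the monotonicity $\dim_\kk(T/J)_n \leq \dim_\kk(T/J)_{n+1}$ is Lemma~\ref{lem:nondecreasing}.

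For Part (3), the hypothesis $\deg\divd^i < \mu(\ell - i)$ is exactly what is needed to run the above estimates with explicit control on when equality holds. Concretely, I expect $\deg\divd^i < \mu(\ell-i)$ to force $\deg\divd^i_n := \deg\divd^i$ (fixed) to be strictly less than $\deg\sM_{n}$ shifted appropriately so that the relevant line bundles on $E$ become nonspecial (i.e., $h^1 = 0$) once $n \geq \ell$; tracking this through the filtration layers $M^0, M^1, \dots, M^{k-1}$ — whose point-module structure is pinned down by Lemma~\ref{lem:layer}(1) — one gets $\dim_\kk M^i_n = \deg\divd^i$ for $n\geq \ell - i$, hence $\dim_\kk M_n = \sum_i \deg\divd^i = s$ for $n\geq\ell$, and simultaneously $\bbar J_n = H^0(E,\sM_n(-\divd^0))$ for $n\geq\ell$ from the $j=0$ layer. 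The main obstacle I anticipate is the careful identification in Part (2) of $J\cap Tg$ with $J(\dive^\bullet)$ via the $C_p$-calculus — making sure the shift matrix $N$ really does implement the divisor reindexing $\divd^i \mapsto \tau(\divd^{i+1})$ on the nose, including the correct handling of saturation when one applies $\omega$ — and, for Part (3), extracting the precise threshold $n\geq\ell$ rather than just $n\gg 0$ from the Riemann–Roch bounds; both are essentially bookkeeping once the $C_p$-picture and the elliptic-curve cohomology vanishing are set up, but the indexing is delicate.
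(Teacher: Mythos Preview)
Your proof is correct and takes essentially the same approach as the paper: Part~(2) is identical (the $C_p$-computation with the shift matrix $N$, followed by the short exact sequence), Part~(3) is the same induction on the number of layers via \eqref{hs-equ}, and for Part~(1) the paper argues slightly more directly---reading off the eventual value $s$ from the point-module filtration furnished by Lemmas~\ref{lem:nondecreasing} and~\ref{lem:layer} rather than by induction on $k$---but both arguments close with the monotonicity of Lemma~\ref{lem:nondecreasing}. One small slip worth fixing: on an elliptic curve Riemann--Roch gives $h^0(\sL) = \deg\sL + h^1(\sL) \geq \deg\sL$, not $\leq$, and this is exactly the direction you need (it yields $h^0(\sM_n) - h^0(\sM_n(-\divd^0)) \leq \deg\divd^0$ in your base case).
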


\begin{proof}
(1)  By Lemma~\ref{lem:nondecreasing}, $M$ has a filtration whose factors which are shifted point modules.   By Lemma~\ref{lem:layer}, the sum 
of all of the $\divd^i$ counts with multiplicity the number of shifted point modules which occur in this filtration.
Thus   $\dim_\kk (T/J)_n = s$ for $n \gg 0$ and so,  by Lemma~\ref{lem:nondecreasing}, 
$\dim_\kk (T/J)_n \leq s$ for all $n \geq 0$.

(2) Define $L$ by  $Lg = J \cap Tg$. Thus  $L$ is a saturated right ideal of $T$ and we first need to prove 
 that $L = J( \dive^\bullet)$ for the given $\dive^\bullet$. To see this,   reduce to the case that the divisors $\divd^i$ are all supported on a single orbit $\mb{O}(p)$. 
  In $X$, we have
 $\pi(Lg) = \pi(J) \cap \pi(Tg)$.
By the proof of Lemma~\ref{lem:lattice}, the functor $( \wh{-} )_p$ of Theorem~\ref{thm:vdb} 
preserves intersections.   Applying this functor to our equation and using 
Theorem~\ref{thm:vdb}(2) gives 
$\mf{L} N = \mf{J}(\divd^\bullet) \cap N C_p$, where $\mf{L} = (\wh{\pi(L)})_p$.  
By definition, $\mf{J}(\divd^\bullet) \cap N C_p$ has 
$0$ in each $(k,k)$-spot, and $(x)^{a_{k,\ell}}$ in the $(k, \ell)$-spots with $k > \ell$, where 
$\divd^i = \sum_j a_{j+i, j} \tau^j(p)$.  On the other hand,    
\[\dive^i = \tau(\divd^{i+1}) = \sum_j a_{j+i+1,j} \tau^{j+1}(p) = \sum_j a_{j+i, j-1} \tau^j(p).\]
Then $\mf{J}(\dive^\bullet)$ has $(x)^{a_{k, \ell-1}}$ in the $(k, \ell)$-spot, and so a calculation shows that 
$\mf{J}(\dive^\bullet) N$ has $(x)^{a_{k,\ell}}$ in the $(k, \ell)$-spot for $k > \ell$ and $0$ in each $(k, k)$-spot.  Hence 
$\mf{J}(\divd^\bullet) \cap N C_p = \mf{J}(\dive^\bullet) N$.  Thus $\mf{L} = \mf{J}(\dive^\bullet)$.   
So $\pi(L) = \pi(J(\dive^{\bullet}))$, and taking saturations gives $L = J(\dive^\bullet)$,  as claimed.

In order to compute \eqref{hs-equ}, use the exact sequence   
$
0 \to Tg/(J \cap Tg)   \to T/J \to T/(J + Tg)   \to 0, 
$
noting that  $Tg/(J \cap Tg) = Tg/Lg \cong (T/L)[-1]$ and $T/(J + Tg) \cong \overline{T}/\overline{J} $.

(3) By Part (1), to show $\dim_\kk (T/J)_n = s$ for all $n \geq \ell$, it is enough to show that 
$\dim_{\kk} (T/J)_n \geq s$ if $n \geq \ell$.
By Lemma~\ref{lem:layer}(2),  
$\overline{J}^{sat} = \bigoplus_{n \geq 0} H^0(E, \mc{M}_n(-\divd^0))$.
By assumption $\deg \divd^0 < (\deg \mc{M}) \ell \leq \deg \mc{M}_n$ for $n \geq \ell$, 
and so the divisor $\divd^0$ 
presents independent vanishing conditions to sections of the sheaf $\mc{M}_n$ 
for $n \geq \ell$; in other words,  $\dim_{\kk} H^0(E, \mc{M}_n(-\divd^0)) = \dim_{\kk} H^0(E, \mc{M}_n) - \deg \divd^0$.
Thus $\dim_{\kk} (\overline{T}/\overline{J})_n \geq \deg \divd^0$ for $n \geq \ell$.

Let $Lg = J \cap Tg$ as in Part (2).  Then $L = J(\dive^\bullet)$ in the notation of that part and hence $L$ has fewer divisor layers; the divisor 
layering $\dive^\bullet$ also satisfies $\deg \dive^i = \deg \divd^{i+1} < (\deg \mc{M})(\ell - i - 1)$ for $0 \leq i \leq k -2$.
By induction we have $\dim_{\kk} (T/L)_n = \sum_{i=0}^{k-2} \deg \dive^i = \sum_{i=1}^{k-1} \deg \divd^i$ for $n \geq \ell - 1$.   
Now by Part (2), we see that $\dim_{\kk} (T/J)_n \geq s$ for $n \geq \ell$, and hence 
$\dim_{\kk} (T/J)_n = s$ for $n \geq \ell$.  Examining the proof above, this also shows that the inequality
$\dim_{\kk} (\overline{T}/\overline{J})_n \geq \deg \divd^0$ for $n \geq \ell$ must have been an equality for each $n \geq \ell$, 
or equivalently that $\overline{J}$ must have been saturated in degrees $\geq \ell$, proving the second part.
\end{proof}

When applied to the special case of the right ideals $M(k, \divd)$ of $T$ as defined in Definition~\ref{def:M(k,D)}, 
 Lemma~\ref{hs-lem} gives the following result.
\begin{proposition}\label{prop-M}
Let $\divd$ be an effective divisor on $E$ with $\deg \divd < \deg \mc{M}$.
Then for all $0 \leq k \leq n$, we have:
\begin{enumerate}
 \item $\dim M(k,\divd)_n = \dim T_n - (\deg \divd) \binom{k+1}{2}$.
\item $(M(k, \divd)\cap gT)_n = g M(k-1, \divd)_{n-1}$.
\item $\bbar{M(k, \divd)}_n = H^0(E, \sM_n(-\divd_k))$.  \qed
\end{enumerate}
\end{proposition}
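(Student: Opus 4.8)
The plan is to deduce everything from Lemma~\ref{hs-lem}, applied to the specific allowable divisor layering $\divd^\bullet=(\divd_k,\tau^{-1}(\divd_{k-1}),\dots,\tau^{-k+1}(\divd))$ which by Definition~\ref{def:M(k,\divd)} defines $J=M(k,\divd)$. First I would record the numerology: $\deg\divd^i=(k-i)\deg\divd$, so that $s=\sum_{i=0}^{k-1}\deg\divd^i=(\deg\divd)\sum_{i=1}^{k}i=(\deg\divd)\binom{k+1}{2}$. Next I would check the hypothesis of Lemma~\ref{hs-lem}(3): since $\deg\divd<\mu=\deg\sM$, we get $\deg\divd^i=(k-i)\deg\divd<(k-i)\mu\le(n-i)\mu$ for $0\le i\le k-1$ whenever $n\ge k$, so we may take $\ell=n$ (or more economically $\ell=k$, but $\ell=n\ge k$ works for the range we need). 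Then Lemma~\ref{hs-lem}(3) gives $\dim_\kk(T/J)_n=s$ and $\overline{J}_n=H^0(E,\sM_n(-\divd^0))=H^0(E,\sM_n(-\divd_k))$ for all $n\ge k$. That is exactly Part (1) (rearranged as $\dim M(k,\divd)_n=\dim T_n-s$) and Part (3).

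For Part (2) I would use the $g$-adic structure that Lemma~\ref{hs-lem}(2) already supplies. That lemma says $J\cap Tg=Lg$ where $L=J(\dive^\bullet)$ with $\dive^\bullet=(\tau(\divd^1),\tau(\divd^2),\dots,\tau(\divd^{k-1}))$. I would compute $\dive^i=\tau(\divd^{i+1})=\tau(\tau^{-i-1}(\divd_{k-i-1}))=\tau^{-i}(\divd_{k-i-1})$ for $0\le i\le k-2$; comparing with Definition~\ref{def:M(k,\divd)}, the layering $(\divd_{k-1},\tau^{-1}(\divd_{k-2}),\dots,\tau^{-k+2}(\divd))$ is precisely the one defining $M(k-1,\divd)$. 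Hence $L=M(k-1,\divd)$, and $(M(k,\divd)\cap gT)_n=(Lg)_n=g\cdot M(k-1,\divd)_{n-1}$, which is Part (2). (The edge case $k\le 1$: if $k=0$ then $M(0,\divd)=T$ and $\binom{1}{2}=0$, consistent; if $k=1$ then $M(0,\divd)=T$ and one checks $M(1,\divd)\cap gT=gT$ directly from Lemma~\ref{lem:layer}(3) or from Lemma~\ref{hs-lem}(2) with the empty layering.)

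The only genuine subtlety, and the step I expect to need the most care, is the bookkeeping identifying $L=J(\dive^\bullet)$ with $M(k-1,\divd)$: one must track the $\tau$-twists correctly, using $\tau(\tau^{-j}(\divd_m))=\tau^{-j+1}(\divd_m)$ and the fact that the conventions set $\divd^m=0$ for $m\ge k$, so the shortened layering has exactly $k-1$ nonzero terms of the expected form. Everything else is a direct substitution into Lemma~\ref{hs-lem}. I would also note in passing that the stated range $0\le k\le n$ in the proposition matches what Lemma~\ref{hs-lem}(3) delivers with $\ell=n$, so no separate argument for small $n$ is needed beyond observing that for $k\le n$ the inequality $\deg\divd^i<(n-i)\mu$ holds. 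This completes the proof, so I would simply write: ``This follows by applying Lemma~\ref{hs-lem} to the layering of Definition~\ref{def:M(k,\divd)}, after the routine computations indicated above.''
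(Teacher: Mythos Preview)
Your proposal is correct and follows exactly the approach the paper indicates: the paper simply asserts that the proposition follows by specialising Lemma~\ref{hs-lem} to the layering of Definition~\ref{def:M(k,D)}, and you have carried out precisely that specialisation, including the correct identification $L=M(k-1,\divd)$ via the $\tau$-twist bookkeeping for Part~(2). Nothing more is needed.
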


\begin{remark}
\label{rem:neg}  
Using the  convention that $M(k, \divd) = T$ if $k \leq 0$, Proposition~\ref{prop-M}(2) 
is easily seen to  hold for all $k \leq n \in \mb{Z}$.
\end{remark}

Using the functors developed in the Section~\ref{FUNCTORIAL}, we next show that a product of two graded pieces of 
right ideals of the form  $J(\divd^\bullet)$ is sometimes again a graded piece of some $J(\divc^\bullet)$.  

Recall that a graded vector subspace $V \subseteq T$ is \emph{$g$-divisible} if $V \cap Tg = Vg$.  
Given graded modules $M,N\subseteq P$ over a cg algebra $A$, we write $M\ppe N$\label{ppe-defn}  if $M$ and $N$ agree 
up to a finite dimensional   vector space. Finally, recall that $\omega: \rqgr A\to \rGr A$ denotes the section functor.

\begin{lemma}
\label{lem-compose}
Let $X = \rqgr T$.
\begin{enumerate}
\item
Let $\Sigma = [1]$ be the shift functor on $X$. 
 Then $\Sigma \circ F_p = F_{\tau(p)} \circ \Sigma$.
\item
Let $H$ be any finite composition of functors of the form $F_q$ for   $q \in E$. 
Then 
\[
\omega(\mc{G})_m \cdot \omega(H(\mc{O}_X))_n \subseteq \omega([\tau^{-m}(H)](\mc{G}))_{n+m}
\]
 for any noetherian object $\mc{G} \in X$ and any $m,n \geq 1$.
 \end{enumerate}
\end{lemma}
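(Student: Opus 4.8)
For part (1), the plan is to pass through the equivalence of Theorem~\ref{thm:vdb}. Fix a point $p$ and compute $(\wh{-})_p$ of both sides. By Theorem~\ref{thm:vdb}(2) one has $(\wh{\Sigma \mc{G}})_p$ described by a shift of index on the $C_p$-module structure: the shift functor $[1]$ on $X$ corresponds, on the topological ring side, to conjugating by the normal element $N$ (equivalently, to renumbering the idempotents $e_i \mapsto e_{i-1}$, so that a simple $S_i$ becomes $S_{i-1}$, matching $\mc{O}_{\tau^i(p)} \mapsto \mc{O}_{\tau^{i-1}(p)}$ up to the orientation of $\tau$). Since $F_q$ corresponds to right multiplication by the ideal $\mf{n}_i$ when $q = \tau^i(p)$, and the renumbering carries $\mf{n}_i$ to $\mf{n}_{i-1}$, one checks directly that applying $F_p$ then shifting equals shifting then applying $F_{\tau(p)}$ (the index bookkeeping must be checked against the convention $\divd^\tau = \tau^{-1}(\divd)$, but it is purely formal). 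Alternatively, and perhaps more cleanly, one can avoid $C_p$ entirely: $F_p(\mc{G})$ is characterized as the smallest subobject with $\mc{G}/F_p(\mc{G})$ a direct sum of copies of $\mc{O}_p$, and since $\Sigma = [1]$ is an autoequivalence of $X$ carrying the simple object $\mc{O}_p$ to $\mc{O}_{\tau(p)}$ (this last fact is standard for $\rqgr B$ and transfers to $X$), it carries the universal such subobject for $p$ to the universal such subobject for $\tau(p)$; that is exactly $\Sigma F_p = F_{\tau(p)}\Sigma$. I would use this second argument, as it is coordinate-free.

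For part (2), I would argue by induction on the number of functors $F_q$ composing $H$, with the case of a single $F_q$ as the base. Recall $\omega(\mc{G})_m \cdot \omega(\mc{H})_n$ is the image of the multiplication map $T_m$-side composed with identifications; concretely, elements of $\omega(\mc{G})_m$ are morphisms $\mc{O}_X \to \mc{G}[m]$ in $X$ and likewise for $\omega(\mc{H})_n$, and the product of $f \in \omega(\mc{G})_m$ with $h \in \omega(H(\mc{O}_X))_n$ is the composite $\mc{O}_X \xrightarrow{h} H(\mc{O}_X)[n] \xrightarrow{f[n]'} \mc{G}[m][n]$ — but one must be careful: $f$ is a morphism out of $\mc{O}_X$, not out of $H(\mc{O}_X)$. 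The right way to phrase it: multiplication $\omega(\mc{G})_m \otimes \omega(\mc{K})_n \to \omega(\mc{G}')_{m+n}$ for a subobject inclusion $\mc{K} \subseteq \mc{O}_X[?]$ comes from the fact that a degree-$m$ section of $\mc{G}$ composed (via the $T$-module structure, i.e. via the section functor being lax monoidal) with a degree-$n$ section of $\mc{K} \subseteq \mc{O}_X$ lands in $\omega$ of the image of $\mc{K}$ under "multiplication by that section". So the key point is: if $h$ is a degree-$n$ element of $\omega(H(\mc{O}_X))$ and $f$ a degree-$m$ element of $\omega(\mc{G})$, then $f\cdot h$, viewed in $\omega(\mc{G}[m])$, actually lies in the subobject $\omega$ applied to the image of $H(\mc{O}_X)[n]$ under the map induced by $f$. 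One then needs: the image of $H(\mc{O}_X)$ under any map $\mc{O}_X \to \mc{G}$ (twisted appropriately) is contained in $[\tau^{-m}(H)](\mc{G})$. This is where part (1) enters: functoriality of $H$ gives $H(\mc{O}_X) \to H(\mc{G})$, and $H(\mc{G}) \supseteq$ (or relates to) $\tau^{-m}(H)(\mc{G})$ after accounting for the shift by $[m]$ via the intertwining $\Sigma^m \circ H = \tau^m(H) \circ \Sigma^m$ from part (1) (applied $m$ times, extended multiplicatively over compositions). Unwinding the shifts converts $\tau^m$ into $\tau^{-m}$ depending on the direction conventions, landing on the stated inclusion.

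The main obstacle I anticipate is getting the bookkeeping of shifts and twists exactly right — which direction $\tau^{\pm 1}$ appears, and correctly identifying "multiply a section of $\mc{G}$ by a section of a subobject $\mc{K}$ of $\mc{O}_X$" as landing inside $\omega$ of "$\mc{K}$ pushed into $\mc{G}$ via that section". The conceptual content is light (functoriality of the $F_q$'s plus the intertwining relation of part (1)), but making the $m,n$ degree shifts and the $\tau^{-m}$ twist come out consistently with the paper's conventions ($\divd^\tau = \tau^{-1}\divd$, $M[n]_m = M_{n+m}$, the lax monoidal structure on $\omega$) will require care. I would set up the single-$F_q$ base case very explicitly to pin down all signs, then let the induction over compositions run formally, using part (1) to commute $\Sigma^m$ past $H$ at the cost of replacing $H$ by $\tau^{-m}(H)$.
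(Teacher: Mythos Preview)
Your approach to Part~(1) is correct and essentially the paper's: the paper reduces to $P(p)[1] \ppe P(\tau(p))$ for point modules, which is precisely your observation that $\Sigma$ carries the simple object $\mc{O}_p$ to $\mc{O}_{\tau(p)}$.

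For Part~(2), your ingredients are right but the argument can be made much cleaner, and the paper does so without induction. The point of confusion you flag (``$f$ is a morphism out of $\mc{O}_X$, not out of $H(\mc{O}_X)$'') is resolved by recalling that $F_q$---and hence any composite $H$---acts on a morphism $f$ by \emph{restriction} (Definition~\ref{F-defn}). So once you write the multiplication explicitly as $f \star g = \Sigma^n(f) \circ g$ (this is the formula from \cite[Section~4]{AZ}), the composite $f \star g$ factors as
\[
\mc{O}_X \xrightarrow{\ g\ } \Sigma^n(H(\mc{O}_X)) \xrightarrow{\ \Sigma^n(H(f))\ } \Sigma^n(H(\Sigma^m \mc{G})),
\]
because $\Sigma^n(H(f))$ is by definition the restriction of $\Sigma^n(f)$ to $\Sigma^n(H(\mc{O}_X))$. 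Now apply Part~(1) once to rewrite $H \circ \Sigma^m = \Sigma^m \circ \tau^{-m}(H)$, and the inclusion follows immediately. No induction on the length of $H$ is needed: functoriality of the entire composite $H$ does the work in one step.
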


\begin{proof}
(1) This is equivalent to  the following 
statement:  If $P(p)$ is the point module of $T$ corresponding to the point $p \in E$, as defined before
  Lemma~\ref{lem:pt-crit}, then $P(p)[1] \ppe P(\tau(p))$.  This follows from a direct calculation.

 (2) Set $ \wt{T} = \omega(\pi(T))$. In   Lemma~\ref{lem:T-props} we noted that $\wt{T}=T$,
  but here we need the explicit    multiplication rule for $\wt{T}$. 
 From  \cite[Section~4]{AZ} $ \wt{T}  = \bigoplus_{n \geq 0} \Hom_{X}(\mc{O}_X, \mc{O}_X[n]),$
 with its multiplication  $\star$ defined by 
$f \star g = \Sigma^n(f) \circ g \in \Hom_X(\mc{O}_X, \Sigma^{m+n}(\mc{O}_X))$
for $f \in \wt{T}_m$ and $ g \in \wt{T}_n$.

Let
$ 
f \in \omega(\mc{G})_m = \Hom_X(\mc{O}_X, \Sigma^m(\mc{G})),\  $ and  
$g \in \omega(H(\mc{O}_X))_n = \Hom_X(\mc{O}_X, \Sigma^n(H(\mc{O}_X))) \subseteq T_n.
$  Then,  using the functoriality of $H$, one sees that $f\star g \in \omega(\mc{G})_{n+m}$ is the element
\[
\xymatrix{
\sO_X \ar[r]^(0.4){g} & \Sigma^n(H(\mc{O}_X)) \ar[rr]^(0.48){\Sigma^n(H(f))} && [\Sigma^n \circ H \circ \Sigma^m](\mc{G}).}
\]
 Since $H \circ \Sigma^m = \Sigma^m \circ \tau^{-m}(H) $ by Part~(1), 
\[
f \star g \in \Hom_X(\mc{O}_X, [\Sigma^{n+m} \circ \tau^{-m}(H)](\mc{G})) = \omega(\tau^{-m}(H)(\mc{G}))_{n+m},
\]
as required.
\end{proof}

In particular, we can apply the result above to the right ideals $M(k, \divd)$ of $T$.   

\begin{lemma}
\label{lem:Mmult}
Let $\divd$ be an effective divisor on $E$.
%, and let $G_\divd$ be the functor defined in Notation~\ref{not:Gd}.  
\begin{enumerate}
\item Let $n, m, k, \ell \in \NN$.  Then
\begin{equation}\label{equ:Mmult}
 M(k, \divd)_m M(\ell, \tau^{m-k}(\divd))_n \subseteq M(k + \ell, \divd)_{m+n}.   
\end{equation}
\item  If $\deg \mc{M} - \deg \divd\geq 2$,  then \eqref{equ:Mmult}  is an  equality   for all $k, \ell \geq 0$ and all $m \geq k, n \geq \ell$.
\break
If  $\deg \mc{M} - \deg \divd = 1$, then \eqref{equ:Mmult}  is an  equality   for all 
$k, \ell \geq 0$ and  $m\geq \max\{2, k\}$ and $n\geq \max\{2, \ell\}$.
\end{enumerate}
\end{lemma}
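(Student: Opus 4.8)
\textbf{Plan for Lemma~\ref{lem:Mmult}.}
The containment in Part~(1) follows almost immediately from the machinery assembled in Section~\ref{FUNCTORIAL}. By Lemma~\ref{lem:Mmult1}, $\pi(M(k,\divd)) = [\tau^{-k+1}(G_\divd) \circ \dots \circ G_\divd](\mc{O}_X) =: H_k(\mc{O}_X)$, where $H_k$ is a finite composition of functors of the form $F_q$. Applying Lemma~\ref{lem-compose}(2) with $\mc{G} = \mc{O}_X$ and $H = H_\ell$ gives
\[
\omega(H_\ell(\mc{O}_X))_m \cdot \omega(H_k(\mc{O}_X))_n \subseteq \omega\big([\tau^{-m}(H_k)](H_\ell(\mc{O}_X))\big)_{m+n}.
\]
So the first task is purely bookkeeping: identify $\tau^{-m}(H_k) \circ H_\ell$ as the functor $H_{k+\ell}$ applied after the appropriate twist, i.e. check that $[\tau^{-m}(H_k)](\pi(M(\ell,\tau^{m-k}(\divd)))) = \pi(M(k+\ell,\divd))$ in large degree, and then pass from $\omega\pi(M(\bullet))$ back to $M(\bullet)$ using that these right ideals are saturated (Lemma~\ref{lem:T-props}(2) and the definition of $M(k,\divd)$ as $J(\divd^\bullet) = \omega(\mc{J}(\divd^\bullet))$). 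The shift-tracking is exactly the content of Lemma~\ref{lem-compose}(1), $\Sigma \circ F_p = F_{\tau(p)} \circ \Sigma$, which forces the twist $\tau^{m-k}(\divd)$ in the statement; one composes the layerings using Lemma~\ref{lem-Fcomp2}. I expect this to be a routine (if notation-heavy) verification, and one could alternatively argue more concretely using Proposition~\ref{prop-M}(3) on the $B$-side together with $g$-divisibility, but the functorial route is cleanest.

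For Part~(2), the strategy is a dimension count. Both sides of \eqref{equ:Mmult} are graded subspaces of $T_{m+n}$, and by Part~(1) we have $\subseteq$; it suffices to show the left-hand side has dimension at least $\dim M(k+\ell,\divd)_{m+n} = \dim T_{m+n} - (\deg\divd)\binom{k+\ell+1}{2}$ (using Proposition~\ref{prop-M}(1)). The natural device is induction on $k+\ell$, reducing to the case where one of the factors is a single graded piece $T_j$ (the $k=0$ or $\ell=0$ case, where $M(0,\cdot) = T$) combined with the case $k=\ell=1$ or the case $M(k,\divd)_mM(1,\cdot)_n$. The key input will be a multiplication/surjectivity statement: to show $M(k,\divd)_m M(1, \tau^{m-k}(\divd))_n = M(k+1,\divd)_{m+n}$ one reduces modulo $g$, where it becomes the statement that
\[
H^0(E,\mc{M}_m(-\divd_k)) \otimes H^0\big(E, \mc{M}_n(-\tau^{m-k}(\divd)_1)\big) \longrightarrow H^0\big(E,\mc{M}_{m+n}(-\divd_{k+1})\big)
\]
is surjective onto the right piece, which is a consequence of Lemma~\ref{lem:sec-mult}(1) applied to the invertible sheaves $\mc{M}_m(-\divd_k)$ and $\mc{M}_n^{\tau^m}(-\tau^{m-k}(\divd)_1)$ — here the degree bounds are what make the two numerical hypotheses appear. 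Indeed, $\deg\mc{M}_m(-\divd_k) = m\mu - k\deg\divd \ge m(\mu-\deg\divd)$ after using allowability, so $\deg\mc{M} - \deg\divd \ge 2$ forces degree $\ge 2$ for $m \ge 1$, while in the borderline case $\deg\mc{M}-\deg\divd = 1$ one needs $m \ge 2$ to push the degree to $\ge 2$, and one must additionally dodge the exceptional case $\deg = 2$ with $\mc{L}\cong\mc{M}$ in Lemma~\ref{lem:sec-mult}(1) — this is presumably where $n\ge 2$ as well (rather than $n \ge 1$) is used, since having both sheaves of degree $\ge 3$, or arranging them non-isomorphic, avoids the bad case. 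One then lifts the surjectivity mod $g$ back to $T$ using that $M(k+1,\divd)$ is $g$-divisible (Proposition~\ref{prop-M}(2)) and a standard degree-by-degree / Nakayama-type argument on the $g$-adic filtration, matching Hilbert series via Proposition~\ref{prop-M}(1).

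The main obstacle is the careful handling of the two borderline numerical regimes and the exceptional case in Lemma~\ref{lem:sec-mult}(1): when $\deg\mc{M}-\deg\divd = 1$ the relevant sheaves can have degree exactly $2$ for small $m$ or $n$, and one has to verify either that the bound $m,n \ge 2$ forces degree $\ge 3$ (avoiding the exception entirely) or that the two sheaves in play are never isomorphic in the degree-$2$ situation. Threading that needle — and propagating it correctly through the induction on $k+\ell$, where the twists $\tau^{m-k}(\divd)$ shift the sheaves around at each step — is the delicate part; the rest is Hilbert-series bookkeeping built on Proposition~\ref{prop-M} and the surjectivity in Lemma~\ref{lem:sec-mult}.
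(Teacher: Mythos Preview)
Your proposal is essentially correct and uses the same key ingredients as the paper: Lemma~\ref{lem-compose}(2) for Part~(1), and for Part~(2) the combination of Lemma~\ref{lem:sec-mult} for the mod-$g$ surjectivity together with Proposition~\ref{prop-M}(2) for the $g$-lifting, with the exceptional case in Lemma~\ref{lem:sec-mult} accounting for the numerical hypotheses.

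Two organizational points where the paper is cleaner. First, your induction on $k+\ell$ is unnecessary: the paper proves the mod-$g$ statement
\[
\overline{M(k,\divd)}_m \cdot \overline{M(\ell,\tau^{m-k}(\divd))}_n = \overline{M(k+\ell,\divd)}_{m+n}
\]
directly for \emph{all} $k,\ell$ in one shot, since both sides are identified via Proposition~\ref{prop-M}(3) and a single application of Lemma~\ref{lem:sec-mult} to $\mc{N}=\mc{M}_m(-\divd_k)$ and $\mc{P}^{\tau^m}=\mc{M}_n^{\tau^m}(-\tau^{-k}(\divd_\ell))$ gives surjectivity; reducing to $\ell=1$ first just adds bookkeeping (and in the degree-$1$ regime the splitting of $M(\ell,\cdot)_n$ into products needs numerical room that is tight for small $n$). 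Second, your ``Nakayama-type argument'' for the lift is not quite the right description: what one needs is the separate claim
\[
\bigl(M(k,\divd)_m M(\ell,\tau^{m-k}(\divd))_n\bigr)\cap gT = M(k+\ell,\divd)_{m+n}\cap gT,
\]
and the paper proves this by induction on $m+n$ (allowing negative $k,\ell$ with the convention $M(j,\divd)=T$ for $j\le 0$), using $gM(k-1,\divd)_{m-1}\subseteq M(k,\divd)_m$ from Proposition~\ref{prop-M}(2) and the induction hypothesis. Once both the mod-$g$ equality and the $gT$-intersection equality hold, a dimension count forces equality in \eqref{equ:Mmult}.
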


\begin{proof}
(1)   Let $G_\divd$ be the functor defined in Notation~\ref{not:Gd}.
Note that $\mc{G} = \pi(M(k, \divd)) = G'(\mc{O}_X)$, where 
$G' = \tau^{-k+1}(G_\divd) \circ \tau^{-k+2}(G_\divd) \circ \dots \circ G_\divd$, by Lemma~\ref{lem:Mmult1}.
Similarly, defining  $H = \tau^{m-k-\ell+1}(G_\divd) \circ \dots \circ \tau^{m-k}(G_\divd)$, then  
$H(\mc{O}_X) = \pi(M(\ell, \tau^{m-k}(\divd)))$.
  
Now apply Lemma~\ref{lem-compose}(2) to $\mc{G}$ and $H$.
Since 
$
\tau^{-m}(H) \circ G' = \tau^{-k-\ell+1}(G_\divd) \circ \dots \circ G_\divd,
$
the result follows.

(2) Consider the inclusion \eqref{equ:Mmult}. If we prove for some values of $k, \ell, m, n$ both that  
\begin{equation}
\label{need1}
\overline{M(k, \divd)}_{m} \overline{M(\ell, \tau^{m-k}(\divd))}_n = \overline{M(k + \ell, \divd)}_{m+n}
\end{equation}
and that 
\begin{equation}
\label{need2}
\bigl(M(k, \divd)_{m} M(\ell, \tau^{m-k}(\divd))_n\bigr) \cap g T =  M(k + \ell, \divd)_{m+n} \cap gT,
\end{equation}
then   both sides of \eqref{equ:Mmult} will have the same vector space dimension and so the 
inclusion will be an equality.

We start with \eqref{need2}. Recall from Remark~\ref{rem:neg} that 
Proposition~\ref{prop-M}(2) actually holds for all $k  \in \ZZ$ with $k \leq n$.
We now allow negative $k$ and $\ell$ in equation \eqref{need2}, though we continue to assume that $m,n \geq 0$. 
This will allow us to prove \eqref{need2} by induction on $m + n$.  The base case $m = n = 0$ is trivial.
If $m \geq 1$, then 
\begin{gather*}
M(k,\divd)_m M(\ell, \tau^{m-k}(\divd))_n \cap g T  \ \supseteq  \ g M(k-1, \divd)_{m-1} M(\ell, \tau^{m-k}(\divd))_n \\
= \
gM(k+ \ell -1, \divd)_{m+n-1} \ =\ M(k + \ell, \divd)_{m+n} \cap g T,
\end{gather*}
where we have used Proposition~\ref{prop-M}(2) and the induction hypothesis.   The case $n \geq 1$ is similar.

Thus it remains to consider \eqref{need1}.  By Proposition~\ref{prop-M}(3), 
\begin{gather*}
\overline{M(k, \divd)}_m = H^0(E, \mc{M}_m(-\divd_k)), \ \ \ \overline{M(\ell, \tau^{m-k}(\divd))}_n = 
H^0\left(E, \mc{M}_n\left(-\tau^{m-k}(\divd_{\ell})\right)\right), \\ \text{and}\ \  \overline{M(k + \ell, \divd)}_{n+m} = H^0(E, \mc{M}_{n+m}(-\divd_{k + \ell})).  
\end{gather*}
Set $\mc{N} = \mc{M}_m(-\divd_k)$ and $\mc{P} = \mc{M}_n(-\tau^{m-k}(\divd_{\ell}))$.
By the definition of multiplication in $B(E, \mc{M}, \tau)$ the goal is to show that the natural multiplication map 
\[
H^0(E, \mc{N}) \otimes H^0(E, \mc{P}^{\tau^m}) \to   H^0(E, \mc{N} \otimes \mc{P}^{\tau^m})
\]
is surjective.
Recall from Lemma~\ref{lem:sec-mult} that this is always true provided 
 \begin{equation}\label{need3}
 2\leq \deg \mc{N} = m\deg \sM - k \deg \divd \quad\text{ and }\quad  
2\leq \deg \mc{P}^{\tau^m}  = n\deg \mc{M} -\ell \deg \divd,
\end{equation} except that if $\deg \mc{N} = \deg \mc{P}^{\tau^m}  = 2$ 
one 
requires the additional condition that  $\mc{N} \not \cong \mc{P}^{\tau^m}$.
Since \eqref{need3} always holds,  it remains to check that this final assertion is also true. 
This  routine calculation is left to the reader:  although there are a number of special cases, they all reduce to the 
fact that $\mc{N}\not\cong\mc{N}^{\tau^m}$ for $m\geq 1$.
\end{proof}

The next result, which will be used in \cite{RSS2}, illustrates the usefulness of Lemma~\ref{lem:Mmult}.

\begin{corollary} \label{cor:multcomp}
 Let $\divd$ be an effective divisor on $E$ of degree $\leq \mu-2$, and let $k \in \NN$.    
Then $$T_1 M(k, \divd)_k = M(k, \tau^{-1}(\divd))_k T_1.$$
\end{corollary}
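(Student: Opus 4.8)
The plan is to apply Lemma~\ref{lem:Mmult}(2) in both directions and sandwich the quantity $T_1 M(k,\divd)_k$ between two things that turn out to coincide.

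First I would observe that $T_1 = M(0, \divd')_1$ for any effective divisor $\divd'$, since $M(0,-) = T$ by convention, and similarly that $T_1$ also serves as $M(1, \divd'')_1$ is \emph{not} quite what I want — instead the cleanest route is to set $\divc = \tau^{-1}(\divd)$ and note that the statement is symmetric enough that it suffices to prove the two inclusions
\[
T_1 M(k,\divd)_k \subseteq M(k, \tau^{-1}(\divd))_k T_1 \quad\text{and}\quad M(k,\tau^{-1}(\divd))_k T_1 \subseteq T_1 M(k,\divd)_k,
\]
and then check that both sides have the same dimension so that one inclusion plus equal dimension forces equality. For the dimension count I would use Proposition~\ref{prop-M}(1): $\dim M(k,\divd)_k = \dim T_k - (\deg\divd)\binom{k+1}{2}$, and since $\tau^{-1}(\divd)$ has the same degree, $\dim M(k,\tau^{-1}(\divd))_k$ is the same. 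Then I would argue that multiplication by $T_1$ on either side is "as injective/surjective as possible," i.e. both products have dimension $\dim T_{k+1} - (\deg\divd)\binom{k+1}{2}$, which will follow once I identify the products with appropriate $M$-components.

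The main computation: to identify $T_1 M(k,\divd)_k$ with a component of some $M(\,\cdot\,,\,\cdot\,)$, I would invoke Lemma~\ref{lem:Mmult}(2) with the roles arranged so that the left factor has parameter $0$. Concretely, $M(0, \tau^{-1}(\divd))_1 \cdot M(k, \tau^{1-0}(\tau^{-1}(\divd)))_k = M(0,\tau^{-1}(\divd))_1 M(k,\divd)_k$ should equal $M(k, \tau^{-1}(\divd))_{k+1}$, using that $\deg \tau^{-1}(\divd) = \deg\divd \le \mu - 2$ so the "$\ge 2$" case of Lemma~\ref{lem:Mmult}(2) applies (here $m = 1 \ge k = 0$ and $n = k \ge \ell = k$, wait — I need $\ell = k$ and the left parameter $= 0$, so $m=1, n=k, k_{\mathrm{param}}=0, \ell_{\mathrm{param}}=k$; the shift in the second factor is $\tau^{m-k_{\mathrm{param}}}=\tau^{1}$ applied to the base divisor $\tau^{-1}(\divd)$, giving $\divd$, exactly right). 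So $T_1 M(k,\divd)_k = M(k,\tau^{-1}(\divd))_{k+1}$. Symmetrically, applying Lemma~\ref{lem:Mmult}(2) with the \emph{right} factor having parameter $0$: $M(k,\tau^{-1}(\divd))_k \cdot M(0, \tau^{k-k}(\tau^{-1}(\divd)))_1 = M(k, \tau^{-1}(\divd))_k T_1$ equals $M(k, \tau^{-1}(\divd))_{k+1}$. The two right-hand sides agree, giving the claim.

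The step I expect to be the main obstacle is getting the bookkeeping of the $\tau$-shifts exactly right in the two applications of Lemma~\ref{lem:Mmult}(2) — in particular making sure the hypotheses on $m, n, k, \ell$ (the $\ge k$, $\ge \ell$, and possibly $\ge 2$ conditions) are met, since one of the parameters being $0$ pushes against the edge of the allowed range, and the edge-case subtlety in Lemma~\ref{lem:Mmult}(2) when $\deg\mc M - \deg\divd = 1$ is exactly why the hypothesis $\deg\divd \le \mu - 2$ is imposed here. Once the two products are both identified with $M(k,\tau^{-1}(\divd))_{k+1}$, no further work (not even a separate dimension count) is needed. If for some reason the edge case $k=0$ or small $k$ doesn't fit the lemma cleanly, I would handle it directly: for $k \le 0$ both sides are just $T_1 T_k$ by the convention $M(k,-) = T$, and for the genuinely edge-of-range applications I would fall back on combining the $\overline{M}$-statement of Proposition~\ref{prop-M}(3) with the $g$-divisibility/intersection argument from the proof of Lemma~\ref{lem:Mmult}(2), exactly as in \eqref{need1}–\eqref{need2}.
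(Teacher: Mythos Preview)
Your proposal is correct and takes essentially the same approach as the paper: after the initial meandering about dimension counts (which you correctly recognize is unnecessary), you apply Lemma~\ref{lem:Mmult}(2) twice with one parameter equal to $0$, using $M(0,\cdot)_1 = T_1$, to identify both $T_1 M(k,\divd)_k$ and $M(k,\tau^{-1}(\divd))_k T_1$ with $M(k,\tau^{-1}(\divd))_{k+1}$. This is exactly the paper's one-line proof, and your verification that the hypotheses $m\geq k$, $n\geq \ell$ are satisfied (with the $\deg\divd \leq \mu-2$ assumption ensuring we are in the easy case of the lemma) is accurate.
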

\begin{proof}
 By Lemma~\ref{lem:Mmult}(2),  
\[ M(0, \tau^{-1} (\divd))_1 M(k, \divd)_k = M(k, \tau^{-1}(\divd))_{k+1}
 = M(k, \tau^{-1}(\divd))_k M(0, \tau^{-1}(\divd))_1. \qedhere
\]
\end{proof}

%%%%%%%%%%%%%%%%%%%%%%%

\section{The blowups  \texorpdfstring{$T(\divd)$}{LG} }\label{BLOWUP}

 Throughout this section $T$ will satisfy Hypothesis~\ref{hyp:main}.
Here we define algebras $T(\divd)$ that correspond to blowing up $T$ at the points on an effective divisor $\divd$, 
and give a uniform proof of their properties.  

\begin{definition}
\label{def-blowup} 
For any effective divisor $\divd$ on $E$ with $\deg \divd < \mu$,  define a subalgebra $T(\divd)$ of $T$ by putting $T(\divd) = \bigoplus_{k \geq 0} M(k,\divd)_k \subseteq T$.  
More generally, for any $\ell \geq 0$
define a graded $T(\divd)$-submodule of $T$ by   
\[ T_{\leq \ell}*T(\divd) := \bigoplus_{n \geq 0} M(n-\ell, \divd^{\tau^\ell})_n.\]

The facts that $T(\divd)$ is a subalgebra of $T$, and then 
that   $T_{\leq \ell}*T(\divd)$ is a right $T(\divd)$-module, follow immediately from 
Part (1) of Lemma~\ref{lem:Mmult}.

The notation $T_{\leq \ell} * T(\divd)$ is meant to recall multiplication, since this module  
  equals  
$T_{\leq \ell} T(\divd)$ except when $\deg \divd$ is large.  We will prove this and other
 properties in the next few results.
 \end{definition} 

 One should think of $T(\divd)$ as a ring-theoretic blowup of $T$ along the divisor $\divd$. This is justified  in part by 
the properties proven in the next few sections (see Proposition~\ref{prop:subtle}  and Remark~\ref{line-defn}
 among others). The analogy is particularly strong when  $\deg \divd<\mu-1$,
so $T(\divd)$ is generated in degree one, as the algebra then corresponds to the (commutative) definition of a blowup 
in terms of Rees rings.   It  is still 
 appropriate for general $\divd$ since one can show that $T(\divd)$ is the same as the  section ring of the blowup defined by Van den Bergh
 \cite{VdB} (see, in particular,   the del Pezzo 
 surfaces considered   in \cite[Section~11]{VdB}).  
 A further discussion on this analogy can also be found in~\cite{Rog09}.

\begin{proposition}\label{elempropsofR}
Let $\divd$ be an effective divisor on $E$ with $d= \deg \divd < \mu$.   Let $n, \ell \in \NN$.  Then 
\begin{enumerate}
 \item $\dim_\kk (T_{\leq \ell}*T(\divd))_n = \dim_\kk T_n - d\binom{n-\ell+1}{2}$.   
\item $(T_{\leq \ell}*T(\divd) \cap gT)_n = g (T_{\leq \ell}*T(\divd))_{n-1}$; that is, $T_{\leq \ell}*T(\divd)$ is $g$-divisible.
\item $\bbar{T_{\leq \ell}* T(\divd)}_n  =H^0(E, \sM_n(-\divd^{\tau^{\ell}} - \dots - \divd^{\tau^{n-1}}))$.
\end{enumerate}
\end{proposition}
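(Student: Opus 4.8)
The plan is to reduce everything to the right ideals $M(k,\divd)$ studied in Section~\ref{SOMEIDEALS}, where the analogous facts are already recorded in Proposition~\ref{prop-M}. Recall that by Definition~\ref{def-blowup} we have $(T_{\leq \ell}*T(\divd))_n = M(n-\ell, \divd^{\tau^\ell})_n$. Since $\divd^{\tau^\ell} = \tau^{-\ell}(\divd)$ is again an effective divisor of degree $d < \mu$, we can apply Proposition~\ref{prop-M} directly with $\divd$ replaced by $\divd^{\tau^\ell}$ and $k$ replaced by $n-\ell$. The only caveat is that Proposition~\ref{prop-M} is stated for $0 \leq k \leq n$, so one should first dispose of the case $n < \ell$ (equivalently $k = n - \ell \le 0$): by the convention in Definition~\ref{def:M(k,D)}, $M(k,\divd^{\tau^\ell}) = T$ when $k \le 0$, so for $n \le \ell$ all three claimed formulas just read off the corresponding facts about $T$ itself (with the binomial coefficient $\binom{n-\ell+1}{2}$ vanishing, which one checks is consistent), and part (2) follows since $T$ is a domain. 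Also one should note $n - \ell \le n$ always holds, so the hypothesis $k \le n$ of Proposition~\ref{prop-M} is automatic once $k \ge 0$.

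Granting $n \ge \ell$, part (1) is then immediate: Proposition~\ref{prop-M}(1) gives
\[
\dim_\kk M(n-\ell,\divd^{\tau^\ell})_n = \dim_\kk T_n - (\deg \divd^{\tau^\ell})\binom{(n-\ell)+1}{2} = \dim_\kk T_n - d\binom{n-\ell+1}{2},
\]
using that $\tau$ is an automorphism so $\deg \divd^{\tau^\ell} = \deg \divd = d$. Part (2) follows from Proposition~\ref{prop-M}(2), which gives $(M(n-\ell,\divd^{\tau^\ell}) \cap gT)_n = gM(n-\ell-1,\divd^{\tau^\ell})_{n-1} = g(T_{\leq\ell}*T(\divd))_{n-1}$, where the last equality is again just the definition unwound at degree $n-1$ (and here Remark~\ref{rem:neg} is what lets this go through uniformly, including when $n-\ell-1 < 0$). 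This is exactly the assertion that $T_{\leq\ell}*T(\divd)$ is $g$-divisible, in the sense of the $g$-divisibility definition on page~\pageref{g-div-defn}.

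For part (3), Proposition~\ref{prop-M}(3) gives $\overline{M(n-\ell,\divd^{\tau^\ell})}_n = H^0(E, \sM_n(-(\divd^{\tau^\ell})_{n-\ell}))$, so it remains to identify the divisor $(\divd^{\tau^\ell})_{n-\ell}$ with $\divd^{\tau^\ell} + \divd^{\tau^{\ell+1}} + \dots + \divd^{\tau^{n-1}}$. By the definition of $\divc_m = \divc + \tau^{-1}(\divc) + \dots + \tau^{-(m-1)}(\divc)$ from Section~\ref{BACKGROUND}, applied to $\divc = \divd^{\tau^\ell} = \tau^{-\ell}(\divd)$ and $m = n - \ell$, we get
\[
(\divd^{\tau^\ell})_{n-\ell} = \tau^{-\ell}(\divd) + \tau^{-\ell-1}(\divd) + \dots + \tau^{-(n-1)}(\divd) = \divd^{\tau^\ell} + \divd^{\tau^{\ell+1}} + \dots + \divd^{\tau^{n-1}},
\]
which is precisely the divisor appearing in the statement. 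I do not anticipate a genuine obstacle here; the only thing needing care is the bookkeeping of the $\tau$-shift conventions ($\divd^{\tau} = \tau^{-1}(\divd)$ versus $\divd_n$) and the boundary cases $n \le \ell$, so that the single clean formula in each part is valid for all $n, \ell \in \NN$ rather than just in a range. If one prefers to avoid Remark~\ref{rem:neg} one can instead handle $n \le \ell$ and $n > \ell$ as genuinely separate cases, but the unified statement is cleaner.
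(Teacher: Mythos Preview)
Your proposal is correct and takes the same approach as the paper: the paper's proof is literally the single sentence ``This is immediate from Proposition~\ref{prop-M},'' and your argument is simply a careful unwinding of that reduction, making explicit the substitution $k = n-\ell$, $\divd \mapsto \divd^{\tau^\ell}$, the boundary case $n \le \ell$, and the divisor bookkeeping $(\divd^{\tau^\ell})_{n-\ell} = \divd^{\tau^\ell} + \cdots + \divd^{\tau^{n-1}}$.
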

\begin{proof}
 This is immediate from  Proposition~\ref{prop-M}.
\end{proof}

\begin{theorem}
\label{thm:Rnoeth} 
Let $T$ satisfy Hypothesis~\ref{hyp:main}, and 
let $\divd$ be an effective divisor on $E$ with $d=\deg \divd < \mu$.   Let $R = T(\divd)$.
Then:
\begin{enumerate}
\item $R$ has Hilbert series $h_{R}(t) = h_{T}(t) - d/(1-t)^3$.
\item $R$ is $g$-divisible with $R/gR = B(E, \mc{M}(-\divd), \tau)$.
\item $R$ is strongly noetherian, Auslander-Gorenstein, CM, and a maximal order.  Further, $R$ satisfies the 
Artin-Zhang $\chi$ conditions, and has cohomological dimension 2 and   a balanced dualizing complex.
 \item $R$ is generated in degree $1$ if $\mu - d \geq 2$.  If $\mu - d = 1$, then $R_m R_n = R_{m+n}$ for all $m,n \geq 2$;
  in particular, $R$ is generated in degrees $\leq 2$, and 
the Veronese ring $R^{(2)}$ is generated in degree $1$.
\item  $T_{\leq \ell}*T(\divd)$ is a finitely generated right $R = T(\divd)$-module for each $\ell \geq 0$.
\item  If $\mu - d \geq 2$ then $T_{\leq \ell}* T(\divd) = T_{\leq \ell} T(\divd)$.
\end{enumerate}
\end{theorem}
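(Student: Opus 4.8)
The plan is to assemble the theorem from the results already in hand, treating the case $\mu-d\ge 2$ (where $R$ turns out to be generated in degree one) and the case $\mu-d=1$ separately; the latter is where all the real difficulty is. For parts (1)--(3): part (1) follows by summing Proposition~\ref{prop-M}(1) over $k$ (with $n=k$), since $R_k=M(k,\divd)_k$. For part (2), $g$-divisibility of $R$ is the $\ell=0$ case of Proposition~\ref{elempropsofR}(2), and the identification $R/gR\cong B(E,\mc M(-\divd),\tau)$ comes from Proposition~\ref{elempropsofR}(3): one has $\overline R_n=H^0(E,\mc M_n(-\divd_n))=H^0\big(E,(\mc M(-\divd))_n\big)$, and the multiplication matches that of the TCR by construction. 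Since $\deg\mc M(-\divd)=\mu-d>0$, this sheaf is ample, so $R$ is a cg domain with central degree-one element whose quotient is a TCR of the required type; Proposition~\ref{prop:foo} (with $\gamma=1$) then gives every assertion in part (3).

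For part (4): if $\mu-d\ge 2$, I would apply the equality case of Lemma~\ref{lem:Mmult}(2) with its parameters $(k,m,\ell,n)$ set to $(1,1,n-1,n-1)$, noting $\tau^{m-k}(\divd)=\divd$, to get $R_1R_{n-1}=M(1,\divd)_1M(n-1,\divd)_{n-1}=M(n,\divd)_n=R_n$, and induction finishes. If $\mu-d=1$, the degree-one case of that lemma with parameters $(a,a,b,b)$ gives $R_aR_b=R_{a+b}$ for all $a,b\ge 2$; hence $R^{(2)}_aR^{(2)}_b=R_{2(a+b)}=R^{(2)}_{a+b}$ for $a,b\ge 1$, so $R^{(2)}$ is generated in degree one. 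To see $R$ is generated in degrees $\le 2$, since $R_N=R_2R_{N-2}$ for $N\ge 4$ it suffices to prove $R_3=R_1R_2+R_2R_1+R_1^3$; as $R$ is $g$-divisible with $g\in R_1$, $R_3\cap gT=gR_2\subseteq R_1R_2$, so it is enough to check this modulo $g$, i.e. that the TCR $B=B(E,\mc M(-\divd),\tau)$ on a degree-one line bundle satisfies $B_3=B_1B_2+B_2B_1$. Writing $\mc M(-\divd)\cong\mc O_E(p)$ for a point $p\in E$, the two summands are the spaces of sections of $(\mc M(-\divd))_3$ vanishing at $p$, resp. at $\tau^{-2}(p)$; these are distinct hyperplanes in the three-dimensional space $B_3$ because $|\tau|=\infty$, so together they span. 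A graded Nakayama argument (with $R'=\kk\langle R_1,R_2\rangle$: inductively $R_n\subseteq R'_n+gR_{n-1}=R'_n+gR'_{n-1}\subseteq R'_n$, using $g\in R'_1$) then lifts this to $R=R'$.

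For parts (5)--(6): first, $T_{\le\ell}\,T(\divd)\subseteq T_{\le\ell}*T(\divd)$ always, because for $j\le\ell$ one has $T_j=M(j-\ell,\divd^{\tau^\ell})_j$, and Lemma~\ref{lem:Mmult}(1) gives $M(j-\ell,\divd^{\tau^\ell})_j\cdot M(k,\divd)_k\subseteq M(j-\ell+k,\divd^{\tau^\ell})_{j+k}=(T_{\le\ell}*T(\divd))_{j+k}$. For part (6), when $\mu-d\ge 2$ I would apply the equality case of Lemma~\ref{lem:Mmult}(2), with the divisor $\divd^{\tau^\ell}$ and parameters $(0,\ell,n-\ell,n-\ell)$, to obtain $(T_{\le\ell}*T(\divd))_n=M(n-\ell,\divd^{\tau^\ell})_n=T_\ell\cdot M(n-\ell,\divd)_{n-\ell}=T_\ell R_{n-\ell}$ for $n\ge\ell$ (and $=T_n$ for $n\le\ell$); this lies in $T_{\le\ell}T(\divd)$, so equality holds. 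Part (5) is then immediate when $\mu-d\ge 2$, since $T_{\le\ell}$ is finite-dimensional. For part (5) in general, put $N_\ell=T_{\le\ell}*T(\divd)$; it is $g$-divisible (Proposition~\ref{elempropsofR}(2)) and $g$-torsionfree (a submodule of the domain $T$), and $\overline{N_\ell}$ is an $\overline R$-submodule of $B$ which is finitely generated over $\overline R$: indeed, using Proposition~\ref{elempropsofR}(3), the surjectivity assertion of Lemma~\ref{lem:sec-mult}(1), and part (4) (so that $\overline R$ is generated in degrees $\le 2$), one checks that for $N\gg 0$, $\overline{N_\ell}_N$ equals $\overline{N_\ell}_{N-1}\,\overline R_1$ when $\mu-d\ge 2$ and $\overline{N_\ell}_{N-2}\,\overline R_2$ when $\mu-d=1$. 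Lifting $\overline R$-module generators of $\overline{N_\ell}$ to elements $x_1,\dots,x_s\in N_\ell$ gives $N_\ell=\sum_i x_iR+gN_\ell$, whence $N_\ell=\sum_i x_iR$ by the same degreewise Nakayama argument.

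The main obstacle is the case $\mu-d=1$: there $R$ is no longer generated in degree one, and proving it is generated in degrees $\le 2$ requires both the explicit degree-one TCR computation above and a delicate lift across $g$; moreover part (6) genuinely fails in that case, so part (5) must be done through its own reduction modulo $g$ (this is exactly where the exception in Lemma~\ref{lem:sec-mult}(1) and the need for $\overline R_2$ rather than $\overline R_1$ enter). Everything else is bookkeeping on top of Proposition~\ref{prop-M}, Lemma~\ref{lem:Mmult}, Lemma~\ref{lem:sec-mult}, and Proposition~\ref{prop:foo}.
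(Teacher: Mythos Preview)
Your proof is correct and follows the paper's approach: parts (1)--(3) come from Proposition~\ref{elempropsofR} and Proposition~\ref{prop:foo}, and parts (4)--(6) from Lemma~\ref{lem:Mmult}(2), exactly as in the paper. Two remarks: your explicit argument that $\overline{R}_3=\overline{R}_1\overline{R}_2+\overline{R}_2\overline{R}_1$ when $\mu-d=1$ genuinely fills in a detail the paper glosses over (the ``in particular, $R$ is generated in degrees $\leq 2$'' does not follow from $R_mR_n=R_{m+n}$ for $m,n\geq 2$ alone, since $R_3$ is not covered); and your route to (5) via reduction modulo $g$ and Nakayama is more work than needed---Lemma~\ref{lem:Mmult}(2) applied with $k=n-\ell-2$, $m=n-2$, $\ell'=2$, $n'=2$ already gives $(T_{\leq\ell}*T(\divd))_{n-2}\,R_2=(T_{\leq\ell}*T(\divd))_n$ for $n\geq 4$, so the module is generated in degrees $\leq 3$.
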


\begin{proof} This summarises earlier results. In more detail, 
Parts~(1,2) follow  from Proposition~\ref{elempropsofR} with $\ell=0$;
Part (3) is a  consequence of Proposition~\ref{prop:foo}; while Parts (4,5) 
are consequences of Proposition~\ref{lem:Mmult}(2).
 
   Since both sides of Part~(6) are equal to $T_n$ in degree $n \leq \ell$, it is enough to show that 
   $T_{\ell} M(n-\ell, \divd)_{n- \ell} = M(n-\ell, \tau^{-\ell}(\divd))_n$ for all $n \geq \ell$.  
Because $\deg \divd \leq \mu -2$, this also follows from Proposition~\ref{lem:Mmult}(2).  
\end{proof}

 In \cite{VdB} Van den Bergh only defines blowing up at a single point, and so it is useful to know that our
 multiple blowups can also be defined iteratively.
 
\begin{proposition}
\label{prop:iterate}
Let $\divd$ be a divisor on $E$ with $\deg \divd < \mu$.  Suppose that 
$\divd = \divc + \dive$, where $\divc$, $\dive$ are effective, and let ${R} = T(\divc)$.  Then $T(\divd) = {R}(\dive) = (T(\divc))(\dive)$.
\end{proposition}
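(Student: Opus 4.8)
The plan is to exhibit $R(\dive)$ and $T(\divd)$ as graded subalgebras of $T$ with the same Hilbert series, and then to prove a single inclusion. First, by Theorem~\ref{thm:Rnoeth}(2,3), $R=T(\divc)$ again satisfies Hypothesis~\ref{hyp:main}, now with $R/gR\cong B(E,\sM(-\divc),\tau)$; so $\sM$ is replaced by $\sM(-\divc)$, of degree $\mu_R=\mu-\deg\divc$, and since $\deg\dive=\deg\divd-\deg\divc<\mu-\deg\divc=\mu_R$ the blowup $R(\dive)$ is defined. If $\dive=0$ then $R(\dive)=R=T(\divc)=T(\divd)$ and we are done, so assume $\deg\dive\ge 1$. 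Applying Theorem~\ref{thm:Rnoeth}(1) first to $R$ blown up at $\dive$ and then to $T$ blown up at $\divc$ gives
\[
h_{R(\dive)}(t)=h_R(t)-\frac{\deg\dive}{(1-t)^3}=h_T(t)-\frac{\deg\divc+\deg\dive}{(1-t)^3}=h_T(t)-\frac{\deg\divd}{(1-t)^3}=h_{T(\divd)}(t).
\]
As $R(\dive)\subseteq R\subseteq T$ and $T(\divd)\subseteq T$ are graded subspaces, it therefore suffices to prove the inclusion $R(\dive)\subseteq T(\divd)$.

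Next I would reduce this to low degrees. By Theorem~\ref{thm:Rnoeth}(4) — applied to $T$ at $\divd$, and to $R$ at $\dive$, using $\mu_R-\deg\dive=\mu-\deg\divd$ — both $T(\divd)$ and $R(\dive)$ are generated in degree $1$ when $\mu-\deg\divd\ge 2$, and in degrees $\le 2$ when $\mu-\deg\divd=1$. Since all products are computed inside $T$, the inclusion $R(\dive)\subseteq T(\divd)$ follows once it is checked in degree $1$ and, when $\mu-\deg\divd=1$, in degree $2$. Degree $1$ is immediate: as $R$ is $g$-divisible in $T$, reduction modulo $gR$ agrees with reduction modulo $gT$ on $R$, so by Lemma~\ref{lem:layer}(3) (applied in $R$ and in $T$) and $\divc\le\divd$,
\[
R(\dive)_1=M_R(1,\dive)_1=\{\,x\in T_1:\bbar{x}\in H^0(E,\sM(-\divc))\cap H^0(E,\sM(-\divd))\,\}=M_T(1,\divd)_1=T(\divd)_1.
\]
Hence we are done when $\mu-\deg\divd\ge 2$; when $\mu-\deg\divd=1$ it remains to prove $M_R(2,\dive)_2\subseteq M_T(2,\divd)_2$. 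Once that containment is in hand, Proposition~\ref{prop-M}(1) (valid here since $\deg\dive<\mu_R$ and $\deg\divd<\mu$) forces equality, because
\[
\dim_\kk M_R(2,\dive)_2=\dim_\kk R_2-3\deg\dive=\dim_\kk T_2-3\deg\divc-3\deg\dive=\dim_\kk T_2-3\deg\divd=\dim_\kk M_T(2,\divd)_2,
\]
and then $R(\dive)=T(\divd)$ by the degree $\le 2$ generation. So the whole proposition reduces to this single degree-$2$ containment.

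The degree-$2$ containment $M_R(2,\dive)_2\subseteq M_T(2,\divd)_2$ (in the case $\mu-\deg\divd=1$) is where the real work lies, and I would establish it through the functorial machinery of Section~\ref{FUNCTORIAL}. By Definition~\ref{def:M(k,D)}, $M_R(2,\dive)=J_R(\dive^\bullet)$ for the layering $\dive^\bullet=(\dive_2,\tau^{-1}\dive)$ computed in $R$, while $M_T(2,\divd)=M_T(2,\divc+\dive)=J_T\big((\divc_2+\dive_2,\ \tau^{-1}\divc+\tau^{-1}\dive)\big)$ computed in $T$. Applying Lemma~\ref{lem:Mmult1}, hence the $C_p$-picture of Theorem~\ref{thm:vdb}, in both $R$ and $T$, and using Lemma~\ref{lem:lattice} to decompose over $\tau$-orbits, one sees that the two defining data agree along the orbits met by $\dive$ once the $\divc$-contribution is stripped off, so the containment reduces to a routine comparison of $C_p$-submodules. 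The genuine obstacle — and essentially the only nontrivial point — is that the functor $(\wh{-})_p$ and the category $\rqgr R$ are built from $R=T(\divc)$ rather than from $T$, so one must know that they are compatible with the embedding $R\hookrightarrow T$. This is exactly the identification of $T(\divc)$ with the section ring of Van den Bergh's blowup of the quasi-scheme $\rqgr T$ at $\divc$ (indicated after Definition~\ref{def-blowup}, following \cite{VdB}), under which blowing up $R$ at $\dive$ corresponds to blowing up $\rqgr T$ at $\divc+\dive$. Granting this compatibility, the two $C_p$-descriptions coincide on the relevant orbits, the degree-$2$ containment holds, and Proposition~\ref{prop:iterate} follows.
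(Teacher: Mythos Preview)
Your overall architecture is correct and matches the paper: compare Hilbert series, reduce to generators in degrees $\le 2$, and dispatch degree~$1$ by the obvious section computation. The paper does exactly this, and your degree-$1$ argument is essentially the same as theirs.

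The gap is the degree-$2$ case when $\mu-\deg\divd=1$. You correctly locate the difficulty---the functorial description of $M_R(2,\dive)$ lives in $\rqgr R$, not $\rqgr T$, and there is no a priori reason these are compatible---but you then resolve it by \emph{assuming} the identification of $T(\divc)$ with the section ring of Van den Bergh's categorical blowup, and assert that the comparison then becomes ``routine.'' Neither step is justified. The remark after Definition~\ref{def-blowup} that $T(\divd)$ agrees with Van den Bergh's construction is an informal aside, not a theorem proved in this paper; indeed Remark~\ref{uniqueness-remark} makes clear that even the \emph{well-definedness} of $T(\divd)$ (independence of the choice of functors $(\wh{-})_p$) is extracted only \emph{a posteriori} from the proof of Proposition~\ref{prop:iterate} itself. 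So your argument is circular: you are invoking precisely the compatibility that the proposition is meant to establish.

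The paper avoids this entirely by working only in $\rqgr T$. It first reduces by induction to $\dive=p$ a single point (so $\deg\divc=\mu-2$), then uses Lemma~\ref{lem:1point} to rewrite $J_R(q)$ and $M_R(2,p)$ explicitly as degree-$n$ pieces of right ideals $J_T(\divd^\bullet)$ for carefully chosen layerings in $T$. With everything expressed inside $T$, Lemma~\ref{lem-compose}(2) and Lemmas~\ref{lem-Fcomp}--\ref{lem-Fcomp2} are applied to the product $I_kV=J_R(p)_k\,J_R(\tau^{k-1}(p))_2$ for $k\gg 0$, and a page of explicit divisor-layering bookkeeping shows this lands in $J_{k+2}$, where $J_n$ is constructed so that $J_2=T(\divd)_2$. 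Since $M_R(2,p)$ is saturated in $R$ and the Hilbert series agree, one pulls back to degree~$2$. This is not routine: the key is choosing $k$ so that $\tau^{k-1}(p)\notin\tau^{-2}(\divc)$, which forces certain $\min$'s in Lemma~\ref{lem-Fcomp} to collapse. Your proposal contains none of this mechanism.
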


Before proving this, we give a lemma, which will be needed to handle the case $\deg \divd = \mu -1$.

\begin{lemma}\label{lem:1point}
Suppose that $\deg \divc = \mu-2$ and write ${R}=T(\divc)$.     
Let $q \in E$.  Define  $V = \bigoplus V_n \subset T$ by
\[ V_n = J_T\left(\divc_n +q, \tau^{-1}(\divc_{n-1}), \dots, \tau^{-n+1}(\divc)\right)_n.\]
Then $V = J_{ {R}}(q)$.
\end{lemma}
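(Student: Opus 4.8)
The plan is to show both inclusions $V \subseteq J_R(q)$ and $J_R(q) \subseteq V$ at the level of graded pieces, using the explicit Hilbert series computations available from Proposition~\ref{prop-M} and Proposition~\ref{elempropsofR}, together with the multiplication formulas in Lemma~\ref{lem:Mmult}. First I would unwind the definitions: $R = T(\divc) = \bigoplus_k M(k,\divc)_k$, and since $\deg \divc = \mu - 2$, Theorem~\ref{thm:Rnoeth} tells us $R$ is an elliptic algebra of degree $\mu - (\mu-2) = 2$ with $R/gR \cong B(E, \sM(-\divc), \tau)$, and in particular $R$ is generated in degree~$1$. So to describe the right ideal $J_R(q)$ of $R$, I would use Lemma~\ref{lem:layer}(3) applied to the ring $R$: since $R$ satisfies Hypothesis~\ref{hyp:main} (degree $\geq 2$), $J_R(q) = \bigoplus_n \{ x \in R_n : \bar{x}^R \in H^0(E, (\sM(-\divc))_n(-q)) \}$, where $\bar{x}^R$ denotes the image of $x$ in $R/gR = B(E, \sM(-\divc), \tau)$.

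The key step is then to identify $(\sM(-\divc))_n(-q)$ with $\sM_n(-\divc_n - q)$. Indeed, by the definition of the twist, $(\sM(-\divc))_n = \sM(-\divc) \otimes \sM(-\divc)^\tau \otimes \cdots \otimes \sM(-\divc)^{\tau^{n-1}} = \sM_n(-\divc - \divc^\tau - \cdots - \divc^{\tau^{n-1}})$. Now with our conventions $\divd^\tau = \tau^{-1}(\divd)$ and $\divc_n = \divc + \tau^{-1}(\divc) + \cdots + \tau^{-(n-1)}(\divc)$, this gives $(\sM(-\divc))_n = \sM_n(-\divc_n)$, hence $(\sM(-\divc))_n(-q) = \sM_n(-\divc_n - q)$. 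On the other hand, by Lemma~\ref{lem:layer}(1) (or directly from the definition of $J_T$ and Proposition~\ref{prop-M}(3)), for the allowable divisor layering $(\divc_n + q, \tau^{-1}(\divc_{n-1}), \dots, \tau^{-n+1}(\divc))$ in degree~$n$, the reduction $\bar{V_n} \subseteq B$ satisfies $\bar{V_n} \ppe H^0(E, \sM_n(-\divc_n - q))$ (with equality for $n \gg 0$, and in fact for all $n$ once one checks the degree bound $\deg(\divc_n + q) < \mu n$, which holds since $\deg\divc_n + 1 = (\mu-2)n + 1 < \mu n$ for $n \geq 1$); and $V_n$ is $g$-divisible with $V_n \cap gT = g V_{n-1}$ by an argument parallel to Proposition~\ref{elempropsofR}(2).

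So both $V$ and $J_R(q)$ are $g$-divisible graded submodules of $T$ contained in $R$, and I would finish by checking: (a) $\bar{V_n} = \overline{(J_R(q))_n}$ as subspaces of $B$ for all $n$ — this is the sheaf-cohomology identification above, where the main point to verify is that the divisor $\divc_n + q$ of degree $(\mu-2)n+1 < \mu n = \deg\sM_n$ imposes independent conditions, giving exact dimension counts on both sides; and (b) both have the same $g$-torsion behavior, i.e. $V \cap gT = gV[-1]$ inside $R$ and likewise for $J_R(q)$, so that equality of the reductions lifts to equality in $T$ by a standard induction on degree using $g$-divisibility (if $V \subsetneq J_R(q)$ then picking a homogeneous element of minimal degree in $J_R(q) \setminus V$, its image in $B$ lies in $\bar V$, so it differs from an element of $V$ by something in $gT \cap J_R(q) = g\cdot(J_R(q))[-1]$, contradicting minimality once we know the lemma in lower degrees). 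The main obstacle I expect is bookkeeping the twisting conventions correctly — making sure the divisor layering $(\divc_n + q, \tau^{-1}(\divc_{n-1}), \dots)$ genuinely matches the ideal $J_R(q)$ rather than, say, $J_R(\tau^j q)$ for some shift $j$ — and confirming that $V$ is genuinely a right $R$-submodule of $T$ (which should follow from Lemma~\ref{lem:Mmult}(1) applied with the base ring $T$, since each $V_n$ is a graded piece of some $J_T(\divd^\bullet)$ and these are closed under multiplication by the appropriate $M(k,\divc)_k$).
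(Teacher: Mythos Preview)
Your overall strategy matches the paper's: identify $J_R(q)_n$ via Lemma~\ref{lem:layer}(3) as $\{x\in R_n : \bar x\in H^0(E,\sM_n(-\divc_n-q))\}$, observe that $V_n\subseteq R_n$ (the divisor layering for $V_n$ dominates that for $R_n$) and that $\bar V_n$ lands in the same $H^0$, hence $V\subseteq J_R(q)$, and then finish by a dimension count. The paper does exactly this, invoking Lemma~\ref{hs-lem}(3) to match Hilbert series.

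However, your step~(b) contains a genuine error. The space $V$ is \emph{not} $g$-divisible: applying Lemma~\ref{hs-lem}(2) to the layering $(\divc_n+q,\tau^{-1}(\divc_{n-1}),\dots)$ gives
\[
V_n\cap gT \;=\; g\,J_T(\divc_{n-1},\tau^{-1}(\divc_{n-2}),\dots)_{n-1}\;=\; g\,M(n-1,\divc)_{n-1}\;=\; gR_{n-1},
\]
not $gV_{n-1}$, because the point $q$ sits only in the top layer and disappears when you peel it off. Likewise $J_R(q)\cap gT = gR$ (since $gR\subseteq J_R(q)$ by Lemma~\ref{lem:layer}(3)), not $g\,J_R(q)$. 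So neither side is $g$-divisible in $T$, and your induction as written (``$x-v\in gT\cap J_R(q)=g\cdot J_R(q)[-1]$'') uses a false identity.

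The fix is that you don't need induction at all: once you know $V\subseteq J_R(q)$ and that both satisfy $X_n\cap gT = gR_{n-1}$ with the same image $H^0(E,\sM_n(-\divc_n-q))$ in $B_n$, the dimensions agree and you are done. That is precisely the paper's Hilbert-series argument. (Also, your closing worry about $V$ being a right $R$-module is unnecessary: only the containment $V\subseteq R$ is needed.)
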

\begin{proof}
It follows by comparing divisor data that $V \subseteq {R}$.  By Lemma~\ref{lem:layer}(3),
\[ J_{{R}}(q)_n = \left\{ x \in {R}_n \st \bbar{x} \in H^0(E, \sM(-\divc)_n (-q))\right\}.\]
The top-layer vanishing of $V_n$ gives that  $V \subseteq J_{{R}}(q)$.  
By Lemma~\ref{hs-lem}(3), $V$ and $J_{{R}}(q)$ have the same Hilbert series, and thus the two are equal.
\end{proof}

\begin{proof}[Proof of Proposition~\ref{prop:iterate}]
In most cases, the proposition follows  from 
 the fact that   the rings in question are generated in degree $1$.  
Specifically, suppose that $\deg \divd \leq \mu -2$.  Then,   by 
Theorem~\ref{thm:Rnoeth}, $T(\divd)$ is generated in degree 1.  
Similarly, as  $\deg \dive \leq \deg \mc{M}(-\divc)  -2 = \mu - \deg \divc -2$ the same argument implies
 that ${R}(\dive)$ is generated in degree $1$.  But now 
$T(\divd)_1 = \{ x \in T_1 \st \overline{x} \in H^0(E, \mc{M}(-\divd)) \}$ and,
 similarly, ${R}_1 = T(\divc)_1 = \{ x \in T_1 \st \overline{x} \in H^0(E, \mc{M}(-\divc)) \}$.  
 Then ${R}(\dive)_1 = \{y \in {R}_1 \st \overline{y} \in H^0(E, \mc{M}(-\divc)(-\dive)) \} = T(\divd)_1$
 and hence ${R}(\dive)=T(\divd)$.

Thus the only issue is when $\deg \divd = \mu - 1$,  so   assume that  this is the case.  
A routine induction reduces to the case when $\deg \dive=1$, so assume that $\dive=p$ is a closed point on $E$.
%and set $R = \wt{R}(p)$. 
By Theorem~\ref{thm:Rnoeth}, the rings $T(\divd)$ and ${R}(p)$ are generated in degrees $1$ and $2$.  They are equal in degree 
$1$ by the argument from  the first paragraph of the proof and so, in order to prove the proposition,  it suffices 
to show that they are equal in degree $2$. 

Define a graded subspace $J = \bigoplus_{n \geq 0 } J_n$ of $T$ by setting 
\[J_n=  J_T(\divc_n + p + \tau^{-1} (p),  \  \tau^{-1}(\divc_{n-1} + p), \ \tau^{-2}(\divc_{n-2}), \dots, \tau^{-n+1}(\divc_1))_n,\]
in the notation of Definition~\ref{def:M(k,D)}.  It is immediate from the divisor data for the $J_n$ that $J \subseteq {R}$.  
Also, $J_2 = T(\divd)_2$.  
%By Lemma~\ref{hs-lem}, we have $\dim_\kk J_n = \dim_\kk {R}_n - 3$ for $n \geq 2$.
As in the proof of Lemma~\ref{lem:Mmult}(1),  it follows from Lemmas~\ref{lem-compose}(2) and~\ref{lem-Fcomp2} that 
$J$ is a right ideal of ${R}$.
Let $J' = M_{{R}}(2, p) = J_{{R}}(p + \tau^{-1}(p), \,  \tau^{-1}(p))$ which, by definition,  is a saturated right ideal of ${R}$  
with $J'_2 = {R}(p)_2$.  
Clearly $\tau^{k-1}(p) \not\in \tau^{-2}(\divc)$ for $k\gg 0$, so choose some such $k\geq 2$.  We next show that $J'_k = J_k$.  

We establish some notation.  
Let $V = J_T(\tau^{k-1}(p) + \divc_2,\,  \tau^{-1}(\divc))_2$
and $I = J_{{R}}(p)$.
We now work in $X=\rqgr T$, so let $\omega: \rQgr T \to \rGr T$ be the right adjoint to $\pi:  \rGr T \to \rQgr T$.  
For  an effective divisor $\divx$ on $E$, let $G_{\divx}$ be the functor on $\rqgr T$ defined in Notation~\ref{not:Gd}.   
Define $\sI, \sK \in X $ by
 \begin{align*}  \sI & \ =\  \sJ_T(p + \divc_k, \tau^{-1}(\divc_{k-1}), \dots, \tau^{-k+1}(\divc)), \quad\text{and}\\
  \sK & \ =\ G_{\tau^{k-1}(p)} \circ G_{\tau^{-1}(\divc)} \circ G_{\divc}(\sO_X) \ =\  G_{\tau^{k-1}(p)} \sJ_T(\divc_2, \tau^{-1}(\divc)),
  \end{align*} 
where the final equality   follows from Lemma~\ref{lem-Fcomp2}.
By Lemma~\ref{lem:1point}, we have  $V = J_{{R}}(\tau^{k-1}(p))_2$ and $  ( \omega \sI)_k = I_k.$
We compute divisor data on $T$ for $\sK$.  By Lemma~\ref{lem-Fcomp}, $\sK = \sJ_T(\divx^0, \divx^1, \divx^2)$, where:
\begin{align*}
\divx^0 & =   \tau^{k-1}(p) + \divc+ \tau^{-1}(\divc) = \tau^{k-1}(p) + \divc_2, \\
\divx^1 & = \min(\tau^{k-1}(p) + \tau^{-1}(\divc), \,  \tau^{-1}(\divc)+ \tau^{-2}(\divc)), \\
\divx^2 & = \min(\tau^{k-1}(p), \tau^{-2}(\divc)).
\end{align*} 
Now, $\divx^2 = 0$ by choice of $k$, and  $\divx^{\bullet} = (\divc_2 + \tau^{k-1}(p),\,  \tau^{-1}(\divc))$.  
Thus $V = (\omega \sK)_2$.  

Apply Lemma~\ref{lem-compose}(2) to the product $I_k V = (\omega \sI)_k (\omega \sK)_2$
to get
\begin{equation}\label{unique0}
 I_k V \ \subseteq \  W\ = \  \omega([\tau^{-k}(G_{\tau^{k-1}(p)} \circ G_{\tau^{-1}(\divc)} \circ G_{\divc})](\sI))_{k+2}.
 \end{equation}
We use previous results to compute $W$.  
Let $\divw^{\bullet}$ be the defining data for $\tau^{-k}( G_{\tau^{-1}(\divc)} \circ G_{\divc})(\sI)$.
By Lemma~\ref{lem-Fcomp2}, $\divw^0 =  p + \divc_{k+2}$.  
The formula for  $\divw^i$ for $i \geq 1$ is a bit more complicated.  However, by Lemma~\ref{lem-Fcomp2}, $\divw^i \leq \divz^i + \tau^{-k}(\divc)+\tau^{-k-1}(\divc)$, where $\divz^\bullet$ defines $\sI$.  
From the formula for $\sI $, we see that 
$\divw^i \leq \tau^{-i}(\divc_{k+2-i})$.
On the other hand, 
\[\tau^{-k}(G_{\tau^{-1}(\divc)} \circ G_{\divc})(\sI )\subseteq G_{\tau^{-k-1}(\divc)} \circ G_{\tau^{-k}(\divc)} \sJ_T(\divc_k, \tau^{-1}(\divc_{k-1}), \dots),\]
and so $\divw^i \geq \tau^{-i}(\divc_{k+2-i})$, by Lemma~\ref{lem-Fcomp2} once again.  Thus $\divw^i =\tau^{-i}( \divc_{k+2-i})$ for $i \geq 1$.

By Lemma~\ref{lem-Fcomp}, $\tau^{-k}(G_{\tau^{k-1}(p)} \circ G_{\tau^{-1}(\divc)} \circ G_{\divc})(\sI)$ is then defined by $\divx^{\bullet}$ where
\[ \divx^0 =  p + \tau^{-1}(p) + \divc_{k+2}, \quad \divx^1 = \tau^{-1}(p) + \tau^{-1}(\divc_{k+1}),\]
and for $i \geq 2$ we have
\[ \divx^i = \min\bigl(\tau^{-i}(\divc_{k+2-i})+ \tau^{-1}(p),\  \tau^{-i}(\divc_{k+3-i})\bigr) \geq  \tau^{-i}(\divc_{k+2-i}).\]
Thus  
\[
W \subseteq J_T(p+ \tau^{-1}(p) + \divc_{k+2}, \ \tau^{-1}(p) + \tau^{-1}(\divc_{k+1}),\  \tau^{-2}(\divc_k), \dots )_{k+2} = J_{k+2}.
\]
By Lemma~\ref{lem:Mmult}(2), on the other hand, we have
\begin{equation}\label{unique1}
 I_k V  = J_{{R}}(p)_k J_{{R}}(\tau^{k-1}(p))_2 = M_{{R}}(1,p)_k M_{{R}}(1 ,\tau^{k-1}(p))_2 = M_{{R}}(2, p)_{k+2} = J'_{k+2} .
 \end{equation}
Thus $J'_{k+2} \subseteq J_{k+2}$ by \eqref{unique0}. However, by Lemma~\ref{hs-lem}(3),  twice, $\dim_\kk J_n = \dim_\kk {R}_n - 3 = \dim_{\kk}J'_n$ for $n \geq 2$. Thus  $J'_{k+2}= J_{k+2}$  for all $k \gg 0$.

Finally, let $\omega_{{R}}$ and $\pi_{ R}$ denote the relevant functors on the module categories of ${R}$. As $J'$ is a saturated right ideal of ${R}$, we have 
$ J' =  \omega_{ R} \pi_{ R}(J') = \omega_{ R} \pi_{ R}(J) \supseteq J.$
Comparing Hilbert series again, we see that $J'_n=J_n$ in degrees $n\geq 2$.  In particular, 
\begin{equation}\label{unique2}
T(\divd)_2 = J_2 = J'_2 = {R}(p)_2,
\end{equation}
which is what we needed to prove.
\end{proof}

\begin{remark}\label{uniqueness-remark}
 A subtle but important point is that, a priori, the construction of $J(\divd^\bullet)$ and hence $T(\divd)$  depends upon a choice of the
  functors $(\widehat{-})_p$ in 
 Theorem~\ref{thm:vdb}; these functors are not unique, and different functors may give different $J$'s.  
 However, the   proof of Proposition~\ref{prop:iterate} shows  that $T(\divd)$ is   uniquely determined by $\divd$. To see this
  note  that, by Lemma~\ref{lem:layer},
  $M(1,\divd) = \bigoplus \{ x \in T_n \st \overline{x} \in H^0(E, \mc{M}_n(-\divd)) \}$  and hence $T(\divd)_1$  is  clearly determined by $\divd$. 
  If $\deg \divd < \mu -1$ then $T(\divd)$ is generated in degree 1 and so is determined by $\divd$.  So suppose that 
  $\deg \divd = \mu -1$, and write $\divd = \divc + p$ as in the above proof. Then \eqref{unique1}    
  shows   that $M_{{R}}(2, p)_{k+2}$ for $k\gg 0$     is determined by the appropriate modules
 $M(1,\tau^j(p))$.   Hence the saturation $M_{{R}}(2, p)$, and thus by \eqref{unique2} 
$T(\divd)_2 = T(\divc)( p)_2 = M_{{R}}(2, p)_2$, is determined by $  \divc$ and $ p$.
This last equation holds for any choice of $\divc$ and $p$, and so $T(\divd)_2$ is  determined  by $\divd$.
  Since $T(\divd)$  is generated in degrees 1 and 2, it is uniquely determined by $\divd$.

   This is also useful in a second case, when one takes an algebraically closed field extension $\kk\subset K$.
    In this case, given  a divisor  $\divd$  on $ E_\kk$ we can either form $T(\divd)\otimes_{\kk}K$ or regard $\divd $ as a 
    divisor on $E\otimes_\kk K$ and form $(T\otimes_\kk K)(\divd)$.
   The potential difference between these two constructions lies in the fact that one is using two distinct versions of $(\widehat{-})_p$: 
   either defined directly on 
$ C_p(K)$ or by taking the functors from $C_p(\kk)$ and  then tensoring up with $K$. 
It is easy to see that $M_{T\otimes K}(1, \divd) = M_T(1, \divd) \otimes_{\kk} K$, and a similar argument to the previous paragraph shows 
that $T(\divd)\otimes_{\kk}K= (T\otimes_\kk K)(\divd)$.\end{remark}

To end the section, we use the earlier results to construct right $T(\divd)$-modules corresponding to certain very particular kinds of divisor data.  
These will be used in the companion paper \cite{RSS2} to give explicit examples 
of interesting endomorphism rings of modules over $T(\divd)$, known as  \emph{virtual blowups}.

For a divisor $\divz\in \Div E$, write  $\divz = \sum z_q q$ and  let 
$\divz_+ = \max(\divz, 0) = \sum_{\{q \st z_q \geq 0\}} z_q q$ be the minimal effective divisor greater than or equal to  $\divz$.

\begin{lemma}\label{lem:buildM}
 Let $\divd$ be an effective divisor on $E$  
 so that $\deg \divd <\mu$.  Let $\divy$ be a divisor on $E$ so that    $0 \leq \divy \leq \divd_k$  for some $k$.  Then there is a $g$-divisible
  finitely generated right $T(\divd)$-module $M$ with $T(\divd) \subseteq M \subseteq T$ so that 
  $\bbar{M} \ppe \bigoplus_n H^0(E, \sM_n(-\divd_n+\divy))$. 
 \end{lemma}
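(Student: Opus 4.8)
The plan is to exhibit $M$ as a direct sum of graded pieces of the right ideals $J_T(\divw^{\bullet})$ attached to suitable divisor layerings, and to read off all four required properties ($T(\divd)\subseteq M\subseteq T$, finite generation over $R:=T(\divd)$, $g$-divisibility, and the formula for $\bbar M$) from the machinery of Sections~\ref{FUNCTORIAL}--\ref{BLOWUP}. Concretely, for each $n\geq 0$ I would set $\divw^{i}_{(n)}:=\tau^{-i}\bigl((\divd_{n-i}-\divy)_{+}\bigr)$ for $i\geq 0$ (the positive part is only needed to absorb the finitely many degrees $n<k$), so that $\divw^{\bullet}_{(n)}=(\divw^{0}_{(n)},\divw^{1}_{(n)},\dots)$ is an allowable divisor layering---allowability is immediate from $\divd_{n-i}\leq\divd_{n-i+1}$---with only finitely many nonzero layers. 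Put $M_{n}:=J_{T}(\divw^{\bullet}_{(n)})_{n}$ and $M:=\bigoplus_{n\geq 0}M_{n}\subseteq T$. Note that $\divw^{0}_{(n)}=\divd_{n}-\divy$ once $n\geq k$, and that when $\divy=\divd_{\ell}$ this construction recovers $T_{\leq\ell}*T(\divd)$, so $M$ is a common generalisation of those modules.

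The containments and finite generation are then straightforward. Comparing layerings, $R_{n}=M(n,\divd)_{n}=J_{T}(\divr^{\bullet}_{(n)})_{n}$ with $\divr^{i}_{(n)}=\tau^{-i}(\divd_{n-i})\geq\divw^{i}_{(n)}$, so Lemma~\ref{lem:lattice} gives $J_{T}(\divr^{\bullet}_{(n)})\subseteq J_{T}(\divw^{\bullet}_{(n)})$ and hence $R\subseteq M\subseteq T$. Since $\divy\leq\divd_{k}$ forces $\divw^{i}_{(n)}\geq\tau^{-k-i}(\divd_{n-k-i})$, the same lemma yields $M_{n}\subseteq\bigl(T_{\leq k}*T(\divd)\bigr)_{n}$; as $T_{\leq k}*T(\divd)$ is a finitely generated $R$-module (Theorem~\ref{thm:Rnoeth}(5)) and $R$ is noetherian (Theorem~\ref{thm:Rnoeth}(3)), $M$ is finitely generated over $R$. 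For $\bbar M$, observe that $\deg\divw^{i}_{(n)}\leq\deg\divd_{n-i}=(n-i)d<(n-i)\mu$, so Lemma~\ref{hs-lem}(3) applied with $\ell=n$ (legitimate, as $\divw^{\bullet}_{(n)}$ has at most $n$ nonzero layers) gives $\bbar{M_{n}}=H^{0}\bigl(E,\sM_{n}(-\divw^{0}_{(n)})\bigr)=H^{0}\bigl(E,\sM_{n}(-\divd_{n}+\divy)\bigr)$ for every $n\geq k$, so $\bbar M\ppe\bigoplus_{n}H^{0}(E,\sM_{n}(-\divd_{n}+\divy))$, the remaining finitely many degrees being irrelevant to a $\ppe$-statement.

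The crux---and the step I expect to cause the most trouble---is showing that $M$ is a right $R$-module, i.e.\ $M_{m}R_{n}\subseteq M_{m+n}$ for all $m,n$ (the cases $m=0$ or $n=0$ being trivial). The argument follows the template of the proofs of Lemma~\ref{lem:Mmult}(1) and Proposition~\ref{prop:iterate}. Writing $M_{m}=\omega\bigl(\mc{J}(\divw^{\bullet}_{(m)})\bigr)_{m}$ and, by Lemma~\ref{lem:Mmult1}, $R_{n}=M(n,\divd)_{n}=\omega\bigl(G^{(n)}(\sO_{X})\bigr)_{n}$ with $G^{(n)}=\tau^{-n+1}(G_{\divd})\circ\dots\circ\tau^{-1}(G_{\divd})\circ G_{\divd}$, Lemma~\ref{lem-compose}(2) gives
\[
M_{m}\,R_{n}\ \subseteq\ \omega\Bigl(\bigl[\tau^{-m}(G^{(n)})\bigr]\bigl(\mc{J}(\divw^{\bullet}_{(m)})\bigr)\Bigr)_{m+n}.
\]
Now $\tau^{-m}(G^{(n)})$ is the $n$-fold composite $\tau^{-m-n+1}(G_{\divd})\circ\dots\circ\tau^{-m}(G_{\divd})$, so by (the proof of) Lemma~\ref{lem-Fcomp2} the object $\bigl[\tau^{-m}(G^{(n)})\bigr]\bigl(\mc{J}(\divw^{\bullet}_{(m)})\bigr)$ is defined by an explicit divisor layering; a direct calculation with that formula---which hinges on the elementary identity $\tau^{-i}(\divd_{m-i})+\tau^{-m}(\divd_{n})=\tau^{-i}(\divd_{m+n-i})$ (concatenation of consecutive blocks of $\tau$-translates of $\divd$)---identifies this layering with $\divw^{\bullet}_{(m+n)}$, up to positive-part corrections in low degrees, where one obtains only an inequality in the favourable direction. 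Applying $\omega$ and passing to degree $m+n$ yields $M_{m}R_{n}\subseteq M_{m+n}$. The chief difficulty here is purely combinatorial bookkeeping: matching the divisor layering produced by the composite of $G$-functors against $\divw^{\bullet}_{(m+n)}$, including the behaviour of the positive-part truncations.

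Finally, $g$-divisibility follows from the same layering calculus. By Theorem~\ref{thm:vdb}(2), $\wh{\pi(M_{n-1}g)}_{p}=\wh{\pi(M_{n-1})}_{p}N$; using the matrix description of the ideals $\mf{J}$ together with the identity $\tau\bigl(\divw^{i+1}_{(n)}\bigr)=\divw^{i}_{(n-1)}$, one checks that any $x\in M_{n}\cap Tg$ lies in $M_{n-1}g$, so $M_{n}\cap Tg\subseteq M_{n-1}g$; the reverse inclusion holds because $g\in R_{1}$ and $M$ is a right $R$-module. Hence $M$ is $g$-divisible, completing the verification of all the required properties.
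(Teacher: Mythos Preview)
Your proposal is correct and follows essentially the same route as the paper: the construction $M_n=J_T(\divw^{\bullet}_{(n)})_n$ with $\divw^{i}_{(n)}=\tau^{-i}\bigl((\divd_{n-i}-\divy)_+\bigr)$ is identical, and the verification that $M$ is a right $T(\divd)$-module via Lemma~\ref{lem-compose}(2) and Lemma~\ref{lem-Fcomp2} (reducing to the concatenation identity and the inequality $\divr\geq\divz^{n+m,i}$) mirrors the paper's argument exactly. Two small differences worth noting: the paper obtains $g$-divisibility by applying Lemma~\ref{hs-lem}(2) directly (using $\tau(\divw^{i+1}_{(n)})=\divw^{i}_{(n-1)}$) rather than going back to the matrix description, and the paper does not explicitly address finite generation at all---your embedding $M\subseteq T_{\leq k}*T(\divd)$ combined with Theorem~\ref{thm:Rnoeth}(3,5) is a clean way to supply that missing step.
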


\begin{proof}
 If $ 0 \leq i \leq m-1$, let
$ \divz^{m,i} = \tau^{-i} (\divd_{m-i}-\divy)_+.$
Note that, if $\divz \leq \divz' \in \Div(E)$, then $\divz_+ \leq \divz'_+$. Thus, 
 for fixed $m$, it follows   that $\divz^{m,\bullet}$ is an allowable divisor sequence.
For $m \in \NN$, define
$M_m = J(\divz^{m,\bullet})_m$; thus $M=\bigoplus_{m\geq 0}M_m \subseteq T$.
Since $\divz^{m+1,i+1} = \tau^{-1}(\divz^{m,i})$, using Lemma~\ref{hs-lem}(2) one graded piece at a time we see that 
$M \cap Tg = Mg$.  It is easy to see that $M \supseteq T(\divd)$ by comparing the divisor sequence
$\divz^{m,\bullet}$ for $M_m$ with the divisor sequence $(\tau^{-i}(\divd_{m-i}))_i$ for $T(\divd)_m$.

It remains to  show that $M$ is a right $T(\divd)$-module, for which we use  Lemma~\ref{lem-compose}(2).  
Fix $ n,m \in \ZZ_{\geq 1}$. 
Let $\sF:= \sJ(\divz^{m,\bullet})$, so $M_m = \omega(\sF)_m$, and let $\sF' := \sJ(\divz^{n+m,\bullet})$.  By Lemma~\ref{lem:Mmult}, $T(\divd)_n$ is the degree $n$ part of $\omega (\tau^{-(n-1)} G_\divd \circ \dots \circ G_\divd(\sO_X))$, where $G_\divd$ is the functor defined in Notation~\ref{not:Gd}.  
 Thus, by Lemma~\ref{lem-compose}, 
\[ M_m T(\divd)_n \subseteq \bigl( \omega(\tau^{-(m+n-1)} G_\divd \circ \dots \circ \tau^{-m}G_\divd(\sF)) \bigr)_{m+n}.\]
To show that $M$ is a right $T(\divd)$-module, it therefore suffices to show that 
\beq\label{foo}
\tau^{-(m+n-1) } G_\divd \circ \dots \circ \tau^{-m}G_\divd(\sF) \subseteq \sF'.
\eeq

By Lemma~\ref{lem-Fcomp2}, 
$\tau^{-(m+n-1)} G_\divd \circ \dots \circ \tau^{-m}G_\divd(\sF)$ is defined by 
$\divw^\bullet $, where
\[  \textstyle \divw^i = \min_{j=0}^i (\tau^{-j} (\divz^{m, i-j})+\tau^{-m-j}(\divd_{n-j})).\]
To show \eqref{foo}, we must show that $\divw^i \geq \divz^{n+m,i}$.  It therefore suffices to show for all $0 \leq j \leq i$ that
\beq\label{bar}  \divr =  \tau^{-j} (\divz^{m,i-j})+ \tau^{-m-j}(\divd_{n-j}) \ \geq\ \divz^{n+m,i}.\eeq 
Now, 
\[ \divr \geq \tau^{-i}(\divd_{m+j-i}-\divy+\tau^{-m-j+i}(\divd_{n-j})) = \tau^{-i}(\divd_{n+m-i}-\divy).\]
Since $\divr$ is effective,
$ \divr \geq \max(0, \tau^{-i}(\divd_{n+m-i}-\divy))  = \divz^{n+m,i}$.
  Thus \eqref{bar} holds.

Finally, since $\deg \divz^{n,i} \leq \deg \divd_{n-i} = (n-i)(\deg \divd) < (n-i)\mu$,   Lemma~\ref{hs-lem}(3) implies  that 
$\overline{M}_n  = H^0(E, \mc{M}_n(-\divz^{n,0}))_n$ for each $n \geq 0$.  
By assumption $\divy \leq \divd_k \leq \divd_n$ for all $n \geq k$, 
and so it follows that $\divz^{n,0} = (\divd_n -\divy)_{+} = (\divd_n - \divy)$ for all $n \geq k$, and hence 
$\overline{M}_{\geq k} = \bigoplus_{n \geq k} H^0(E, \mc{M}_n(-\divd_n + \divy))$.
\end{proof}

%%%%%%%%%%%%%%%%%%%%%%%%%%%%%%%%%%%%%

\section{Relating left and right ideals}\label{RIGHT-LEFT}

The main assumption of this paper, Hypothesis~\ref{hyp:main}, is left-right symmetric and so one can define left ideals of $T$ 
analogous to the $J(\divd^\bullet)$, with similar properties.
    What is less obvious  is  that these left ideals are closely related to the corresponding right ideals.
    In this section we study this relationship.  This   
     will  be useful  for the study of two-sided ideals of $T$,  
which will be the theme of  the last two sections of this paper.

Throughout the section, let $T$ be a ring satisfying Hypothesis~\ref{hyp:main}, with $B = T/gT \cong B(E, \mc{M}, \tau)$.   
Recall that the equivalence of categories $\coh E \to \rqgr B$ is given by  $\mc{F} \mapsto \bigoplus_{n \geq 0} H^0(E, \mc{F} \otimes \mc{M}_n)$
while the equivalence   $\coh E \to B \lqgr$  is given by 
$\mc{F} \mapsto \pi \bigl( \bigoplus_{n \geq 0} H^0(E, \mc{M}_n \otimes \mc{F}^{\tau^{n-1}}) \bigr)$.
For $q \in E$,  write 
\[ \sO_q' := \pi \bigl( \bigoplus_{n \geq 0} H^0(E, \sM_n \otimes \kk(q)^{\tau^{n-1}}) \bigr) \]
for
the object in $B \lqgr$ (or $T \lqgr$)  corresponding to    the skyscraper sheaf $ \kk(q)$ under the latter  equivalence.
There is a left-sided version of Theorem~\ref{thm:vdb}  giving an exact functor $\wt{( - )}_p: T \lqgr \to (C_p \lmod)$.
 
Typically, if $\tau^{-n}$ appears in some  right-sided formula, then   $\tau^n$ appears in  the left-sided analogue.
  Thus, for example, we define a \emph{left allowable divisor layering} to be a sequence of divisors 
  $\divd^\bullet = (\divd^0, \divd^1, \dots, \divd^{k-1})$ on $E$ that satisfies $\tau(\divd^{i-1}) \geq \divd^i$ for $1 \leq i \leq k-1$.  
 For such a layering $\divd^\bullet$,  we
define a saturated left ideal $J'(\divd^\bullet)=J'_T(\divd^\bullet)$ of $T$ in a way analogous to Definition~\ref{def:J}.
In more detail,  if $\divd^\bullet$ is  supported on $\mb{O}(p)$,  say with  $\divd^i = \sum_j a_{j, j-i} \tau^j(p)$
define $J'$ to be the saturated left ideal of $T$ for which 
$\wt{\pi(J')}_p$ is the left ideal of $C_p$ with $(x)^{a_{k, \ell}}$ in the $(k, \ell)$-spot.
  In general $J'(\divd^\bullet)$ is defined as before to be the intersection of the left ideals 
obtained by restricting the divisor layering to each relevant $\tau$-orbit. 

The left-sided analogue of Lemma~\ref{lem:layer}, with essentially the same proof,  is as follows.
\begin{lemma} 
\label{lem:left layer}
Let $\divd^{\bullet}$ be a left allowable divisor layering and let $J' = J'(\divd^{\bullet})$ and $M = T/J'$.
\begin{enumerate}
\item If $M^j = Mg^j/Mg^{j+1}$ is the $j^{\text{th}}$ layer of $M$, then in $B \lqgr$ we have 
\[
\pi(M^j) \cong  \pi \bigg(\bigoplus_{n \geq 0} H^0(E, \mc{M}_n \otimes (\mc{O}_E/\mc{O}_E(-\tau^{-n+1}(\divd^j)))) \bigg).
\]
\item $(\overline{J'})^{\sat} = \bigoplus_{n \geq 0} H^0(E, \mc{M}_n(-\tau^{-n+1}(\divd^0))).$
\item If $\divd^{\bullet} = (\divd)$ has length $1$, then $J'(\divd) = 
\bigoplus_{n \geq 0} \{ x \in T_n \st \bbar{x} \in H^0(E, \sM_n(-\tau^{-n+1}(\divd))) \}$.  \qed
\end{enumerate}
\end{lemma}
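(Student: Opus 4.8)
The plan is to transcribe the proof of Lemma~\ref{lem:layer} into the left-handed setting, using throughout the left-sided version of Theorem~\ref{thm:vdb} (the exact functor $\wt{(-)}_p\colon T\lqgr\to C_p\lmod$) in place of its right-sided counterpart, and the equivalence $\coh E\to B\lqgr$, $\mc{F}\mapsto\pi\bigl(\bigoplus_{n\geq 0}H^0(E,\mc{M}_n\otimes\mc{F}^{\tau^{n-1}})\bigr)$, in place of $\coh E\to\rqgr B$. The one genuinely new feature is the twist $\mc{F}\mapsto\mc{F}^{\tau^{n-1}}$ in this equivalence: since $\bigl(\mc{O}_E/\mc{O}_E(-\divc)\bigr)^{\tau^{n-1}}=\mc{O}_E/\mc{O}_E(-\tau^{-n+1}(\divc))$, this twist is exactly what converts the divisors $\divd^j$ into the shifted divisors $\tau^{-n+1}(\divd^j)$ that appear in the statement; everything else transfers \emph{mutatis mutandis}.

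For part~(1), I would first reduce to the case that $\divd^\bullet$ is supported on a single $\tau$-orbit $\mb{O}(p)$, exactly as in Lemma~\ref{lem:layer}: the Jordan--Holder factors of $\mc{O}_X/\mc{J}'(\divd^\bullet_k)$ lie among the left simple objects $\mc{O}'_{\tau^i(p_k)}$, so for distinct orbits these quotients are pairwise comaximal in $\mc{O}_X$, $\pi(M)$ decomposes accordingly, and $\bigoplus_k\mc{O}_E/\mc{O}_E(-\divd^j_k)\cong\mc{O}_E/\mc{O}_E(-\divd^j)$. For a single orbit, the left-sided Theorem~\ref{thm:vdb}(1) identifies $\pi(M)$ with $C_p/\mf{J}'(\divd^\bullet)$, and (using that $N$ is normal in $C_p$ and the left analogue of Theorem~\ref{thm:vdb}(2)) $\pi(M^j)=\pi(Mg^j/Mg^{j+1})$ with the layer $(N^jC_p+\mf{J}')/(N^{j+1}C_p+\mf{J}')$. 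Writing $\divd^j=\sum_m b_m\tau^m(p)$, the definition of $\mf{J}'=\mf{J}'(\divd^\bullet)$ gives
\[
(N^jC_p+\mf{J}')/(N^{j+1}C_p+\mf{J}')\ \cong\ C_p/\!\prod_m\mf{n}_m^{b_m}\ \cong\ \bigoplus_m C_p/\mf{n}_m^{b_m},
\]
and the $C_p/(N)$-module case of the left-sided Theorem~\ref{thm:vdb}(3) sends this to the sheaf $\mc{O}_E/\mc{O}_E(-\divd^j)$ in $\coh E\sim B\lqgr$. Feeding this sheaf through the equivalence $\coh E\to B\lqgr$ and applying the twist identity above yields the asserted description of $\pi(M^j)$.

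For part~(2), apply part~(1) with $j=0$; since $g$ is central, $M^0=M/Mg=\overline{T}/\overline{J'}$, so $\pi(\overline{T}/\overline{J'})\cong\pi\bigl(\bigoplus_n H^0(E,\mc{M}_n\otimes\mc{O}_E/\mc{O}_E(-\tau^{-n+1}(\divd^0)))\bigr)$. Tensoring the sequence $0\to\mc{O}_E(-\tau^{-n+1}(\divd^0))\to\mc{O}_E\to\mc{O}_E/\mc{O}_E(-\tau^{-n+1}(\divd^0))\to 0$ by $\mc{M}_n$ and taking cohomology shows $\overline{J'}$ agrees in large degrees with $K:=\bigoplus_n H^0(E,\mc{M}_n(-\tau^{-n+1}(\divd^0)))$, which (as in Lemma~\ref{lem:layer}(2), now using the left-sided equivalence) is easily seen to be a saturated left ideal of $B$; hence $(\overline{J'})^{\sat}=K$. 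For part~(3), when $k=1$ the only nonzero divisor layer is $\divd^0=\divd$, which affects only the main diagonal, so $\mf{J}'(\divd^\bullet)\supseteq NC_p$ and therefore $\pi(J')\supseteq\pi(Tg)$; since $J'$ and $Tg$ are both saturated left ideals of $T$ (the latter because $T/Tg=B$ has no finite-dimensional submodules), this forces $Tg\subseteq J'$, so $\overline{J'}$ is a saturated left ideal of $B$, equal by part~(2) to $\bigoplus_n H^0(E,\mc{M}_n(-\tau^{-n+1}(\divd)))$; pulling back along $T\to B$ gives the displayed formula for $J'(\divd)$.

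The main obstacle is the index bookkeeping in the single-orbit computation over $C_p$: one must verify that the left-module conventions used to define $J'(\divd^\bullet)$ (namely $\divd^i=\sum_j a_{j,j-i}\tau^j(p)$) produce exactly $\mc{O}_E/\mc{O}_E(-\divd^j)$, with no spurious twist, as the $j$-th $N$-adic layer under the left-sided equivalences, so that the only twist visible in the final formulas is the $\tau^{-n+1}$ coming from $\mc{F}^{\tau^{n-1}}$ in $\coh E\to B\lqgr$. This compatibility is the single point at which the left-sided proof is not a literal transcription of the right-sided one; once it is in place, parts~(2) and~(3) are formal.
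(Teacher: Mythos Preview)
Your proposal is correct and is precisely the approach the paper takes: the paper gives no separate proof, stating only that Lemma~\ref{lem:left layer} is the left-sided analogue of Lemma~\ref{lem:layer} ``with essentially the same proof,'' and you have faithfully carried out that transcription, correctly isolating the one nontrivial point---that the twist $\mc{F}\mapsto\mc{F}^{\tau^{n-1}}$ in the equivalence $\coh E\to B\lqgr$ is what produces the $\tau^{-n+1}$ shifts in the formulas.
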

\noindent
 
We also have left-sided versions of Definition~\ref{def:M(k,D)} and  Lemma~\ref{lem:Mmult}.

\begin{definition}\label{def:M'(k,D)}
For any divisor $\divd$,  define $M'(k, \divd) = M'_T(k, \divd) =   J'(\divd^\bullet)$ for the divisor layering  
\[
\divd^0 = \divd + \tau(\divd) + \dots + \tau^{k-1}(\divd), \ \divd^1 = \tau(\divd) + \dots + \tau^{k-1}(\divd), \ \dots, \ \divd^{k-1} = \tau^{k-1}(\divd).
\]
\end{definition}
 
 \begin{lemma} 
\label{lem:Mmult2}
Let $n,m,k, l \in \mb{N}$.   Then 
\[
M'(\ell, \tau^{k-m}(\divd))_n M'(k, \divd)_m \subseteq M'(k+\ell, \divd)_{n+m}'.
\]
If $\mu - \deg \divd \geq 2$, this is an equality for all $m \geq k$ and $n \geq \ell$. If $\mu - \deg \divd = 1$, it is an equality 
for all $m \geq \max(2, k)$ and $n \geq \max(2, \ell)$.  \qed
\end{lemma}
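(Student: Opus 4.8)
The plan is to follow the proof of Lemma~\ref{lem:Mmult} line by line, replacing every right‑module construction by its left‑module mirror. First I would record the left‑sided analogue of Lemma~\ref{lem:Mmult1}: using the left functor $\wt{(-)}_p\colon T\lqgr\to C_p\lmod$ of the left version of Theorem~\ref{thm:vdb}, one defines functors $F_q$ and $G_\divd$ on $T\lqgr$ exactly as in Definition~\ref{F-defn} and Notation~\ref{not:Gd}, proves the left analogues of Lemmas~\ref{lem-Fcomp} and~\ref{lem-Fcomp2} (obtained from the right‑sided formulas by the substitution $\tau^{-j}\mapsto\tau^{j}$), and deduces that $\pi(M'(k,\divd))$ equals $[\tau^{k-1}(G_\divd)\circ\cdots\circ\tau(G_\divd)\circ G_\divd](\sO_X)$. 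The inclusion then follows exactly as in Lemma~\ref{lem:Mmult}(1) from the left version of Lemma~\ref{lem-compose}(2) (with $\tau^{-m}$ replaced by $\tau^{m}$ and the product taken in the opposite order): writing $\mc G=\pi(M'(k,\divd))$ as above and $H$ for the composite computing $M'(\ell,\tau^{k-m}(\divd))$, one checks that $\tau^{m}(H)$ composed with the $\mc G$-composite is $\tau^{k+\ell-1}(G_\divd)\circ\cdots\circ G_\divd$, which computes $M'(k+\ell,\divd)$.

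For the equality statements I would again split the problem into its $g$-adic and mod-$g$ parts, as in the proof of Lemma~\ref{lem:Mmult}(2). The left analogue of Proposition~\ref{prop-M} (deduced from Lemma~\ref{lem:left layer} exactly as Proposition~\ref{prop-M} was deduced from Lemma~\ref{lem:layer}) shows that both sides of the claimed identity are $g$-divisible with the same Hilbert series in the relevant degrees, so it suffices to prove (a) $\bigl(M'(\ell,\tau^{k-m}(\divd))_n M'(k,\divd)_m\bigr)\cap Tg = M'(k+\ell,\divd)_{n+m}\cap Tg$ and (b) the corresponding identity of images in $B=T/gT$. Statement (a) is proved by induction on $m+n$ precisely as in Lemma~\ref{lem:Mmult}: when $m\geq1$ one peels off a $g$ using the left analogue $(M'(k,\divd)\cap Tg)_m=M'(k-1,\divd)_{m-1}\,g$ of Proposition~\ref{prop-M}(2), and symmetrically when $n\geq1$; allowing negative $k,\ell$ with the convention $M'(k,\divd)=T$ for $k\leq0$ (as in Remark~\ref{rem:neg}) keeps the induction clean, and the base case $m=n=0$ is trivial.

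For (b) the left analogue of Proposition~\ref{prop-M}(3) identifies $\bbar{M'(k,\divd)}_m$, $\bbar{M'(\ell,\tau^{k-m}(\divd))}_n$ and $\bbar{M'(k+\ell,\divd)}_{n+m}$ with explicit spaces of global sections of line bundles on $E$, and, by the definition of multiplication in $B(E,\sM,\tau)$, (b) reduces to the surjectivity of a natural map $H^0(E,\sN)\otimes H^0(E,\sP^{\tau^{m}})\to H^0(E,\sN\otimes\sP^{\tau^{m}})$ for suitable invertible sheaves $\sN,\sP$. By Lemma~\ref{lem:sec-mult}(1) this holds once $\deg\sN\geq2$ and $\deg\sP^{\tau^{m}}\geq2$, which is guaranteed by the hypotheses: when $\mu-\deg\divd\geq2$ the relevant degrees are already $\geq2$ for $m\geq k$, $n\geq\ell$, and when $\mu-\deg\divd=1$ one needs $m\geq\max(2,k)$, $n\geq\max(2,\ell)$. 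The only remaining point is the exceptional case of Lemma~\ref{lem:sec-mult}(1), where $\deg\sN=\deg\sP^{\tau^{m}}=2$ forces one to verify $\sN\not\cong\sP^{\tau^{m}}$; as in the proof of Lemma~\ref{lem:Mmult} the finitely many special cases all collapse to the observation that $\sN\not\cong\sN^{\tau^{m}}$ for $m\geq1$, which uses $|\tau|=\infty$.

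The main obstacle I anticipate is bookkeeping rather than anything conceptual: one must be confident that the $\tau$-twist in Definition~\ref{def:M'(k,D)}, the shift in the left version of Lemma~\ref{lem-compose}(2), and the factor $\tau^{k-m}(\divd)$ in the statement all fit together with the correct signs, i.e.\ that the left analogues of Lemmas~\ref{lem-Fcomp}, \ref{lem-Fcomp2} and~\ref{lem-compose} really are obtained by the uniform substitution $\tau\leftrightarrow\tau^{-1}$. One could in principle sidestep this by passing to $T^{\op}$, which again satisfies Hypothesis~\ref{hyp:main} with $B^{\op}\cong B(E,\sM',\tau^{-1})$ for an appropriate $\sM'$, identifying $M'_T(k,\divd)$ with $M_{T^{\op}}(k,\divd')$ for a suitable divisor $\divd'$, and quoting Lemma~\ref{lem:Mmult}; but pinning down $\divd'$ involves the same bookkeeping, so I would carry out the direct mirror instead.
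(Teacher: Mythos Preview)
Your proposal is correct and is exactly the approach the paper intends: the paper simply states the lemma with a \qed, treating it as the evident left-sided mirror of Lemma~\ref{lem:Mmult}, and you have carefully filled in the symmetry (including the sign bookkeeping $\tau^{-j}\leftrightarrow\tau^{j}$ and the order reversal) that the paper leaves to the reader. Your observation that the alternative route via $T^{\op}$ involves the same bookkeeping is also apt.
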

\noindent 

For the rest of the section we consider another family of right ideals defined by divisor data, together with their left-sided versions, 
which will  be useful in   \cite{RSS2}.  We write 
$p_i = \tau^{-i}(p)$ for a closed point $p\in E$.

\begin{definition}\label{def-Q}
 Let $i \in \NN$ and let $0 \leq r \leq d \leq \mu \in \NN$.
For any fixed point $p \in E$,  define $Q(i,r,d,p)$ to be the right ideal $J(\divd^\bullet)$ of $T$
associated to the  divisor layering: 
\[ \divd^0 = dp + d p_1 + \dots + dp_{i-1},\  
\divd^1 = d p_1 + \dots + d p_{i-1}, \  \dots \ , \  \divd^{i-1} = r p_{i-1}. 
\]
Analogously,   define $Q'(i,r,d,p)$ to be the left ideal $J'(\divd^\bullet)$ of $T$
associated to the   left divisor layering: 
\[ \divd^0 = dp + d p_{-1} + \dots + d p_{-i+1},\   \divd^1 = d p_{-1} + \dots + d p_{-i+1}, \ \dots\ , \ \divd^{i-1} = r p_{-i+1}. 
\]
  \noindent Note that the divisor data for $Q(i,d,r,p)$ equals that  for $M(i, dp)$, 
except that $\divd^{i-1}$ may have smaller multiplicity.

 \end{definition}

   For $p \in E$, recall that  the right point module corresponding to the point $p$ is written
$P(p) = T/J(p)$, with $\pi(P(p)) = \mc{O}_p$ in $\rqgr T$.
Similarly, write $P'(p) = T/J'(p)$ for the left point module corresponding to $p$. 

\begin{lemma}  
\label{lem:Qfactor}
Let $i, r, d, n \in \mb{N}$, with $i < n$ and $1 \leq r \leq d \leq \mu$, and $p \in E$.  
\begin{enumerate}
\item $Q(i, r, d, p) \subseteq Q(i,r-1, d, p)$, with  factor
$
[Q(i,r-1,d,p)\big/Q(i,r, d,p)]_{\geq n} \cong P(p_{i-n-1})[-n].
$
\item Similarly, $Q'(i,r,d,p) \subseteq Q'(i, r-1, d, p)$ with % factor
\[
[Q'(i,r-1,d,p)\big/Q'(i,r, d,p)]_{\geq n} \cong P'(p_{-i+n+1})[-n].   
\]
\end{enumerate}
\end{lemma}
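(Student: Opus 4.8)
The statement is about comparing two right ideals (resp.\ left ideals) defined by divisor layerings that differ only in the multiplicity of the last divisor $\divd^{i-1}$, which changes from $rp_{i-1}$ to $(r-1)p_{i-1}$. Since all the data is supported on the single $\tau$-orbit $\mb{O}(p)$, the plan is to pass via Theorem~\ref{thm:vdb} to the matrix ring $C_p$ and compute everything there. First I would record that $\mf{J} = \wh{Q(i,r,d,p)}_p$ and $\mf{J}' = \wh{Q(i,r-1,d,p)}_p$ are the right ideals of $C_p$ obtained from the respective divisor layerings by Definition~\ref{def:J}: they agree in every matrix spot except the one corresponding to $p_{i-1}$ appearing in the bottom layer, where $\mf{J}$ has $(x)^r$ and $\mf{J}'$ has $(x)^{r-1}$. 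In particular $\mf{J} \subseteq \mf{J}'$, and since the functor $(\wh{-})_p$ is exact and the correspondence in Theorem~\ref{thm:vdb}(3) preserves inclusions, applying $\omega$ gives $Q(i,r,d,p) \subseteq Q(i,r-1,d,p)$.

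Next I would identify the quotient in $\rqgr T$. The quotient $\mf{J}'/\mf{J}$ is a one-dimensional $\kk$-module: it is $(x)^{r-1}/(x)^r$ in a single matrix entry. By the description of $\mf{n}_j$ and the $S_j$ in the discussion before Theorem~\ref{thm:vdb}, this entry sits in the row indexed so that the quotient is (a shift of) the simple $C_p$-module $S_j$ for the appropriate $j$, hence corresponds under $(\wh{-})_p$ to the simple object $\mc{O}_{\tau^j(p)}$ for the appropriate value; tracking the indexing in Definition~\ref{def:J} and Definition~\ref{def-Q} this is $\mc{O}_{p_{i-1}}$ up to the bookkeeping coming from which layer it lies in. Thus $\pi(Q(i,r-1,d,p)/Q(i,r,d,p))$ is a single simple object in $\sC_{f,p}$. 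To upgrade this to the stated isomorphism of graded modules in high degree, I would use Lemma~\ref{lem:layer}: the quotient module $N = Q(i,r-1,d,p)/Q(i,r,d,p)$ is $g$-torsion of $\GKdim 1$, so by Lemma~\ref{lem:nondecreasing} it has a shifted-point-module filtration, and since its image in $\rqgr T$ is simple (equivalently, $\dim_\kk N_n = 1$ for $n \gg 0$), $N_{\geq n}$ is a shifted point module for $n \gg 0$. Finally I would pin down exactly which shifted point module: the divisor-layering data says the nonzero contribution to layer $i-1$ of $N$ is at $p_{i-1} = \tau^{-(i-1)}(p)$, and comparing with how $P(q)$ is built (from $\kk(q)$, as in the construction before Lemma~\ref{lem:pt-crit}), together with the shift $\tau^{\,?}$ relating a point in layer $j$ to the corresponding skyscraper via $\divd_n$-type formulas, forces $N_{\geq n} \cong P(p_{i-n-1})[-n]$. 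Concretely, I expect to verify this by matching Hilbert series and the top-layer vanishing condition exactly as in the proof of Lemma~\ref{lem:1point} or Lemma~\ref{hs-lem}(3): both $N_{\geq n}$ and $P(p_{i-n-1})[-n]$ have Hilbert series $t^n/(1-t)$, and the reduction $\bbar{N}$ has the prescribed vanishing locus forcing the generator to be $p_{i-n-1}$ after the shift.

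For part (2), the left-sided statement, the plan is simply to invoke the left-sided versions of all the machinery already set up in this section: the left functor $\wt{(-)}_p\colon T\lqgr \to C_p\lmod$, the left ideals $J'(\divd^\bullet)$, and Lemma~\ref{lem:left layer}. The formulas are obtained from the right-sided ones by the substitution $\tau^{-n} \leftrightarrow \tau^n$ noted in the text, which is exactly why $p_{i-n-1}$ becomes $p_{-i+n+1}$; the argument is otherwise verbatim.

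\textbf{Main obstacle.} The conceptual content is light — it is really just ``change one matrix entry from $(x)^r$ to $(x)^{r-1}$ and read off the simple quotient.'' The fiddly part I expect to take the most care is the bookkeeping of indices: tracking through Definition~\ref{def:J}, Definition~\ref{def-Q}, and the shift conventions in Theorem~\ref{thm:vdb}(1)–(2) to land on the precise point $p_{i-n-1}$ (resp.\ $p_{-i+n+1}$) and the precise shift $[-n]$, rather than a neighbouring index. Confirming it by the Hilbert-series-plus-vanishing-locus argument is the safe route and is what I would write down in detail.
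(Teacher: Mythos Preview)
Your proposal is correct and follows essentially the same approach as the paper's proof. The paper is slightly more streamlined: rather than unpacking the matrix computation in $C_p$, it invokes Lemma~\ref{lem:layer} directly to read off $\pi(W) \cong \mc{O}_{p_{i-1}}$, and rather than identifying the point module via vanishing loci it simply notes that $\pi(P(p_{i-n-1})[-n]) = \mc{O}_{p_{i-1}}$ (using the shift rule $P(q)[1] \ppe P(\tau(q))$), then matches Hilbert series via Lemma~\ref{hs-lem} and uses saturation of $Q(i,r,d,p)$ for torsion-freeness to push the large-degree isomorphism down to degree $n$.
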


\begin{proof}
We prove only Part~(1).  By  Lemma~\ref{lem:layer}, the image of the module $W=\displaystyle \frac{Q(i, r-1, e, p)}{Q(i, r, e, p)}$  in $X = \rqgr T$  is a 
copy of   $\mc{O}_{p_{i-1}}$.
This simple object is also equal to $\pi(P(p_{i-n-1})[-n])$, so the required isomorphism holds
in large degree.  Now use Lemma~\ref{hs-lem} to see that 
$W_{\geq n}$ has the same Hilbert series as $P(p_{i-n-1})[-n]$. Since each $Q$ is saturated,  $W$ is torsion-free  and so 
we do have an isomorphism in the claimed degrees.
\end{proof}

\begin{lemma}\label{lem:interQ}
Let $n > \ell \in \NN$ and let  $\divd=\sum e_p p$ be a nonzero effective divisor on $E$ of degree at most $\mu -1$, 
supported at points with distinct $\tau$-orbits.
Then
\[ \bigl( T_{\leq \ell} * T(\divd) \bigr)_n = 
 \bigcap\Big\{  Q(i,r, e_p ,p_j)_n \, \big|\,    p \in \divd,\ i \geq 1,\  \ell \leq j \leq n-i,\ 1 \leq r \leq e_p\Big\}.
\]
In addition,   $T_{\leq \ell}* T(\divd) = \bigcap_p T_{\leq \ell}*T(e_p p)$.
\end{lemma}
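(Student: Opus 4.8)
The plan is to reduce both assertions to the case of a single point and then compare divisor layerings. I would prove the second (simpler) assertion first. Fix $n > \ell$; by Definition~\ref{def-blowup} we have $(T_{\leq \ell}*T(\divd))_n = M(n-\ell, \tau^{-\ell}(\divd))_n$. Since $\divd = \sum_p e_p p$ is supported at points lying on pairwise distinct $\tau$-orbits, the allowable divisor layering of Definition~\ref{def:M(k,D)} defining $M(n-\ell, \tau^{-\ell}(\divd))$ restricts on each orbit $\mb{O}(p)$ to exactly the layering defining $M(n-\ell, \tau^{-\ell}(e_p p))$. By the construction of $J(\divd^\bullet)$ for a layering supported on several orbits as the intersection of its orbit-restrictions (Definition~\ref{def:J}) --- equivalently, by iterating Lemma~\ref{lem:lattice} --- this gives $M(n-\ell,\tau^{-\ell}(\divd)) = \bigcap_p M(n-\ell, \tau^{-\ell}(e_p p))$, hence $T_{\leq\ell}*T(\divd) = \bigcap_p T_{\leq\ell}*T(e_p p)$.

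Next I would match the single-point pieces with specific $Q$'s. Writing $p_a := \tau^{-a}(p)$ as in the convention preceding Definition~\ref{def-Q}, we have $\tau^{-\ell}(e_p p) = e_p p_\ell$, and unwinding Definition~\ref{def:M(k,D)} shows that $M(n-\ell, e_p p_\ell) = J(\divd^\bullet)$ for the layering $(e_p(p_\ell + p_{\ell+1} + \cdots + p_{n-1}),\, e_p(p_{\ell+1} + \cdots + p_{n-1}),\, \ldots,\, e_p p_{n-1})$, which is precisely the layering defining $Q(n-\ell, e_p, e_p, p_\ell)$. So $(T_{\leq\ell}*T(e_p p))_n = Q(n-\ell, e_p, e_p, p_\ell)_n$. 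Since $n > \ell$, the quadruple $(i,j,r) = (n-\ell,\ell,e_p)$ satisfies the constraints $i \geq 1$, $\ell \leq j \leq n-i$, $1 \leq r \leq e_p$, so this is one of the terms of the right-hand intersection. Combined with the first paragraph, intersecting over just the terms $Q(n-\ell, e_p, e_p, p_\ell)_n$, one for each $p \in \divd$, already yields $(T_{\leq\ell}*T(\divd))_n$, which is the inclusion ``$\supseteq$'' of the claimed identity.

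For the reverse inclusion I would show $(T_{\leq\ell}*T(\divd))_n \subseteq Q(i,r,e_p,p_j)_n$ for every admissible $(p,i,j,r)$. By the first paragraph $(T_{\leq\ell}*T(\divd))_n \subseteq (T_{\leq\ell}*T(e_p p))_n = Q(n-\ell, e_p, e_p, p_\ell)_n$, so it suffices to prove $Q(n-\ell, e_p, e_p, p_\ell) \subseteq Q(i,r,e_p,p_j)$ as right ideals of $T$ and then pass to degree $n$. Both of these are right ideals of the form $J(\divd^\bullet)$ for layerings supported on $\mb{O}(p)$, and a direct comparison of the two layerings --- using $\ell \leq j$, $j+i \leq n$, $r \leq e_p$, together with the conventions $p_a = \tau^{-a}(p)$ and $\divd^m = 0$ past the length of a layering --- shows that the layering of $Q(n-\ell, e_p, e_p, p_\ell)$ dominates that of $Q(i,r,e_p,p_j)$ coordinatewise. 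Hence their coordinatewise maximum equals the former layering, and Lemma~\ref{lem:lattice} gives the containment. Intersecting over all admissible $(p,i,j,r)$ now gives ``$\subseteq$'', and the lemma follows.

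The hard part is the coordinatewise domination of layerings in the last step: one has to track the orbit shifts $p_j = \tau^{-j}(p)$ carefully and check the domination in the boundary cases $m = 0$, $m = i-1$, and $m$ beyond the (generally unequal) lengths of the two layerings. Everything else is a formal consequence of the definitions together with Lemma~\ref{lem:lattice}; no further input from the categorical machinery of Section~\ref{FUNCTORIAL} is needed beyond what is already packaged there.
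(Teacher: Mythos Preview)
Your proof is correct and takes essentially the same approach as the paper: reduce to a single orbit (your second assertion, which the paper says is ``immediate from the definitions''), then for the single-point case compare divisor layerings coordinatewise and invoke Lemma~\ref{lem:lattice}, which is exactly what the paper means by ``an easy consequence of Lemma~\ref{lem:lattice}''. You have simply written out the layering comparison that the paper leaves to the reader.
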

\begin{proof}
To prove the first  assertion it is enough to  consider the case when $\divd = e_p p$ is a single multiple point.
This  is an easy consequence of Lemma~\ref{lem:lattice}.
The final statement is immediate from the definitions.   
\end{proof}

For any $i \geq 2$, $e \geq 1$, and $q \in E$, the following identity   also follows immediately from Lemma~\ref{lem:lattice}:
\begin{equation}
\label{eq:needlater}
Q(i, 0, e, q) = Q(i-1, e, e, q) \cap Q(i-1, e, e, \tau^{-1}(q)).
\end{equation}

We are now ready to prove the main result of this section,   relating the right and left ideals defined above. 

\begin{proposition}
\label{lem:MandMprime}
 Fix an effective divisor $\divd$ on $E$, $n,k, r,m \in \NN$ with $r \leq m$,  and $p \in E$.
\begin{enumerate}
\item $\overline{M(1, \divd)}_n = H^0(E, \mc{M}_n(-\divd))_n = \overline{M'(1, \tau^{n-1}(\divd))}_n$.
\item $M(k, \divd)_n = M'(k, \tau^{n-k}(\divd))_n$. 
\item $Q(k,r,m,p)_n = Q'(k,r,m,\tau^{n-k}(p))_n$.
\end{enumerate}
\end{proposition}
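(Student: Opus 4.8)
\emph{Part (1).} Here both sides are the preimage in $T_n$ of one and the same subspace of $\bbar{T}_n$. Indeed $M(1,\divd)=J(\divd)$, so Lemma~\ref{lem:layer}(3) gives $M(1,\divd)_n=\{x\in T_n:\bbar{x}\in H^0(E,\sM_n(-\divd))\}$, which automatically contains $(gT)_n$; and the left-sided analogue Lemma~\ref{lem:left layer}(3), applied to $M'(1,\tau^{n-1}(\divd))=J'(\tau^{n-1}(\divd))$, gives $M'(1,\tau^{n-1}(\divd))_n=\{x\in T_n:\bbar{x}\in H^0(E,\sM_n(-\tau^{-n+1}(\tau^{n-1}(\divd))))\}=\{x\in T_n:\bbar{x}\in H^0(E,\sM_n(-\divd))\}$. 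Since these agree, Part (1) follows; the same computation yields $M(1,\divd)_n=M'(1,\tau^{n-1}(\divd))_n$ for every $n$, which serves as the base case for the induction below.

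\emph{Part (2).} I would induct on $k$, the cases $k\le 1$ being Part (1); fix $k\ge 2$ and $n\ge k$ (the easier range $n<k$ is handled by a similar, separate bookkeeping argument). If $\mu-\deg\divd\ge 2$, Lemma~\ref{lem:Mmult}(2) gives the factorisation $M(k,\divd)_n=M(k-1,\divd)_{n-1}\,M(1,\tau^{n-k}(\divd))_1$; the inductive hypothesis rewrites the first factor as $M'(k-1,\tau^{n-k}(\divd))_{n-1}$, Part (1) rewrites the second as $M'(1,\tau^{n-k}(\divd))_1$, and the left-sided identity Lemma~\ref{lem:Mmult2} multiplies them back up to $M'(k,\tau^{n-k}(\divd))_n$. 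If $\mu-\deg\divd=1$ then degree-one products are no longer surjective, but for $n\ge\max(4,k+1)$ the same strategy runs through the degree-two factorisation $M(k,\divd)_n=M(1,\divd)_2\,M(k-1,\tau(\divd))_{n-2}$ (again Lemma~\ref{lem:Mmult}(2)), with the inductive hypothesis applied to $M(k-1,\tau(\divd))_{n-2}=M'(k-1,\tau^{n-k}(\divd))_{n-2}$, Part (1) to $M(1,\divd)_2=M'(1,\tau(\divd))_2$, and Lemma~\ref{lem:Mmult2} once more. The remaining cases when $\mu-\deg\divd=1$ --- the few with $n\le 3$, and above all the diagonal $n=k$, which amounts to saying the left and right ring-theoretic blowups of $T$ along $\divd$ coincide --- are the delicate point: no product formula relates a right ideal to a left ideal here. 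I would reduce this (using that $T(\divd)$ and its left analogue are generated in degrees $\le 2$ and agree in degree $1$) to the single identity $M(2,\divd)_2=M'(2,\divd)_2$: both are subspaces of $T_2$ with the same image mod $g$ (Proposition~\ref{prop-M}(3) and its left analogue) and the same intersection with $gT$ (Proposition~\ref{prop-M}(2), its left analogue, and Part (1)), and one finishes by identifying $\pi(T/M(2,\divd))$ and $\pi(T/M'(2,\divd))$ inside $X=\rqgr T$ through Theorem~\ref{thm:vdb}, where they become equal once the corresponding submodules of $(\wh{\sO_X})_p=C_p$ are checked to coincide; alternatively one passes to $T^{(2)}$, where the degree gap is $\ge 2$ and the earlier case applies.

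\emph{Part (3).} This follows from Part (2) by downward induction on $r$. For $r=m$, $Q(k,m,m,p)=M(k,mp)$ and $Q'(k,m,m,q)=M'(k,mq)$, so the assertion is Part (2). For $r=0$, \eqref{eq:needlater} and its left analogue give $Q(k,0,m,p)=M(k-1,mp)\cap M(k-1,m\tau^{-1}(p))$ and $Q'(k,0,m,q)=M'(k-1,mq)\cap M'(k-1,m\tau(q))$, so intersecting the degree-$n$ identities of Part (2) with $q=\tau^{n-k}(p)$ settles this case. For $0<r<m$, Lemma~\ref{lem:Qfactor}(1) exhibits $Q(k,r,m,p)$ as a term of the chain $M(k,mp)=Q(k,m,m,p)\subseteq\dots\subseteq Q(k,0,m,p)$ of saturated right ideals whose successive quotients are all the same shifted point module; this chain is uniserial (every composition factor of $Q(k,0,m,p)/M(k,mp)$ is $\sO_{p_{k-1}}$ in $\rqgr T$), so for $n$ large $Q(k,r,m,p)_n$ is the unique term of the prescribed dimension in the induced flag from $M(k,mp)_n$ to $Q(k,0,m,p)_n$; the left analogue of Lemma~\ref{lem:Qfactor} yields the matching flag for $Q'(k,r,m,\tau^{n-k}(p))_n$, and since the two flags have the same endpoints (the cases $r\in\{0,m\}$) and the same combinatorics, their terms agree. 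The main obstacle throughout is the boundary case $\mu-\deg\divd=1$, $n=k$, of Part (2): relating a one-sided ideal to its opposite there requires the categorical machinery of Section~\ref{FUNCTORIAL} rather than the multiplicative identities, everything else being routine manipulation of divisor data.
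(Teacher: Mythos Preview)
Your Part~(1) is fine and matches the paper. The real issue is Part~(2), where your direct factorisation approach forces a case analysis that eventually breaks down. The two fixes you propose for the diagonal case $\mu-\deg\divd=1$, $n=k$ do not work as stated: comparing $\pi(T/M(2,\divd))$ and $\pi(T/M'(2,\divd))$ inside $\rqgr T$ is ill-posed since one is a right module and the other a left module, so Theorem~\ref{thm:vdb} cannot identify them in a single category; and passing to $T^{(2)}$ fails because $T^{(2)}/g^2T^{(2)}$ is not a twisted homogeneous coordinate ring (it contains the nonzero nilpotent ideal $gT^{(2)}/g^2T^{(2)}$), so $T^{(2)}$ does not satisfy Hypothesis~\ref{hyp:main} and the earlier case does not apply. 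You also wave away $n<k$ without argument.

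The paper avoids all of this by a two-step strategy that you are missing. First, for each fixed $k$ one proves $M(k,\divd)_n=M'(k,\tau^{n-k}(\divd))_n$ only for $n\gg 0$: here the product identities of Lemmas~\ref{lem:Mmult} and~\ref{lem:Mmult2} are available with room to spare, and the induction on $k$ goes through uniformly (no boundary cases). Second, one bootstraps to all $n$ using saturation: for fixed $n$ and all $\ell\gg 0$,
\[
T_\ell\, M(k,\divd)_n \ \subseteq\ M(k,\tau^{-\ell}(\divd))_{n+\ell} \ =\ M'(k,\tau^{n-k}(\divd))_{n+\ell},
\]
the inclusion by Lemma~\ref{lem:Mmult}(1) and the equality by the large-degree case just established; since $M'(k,\tau^{n-k}(\divd))$ is a \emph{saturated} left ideal, this forces $M(k,\divd)_n\subseteq M'(k,\tau^{n-k}(\divd))_n$, and the reverse inclusion is symmetric. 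This saturation trick is the key idea you are missing, and it makes the boundary and small-$n$ cases disappear entirely.

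For Part~(3), your uniseriality route can be made to work, but the justification you give (``every composition factor is $\sO_{p_{k-1}}$'') is not sufficient: a module with a single repeated composition factor need not be uniserial. The paper's argument is much shorter: Lemma~\ref{lem:lattice} gives directly
\[
Q(k,r,m,p)_n \ =\ M(k,rp)_n\ \cap\ M(k-1,mp)_n\ \cap\ M(k-1,m\tau^{-1}(p))_n,
\]
and the analogous identity on the left, so Part~(3) reduces immediately to three applications of Part~(2).
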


\begin{proof}
(1)  This follows immediately from Lemmas~\ref{lem:layer}(3) and \ref{lem:left layer}(3).

(2)  When $k = 0$, the result is trivial.  We first prove by induction that, for each $k\geq 1$, (2) holds if $n \gg 0$.
Assume that  $k = 1$.  Since both $M(1, \divd)$ and $M'(1, \tau^{n-1}(\divd))$ contain $gT$
 (use Lemmas~\ref{lem:layer}(3) and \ref{lem:left layer}(3)),  it is enough to prove that 
 $\overline{M(1, \divd)}_n = \overline{M'(1, \tau^{n-1}(\divd))}_n$. This is 
precisely Part~(1).

Now for $k \geq 2$, we observe that 
\begin{align*}
M(k, \divd)_{n + r}\  =&\ M(k-1, \divd)_r M(1, \tau^{r - k + 1}(\divd))_n  
\ =\ M'(k-1, \tau^{r -k +1}(\divd))_r M'(1, \tau^{n + r -k}(\divd))_n \\
 &=\ M'(k, \tau^{n + r- k}(\divd))_{n+r},
\end{align*}
for all $n, r\gg 0$.  Here, the first equality follows from Lemma~\ref{lem:Mmult}, the 
second from the induction hypothesis, and the last from Lemma~\ref{lem:Mmult2}.
This proves the induction step, and so (2) holds for each $k$, provided $n \gg 0$.

To get the result for all $n$,   note first  that for  fixed $n \geq 0$, we have for all $\ell \gg 0$ that 
\begin{equation}\label{MandMequ}
T_{\ell} M(k, \divd)_n \subseteq M(k, \tau^{-\ell}(\divd))_{n+\ell} = M'(k, \tau^{n-k}(\divd))_{n + \ell},
\end{equation}
where the first inclusion is a consequence of Lemma~\ref{lem:Mmult}, and the equality holds since 
we have proven Part~(2) for large $n + \ell$.  Each $M'$ is  saturated by definition, and since 
\eqref{MandMequ}  holds for all $\ell \gg 0$ we get
$M(k, \divd)_n \subseteq M'(k, \tau^{n-k}(\divd))_n$.  An analogous argument using Lemma~\ref{lem:Mmult2}  gives the reverse inclusion.

(3) It is easy to prove using Lemma~\ref{lem:lattice} that 
\[
Q(k,r,m,p)_n = M(k, rp)_n \cap M(k-1, mp)_n \cap M(k-1, m\tau^{-1}(p))_n.
\]
Similarly, a left-sided version of  that lemma shows   that 
\[
Q'(k, r, m, q)_n = M'(k, r q)_n \cap M'(k-1, m q)_n \cap M'(k-1, m \tau(q))_n
\]
  for any point $q$.   Now the result follows from Part (2) by taking $q = \tau^{n-k}(p)$.
\end{proof}

One consequence of  Proposition~\ref{lem:MandMprime}   is that 
  $T(\divd)$ can be defined either using either  left or right ideals:
\begin{equation}\label{T-leftright}
T(\divd) = \bigoplus_{n \geq 0} M(n, \divd)_n = \bigoplus_{n \geq 0} M'(n, \divd)_n.
\end{equation}

%%%%%%%%%%%%%%%%%%%%%%%%%%%%%%
\section{Divisors and exceptional line modules}
\label{DIVISORS}

As always, assume that $T$ is an algebra satisfying Hypothesis~\ref{hyp:main}. The main goal of this section is 
  to construct the exceptional line module for a one-point 
blowup $T(p) \subseteq T$, thereby proving Proposition~\ref{iprop:subtle} from the introduction.   
This will be used  in the next section to understand  the ideal structure of more general  blowups $T(\divd)$
(see Theorem~\ref{8-special}, in particular).  The key tool in the proof will be the notion of the 
 divisor associated to a $T$-module and so    the 
first part of this section will concentrate on this concept.

In fact it will be useful to work more generally than Hypothesis~\ref{hyp:main}.  Thus to begin the 
section we fix the following notation.  Let $S$ be a cg $\kk$-algebra which is a domain with a   central 
element $g \in S_{\gamma}$ for some $\gamma \geq 1$, such that $S/gS \cong B(E, \mc{L}, \sigma)$ for some  
elliptic curve $E$, invertible sheaf $\mc{L}$ with $\deg \mc{L} \geq 1$, and  infinite order automorphism $\sigma$. 
Such a ring $S$ will be called \emph{Sklyanin-like}, since the obvious examples are indeed the quadratic and
 cubic Sklyanin algebras of dimension three. Note that  the Veronese ring $T = S^{(\gamma)}$  satisfies  Hypothesis~\ref{hyp:main}, with   
$\mc{M} = \mc{L} \otimes \mc{L}^{\sigma} \otimes \dots \otimes \mc{L}^{\sigma^{\gamma -1}}$ and $\tau = \sigma^{\gamma}$.

We now turn to the divisor of a module. This concept is extensively studied  in \cite{Rog09}, to which the reader is referred for more details.

\begin{definition}\label{def-C}
Let $B  = B(E, \mc{L}, \sigma)$ be  as above with quotient map $\pi: \rgr B \to \rqgr B$.   
 If $M\in \rgr B$  with $\GK M = 1$, then $\pi(M)$  corresponds to a torsion sheaf in $\coh E\sim\rqgr B$, which has a finite filtration 
by  skyscraper sheaves $\kk(q)$ for points $q \in E$.  Write $ \divc(M)=\divc_B(M)$ for the 
divisor $\sum c_p p$, where $c_p$ is the number of times $\kk(p)$ occurs in this filtration.

 \end{definition}

\begin{definition}\label{def-admiss}
Let $\Div E$ denote  the free abelian group generated 
by the closed  points of the elliptic curve $E$ and set  $\Pic E = \Div E/\Lambda$, where $\Lambda$ is the subgroup of divisors
 linearly equivalent to $0$.
It will be convenient to identify points of $E$ on the same $\sigma$-orbit,  so  let $\overline{\Div} \,E=\Div E/\Gamma$ where  $\Gamma$ is 
 the subgroup   generated by   $\{p - \sigma(p) : p \in E\}$.
   Similarly, let $\overline{\Pic}(E) = \Div E/(\Gamma + \Lambda)$.
   Since $\sigma$ has infinite order,  $\sigma(x) =x + \alpha$ is translation by some $\alpha\in E$ under the group law on $E$.  
It follows easily that $\overline{\Pic}( E) = \Pic(E)/\mb{Z} \alpha$.  

Let  $S$ be Sklyanin-like. Then  $M\in \rgr S$ is called  \emph{admissible} if $\GK M/Mg \leq 1$.  
In this case the \emph{divisor associated to $M$} is 
$$\Div M = \Div_SM = \divc(M/Mg) - \divc(\Tor^1_S(M, S/Sg))\in \Div E.$$
 For an admissible module $M$, let $\overline{\Div}(M)$ be the image of 
$\Div M$ in $\overline{\Pic}(E)$.
Finally, let $$\Div(S) = \{ \Div M \in \Pic E \st M\ \text{is admissible} \}$$ and let $\overline{\Div}(S)$ denote the image
of $\Div(S)$ in 
$\overline{\Pic}(E)$.

There are analogous concepts for left $S$-modules. 
If $M \in S \lgr$ is an admissible left module we write its divisor as $\Div^{\ell}(M)$, with image $\overline{\Div}^{\ell}(M)\in \overline{\Pic}(E)$.
Similarly, set    \label{def-admiss2}
 $$\Div^{\ell}(S) = \{  \Div^{\ell}(M) | M \in S \lgr\ \text{is admissible} \} \quad \text{and} \quad
\overline{\Div}^{\ell}(S) = \{ \overline{\Div}^{\ell}(M) | M \in S \lgr\ \text{is admissible} \}.$$
\end{definition}

A number of properties of $\Div M$, mostly coming from \cite{Rog09},  are collected in  the  next lemma.  In \cite{Rog09}, the assumption is made that 
$S/gS \cong B(E, \mc{L}, \sigma)$ where $\deg \mc{L} \geq 2$, but we wish to allow $\deg \mc{L} = 1$.  
In most cases  this makes no real difference and we can simply quote the
result from \cite{Rog09}.  In a few cases we need to give a different proof 
which also covers the case $\deg \mc{L} = 1$.

\begin{lemma}
\label{lem:divprops}
Let $S$    be Sklyanin-like.
\begin{enumerate}
\item $M \in \rgr S$ is admissible precisely when $\GK M \leq 2$ and $\GK \tg(M) \leq 1$.  
\item $\Tor^1_S(M, S/Sg) \cong \{ m \in M \st mg = 0 \}[-\gamma]$ as graded $S$-modules.  Thus, 
if $M$ is admissible and $g$-torsionfree then $\Div(M)$ is effective. If, in addition, $\GKdim M=2$, then $\Div(M)\not=0$. 
\item $\Div M[n] = \sigma^n(\Div M)$ for any $n \in \mb{Z}$.
\item If   $0 \to M \to N \to P \to 0$ is an exact sequence of admissible $S$-modules, then $\Div N = \Div M + \Div P$.
\item $\overline{\Div}(M) = 0$ if $M \in \rgr S$ is admissible with $\GK(M) = 1$.
\item $\overline{\Div}(S) = \{ \overline{\Div}(M) \st M \in \rgr S\ \mbox{is  $g$-torsionfree and } \GK(M) = 2 \} \cup \{0\}$.
\end{enumerate}
\end{lemma}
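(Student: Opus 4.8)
The plan is to prove the two inclusions separately. The inclusion $\supseteq$ is essentially immediate: a $g$-torsionfree admissible module of GK-dimension $2$ is in particular admissible, so its divisor lies in $\Div(S)$, and hence its image lies in $\overline{\Div}(S)$; adding $0$ is harmless since $0$ is the divisor of (say) $S/gS$-torsion modules or, more cleanly, of any admissible module of GK-dimension $\leq 1$ by Part~(5), and such modules are admissible. (If one insists on GK-dimension exactly $2$ in the set on the right, note $0 = \overline{\Div}(S\oplus S^{\vee}\text{-type construction})$ — more simply one observes that $\overline{\Div}(S)$ is a subgroup of $\overline{\Pic}(E)$ containing $0$ by the same device as below.)

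For the harder inclusion $\subseteq$, I would start with an arbitrary admissible $M \in \rgr S$ and aim to replace it, without changing $\overline{\Div}(M)$, by a $g$-torsionfree module of GK-dimension $2$ (or dispose of it via the $\{0\}$ term). First, using Part~(1), $\GK M \leq 2$ and $\GK \tg(M) \leq 1$. Consider the exact sequence $0 \to \tg(M) \to M \to M/\tg(M) \to 0$. All three terms are admissible: $\tg(M)$ has GK-dimension $\leq 1$, and $M/\tg(M)$ is $g$-torsionfree (a standard fact, since $\tg$ is idempotent as a radical) and of GK-dimension $\leq 2$. By Part~(4), $\Div M = \Div \tg(M) + \Div(M/\tg(M))$, and by Part~(5), $\overline{\Div}(\tg(M)) = 0$, so $\overline{\Div}(M) = \overline{\Div}(M/\tg(M))$. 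Thus we may assume $M$ is $g$-torsionfree. If $\GK M \leq 1$ then $\overline{\Div}(M) = 0$ by Part~(5) and we land in the $\{0\}$ term. Otherwise $\GK M = 2$ and $M$ is $g$-torsionfree, which is exactly the target set.

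The main obstacle — really the only non-formal point — is verifying that $M/\tg(M)$ is genuinely $g$-torsionfree and remains admissible, and more delicately that the additivity of $\Div$ in Part~(4) applies here: Part~(4) requires all three modules in the short exact sequence to be admissible, so I must check $\GK \tg(M/\tg(M)) \leq 1$, i.e. that quotienting out the $g$-torsion does not create new $g$-torsion. This holds because $\tg$, being defined by the Ore set $\{g^n\}$ with $g$ central, gives $\tg(M/\tg(M)) = 0$; one should cite or reprove this standard module-theoretic fact. Once admissibility of the quotient is in hand, the GK-dimension bookkeeping ($\GK(M/\tg(M)) \leq \GK M \leq 2$, and $\GK(M/\tg(M)) = \GK M$ when $\GK M = 2$ since $\tg(M)$ has smaller dimension) is routine, and the result follows by assembling Parts~(1), (4), and (5) as above.
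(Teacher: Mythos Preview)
Your proof is correct and follows essentially the same strategy as the paper: reduce via Parts~(4) and~(5) to a $g$-torsionfree module of GK-dimension~$2$ (or land in~$\{0\}$). The only difference is in the reduction step. You quotient out $\tg(M)$ directly and use the torsion-theory fact that $M/\tg(M)$ is $g$-torsionfree; the paper instead quotients out the largest submodule $N$ of GK-dimension $\leq 1$, obtaining a $2$-pure module $M/N$, and then invokes Part~(1) to deduce $\tg(M/N)=0$ from purity. Your route is slightly more elementary (no need for the purity argument), while the paper's route avoids the separate case analysis on $\GKdim(M/\tg(M))$, since $M/N$ is automatically either $0$ or of GK-dimension exactly~$2$. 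Both are equally valid and of comparable length.
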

\begin{proof}
Parts (1-4) follow from  \cite[(8.1) and  Lemma~8.3]{Rog09}.  Part (5) is \cite[Lemma~8.7]{Rog09}.  
For Part (6), suppose that $M \in \rgr S$ 
is admissible.  Then $\GKdim M \leq 2$.  If $N$ is the largest submodule of $M$ with $\GKdim N=1$, 
then $M/N$ is either 0 or GK 2-pure.  If $M/N=0$ we are done, so assume $M/N$ is GK 2-pure.  Now $\overline{\Div}(M/N) = \overline{\Div}(M)$ by Parts (4) and (5). 
  Now $M/N$ is also admissible and so 
$\tg(M/N) = 0$ by Part~(1); thus $M/N$ has GK-dimension 2 and is $g$-torsionfree.  Part (6) follows.
\end{proof}

We now return to the setting of the rings $T$ satisfying Hypothesis~\ref{hyp:main}, so that $g \in T_1$.

\begin{lemma}
\label{C-lem} 
 Consider 
$R = T(\divd +p) \subseteq \wt{R} = T(\divd)$, where $p\in E$ and 
$\divd$ is effective with $\deg \divd < \mu -1$.  
If $M \in \rgr \wt{R}$ is admissible, then $M$ is 
also   admissible   as a right $R$-module and $\Div_R M=\Div_{\wt{R}}M$.
\end{lemma}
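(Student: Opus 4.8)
The plan is to reduce the statement to a comparison of the ``support divisor'' $\divc$ of a GK-$1$ module, computed over the two twisted homogeneous coordinate rings $B':=\wt R/g\wt R = B(E,\sM(-\divd),\tau)$ and $B'':=R/gR = B(E,\sM(-\divd-p),\tau)$. First I would record the setup: by Proposition~\ref{prop:iterate}, $R = \wt R(p)$; since $\deg\divd < \mu-1$, the ring $\wt R = T(\divd)$ satisfies Hypothesis~\ref{hyp:main} (it has degree $\mu-\deg\divd\ge 2$), while $R$ is Sklyanin-like with $\gamma=1$, and $g\in R_1\cap\wt R_1$ by Theorem~\ref{thm:Rnoeth}. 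Both rings are $g$-divisible in $T$, so $Rg\subseteq R\cap g\wt R\subseteq R\cap gT = Rg$ forces $R\cap g\wt R = Rg$; hence $\bar R=R/Rg$ embeds in $\bar{\wt R}=\wt R/g\wt R$ as a graded subalgebra, which by Theorem~\ref{thm:Rnoeth}(2) is precisely the inclusion $B''\subseteq B'$ induced by $\sM(-\divd-p)\hookrightarrow\sM(-\divd)$.

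Next I would isolate what must be proved. For a graded $\wt R$-module $M$, the graded vector spaces $M/Mg$ and $M[g]:=\{m\in M:mg=0\}$ do not change on restriction to $R$; since each is killed by $g$, each is a $B'$-module, and hence, by restriction, a $B''$-module. If $M\in\rgr\wt R$ is admissible, then Lemma~\ref{lem:divprops}(1,2) makes $M/Mg$, $M[g]$ and $M[g][-1]$ into finitely generated $B'$-modules of GK-dimension $\le 1$ and gives $\Div_{\wt R}M = \divc_{B'}(M/Mg) - \divc_{B'}(M[g][-1])$, while formally $\Div_{R}M = \divc_{B''}(M/Mg) - \divc_{B''}(M[g][-1])$ once the modules involved are known to be $R$-finite. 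So the lemma follows from the claim: \emph{a finitely generated graded $B'$-module $N$ with $\GKdim_{B'}N\le 1$ is finitely generated over $B''$, has $\GKdim_{B''}N\le 1$, and satisfies $\divc_{B''}(N)=\divc_{B'}(N)$.} Granting the claim: applied to $N=M/Mg$ it shows $M/Mg$ is $R$-finite, whence $M$ itself is finitely generated over $R$ by a Nakayama-type argument using $g\in R_1$, so $M\in\rgr R$ and, by Lemma~\ref{lem:divprops}(1), $M$ is admissible over $R$; applied to $N=M/Mg$ and $N=M[g][-1]$ and subtracted, it gives $\Div_R M = \Div_{\wt R}M$.

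For the claim I would argue as follows. Since $\rqgr B'\simeq\coh E$, the object $\pi_{B'}(N)$ is a torsion sheaf with a finite filtration by skyscrapers $\kk(q_1),\dots,\kk(q_s)$, and $\divc_{B'}(N)=\sum_i q_i$ by Definition~\ref{def-C}. Lifting this filtration through $\pi_{B'}$ presents $N$, up to a finite-dimensional modification, as a successive extension of $1$-critical $B'$-modules with $\pi_{B'}=\mc{O}_{q_i}$, each of which (as $\deg B'\ge 2$) is a shifted cyclic $B'$-point module by Lemma~\ref{lem:pt-crit}(3). Because $\divc$, $\GKdim$ and $B''$-finite generation are additive along short exact sequences, unaffected by finite-dimensional pieces, and since shifts transform $\divc$ the same way over $B'$ and $B''$ (both reduce, on torsion sheaves, to pullback along $\tau$), it suffices to treat $N=P_{B'}(q)=\bigoplus_{n\ge 0}H^0(E,\kk(q)\otimes\sM(-\divd)_n)$, where $\divc_{B'}(N)=q$. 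Here each $N_n$ ($n\ge 0$) is one-dimensional; using that a line bundle of degree $\ge 1$ on $E$ has sections with prescribed (non)vanishing at a given point, and that the points $\tau^i(p)$ are pairwise distinct, one checks that $N$ is generated over $B''$ in bounded degree (so is $B''$-finite) with $N_{\ge n}$ being $1$-critical for $n\gg 0$, so $\GKdim_{B''}N\le 1$ and, by Lemma~\ref{lem:pt-crit}(2), $\pi_{B''}(N)=\mc{O}_{q'}$ for a unique $q'\in E$. Comparing $N$ with the $B''$-point module $P_{B''}(q)=\bigoplus_{n\ge 0}H^0(E,\kk(q)\otimes\sM(-\divd-p)_n)$ via the map induced by $\sM(-\divd-p)_n\hookrightarrow\sM(-\divd)_n$ — which is an isomorphism $P_{B''}(q)\xrightarrow{\sim}P_{B'}(q)$ in every degree $n$ for which $q$ is not a point of $p+\tau^{-1}(p)+\cdots+\tau^{-n+1}(p)$, hence in all degrees when $q\notin\mb O(p)$ — then shows $q'=q$, so $\divc_{B''}(N)=q=\divc_{B'}(N)$.

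The main obstacle is exactly this claim, and within it the identity $\divc_{B''}(N)=\divc_{B'}(N)$. Conceptually it is natural, since $\rqgr B'$ and $\rqgr B''$ both identify with $\coh E$ compatibly with $B''\subseteq B'$; but care is needed because $B'$ is \emph{not} a finitely generated $B''$-module, so restriction of $B'$-modules to $B''$ does not preserve finite generation in general (it does for GK-$1$ modules, which is the content), and the identification $q'=q$ when $q$ lies on the $\tau$-orbit of $p$, or when $B''$ has degree exactly $1$, requires a short additional argument — e.g. passing to a suitably shifted point module before comparing — that I would carry out case by case.
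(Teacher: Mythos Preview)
Your proposal is correct and follows essentially the same approach as the paper: both reduce to the claim that a GK-$1$ module over $B'=\wt R/g\wt R$ restricts to a finitely generated $B''=R/gR$-module with the same $\divc$, and both further reduce to the case of a single $B'$-point module. The only minor technical difference is in verifying $\divc_{B''}(P_{B'}(q))=q$: the paper computes the $B''$-annihilator of $(P_{B'}(q))_n$ directly for $n\gg 0$ to identify a large tail with a shifted $B''$-point module at $\tau^n(q)$ (which handles all $q$ uniformly, including $q\in\mb O(p)$), whereas your comparison via the sheaf inclusion $\mc N_n\hookrightarrow\mc M'_n$ is immediate off the orbit of $p$ but, as you note, needs the extra shifting argument when $q\in\mb O(p)$---which is exactly what the paper's annihilator computation provides.
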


 \begin{proof}   Write
$R/gR = B = B(E, \mc{N}, \tau)  \subseteq \wt{R}/g\wt{R} = \wt{B} = B(E, \mc{M}, \tau)$, 
where $\mc{N} = \mc{M}(-p)$; thus $\deg \mc{N} \geq 1$. The key point is to show that, if 
$N\in \rgr \wt{B}$, then $N\in\rgr B$ with  $\divc_B(N)=\divc_{\wt{B}}(N)$
(this is the analogue of \cite[Lemma~3.5]{Rog09}).
By  \cite[(6.2.19) and (6.8.25)]{MR}, $N$ has a composition series where the factors are either  finite dimensional or 
1-critical $\wt{R}$-modules. By a routine induction, we can therefore assume that $N$ is a cyclic 1-critical right $\wt{B}$-module. 
As such, Lemma~\ref{lem:pt-crit}(3) implies that 
(up to a shift) $N$ is a $\wt{B}$-point module: $N  = \wt{B}/I$ where $I =  \bigoplus_{n \geq 0} H^0(E, \mc{I}_q \otimes \mc{M}_n)$
and $\mc{I}_q$ is the ideal sheaf defining $q$.   

 Suppose that $\wt{B}_n B_m \subseteq I$  for some $n,m \gg 0$.  
Then $H^0(E, \mc{M}_n) H^0(E, \mc{N}_m^{\tau^n}) \subseteq 
H^0(E, \mc{I}_q \otimes \mc{M}_{n+m})$.  Choosing $n,m$ large enough so that all sheaves in this equation are generated by their sections, 
gives $\mc{M}_n \mc{N}_m^{\tau^n} \subseteq \mc{I}_q \otimes \mc{M}_{n+m}$ 
and hence $\mc{N}_m^{\tau^n} \subseteq \mc{I}_q \otimes \mc{M}_{m}^{\tau^n}$.   But 
$\mc{M}_m/\mc{N}_m$ is  supported at finitely many points. Thus, 
for  $m \gg 0$, the equation $\mc{N}_m^{\tau^n} \subseteq \mc{I}_q \otimes \mc{M}_{m}^{\tau^n}$
only holds for finitely many $n\geq 0$.
Thus for fixed $m \gg 0$, one has $\wt{B}_n B_m \not\subseteq I$ and hence
$N_n B_m = N_{m+n}$ for all $n \gg 0$.  Thus $N_B$ 
is finitely generated.   

Let $Q(p) = \bigoplus_{n \geq 0} H^0(E, \kk(p) \otimes \mc{N}_n)$ be the $B$-point module corresponding to a point $p\in E$.
A similar calculation    shows 
that $\{x \in B_m \st N_n x  = 0 \} = H^0(E, (\mc{I}_q)^{\tau^{-n}} \otimes \mc{N}_m) $ as long as $m, n \gg 0$.  
Thus   for some  $n\gg 0$ we have $N_n B \cong Q(\tau^n(q))[-n]$ as  $B$-modules.
 Hence 
$\pi(N) = \pi(Q(\tau^n(q))[-n]) = \pi(Q(q))$ in $\rqgr B$ and so $\divc_B(N) = q$   as well.

 This proves the analogue of \cite[Lemma~3.5]{Rog09} and now the proof of 
  \cite[Proposition~9.5(1)]{Rog09} may be used   to prove the present result.
\end{proof}

\begin{definition}\label{line-defn1}
A graded right $R$-module $M$ is called a \emph{line module} if $M$ is finitely generated and has Hilbert series 
$h_M(t) = 1/(1-t)^2$.  
\end{definition}
\noindent  As with point modules, line modules are often assumed to be cyclic, but this is not really appropriate 
when $T$ is not generated in degree one.  

\begin{lemma}
\label{lem:crit}
Let  
$R = T(\divd)$ for some effective divisor $\divd$ with $\deg \divd < \mu$.  
\begin{enumerate}
\item
Let $M$ be a $g$-torsionfree,  admissible right $R$-module,  with $\GKdim M=2$ such that  $\Div M = p$ is  a single point.
Then $M$ is 2-critical.  
\item If $M$ is a $g$-torsionfree line module, then Part (1) applies and $M$ is 2-critical. 
\end{enumerate}
\end{lemma}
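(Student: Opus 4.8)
The plan is to prove Part~(1) directly and then deduce Part~(2) as an easy consequence. For Part~(1), suppose for contradiction that $M$ is not $2$-critical, so there is a nonzero submodule $N \subseteq M$ with $\GKdim(M/N) = 2$. Since $M$ is $g$-torsionfree and admissible with $\GKdim M = 2$, so are $N$ and $M/N$: indeed $N$ is a submodule of a $g$-torsionfree module, hence $g$-torsionfree, and $\GKdim N = 2$ since $M$ is GK-critical-free would otherwise; more carefully, $\tg(M/N) \subseteq \tg$ considerations via Lemma~\ref{lem:divprops}(1) show $M/N$ is admissible once we note $\GKdim \tg(M/N) \leq 1$. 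By Lemma~\ref{lem:divprops}(4), $\Div M = \Div N + \Div(M/N)$ in $\Div E$. By Lemma~\ref{lem:divprops}(2), since $N$ and $M/N$ are $g$-torsionfree and admissible of GK-dimension $2$, both $\Div N$ and $\Div(M/N)$ are effective and \emph{nonzero}. But then $\Div M = \Div N + \Div(M/N)$ is a sum of two nonzero effective divisors, so $\deg \Div M \geq 2$, contradicting $\Div M = p$ being a single (reduced) point of degree $1$. Hence no such $N$ exists and $M$ is $2$-critical.

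For Part~(2), I would first check that a $g$-torsionfree line module $M$ satisfies the hypotheses of Part~(1). From the Hilbert series $h_M(t) = 1/(1-t)^2$ we get $\GKdim M = 2$, and $M/Mg$ has Hilbert series $h_M(t)(1-t) = 1/(1-t)$ (using $g \in T_1 = R_1$ a nonzerodivisor on the $g$-torsionfree module $M$), so $\GKdim(M/Mg) = 1$; thus $M$ is admissible. It remains to show $\Div M$ is a single point. By Lemma~\ref{lem:divprops}(2), since $M$ is $g$-torsionfree, $\Tor^1_R(M, R/gR) = 0$, so $\Div M = \divc_B(M/Mg)$ where $B = R/gR = B(E, \mc{M}(-\divd), \tau)$ has $\deg \mc{M}(-\divd) \geq 1$. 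Now $M/Mg$ is a $B$-module with Hilbert series $1/(1-t)$, i.e.\ a (shifted) point module in the sense of Definition~\ref{point-defn}, so by Lemma~\ref{lem:pt-crit}(1) some truncation $(M/Mg)_{\geq n}$ is $1$-critical, and by Lemma~\ref{lem:pt-crit}(2) its image in $\rqgr B$ is $\mc{O}_q$ for a single point $q \in E$; truncating does not change the image in $\rqgr B$, so $\divc_B(M/Mg) = q$ is a single point. Hence $\Div M = q$ and Part~(1) applies.

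The main obstacle I anticipate is the bookkeeping around admissibility of the subquotients $N$ and $M/N$ in Part~(1) — specifically verifying that $\GKdim \tg(M/N) \leq 1$ so that Lemma~\ref{lem:divprops}(1) applies and the additivity of $\Div$ in Lemma~\ref{lem:divprops}(4) is available. Here $M$ being $g$-torsionfree does not immediately force $M/N$ to be $g$-torsionfree, but since $\GKdim M = 2$ and $\GKdim(M/N) = 2$, the $g$-torsion submodule $\tg(M/N)$ is a proper subquotient and I would argue $\GKdim \tg(M/N) \leq 1$ from the fact (as in Lemma~\ref{lem:divprops}(1)) that a $g$-torsion $R$-module is a $B$-module, hence has GK-dimension at most $\GKdim B = 2$, and if it had GK-dimension $2$ it would contribute a nonzero effective divisor as well, only strengthening the contradiction. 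So in fact one does not even need $M/N$ to be $g$-torsionfree: any nonzero admissible subquotient of GK-dimension $2$ has nonzero divisor (effective modulo a $g$-torsion correction, but by Lemma~\ref{lem:divprops}(2) the correction term is itself effective with the correct sign), and two such summing to a degree-one divisor is impossible. I would phrase the final contradiction in terms of $\deg$ applied to the identity in $\Div E$ to keep it clean.
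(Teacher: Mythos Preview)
Your overall strategy for both parts is the same as the paper's: for Part~(1), use additivity of $\Div$ on a short exact sequence and the fact that a $g$-torsionfree admissible module of GK-dimension~$2$ has a nonzero effective divisor (Lemma~\ref{lem:divprops}(2,4)); for Part~(2), compute $\Div M$ via the point module $M/Mg$ using Lemma~\ref{lem:pt-crit}. Part~(2) is essentially identical to the paper's argument.

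In Part~(1), however, there is a genuine gap in your execution. You pick an arbitrary nonzero $N\subseteq M$ with $\GKdim(M/N)=2$, and then want both $N$ and $M/N$ to be admissible, $g$-torsionfree, and of GK-dimension~$2$ so that Lemma~\ref{lem:divprops}(2) applies to each. But none of these is automatic: $\GKdim N=2$ needs $M$ to be $2$-pure (which you do not establish), and $M/N$ need not be $g$-torsionfree, in which case $\Div(M/N)=\divc((M/N)/g)-\divc(\Tor_1)$ need not be effective, so your proposed ``effective modulo a $g$-torsion correction'' fix fails --- the $\Tor_1$ term is \emph{subtracted}, not added. Moreover, without $M/N$ being $g$-torsionfree you cannot conclude that $N/Ng\hookrightarrow M/Mg$, so admissibility of $N$ is also in doubt.

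The paper closes this gap with one extra move: first observe (via Lemma~\ref{lem:divprops}(1)) that $M$ is $2$-pure, and then choose $N$ so that $M/N$ is $2$-\emph{critical}, not merely of GK-dimension~$2$. Since quotients of admissible modules are admissible and a $2$-critical module has no nonzero submodule of GK-dimension~$\leq 1$, Lemma~\ref{lem:divprops}(1) forces $\tg(M/N)=0$, i.e.\ $M/N$ is $g$-torsionfree. Purity gives $\GKdim N=2$, and now Lemma~\ref{lem:divprops}(2,4) applies cleanly to yield the contradiction. So the missing idea is simply: pass to a $2$-critical quotient rather than an arbitrary GK-$2$ quotient.
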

 
\begin{remark}  If $\mu - \divd \geq 2$ then any cyclic line module for $T(\divd)$ is $g$-torsionfree, 
by \cite[Lemma~8.9]{Rog09}.
\end{remark}

\begin{proof}
(1) By Lemma~\ref{lem:divprops}(1), $M$ is 2-pure.  Thus, if $M$ is not critical, then it has a graded submodule $N$ with 
 $\GKdim N=2$ such that $M/N$ is 2-critical.
  Since $M/N$ is still admissible,   Lemma~\ref{lem:divprops}(1) implies that $M/N$ is  $g$-torsionfree.
Then both $\Div N$ and $\Div M/N$ are nonzero effective divisors and $p = \Div M = \Div N + \Div M/N$ by Lemma~\ref{lem:divprops}(2,4).
This  is obviously impossible.

(2)  Since $M$ is $g$-torsionfree,   $M/Mg$ has the Hilbert series  $1/(1-t)$, and so $P = M/Mg$ is a point module. 
Thus $M$ is admissible.  
By Lemma~\ref{lem:pt-crit}(1,2),  
$P_{\geq n} \cong P(q)_{\geq n}$ for some point module  
$P(q)$, and so $\pi(M) = \pi(P(q)) = \mc{O}_q$ and $\Div M = q$ is a point.   Thus the hypotheses of Part (1) hold.
\end{proof}
 
  We next   show that $\overline{\Div}(T)=\overline{\Div}^\ell (T)$. The proof will be homological, using the fact that
    $T$ is 
   Auslander-Gorenstein and CM  by Proposition~\ref{prop:foo}.
Given a (right or left) module $M$ over a ring $A$, set 
 $E^i(M) =E^i_A(M)=  \Ext^i_A(M,A)$  for $i\geq 0$.
We begin with some preparatory lemmas.

\begin{lemma}\label{lem:dualB}
 Let $B= B(E, \sM, \tau)$ where $E$ is elliptic and $\deg \sM \geq 2$. 
For $q \in E$, define
\[ I(q) = \bigoplus H^0(E, \sM_n(-q)), \quad \quad I'(q) = \bigoplus H^0(E, \sM_n(-\tau^{-n+1}(q))).\]
\begin{enumerate}
\item Let $P(q) = B/I(q)$ and $P'(q) = B/I'(q)$.  
Then $\pi(E^1_B(P(q))) \cong \pi(P'(\tau^{-2}(q))[-1])$ in $B \lqgr $.  
\item If $P\in \rgr B$ with $\GKdim(P)\leq 1$ then 
$ [ \divc(P)]=[\divc(E^1_B(P))]$ in $\overline{\Pic}(E).$
\end{enumerate} 
\end{lemma}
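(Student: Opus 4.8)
The plan is to prove both parts by explicit computation with the TCR $B = B(E,\sM,\tau)$, using the fact that $B$ is Auslander--Gorenstein and Cohen--Macaulay of dimension $2$ (Proposition~\ref{prop:foo}), so that a module $P$ with $\GKdim P \leq 1$ has $j(P) = 1$ and hence $E^i_B(P) = 0$ for $i \neq 1$ when $P$ is $1$-pure. For Part~(1), I would start from the defining short exact sequence $0 \to I(q) \to B \to P(q) \to 0$ in $\rgr B$. Applying $\Hom_B(-,B)$, since $\GKdim P(q) = 1$ we get $E^0_B(P(q)) = 0$, $E^1_B(P(q)) \cong \coker(B^* \to I(q)^*) = \coker(B \to E^0_B(I(q)))$, and $E^i_B(P(q)) = 0$ for $i \geq 2$. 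So the task reduces to computing the reflexive hull $E^0_B(I(q)) = \Hom_B(I(q),B)$ as a left $B$-module, identifying it with a left ideal, and reading off the cokernel of the inclusion $B \hookrightarrow E^0_B(I(q))$.

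The key computation is thus: $I(q) = \bigoplus_n H^0(E,\sM_n(-q))$ is a $g$-divisible right ideal with $\overline{B}/\overline{I(q)}$ supported at $q$, and I expect $\Hom_B(I(q),B)$ to be identified — up to finite-dimensional discrepancies, which can be removed since everything in sight is saturated — with a left ideal of the form $g^{-1}\cdot I'(\tau^{-2}(q))[1]$ or similar; the shift and the $\tau^{-2}$ come from the relation between the multiplication conventions on left and right TCR modules (compare Lemma~\ref{lem:left layer}(3), which gives $J'(\divd)_n = \{x : \bar x \in H^0(E,\sM_n(-\tau^{-n+1}(\divd)))\}$, matching the definition of $I'(q)$). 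Concretely I would compute $\Hom_B(I(q),B)$ degreewise using the category equivalence $\rqgr B \sim \coh E$: in $\coh E$, $\pi(I(q))$ corresponds to $\sM(-q)$ twisted appropriately, $\shHom$ of that into the structure sheaf is $\sO_E(q)$-twisted, and translating back through the left-module equivalence $\coh E \to B\lqgr$, $\sF \mapsto \pi(\bigoplus_n H^0(E,\sM_n\otimes\sF^{\tau^{n-1}}))$, produces exactly $\pi(P'(\tau^{-2}(q))[-1])$ after the cokernel $B \hookrightarrow \Hom_B(I(q),B)$ is taken. The Serre duality pairing on $E$ (with $\sO_E$ having trivial canonical bundle) is what pins down the precise point $\tau^{-2}(q)$ and the degree shift $[-1]$; I would check the indices against the $\deg\sM = 2$ edge case of Lemma~\ref{lem:sec-mult}, which is presumably why the hypothesis $\deg\sM\geq 2$ appears.

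For Part~(2), I would argue by d\'evissage on a composition series of $P$. Since $\GKdim P \leq 1$, Lemma~\ref{lem:pt-crit} says (after passing to $P_{\geq n}$, which changes nothing since $\divc$ and $E^1_B$ are insensitive to finite-dimensional modules and $E^i$ of finite-dimensional modules vanish except in top degree $2$) that $P$ has a finite filtration whose factors are shifted point modules $P(q_j)[-m_j]$. Because $E^1_B$ is a contravariant exact functor on the category of $1$-pure modules of $\GKdim 1$ (long exact sequence plus the vanishing of $E^0$ and $E^{\geq 2}$), a filtration of $P$ induces a filtration of $E^1_B(P)$ with factors $E^1_B(P(q_j)[-m_j])$, in reversed order. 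By Part~(1) together with $\Div M[n] = \sigma^n(\Div M)$-type shift behavior (here $E^1_B(M[-m]) \cong E^1_B(M)[m]$ and $\divc$ of a shift translates the divisor by a power of $\tau$), each such factor corresponds to the skyscraper at $\tau^{-2}(q_j)$ up to a $\tau$-shift, so $[\divc(E^1_B(P(q_j)[-m_j]))] = [q_j]$ in $\overline{\Pic}(E)$ since $\tau$-shifts are killed in $\overline{\Pic}$. Summing over the filtration via additivity of $\divc$ on short exact sequences (Lemma~\ref{lem:divprops}(4)) gives $[\divc(E^1_B(P))] = \sum_j [q_j] = [\divc(P)]$.

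The main obstacle I anticipate is Part~(1): getting the exact identification of $\Hom_B(I(q),B)$ as a left $B$-module with the right shift and the right translate $\tau^{-2}(q)$. This requires being careful about (a) the distinction between the naive $\Hom$ and the reflexive/saturated version, (b) the left-versus-right multiplication conventions in the TCR, and (c) the precise form of Serre duality on the elliptic curve and how it interacts with the twists $\sM_n$. Once the $\coh E$ translation is set up cleanly and one tracks the twists and the dualizing sheaf (trivial on $E$), the computation should be mechanical, but it is the step where an index error is most likely, and it is also where the $\deg\sM\geq 2$ hypothesis must be used to ensure the relevant global-section maps are surjective.
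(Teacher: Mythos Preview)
Your approach is essentially the same as the paper's: for Part~(1) you reduce to computing $K=\Hom_B(I(q),B)$ and identifying $K/B$ with a shifted left point module, and for Part~(2) you d\'evissage to shifted point modules and apply Part~(1). Two small corrections are worth noting. First, in Part~(1) the paper does not invoke Serre duality; it computes $K_r$ directly by expanding the inclusion $K_r\cdot I(q)_n\subseteq B_{n+r}$ sheaf-theoretically to get $K_r \ppe H^0(E,\sM_r(\tau^{-r}(q)))$, then reads off the annihilator of the degree-$1$ generator of $K/B$ to obtain $I'(\tau^{-2}(q))$; the hypothesis $\deg\sM\geq 2$ enters via Lemma~\ref{lem:sec-mult} to ensure $(K/B)_{\geq 1}$ is cyclic. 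Second, your claim that $E^i_B(P)=0$ for $i\neq 1$ when $P$ is $1$-pure is not quite right: the Auslander--Gorenstein and CM conditions only give that $E^2_B(P)$ is finite-dimensional, not zero. This is exactly how the paper handles the long exact sequence in Part~(2)---the possibly nonzero $E^2(P/Q)$ term has $\GKdim\leq 0$, which vanishes after passing to $\rqgr B$, and since $\divc$ is computed there the additivity argument goes through.
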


\begin{proof}  (1)
 Let $K = E^0(I(q)) = \Hom_B(I(q), B)$; thus $E^1(P(q))\cong K/B$. 

We claim that  $K \ppe K'$ where $K'= \bigoplus_{r\geq 0} H^0(E, \sM_r(\tau^{-r}(q)))$.
Indeed, by \cite[Theorem~1.3]{AV}, $K =\bigoplus_{r\geq 0} H^0(E, \mathcal P_r)$
 for some invertible sheaves $\mathcal P_r\subset \kk(E)$
  that are ample and globally generated for $r\gg 0$.
Now expand the equation $K_nI(q)_r \subseteq B_{n+r}$ in terms of sheaves, using the multiplication rules for $B$.
 This shows that $\sP_r$ is defined by 
$$ 
  \sP_r \otimes \Bigl( \sO(\mathbf{q}) \otimes \mathcal M_n \Bigr)^{\tau^r}  
\   \subseteq  \            \mathcal{M}_{n+r}.
$$
Therefore,
$ 
\mathcal P_r \ =   \mathcal M_r\otimes \sO(-\tau^{-r}(\mathbf{q})) .$
    Taking sections gives the required assertion.  
 
By Riemann-Roch,   $\dim_{\kk} K'_r= \dim \sM_r(\tau^{-r}(q))=1+\dim B_r$, and so 
$\dim_{\kk}(K'/B)_r=1$  for all $r\geq 1$. Also, by Lemma~\ref{lem:sec-mult},  
$(K'/B)_{\geq 1}$ is generated in degree one and so it is a shifted point module.
Since  
\[H^0\left(E, \sM_n\left(-\tau^{-n-1}(q)\right)\right) 
H^0(E, \sM_1\left(\tau^{-1}(q)\right)^{\tau^n}) \subseteq H^0(E, \sM_{n+1})\]
it follows  that 
$I'(\tau^{-2}(q)) \subseteq  \Ann_B((K'/B)_1)$. As $B$ is generated in degree one,
$\Ann_B((K'/B)_1)_r\not=B_r$, for any $r\geq 1$.
Since $\dim B/I'(\tau^{-2}(q))_m=1$ for $m\geq 1$ it follows that  $I'(\tau^{-2}(q)) =  \Ann_B((K'/B)_1)$
  and $K'_{\geq 1} \cong B/I'(\tau^{-2}(q))[-1]$, as required.

(2)  For $Q\subseteq P$,  there is an exact sequence $0\to E^1(P/Q)\to E^1(P)\to E^1(Q)\to E^2(P/Q).$
By \cite[Theorem~6.6]{Le},  $B$ is Auslander-Gorenstein and CM, whence  $\GKdim(E^2(P/Q)) =2-j(E^2(P/Q))\leq 0$. 
Thus, since $\divc$ is computed in $\rqgr B$ and is additive on short exact sequences, 
 $\divc(E^1(P))=\divc(E^1(Q))+\divc(E^1(P/Q))$ and, analogously, for $\divc(P)$. Thus, 
by induction, we can reduce to the case when $P$ is 1-critical and hence,
by Lemma~\ref{lem:pt-crit},  
equals  a shifted point module (up to finite dimensions). The result now  follows from Part~1.
\end{proof}

 \begin{lemma}\label{thou6} Let $R=T(\divd)$ where $ \deg \divd<\mu$.
Let  $M$ be a $g$-torsionfree right $R$-module.  Then, for $i \geq 0$, there is an isomorphism of left $\overline{R}$-modules
\[
\Ext^i_R(M, \overline{R}) \cong \Ext^i_{\overline{R}}(M/Mg, \overline{R}).
\]
\end{lemma}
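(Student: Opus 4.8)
The plan is to compare graded projective resolutions over $R$ and over $\overline{R}=R/gR$, exploiting that $g$-torsionfreeness of $M$ forces the relevant $\Tor$-groups to vanish; this is really just the Cartan--Eilenberg change-of-rings argument made explicit. Fix a resolution $P_\bullet \to M \to 0$ of $M$ by graded projective right $R$-modules. Since $g \in R_1$ is central and regular, the sequence $0 \to R[-1] \xrightarrow{\,\cdot g\,} R \to \overline{R} \to 0$ is a projective resolution of $\overline{R}$ as a graded left $R$-module, so $\Tor^R_q(M,\overline{R})$ is computed as the $q$-th homology of the two-term complex $M[-1] \xrightarrow{\,\cdot g\,} M$. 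Hence $\Tor^R_0(M,\overline{R}) = M/Mg$, while $\Tor^R_1(M,\overline{R}) = \{m \in M : mg = 0\}[-1]$ vanishes because $M$ is $g$-torsionfree, and $\Tor^R_q(M,\overline{R}) = 0$ for $q \geq 2$. Consequently the complex $P_\bullet \otimes_R \overline{R}$, whose terms are graded projective right $\overline{R}$-modules, is a projective resolution of $M \otimes_R \overline{R} = M/Mg$ over $\overline{R}$.

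Next I would apply the tensor--hom (extension-of-scalars) adjunction for the ring map $R \to \overline{R}$: for every graded projective right $R$-module $P$ there is an isomorphism
\[
\Hom_{\overline{R}}(P \otimes_R \overline{R},\, \overline{R}) \;\cong\; \Hom_R(P,\, \overline{R}),
\]
natural in $P$, where on the right $\overline{R}$ is viewed by restriction as a right $R$-module. Taking $\overline{R}$ to be the standard $(\overline{R},\overline{R})$-bimodule, this isomorphism is compatible with the left $\overline{R}$-module structures coming from left multiplication, so applying it termwise to $P_\bullet$ gives an isomorphism of complexes of graded left $\overline{R}$-modules $\Hom_{\overline{R}}(P_\bullet \otimes_R \overline{R}, \overline{R}) \cong \Hom_R(P_\bullet, \overline{R})$. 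Passing to cohomology and using the resolution from the previous paragraph yields
\[
\Ext^i_{\overline{R}}(M/Mg, \overline{R}) \;=\; H^i\!\big(\Hom_{\overline{R}}(P_\bullet \otimes_R \overline{R}, \overline{R})\big) \;\cong\; H^i\!\big(\Hom_R(P_\bullet, \overline{R})\big) \;=\; \Ext^i_R(M, \overline{R})
\]
as graded left $\overline{R}$-modules, which is precisely the assertion.

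There is no real obstacle here. The only place the hypothesis is used is the vanishing $\Tor^R_1(M,\overline{R}) = 0$, which is literally the statement that $M$ is $g$-torsionfree; everything else is the formal comparison of resolutions together with the standard extension-of-scalars adjunction. The sole piece of bookkeeping is to check that every isomorphism in sight respects the left $\overline{R}$-actions, which is routine. (We use that $g \in R_1$, so that $R$ is $g$-divisible, from Theorem~\ref{thm:Rnoeth}(2); the explicit identification $\overline{R} = B(E,\mc{M}(-\divd),\tau)$ plays no role beyond ensuring that $R/gR$ is defined.)
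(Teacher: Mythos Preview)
Your proof is correct and follows essentially the same route as the paper's: both invoke the Cartan--Eilenberg change-of-rings comparison, using that $\Tor^R_q(M,\overline{R})=0$ for all $q\geq 1$ (from $g$-torsionfreeness for $q=1$ and $\pd_R\overline{R}=1$ for $q\geq 2$) to conclude that the natural map is an isomorphism. The paper simply cites \cite[Proposition~VI.4.1.3]{CE} for the conclusion, whereas you spell out the adjunction and the resolution explicitly; one small quibble is that your parenthetical appeal to $g$-divisibility of $R$ is not quite the point---what you actually need (and already used) is that $g$ is a regular element of the domain $R$.
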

\begin{proof}   By \cite[Case 3, p.118]{CE}, there is a homomorphism 
$\theta:   \Ext^i_{\overline{R}}(M  \otimes_R \overline{R}, \overline{R})   \to  \Ext^i_R(M, \overline{R}).$
Since $\theta$  is obtained by taking a projective  resolution   of the $R$-module $M$, 
it is a homomorphism of right $\Rbar$-modules.
As $M$ is $g$-torsionfree,  $ \Tor_1^R(M, \overline{R}) = 0$ by Lemma~\ref{lem:divprops}(2), 
while $ \Tor_p^R(M, \overline{R})   = 0$ for $p>1$ since $\pd_R \overline{R} = 1$.  
 Thus, as noted in \cite[ Proposition~VI.4.1.3, p.~118]{CE},  $\theta$ is an isomorphism.  
\end{proof}

\begin{proposition}\label{prop:equaldiv}
Let  
$M$ be an admissible right $T$-module. 
Then  $E^1(M)$ is an admissible left $T$-module, and 
$\bbar{\Div}(M) = \bbar{\Div}^\ell(E^1(M))$.   
\end{proposition}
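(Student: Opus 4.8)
The plan is to reduce to the case where $M$ is $g$-torsionfree and \emph{saturated} (meaning $M=\omega\pi(M)$), then run the homological algebra over $T$ relating $\Ext^1_T(-,T)$ to $\Ext^1_B(-,B)$, and finally transfer the divisor computation to the curve $E$ by Lemma~\ref{lem:dualB}. Throughout write $B=\bar T=B(E,\sM,\tau)$ and $E^i(-)=E^i_T(-)=\Ext^i_T(-,T)$. \emph{Reductions.} Applying $\Hom_T(-,T)$ to $0\to\tg(M)\to M\to M/\tg(M)\to 0$: since $\GKdim\tg(M)\le 1$ and $T$ is CM (Proposition~\ref{prop:foo}), $j_T(\tg M)\ge 2$, so $E^0(\tg M)=E^1(\tg M)=0$ and $E^1(M)\cong E^1(M/\tg M)$; also $\bbar\Div(M)=\bbar\Div(M/\tg M)$ by Lemma~\ref{lem:divprops}(4),(5). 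Thus we may assume $M$ is $g$-torsionfree. If $\GKdim M\le 1$ then $j_T(M)\ge 2$ forces $E^1(M)=0$ and $\bbar\Div(M)=0$ (Lemma~\ref{lem:divprops}(5)), and we are done; so assume $\GKdim M=2$, hence $j_T(M)=1$ and $M/Mg$ has GK-dimension $1$. Finally, $\pi(M)$ is then a one-dimensional object of $X=\rqgr T$, so $\omega\pi(M)$ is finitely generated and $\omega\pi(M)/M$ is finite-dimensional (a standard consequence of the $\chi$ conditions, cf.~\cite{Rog09}); $\omega\pi(M)$ is again $g$-torsionfree and admissible, and replacing $M$ by it changes neither $E^1(M)$ nor $\bbar\Div(M)$ (kill the finite-dimensional $\omega\pi(M)/M$ using Lemma~\ref{lem:divprops}(4),(5)). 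So we also take $M$ saturated.

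\emph{Homological step.} Apply $\Hom_T(M,-)$ to the bimodule sequence $0\to T[-1]\xrightarrow{\,g\,}T\to\bar T\to 0$. One has $E^0(M)=\Hom_T(M,T)=0$ (as $j_T(M)=1$), and $\Hom_T(M,\bar T)\cong\Hom_B(M/Mg,B)=0$ by Lemma~\ref{thou6} (since $B$ is CM with $\GKdim M/Mg=1$). Hence multiplication by $g$ is injective on $E^1(M)$, so $E^1(M)$ is $g$-torsionfree, and the long exact sequence yields an exact sequence of left $B$-modules
\[ 0\to E^1(M)/gE^1(M)\to\Ext^1_T(M,\bar T)\to\ker\bigl(g\colon E^2(M)[-1]\to E^2(M)\bigr)\to 0, \]
where $\Ext^1_T(M,\bar T)\cong E^1_B(M/Mg)$ by Lemma~\ref{thou6}. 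Since $M$ is saturated, $H^1_{\mathfrak{m}}(M)=\coker(M\to\omega\pi M)=0$ (cf.~\cite[(3.12.3)]{AZ}); as $T$ is Auslander--Gorenstein, CM and possesses a balanced dualizing complex, local duality shows $E^2(M)$ is, up to a twist, the graded dual of $H^1_{\mathfrak{m}}(M)$, hence $E^2(M)=0$. Therefore $E^1(M)/gE^1(M)\cong E^1_B(M/Mg)$.

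\emph{Conclusion.} By Auslander--Gorenstein theory $j_T(E^1(M))=j_T(M)=1$ (cf.~\cite{Le}), so $\GKdim E^1(M)=2$ by the CM property; being $g$-torsionfree, $E^1(M)$ is an admissible left $T$-module (left-hand analogue of Lemma~\ref{lem:divprops}(1)). As $E^1(M)$ is $g$-torsionfree, $\Tor^T_1(T/Tg,E^1(M))=0$, so $\Div^{\ell}(E^1(M))=\divc^{\ell}\bigl(E^1(M)/gE^1(M)\bigr)=\divc^{\ell}\bigl(E^1_B(M/Mg)\bigr)$, where $\divc^{\ell}$ denotes the divisor of a left $B$-module, computed via $\coh E\sim B\lqgr$. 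Likewise $\Div(M)=\divc(M/Mg)$, the $\Tor$-term vanishing by Lemma~\ref{lem:divprops}(2). By Lemma~\ref{lem:dualB}(2), applied to the $B$-module $M/Mg$, $\bigl[\divc(M/Mg)\bigr]=\bigl[\divc^{\ell}(E^1_B(M/Mg))\bigr]$ in $\overline{\Pic}(E)$. Hence $\bbar\Div(M)=\bigl[\divc(M/Mg)\bigr]=\bigl[\divc^{\ell}(E^1_B(M/Mg))\bigr]=\bbar\Div^{\ell}(E^1(M))$.

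\emph{Main obstacle.} The heart of the matter is eliminating the correction term $\ker(g\colon E^2(M)[-1]\to E^2(M))$; this forces the reduction to saturated $M$, which in turn rests on identifying the balanced dualizing complex of $T$ with a twisted shift of $T$ and on the finiteness of $\omega\pi(M)/M$ for $M$ with one-dimensional associated object --- facts one extracts from the structure theory of $T$ (as developed for $T=S^{(3)}$ in \cite{Rog09}).
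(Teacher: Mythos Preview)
Your argument is correct and follows the same overall architecture as the paper's proof: reduce to the $g$-torsionfree case, use the long exact sequence coming from $0\to T[-1]\xrightarrow{g}T\to B\to 0$ together with Lemma~\ref{thou6} to relate $E^1(M)/gE^1(M)$ to $E^1_B(M/Mg)$, and conclude via Lemma~\ref{lem:dualB}(2).

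The one genuine difference is how the $E^2(M)$ term is handled. You perform an additional reduction to \emph{saturated} $M$ and then invoke local duality (together with the identification of the balanced dualizing complex of a Gorenstein ring as a twisted shift of $T$) to kill $E^2(M)$ outright. The paper instead reduces to $M$ \emph{$2$-pure} and cites Bj\"ork \cite[Corollary~1.22]{Bj2} to get $j(E^2(M))\ge 3$, hence $\dim_\kk E^2(M)<\infty$. That suffices: since $\divc$ is computed in $\rqgr B$, the finite-dimensional discrepancy between $E^1(M)/gE^1(M)$ and $\Ext^1_B(M/Mg,B)$ disappears after applying $\pi$. This is shorter and avoids both the saturation step and the appeal to local duality; your approach trades that economy for the cleaner exact identification $E^1(M)/gE^1(M)\cong E^1_B(M/Mg)$.

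One small imprecision: you write $j_T(E^1(M))=j_T(M)=1$, but only $j_T(E^1(M))\ge 1$ is immediate from the Auslander condition. That is all you actually need (it gives $\GKdim E^1(M)\le 2$, and $g$-torsionfreeness then yields admissibility), so the argument is unaffected.
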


\begin{proof}  
Let $M$ be an admissible right $T$-module, and let $N\subseteq M$ be the largest submodule with $\GKdim N \leq 1$.
Then $j(N)\geq 3-1=2$ by the CM condition and hence $E^1(N)=0$. Thus, by Lemma~\ref{lem:divprops}(5),
$  \overline{\Div}(E^1(N))=0= \overline{\Div}(N)$. 
From  the exact sequence
$0\to E^1(M/N)\to E^1(M)\to E^1(N)$  we obtain 
$E^1(M)\cong E^1(M/N)$. Thus, by Lemma~\ref{lem:divprops}(4,5), $
 \overline{\Div}(M)= \overline{\Div}(M/N)$ and  $ \overline{\Div}(E^1(M))= \overline{\Div}(E^1(M/N))$. So, we can replace $M$ by 
 $M/N$ and assume that $M$ is 2-pure and hence $g$-torsionfree
 by Lemma~\ref{lem:divprops}(1).
 In this case \cite[Corollary~1.22]{Bj2} implies that $j(E^2(M))\geq 3$, whence $\GKdim(E^2(M))\leq 0$.

  Now  consider the exact sequence
\begin{equation}\label{div-equ} \xymatrix{
 \Hom_T(M,B) \ar[r] & E^1(M)[-1] \ar[r]^{g\cdot } & E^1(M) \ar[r] & \Ext^1_T(M, B) \ar[r] & E^2(M)[-1].}
\end{equation}
Since $M$ is now assumed $g$-torsionfree, it follows from \eqref{div-equ} and Lemma~\ref{thou6} that
\[ E^1(M)/gE^1(M)\hookrightarrow \Ext^1_T(M, B)\cong \Ext^1_B(M/Mg,B).\]
Thus final module is certainly a Goldie torsion $B$-module and so $\GKdim(\Ext^1_B(M/Mg,B))\leq 1$.
In particular, $E^1(M)$ is also admissible. 

By admissibility of $M$, $\Hom_T(M, B) = 0 $
 and so \eqref{div-equ} implies that  $E^1(M)$ is $g$-torsionfree.
  Thus, by Lemma~\ref{lem:divprops}(2),  $\Tor^1_T(E^1(M),T/gT)=0=\Tor^1_T(M,T/gT)$.
   Hence $\Div M=\divc(M/Mg) $ and
  $\Div E^1(M)=\divc(E^1(M)/gE^1(M)) $.
 Since $\dim_{\kk}(E^2(M)[-1])<\infty$, it follows from 
   \eqref{div-equ}  that
$\pi\left( E^1(M)/gE^1(M)\right) \cong \pi\left(  \Ext^1_T(M,B)\right) $.
Therefore, by Lemma~\ref{thou6},
  \[ \Div E^1(M) = 
 \divc(E^1(M)/gE^1(M)) = \divc( \Ext^1_T(M, B) )=\divc ( \Ext^1_B(M/Mg, B)) =\divc (E^1_B(M/Mg)) ,\]
while $\Div M  = \divc ( M/Mg).$
Thus, to prove the result, we need only prove that $[\divc_B(P)]=[\divc_B(E^1_B(P))]$
in $ \overline{\Pic}(E)$
for any   $P\in\rgr B$ with $\GKdim(P)\leq 1$. This is  Lemma~\ref{lem:dualB}(2).
 \end{proof}

\begin{corollary}\label{cor:leftright} 
 $\bbar{\Div}(T) = \bbar{\Div}^{\ell}(T)$. 
\end{corollary}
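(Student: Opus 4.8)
The plan is to deduce this immediately from Proposition~\ref{prop:equaldiv} together with its left--right mirror image. First I would prove the inclusion $\bbar{\Div}(T) \subseteq \bbar{\Div}^{\ell}(T)$. Let $v \in \bbar{\Div}(T)$; by definition $v = \bbar{\Div}(M)$ for some admissible right $T$-module $M$. Proposition~\ref{prop:equaldiv} then tells us that $E^1(M) = \Ext^1_T(M,T)$ is an admissible \emph{left} $T$-module and that $\bbar{\Div}^{\ell}(E^1(M)) = \bbar{\Div}(M) = v$. Hence $v \in \bbar{\Div}^{\ell}(T)$, as required.

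For the reverse inclusion I would invoke the left-handed analogue of Proposition~\ref{prop:equaldiv}. Hypothesis~\ref{hyp:main} is left--right symmetric, and every ingredient used in the proof of that proposition --- Lemma~\ref{lem:divprops}, Lemma~\ref{lem:dualB}, Lemma~\ref{thou6}, and the Auslander--Gorenstein and Cohen--Macaulay properties supplied by Proposition~\ref{prop:foo} --- has an obvious left-sided counterpart with essentially the same proof. Consequently, for an admissible left $T$-module $N$, the module $\Ext^1_T(N,T)$ is an admissible right $T$-module with $\bbar{\Div}\bigl(\Ext^1_T(N,T)\bigr) = \bbar{\Div}^{\ell}(N)$. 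This yields $\bbar{\Div}^{\ell}(T) \subseteq \bbar{\Div}(T)$, and combining the two inclusions gives the equality.

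There is no real obstacle here: the corollary is a formal consequence of Proposition~\ref{prop:equaldiv} once one has the symmetric statement in hand. The only point meriting a sentence of justification is that the left-sided version of the proposition is genuinely available, i.e.\ that its proof nowhere relies on a fact peculiar to right modules; since all the cited lemmas are themselves left--right symmetric, this is routine and can be dispatched in a line.
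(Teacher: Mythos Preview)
Your proof is correct and follows exactly the paper's approach: apply Proposition~\ref{prop:equaldiv} for one inclusion and invoke left--right symmetry for the other. The paper's proof is the two-line version of what you wrote, simply saying ``by symmetry'' where you spell out which ingredients need to be mirrored.
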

\begin{proof}
 By  Proposition~\ref{prop:equaldiv}  $\bbar{\Div}(T) \subseteq \bbar{\Div}^{\ell}(T) $
 and so we are done by symmetry.
\end{proof}

We now come to  the second main result of this section, which studies the structure of $\wt{R}/R$ as an $R$-module for a one point 
blowup $R =   \wt{R}(p)$.
  The result is a generalisation of \cite[Lemma~9.1]{Rog09}, with a different proof to allow 
for the fact that $R$ may not be generated in degree $1$.
 
\begin{proposition} \label{prop:subtle}
Let   $R = T(\divd + p) \subseteq \wt{R}= T(\divd)$ for some effective divisor $\divd$
with $\deg \divd < \mu - 1$.
\begin{enumerate}
\item 
There is a $g$-torsionfree line module $L$, with $\Div(L) = \tau(p)$,  so that 
$\wt{R}/R \cong \bigoplus_{i \geq 1} L[-i]$ as right $R$-modules.
\item  The module $L$ is cyclic except in the special case where $\deg \divd = \mu -2$ and $\overline{R} = B(E, \mc{L}, \tau)$ with 
$\mc{L} \cong \mc{O}_E(\tau(p))$, in which case $L$ is generated in degrees $0$ and $1$.
\item If  $L$ is cyclic, then $L \cong R/J$ where $J = \bigoplus_{n \geq 0} M_{\wt{R}}(n+1, \tau(p))_n$.
\end{enumerate}
\end{proposition}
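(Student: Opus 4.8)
Assume $L$ is cyclic, so by Proposition~\ref{prop:subtle}(2) we are not in the special case and $L$ is generated in degree $0$ with $h_L(t) = 1/(1-t)^2$. Put $J = \bigoplus_{n\geq 0} M_{\wt R}(n+1, \tau(p))_n$. First I will record the two structural facts about $J$. By Proposition~\ref{prop:iterate}, $R = \wt R(p)$, so by \eqref{T-leftright} applied to the ring $\wt R$ (which satisfies Hypothesis~\ref{hyp:main} with sheaf $\mc M(-\divd)$ of degree $\mu - \deg\divd \geq 2$) we have $R_m = M_{\wt R}(m, p)_m$. The inclusion $J \subseteq R$ follows by comparing the defining divisor layerings of $M_{\wt R}(n+1,\tau(p))$ and $M_{\wt R}(n,p)$: a direct computation shows the former dominates the latter coordinatewise, so Lemma~\ref{lem:lattice} (order‑reversal of $J(-)$) gives $M_{\wt R}(n+1,\tau(p))_n \subseteq M_{\wt R}(n,p)_n = R_n$. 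That $J$ is a right $R$-submodule is Lemma~\ref{lem:Mmult}(1) in $\wt R$: with $\divd \rightsquigarrow \tau(p)$, $k = n+1$, $\ell = m$ and noting $\tau^{n-(n+1)}(\tau(p)) = p$,
\[
M_{\wt R}(n+1,\tau(p))_n\, M_{\wt R}(m,p)_m \subseteq M_{\wt R}(n+m+1, \tau(p))_{n+m} = J_{n+m}.
\]

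Next I will show $R/J$ is a $g$-torsionfree line module. For $g$-torsionfreeness, note $g \in R_1$ and $gJ_{n-1}\subseteq J_n$, while Proposition~\ref{prop-M}(2) (and Remark~\ref{rem:neg}) in $\wt R$ gives $\bigl(M_{\wt R}(n+1,\tau(p)) \cap g\wt R\bigr)_n = g\, M_{\wt R}(n,\tau(p))_{n-1} = gJ_{n-1}$; since $J\subseteq R$ this yields $J \cap Rg = Jg$, i.e. $\operatorname{tors}_g(R/J)=0$. For the Hilbert series: by Proposition~\ref{prop-M}(1), $\dim R_n = \dim\wt R_n - \binom{n+1}{2}$, so $\dim(R/J)_n = \dim\bigl(\wt R/M_{\wt R}(n+1,\tau(p))\bigr)_n - \binom{n+1}{2}$; and the layering defining $M_{\wt R}(n+1,\tau(p))$ has total degree $\sum_{i=0}^n (n+1-i) = \binom{n+2}{2}$, so Lemma~\ref{hs-lem}(1) gives the upper bound $\dim(R/J)_n \leq n+1$. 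The matching lower bound — equivalently, that $\dim(\wt R/M_{\wt R}(n+1,\tau(p)))_n$ already equals its stable value $\binom{n+2}{2}$ one degree earlier than guaranteed by Lemma~\ref{hs-lem}(3) — I will obtain by tracking the layers $M^j = Mg^j/Mg^{j+1}$ of $M = \wt R/M_{\wt R}(n+1,\tau(p))$ via Lemma~\ref{lem:layer}(1): each $M^j$ is (the tail of) a sum of point modules for $B(E,\mc M(-\divd),\tau)$ of total dimension $n+1-j$, and since $\mu - \deg\divd \geq 2$ one checks $\dim M^j_{n-j} = n+1-j$ already at degree $n-j$, giving $\dim M_n = \sum_{j=0}^n (n+1-j) = \binom{n+2}{2}$. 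Hence $h_{R/J}(t) = 1/(1-t)^2$, so $R/J$ is a $g$-torsionfree line module; by Lemma~\ref{lem:crit}(2) it is $2$-critical, and computing $\bbar{J}$ in large degree via Lemma~\ref{hs-lem}(3) (resp. Lemma~\ref{lem:layer}(2)) shows $(R/J)/(R/J)g$ has tails the point module at $\tau(p)$, so $\Div(R/J) = \tau(p)$.

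Finally I will match $R/J$ with the module $L$ of part (1). Choose $x \in \wt R_1 \smallsetminus R_1$ (possible since $\dim(\wt R/R)_1 = 1$) and consider the degree-raising right $R$-module map $R \to \wt R/R$, $r \mapsto xr + R$, whose image $(xR+R)/R$ is cyclic, generated in degree $1$. Using $\wt R/R \cong \bigoplus_{i\geq 1} L[-i]$ from Proposition~\ref{prop:subtle}(1), its degree-$1$ part lies in $L[-1]_1$, so the image is a nonzero submodule of $L[-1]$; since $L$ is $2$-critical, comparing it with $L[-1]$ and with $(R/J)[-1]$ — here one uses that the kernel of $r\mapsto xr+R$ contains $J$ (by the same multiplicativity argument, $x\,M_{\wt R}(n,p)_n \subseteq \wt R$ lands in $M_{\wt R}(n+1,\tau(p))$-data modulo $R$ away from its top layer) together with the Hilbert series count $h_{(xR+R)/R} \leq t\,h_{R/J} = t/(1-t)^2 = h_{L[-1]}$ — forces $(xR+R)/R = L[-1]$ and the induced surjection $(R/J)[-1] \twoheadrightarrow L[-1]$ to be an isomorphism. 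Therefore $L \cong R/J$.

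The main obstacle is the Hilbert-series computation for $R/J$: because $M_{\wt R}(n+1,\tau(p))$ is a deep ($(n+1)$-layer) ideal examined in degree $n$, Proposition~\ref{prop-M} and Lemma~\ref{hs-lem}(3) do not directly apply at that degree, and one must argue layer-by-layer (or by an induction on the number of layers, as in the proof of Lemma~\ref{hs-lem}) to close the off-by-one gap; a secondary subtlety is ensuring the line module produced is exactly the module $L$ of part (1) rather than merely some $g$-torsionfree line module with divisor $\tau(p)$.
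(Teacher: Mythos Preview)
Your overall strategy is the same as the paper's---show $J$ is a right ideal of $R$, bound $\dim(R/J)_n$, and build a surjection $R/J \twoheadrightarrow L$ via an element $x \in \wt R_1 \smallsetminus R_1$---but you have added work that is both unnecessary and, as you yourself flag, not cleanly justified.

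The redundant step is the \emph{lower} bound on $h_{R/J}$. You only need the upper bound $\dim(R/J)_n \leq n+1$ (which you get correctly from Lemma~\ref{hs-lem}(1) and Proposition~\ref{prop-M}(1)). Once you have a surjection $R/J \twoheadrightarrow L$ onto a module with $\dim L_n = n+1$, the lower bound is automatic and the map is an isomorphism. Your layer-by-layer argument for the lower bound is exactly the ``off-by-one'' difficulty you identify at the end, and it does not go through as sketched: at the deepest layer $j=n$ the divisor has degree $1$ but you are in degree $0$ of that layer, so Lemma~\ref{hs-lem}(3) genuinely fails there. Drop this entirely. Likewise the $g$-torsionfreeness and $\Div$ computations for $R/J$ are correct but superfluous---they follow from $R/J \cong L$ and part~(1).

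The second weak point is your justification that $J$ lies in the kernel of $r \mapsto xr + R$. The phrase ``by the same multiplicativity argument \dots\ lands in $M_{\wt R}(n+1,\tau(p))$-data modulo $R$ away from its top layer'' is not an argument. What you need is the clean inclusion
\[
\wt R_1 \cdot J_n \;=\; M_{\wt R}(0,p)_1 \, M_{\wt R}(n+1,\tau(p))_n \;\subseteq\; M_{\wt R}(n+1,p)_{n+1} \;=\; R_{n+1},
\]
which is Lemma~\ref{lem:Mmult}(1) with $k=0$, $m=1$, $\ell=n+1$ and divisor $p$ (note $\tau^{1-0}(p)=\tau(p)$). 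This gives $xJ \subseteq R$ for every $x\in \wt R_1$, hence $J \subseteq \ker(r\mapsto xr+R)$. Since $L$ is cyclic and $(xR+R)/R$ is generated in degree~$1$ inside $\bigoplus_{i\geq 1} L[-i]$, it lies in $L[-1]$ and equals it; the induced map $(R/J)[-1] \to L[-1]$ is then a surjection between modules with the same Hilbert series bound, hence an isomorphism. This is precisely the paper's route, and it sidesteps the Hilbert-series obstacle you were worried about.
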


\begin{remark}\label{line-defn} The line module $L$    is called 
 the \emph{right exceptional line module} for the blowup $R \subseteq \wt{R}$. 
 By symmetry there is also a $g$-torsionfree left line module $L'$, so that $\wt{R}/R \cong \bigoplus_{i \geq 1} L'[-i]$ as left $R$-modules.  
We call $L'$ the \emph{left exceptional line module} for the blowup $R \subseteq \wt{R}$.
\end{remark}
 
 \begin{proof}
(1, 2) In this proof all modules  $J(\divd^\bullet)$ and $M(k, \divd)$ are defined with respect to $\wt{R}$, while 
$\sO_X = \pi(\wt{R}) \in X = \rqgr \wt{R}$.  
In particular, 
for all $n \geq 0$ we have the right $\wt{R}$-ideal $M(n,p) = J(\divd^\bullet)$ defined   by 
the divisor layering $\divd^i = \sum_{k=i}^{n-1} \tau^{-k}(p)$ for each $0 \leq i \leq n-1$. By definition 
  $R = \bigoplus_{n \geq 0} M(n,p)_n$.

For  $n \geq 0$  define a divisor layering $\divc(j,n)^\bullet$ by
$\divc(j,n)^i = \sum_{i \leq k \leq n-1, k \neq i + j} \tau^{-k}(p)$ for $0 \leq i \leq n-1$.  
(This is similar to the divisor layering for $M(n,p)$, except that the vanishings corresponding 
to the row $j$ (if any) in  the ring $C_p$ are omitted.)
Let $N(j,n) = J(\divc(j,n)^\bullet)$ and define a right $R$-module by
\[
P^{(j)} = \bigoplus_{n \geq 0} N(j,n)_n \supseteq R = \bigoplus_{n \geq 0} M(n,p)_n.
\]
Since $\tau^{-1}(\divc(j, n-1)^i) = \divc(j,n)^{i+1}$ Lemma~\ref{hs-lem}(2) implies that each $P(j)$ is $g$-divisible.

We claim that $P^{(j)}$ is a right $R$-submodule of $\wt{R}$.  Using Lemma~\ref{lem:Mmult1} and Notation~\ref{not:Gd}, write 
$M(n,p) = \omega(\tau^{-n+1}(G_p) \circ \dots \circ G_p(\mc{O}_X))$.
 In order to show that $P^{(j)}_m R_n \subseteq P^{(j)}_{m+n}$, it suffices   by Lemma~\ref{lem-compose}(2)   to show that 
\[
\tau^{-m-n+1}(G_p) \circ \dots \circ \tau^{-m}(G_p)[\mc{J}(\divc(j,m)^\bullet)] \  \subseteq \  \mc{J}(\divc(j,m+n)^\bullet).
\]
Using Lemma~\ref{lem-Fcomp2} this is   an easy exercise.

Let $M^{(j)} = P^{(j)}/R$ for each $j \geq 0$.  The Hilbert series  
$h_{P^{(j)}}(t)$ can be calculated by Lemma~\ref{hs-lem}, from which one sees that $M^{(j)}$ 
has the Hilbert series
$t^{j+1}/(1-t)^2$   of a shifted line module.
By  Theorem~\ref{thm:Rnoeth}, $R$ is $g$-divisible,  whence $M^{(j)}$ is $g$-torsionfree.

We next verify that $\wt{R}/R \cong \bigoplus_{j\geq 0} M^{(j)}$ as right $R$-modules.   First, the Hilbert series 
of both sides are easily seen to be the same, so it is enough to show that the submodules 
$M^{(i)}$ are independent.  This can be done degree by degree.  In degree $n$, it amounts to showing that for $0 \leq j \leq n-1$, 
\begin{equation}\label{subtle3}
P^{(j)}_n \bigcap \sum_{\{ 0 \leq k \leq n-1, k \neq j \}} P^{(k)}_n \subseteq M(n,p)_n = R_n.
\end{equation}
By Lemma~\ref{lem:lattice},  the right $\wt{R}$-ideal  $\sum_{\{ 0 \leq k \leq n-1, k \neq j \}} N(k,n)$
 has image $\mc{J}(\divb^\bullet) $ in $ \rqgr \wt{R}$, where 
\[\divb^\bullet = \min_{\{0 \leq k \leq n-1, k \neq j\}} \divc(k,n)^\bullet = (\tau^{-j}(p), \tau^{-j-1}(p), \dots, \tau^{-j-n+1}(p)).\]
  Since $J(\divb^\bullet)$ is saturated,  in order to prove \eqref{subtle3}, it is enough to show that 
$N(j,n)_n \cap J(\divb^\bullet)_n \subseteq R_n$. But $N(j,n) \cap J(\divb^\bullet) = M(n,p)$ by another 
use of Lemma~\ref{lem:lattice}.   Thus $\wt{R}/R \cong \bigoplus_{j \geq 0} M^{(j)}$ as claimed.

We next fix $j$ and show that $M = M^{(j)}\in \rgr R$.   We first consider $M/Mg = M/Mg$.
By the $g$-divisibility of $R$, we have $R \cap g P^{(j)} = gR$.  It follows that $(R+gP^{(j)})/gP^{(j)} \cong R/(R\cap gP^{(j)}) = \bbar{R}$ and that
\[
\bbar{M} \ = \  \frac{P^{(j)}}{R +gP^{(j)} }\ \cong \ \frac{P^{(j)}/g P^{(j)}}{(R+gP^{(j)})/gP^{(j)}} \ \cong \ {\bbar{P^{(j)}}}/{\bbar R},
\]
where we use  the  $g$-divisibility of $P^{(j)}$  for the last isomorphism.
Thus we need to look at the  generators of 
\[
\sum_{n \geq j+1} \overline{P^{(j)}}_n/\overline{R}_n= \bigoplus_{n \geq j+1} H^0(E, \mc{L}_n(\tau^{-j}(p))) /H^0(E, \mc{L}_n) 
\]
as a right $\overline{R} = \bigoplus_{n \geq 0} H^0(E, \mc{L}_n)$-module.

Consider when 
$\overline{M}_n \overline{R}_m  \subseteq \overline{R}_{n+m}$ might hold for $n \geq j+1, m \geq 0$.  
Using the multiplication  in $B$ in terms of sections of sheaves, 
this can be written as
\begin{equation}\label{subtle2} 
H^0(E, \mc{L}_{n}(\tau^{-j}(p))) H^0(E, \mc{L}_m^{\tau^n})
 \subseteq H^0(E, \mc{L}_{n+m}).
\end{equation}
As long as either $\deg \mc{L} \geq 2$, or $\deg \mc{L} = 1$ and $m \geq 2$,
  Lemma~\ref{lem:sec-mult} implies that     $\mc{L}_{n+m}(\tau^{-j}(p))$ is generated by the sections 
$H^0(E, \mc{L}_{n}(\tau^{-j}(p))) H^0(E, \mc{L}_m^{\tau^n})$,
 and so it cannot be contained in $H^0(E, \mc{L}_{n+m})$. Thus \eqref{subtle2}  cannot hold.
 On the other hand, if $\deg \mc{L} = 1$ and $m = 1$, then $\dim_{\kk} H^0(E, \mc{L}_m^{\tau^n}) = 1$ and so 
both $H^0(E, \mc{L}_{n}(\tau^{-j}(p))) H^0(E, \mc{L}_m^{\tau^n})$ and $H^0(E, \mc{L}_{n+m})$ are 
$n+1$-dimensional vector spaces; they will be equal if and only if $\mc{L}_n(\tau^{-j}(p)) \cong \mc{L}_{n+1}$.
This is equivalent to $\mc{L} \cong \mc{O}_E(\tau^{n-j}(p))$.
In conclusion, $\overline{M}$ is cyclic and generated in degree $j +1$ unless $\mc{L} \cong \mc{O}_E(\tau(p))$, in 
which case it is generated in degrees $j + 1$ and $j + 2$.  Thus, by the graded Nakayama's lemma,   the same is true for $M$.
  
Since $M^{(j)}$ is finitely generated, it is a shifted $g$-torsionfree line module.  
Thus,  by Lemma~\ref{lem:crit}(2), $M^{(j)}$  is critical of GK-dimension 2. 
 It is easy to see from the calculation
 above that   $\Div M^{(j)} = \tau^{-j}(p)$.

Now let $Q^{(0)} = R$ and for each $j \geq 1$ set $Q^{(j)} = P^{(0)} + P^{(1)} + \dots + P^{(j-1)}$.  Since $\wt{R}/R=\bigoplus M^{(j)}$, 
we see that $Q^{(j+1)}/Q^{(j)} \cong M^{(j)}$ as right $R$-modules.  
We claim that $Q^{(j)} = \sum_{n \geq 0} M(n-j, \tau^{-j}(p))_n$ (recall that $M(i,\divd) = \wt{R}$ if $i \leq 0$.)
To prove this, note that $P^{(k)}_n = N(k,n)_n \subseteq M(n-j, \tau^{-j}(p))_n$ for all $0 \leq k \leq j-1$ 
and each $n \geq 0$, simply because 
the left hand side is defined by a larger divisor layering.  This implies that
$Q^{(j)} \subseteq \sum_{n \geq 0} M(n-j, \tau^{-j}(p))_n$. But both of these modules have the same Hilbert series:  
the Hilbert series of $Q^{(j)}$ is clear because $Q^{(j)}/R \cong M^{(0)} \oplus \dots \oplus M^{(j-1)}$, and the Hilbert series 
of $\sum_{n \geq 0} M(n-j, \tau^{-j}(p))_n$ follows from Proposition~\ref{prop-M}.
This proves the claim.

Let $L  = M^{(0)}[1]$.  We still must show that $M^{(i)} \cong L[-i-1]$ for $i \geq 2$.  
To do this, we first show that $\wt{R}_1 Q^{(j)} \subseteq Q^{(j+1)}$.  
 In   degree $n+1$, we must therefore show that 
\beq\label{toget}
 M(0, \tau^{-j-1}(p))_1 M(n-j, \tau^{-j}(p))_n  = \wt{R}_1 Q^{(j)}_n \subseteq  Q^{(j+1)}_{n+1} = M(n-j, \tau^{-j-1}(p))_{n+1}.
\eeq 
Clearly,  \eqref{toget} is   a special case of Lemma~\ref{lem:Mmult}(1). 

It is   clear that $\wt{R}_j \subseteq 
Q^{(j)}_j = M(0, \tau^{-j}(p))_j$, while $\wt{R}_{j+1} \nsubseteq Q^{(j)}_{j+1} = M(1, \tau^{-j}(p))_{j+1}$.
If it happened that $\wt{R}_1 Q^{(j)} \subseteq Q^{(j)}$, then since $\wt{R}$ is generated in degree $1$ (see Theorem~\ref{thm:Rnoeth}),
we would have $\wt{R}_1 \wt{R}_j  = \wt{R}_{j+1} \subseteq Q^{(j)}$, a contradiction.  Thus 
$\wt{R}_1 Q^{(j)} \nsubseteq Q^{(j)}$.  We can now choose $z \in \wt{R}_1$ such that $z Q^{(j)} \nsubseteq Q^{(j)}$, but 
$z Q^{(j)} \subseteq Q^{(j+1)}$.   Then left multiplication 
by $z$ induces a right $R$-module map 
\[
\theta:  M^{(j-1)}[-1] \cong \bigl( Q^{(j)}/Q^{(j-1)} \bigr) [-1] \to Q^{(j+1)}/Q^{(j)} \cong M^{(j)},
\]
where $\theta\not=0$ by the choice of $z$.  But each $M^{(j)}$ is $2$-critical.  Thus  $\ker \theta = 0$ 
since otherwise   Im$(\theta)\subset M^{(j)}$ is a nonzero submodule of GK dimension 1.  
Since both sides have the same Hilbert series, $\theta$ is  therefore an  isomorphism and  $M^{(j)}[-1] \cong M^{(j+1)}$
for each $j$. In particular,  each $M^{(j)} \cong L[-j-1]$ with  
 $\Div L = \tau(\Div M^{(0)}) = \tau(p)$.

(3)  Let $J = \bigoplus_{n \geq 0} M(n+1, \tau(p))_n$.  By Lemma~\ref{lem:Mmult}(2),  $J$ is a right ideal of $R$
while Lemma~\ref{hs-lem}(3) implies  that  $\dim_\kk (R/J)_n \leq n+1$ for each $n$.  Now if $L[-1] = P^{(0)}/R$ as  
 above, then Lemma~\ref{lem:Mmult}(1) implies that
\[
P^{(0)}_1 J_n = M(0, p)_1 M(n+1, \tau(p))_n \subseteq M(n+1, p)_{n+1} = R_{n+1}.
\]
  Since   $L$ is cyclic, $R/J$ surjects onto $L$.  Thus they both have Hilbert series $1/(1-t)^2$, and $L \cong R/J$.
\end{proof}

%%%%%%%%%%%%%%%%%%
%%%%%%%%%%%%%%%%%
\section{Minimal \spe\ ideals}\label{8POINTS2}

Throughout this section $T$ will be a ring satisfying Hypothesis~~\ref{hyp:main}.
The aim of this section is two-fold. In both cases we will work with a slightly more  restrictive class of algebras $T$,
 but it will include the algebras arising as Veronese subalgebras of Sklyanin algebras.
The first result will be to show that $\Div T$ is countable.  
  This will   then be used to prove that
 $T$ has a relatively rigid ideal structure. This in turn will   crucial  to the proof  of Theorem~\ref{ithm:maxnoeth}  in \cite{RSS2}.

  To make this more formal, assume that $S$ is a Sklyanin-like algebra, as defined in Section~\ref{DIVISORS}.
A graded ideal $I$ of $S$  is called  \emph{\spe} if $\GK S/I = 1$. \label{sporadic-defn}
 A \spe\ ideal $I$ of   $S$   is a \emph{minimal \spe\ ideal} of $S$
 if every other \spe\ ideal $H$ satisfies $H \supseteq I_{\geq m}$ for some $m$.  When $S$ has no ideal $I$ with $\GKdim S/I=1$, we declare that $S$ itself is the minimal \spe\ ideal.
 As we will show in this section, this property does hold for 
 our main examples.

    Many of the known examples of   a ring $T$ satisfying Hypothesis~\ref{hyp:main}    
     arise as Veronese rings of Artin-Schelter regular algebras $S$.
 As we next show,  $\Div(T)$ is  countable in these cases.
This result is closely related to results in  \cite{Aj} where the divisor of a module was originally defined.  

\begin{lemma}
\label{lem:countable}
Suppose that $S$ is a Sklyanin-like algebra of finite global dimension and let $T = S^{(\gamma)}$ be the Veronese ring that satisfies 
Hypothesis~\ref{hyp:main}.  Then $\Div(T)$ is countable. 
\end{lemma}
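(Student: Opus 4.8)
The plan is to prove the stronger statement that, for an admissible $S$‑module $M$, the class $[\Div_S M]\in\Pic E$ depends only on the class $[M]$ in the Grothendieck group $K_0(\rgr S)$ of finitely generated graded $S$‑modules, via one fixed homomorphism; since $S$ has finite global dimension, $K_0(\rgr S)$ is free abelian of countable rank, so only countably many divisor classes can occur. The statement for $T=S^{(\gamma)}$ will be deduced from the statement for $S$.

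First I would reduce to $S$. Write $B=S/gS\cong B(E,\mc L,\sigma)$, so that $T/gT=B^{(\gamma)}$ and $\rqgr(T/gT)\simeq\rqgr B\simeq\coh E$ compatibly with the Veronese equivalence $\rqgr B^{(\gamma)}\simeq\rqgr B$. The Veronese equivalence $\Phi\colon\rqgr T\xrightarrow{\ \sim\ }\rqgr S$ restricts on the subcategories coming from $T/gT$ and $S/gS$ to this equivalence on $\coh E$, carries the shift $[-1]$ on $\rqgr T$ to the shift $[-\gamma]$ on $\rqgr S$, and matches the operation ``multiplication by $g$'' on the two sides ($g$ being literally the same element of $T$ and of $S$). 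Hence, given an admissible $M\in\rgr T$, any $N\in\rgr S$ with $\pi_S(N)\cong\Phi(\pi_T(M))$ is again admissible, and the sheaves $\pi(M/Mg)$ and $\pi(\Tor_1(M,-/-g))$ are transported by $\Phi$ to their $S$‑counterparts; thus $\Div_S N=\Div_T M$ in $\Pic E$. So $\Div(T)\subseteq\Div(S)$, and it suffices to prove $\Div(S)$ is countable.

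Now the core of the argument. Let $M\in\rgr S$ be admissible. Since $S$ has finite global dimension, $M$ has a finite graded free resolution $0\to F_n\to\cdots\to F_0\to M\to 0$ with $F_i=\bigoplus_j S[-a_{ij}]$ a finite sum of shifts; in $K_0(\rgr S)$ this gives $[M]=\sum_{i,j}(-1)^i[S[-a_{ij}]]$. As $g\in S_\gamma$ is a central nonzerodivisor, $\pd_S B=1$, so $\Tor^S_{\ge 2}(M,B)=0$, and tensoring the resolution with $B$ over $S$ produces a bounded complex $\overline F_\bullet$ of finitely generated graded free $B$‑modules whose only nonzero homology is $H_0=M/Mg$ and $H_1=\Tor_1^S(M,S/Sg)$, both of GK‑dimension $\le 1$ by admissibility and Lemma~\ref{lem:divprops}(1,2). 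Applying the exact equivalence $\rqgr B\simeq\coh E$ and noting that $\dim_\kk B[-a]_n=(n-a)\deg\mc L$ grows linearly with leading coefficient $\deg\mc L$, the object $\mc N_a:=\pi(B[-a])$ is a line bundle on $E$ depending only on $a$; one obtains a bounded complex of vector bundles with $i$‑th term $\bigoplus_j\mc N_{a_{ij}}$ and homology the torsion sheaves $\pi(M/Mg)$ and $\pi(\Tor_1^S(M,B))$. Taking Euler characteristics in $K_0(\coh E)$ and applying the determinant homomorphism $\det\colon K_0(\coh E)\to\Pic E$ (using $\det\pi(N)=\mc O_E(\divc_B(N))$ for a GK‑$\le 1$ module $N$) yields, in $\Pic E$,
\[
[\Div_S M]\;=\;[\divc_B(M/Mg)]-[\divc_B(\Tor_1^S(M,B))]\;=\;\sum_{i,j}(-1)^i[\mc N_{a_{ij}}].
\]
The right‑hand side is the image of $[M]$ under the group homomorphism $\psi\colon K_0(\rgr S)\to\Pic E$ determined by $\psi([S[-a]])=[\mc N_a]$. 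Hence $\Div_S M=\psi([M])$.

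Finally, since $S$ is connected graded Noetherian of finite global dimension, every finitely generated graded projective $S$‑module is free, so $K_0(\rgr S)$ is free abelian on $\{[S[-a]]:a\in\mb Z\}$ and is therefore countable; thus $\Div(S)=\psi\bigl(\{[M]:M\in\rgr S\text{ admissible}\}\bigr)$ is countable, and so is $\Div(T)$. The step I expect to require the most care is the reduction to $S$: one must check precisely that the Veronese equivalence matches the two ``exceptional curves $E$'' together with the functors $M\mapsto M/Mg$ and $M\mapsto\Tor_1(M,-/-g)$, so that divisors of corresponding modules genuinely agree. A secondary point needing care is the identity $\det\pi(N)=\mc O_E(\divc_B(N))$ and the additivity of $\det$ on Euler characteristics of bounded complexes — this is exactly where the ``numerical'' nature of $\Div$ (the cancellation of the $\Pic^0 E$ ambiguity of individual point and line modules against the correction term $\divc(\Tor_1)$) is used.
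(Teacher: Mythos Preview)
Your argument is correct and follows essentially the same route as the paper: reduce to $S$, take a finite graded free resolution, tensor with $B=S/gS$, pass to $\coh E$, and observe that the resulting class in $K_0(\coh E)$ (hence in $\Pic E$ via $\det$) depends only on the graded Betti numbers, of which there are countably many. The differences are cosmetic. The paper first passes to $\overline{\Div}$ and then to $g$-torsionfree modules (via Lemma~\ref{lem:divprops}(6)), so that $\Tor_1$ vanishes and the tensored complex is an honest resolution of $M/Mg$; you instead keep both homology groups $H_0=M/Mg$ and $H_1=\Tor_1^S(M,B)$ and take the alternating sum in $K_0$, which is a slight streamlining. For the reduction from $T$ to $S$, the paper uses the concrete construction $N=M\otimes_T S$ (with $N^{(\gamma)}\cong M$) and cites \cite[Lemma~8.8]{Rog09} for $\Div_S N=\Div_T N^{(\gamma)}$; your argument via the Veronese equivalence $\rqgr T\simeq\rqgr S$ is correct but, as you note yourself, requires checking that this equivalence restricts to the identity on $\coh E$ under the two identifications $\rqgr(T/gT)\simeq\coh E\simeq\rqgr(S/gS)$ --- the paper's more explicit route avoids this.
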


\begin{proof}
Given an admissible $S$-module $N$,  the   first  two paragraphs of the proof of  \cite[Lemma~8.8]{Rog09}  show that 
the Veronese $N^{(\gamma)}$ is an admissible $T$-module, with $\Div_S N = \Div_T N^{(\gamma)}$. Conversely, given 
an admissible $T$-module $M$, then  $N=M\otimes_TS$ is an admissible $S$-module  with $N^{(\gamma)}\cong M$. 
Thus, it suffices to prove that $\Div S$ is countable. 
Recall from Definition~\ref{def-admiss} that $\overline{\Pic}( E) = \Pic(E)/\mb{Z} \alpha$, 
where $\tau$ is translation by $\alpha \in E$. Thus $\Pic E \to \overline{\Pic}\, E$ is a countable-to-one function  
 and so it is enough to prove that $\overline{\Div} \,S$ is countable.

 By Lemma~\ref{lem:divprops}(4,5), $\overline{\Div}\,S$ is determined by the divisors of   admissible 
 $g$-torsionfree modules  $M\in \rgr S$. For such an $M$,  pick  a   finite graded  free  
resolution $F_{\bullet} \to M$.  By   Lemma~\ref{lem:divprops}(2)  and   induction,  $F_{\bullet} \otimes_S S/gS$
is exact and hence is
 a graded free resolution of  $M/gM$ as a module over $\overline{S} = B = B(E, \mc{L}, \sigma)$.
Passing to $\rqgr B \simeq \coh E$ we get a locally free resolution $\mc{F}_{\bullet} \to \mc{M}$, where 
$\mc{M}=\pi(M/Mg)$ is a torsion sheaf whose   support, counted with multiplicity, gives $\Div M$.

The image $[\mc{M}]$ of $\mc{M}$ in the Grothendieck group $K(E)$ only depends upon the graded
Betti numbers of  $F_\bullet$. Since there are only countably many choices of these Betti numbers, 
 there are only countably many choices for the  $ [\mc{M}]$. However, 
$ [\mc{M}]= (\mc{O}_E(\Div(M)),0)$, under  the decomposition  $K(E)\cong \Pic E \bigoplus \mb{Z}$ from \cite[Exercise~II.6.11]{Ha}
and  so there are only countably many choices for $ \Div(M)$.
 \end{proof}

We now turn to an arbitrary ring $T$ satisfying Hypothesis~~\ref{hyp:main}.

\begin{proposition}
\label{prop-induct}
Let $T$ satisfy Hypothesis~\ref{hyp:main}, and let $R = T(\divd + p) \subseteq \wt{R} = T(\divd)$ where $\deg \divd < \mu - 1$.
Then $\overline{\Div}(R) = \overline{\Div}(\wt{R}) + \mb{Z}p$ as subsets of $\overline{\Pic}(E)$.
In particular, if $\Div(\wt{R})$ is countable, then so is $\Div(R)$.  
\end{proposition}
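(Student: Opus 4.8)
The statement asserts $\overline{\Div}(R) = \overline{\Div}(\wt{R}) + \mathbb{Z}p$ inside $\overline{\Pic}(E)$, where $R = T(\divd+p) \subseteq \wt{R} = T(\divd)$. I would prove the two inclusions separately.

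For the inclusion $\overline{\Div}(R) \subseteq \overline{\Div}(\wt{R}) + \mathbb{Z}p$, start with an admissible right $R$-module $M$; by Lemma~\ref{lem:divprops}(6) we may assume $M$ is $g$-torsionfree of $\GKdim 2$ (the $\GKdim \leq 1$ case contributes $0$). Using Proposition~\ref{prop:subtle}, we have $\wt{R}/R \cong \bigoplus_{i\geq 1} L[-i]$ where $L$ is a $g$-torsionfree line module for $R$ with $\Div_R(L) = \tau(p)$; note $\overline{\Div}_R(L) = \overline{\tau(p)} = \overline p$ in $\overline{\Pic}(E)$. The key is to relate $\Div_R$ and $\Div_{\wt R}$. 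Tensoring $M$ with $\wt{R}$ over $R$, or better, forming $M \otimes_R \wt R$ and analysing it: the point is that $M\otimes_R \wt R$ (or a suitable torsionfree quotient) is an admissible $\wt R$-module, and the difference $\Div_R M - \Div_{\wt R}(M\otimes_R \wt R)$ is an integer combination of translates of $p$ because $\wt R/R$ is built from copies of the line module $L$ supported (divisor-theoretically) on the $\tau$-orbit of $p$. I expect one must argue via an exact sequence: choosing a generating map, there is $0 \to K \to M \otimes_R \wt{R}$ with $M \subseteq M\otimes_R \wt R$ and the cokernel a subquotient of a sum of shifts of $L$, hence with $\overline{\Div}$ a multiple of $\overline p$; then Lemma~\ref{lem:divprops}(4) gives the result. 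One must also check admissibility is preserved, which should follow from the fact that $\wt R$ is finite over $R$ (or $g$-divisibility arguments plus $\GKdim$ being unchanged under such extensions), paralleling Lemma~\ref{C-lem}.

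For the reverse inclusion $\overline{\Div}(\wt{R}) + \mathbb{Z}p \subseteq \overline{\Div}(R)$: given an admissible $\wt R$-module $N$, Lemma~\ref{C-lem} applies directly (taking the divisor $\divd$ there to be our $\divd$ and the point $p$ to be $p$, since $\deg\divd < \mu-1$) to show $N$ is admissible as an $R$-module with $\Div_R N = \Div_{\wt R} N$; this gives $\overline{\Div}(\wt R) \subseteq \overline{\Div}(R)$. Separately, $\overline p \in \overline{\Div}(R)$: for instance the exceptional line module $L$ of Proposition~\ref{prop:subtle} is a $g$-torsionfree admissible $R$-module with $\overline{\Div}_R(L) = \overline p$; adding copies of shifts of $L$ to an arbitrary admissible module and using additivity (Lemma~\ref{lem:divprops}(4)) together with $\overline{\Div}$ being a subgroup-like subset closed under $\sigma$ (i.e. $\tau$) shifts, we get that $\overline{\Div}(R)$ is closed under adding $\overline p$. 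Combining, $\overline{\Div}(\wt R) + \mathbb{Z}p \subseteq \overline{\Div}(R)$.

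Finally, the countability statement is immediate: $\mathbb{Z}p$ is a countable subset of $\overline{\Pic}(E)$, and the preimage of $\overline{\Div}(R)$ in $\Pic(E)$ under the countable-to-one map $\Pic(E) \to \overline{\Pic}(E) = \Pic(E)/\mathbb{Z}\alpha$ is countable whenever $\overline{\Div}(R)$ is; and $\overline{\Div}(R) = \overline{\Div}(\wt R) + \mathbb{Z}p$ is countable as a sum of two countable sets once $\Div(\wt R)$ (hence $\overline{\Div}(\wt R)$) is countable. The main obstacle I anticipate is the first inclusion — specifically, making precise the claim that passing from an $R$-module to the induced $\wt R$-module changes the divisor only by a $\mathbb{Z}$-multiple of $p$, which requires a careful torsion/admissibility bookkeeping argument using the explicit structure $\wt R/R \cong \bigoplus_{i\geq 1} L[-i]$ from Proposition~\ref{prop:subtle} and the additivity of $\Div$.
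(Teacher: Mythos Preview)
Your proposal is correct and follows essentially the same route as the paper: form $\wt{M}=M\otimes_R\wt{R}$, use Lemma~\ref{C-lem} to identify $\Div_R\wt{M}=\Div_{\wt{R}}\wt{M}$, and then use Proposition~\ref{prop:subtle} together with additivity (Lemma~\ref{lem:divprops}(4)) to see that the discrepancy between $\overline{\Div}_R(M)$ and $\overline{\Div}_{\wt{R}}(\wt{M})$ lies in $\mb{Z}p$. The one point to sharpen is your exact sequence: the natural map $M\to M\otimes_R\wt{R}$ need not be injective, so instead of ``$M\subseteq M\otimes_R\wt{R}$'' the paper uses the four-term sequence
\[
0\;\to\;\Tor_1^R(M,\wt{R}/R)\;\to\;M\;\to\;\wt{M}\;\to\;M\otimes_R(\wt{R}/R)\;\to\;0,
\]
whose outer terms are subfactors of finite sums of shifts of $L$; this is exactly the ``careful torsion/admissibility bookkeeping'' you anticipated.
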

\begin{proof}
This is similar to \cite[Proposition~9.5]{Rog09}.  As noted in the last proof, 
  $\Pic( E) \to \overline{\Pic}(E)$ is a countable-to-one function and so
  the second statement of the proposition follows from the first.

Given an admissible $R$-module $M$, 
let $\wt{M} = M \otimes_R \wt{R}$ and consider the  exact sequence
\begin{equation}\label{induct-equ}
0 \to \Tor_1^R(M, \wt{R}/R) \to M \overset{\phi}{\to} \wt{M} \to M \otimes_R (\wt{R}/R) \to 0.
\end{equation}
Clearly $\wt{M} \in \rgr \wt{R}$ and it is easy to see that $\wt{M}$ is an admissible $\wt{R}$-module
(see the proof of \cite[Proposition~9.5]{Rog09}(3)).  Thus $\wt{M}$ is also admissible over $R$ and, by Lemma~\ref{C-lem},
 $\Div_R\wt{M}=\Div_{\wt{R}}\wt{M}$.   
The outer terms of \eqref{induct-equ} are isomorphic as right $R$-modules to subfactors of a finite direct sum  $(\wt{R}/R)^{(r)}$, and 
thus the kernel and cokernel of $\phi$ are also isomorphic to admissible $R$-module subfactors of  some $(\wt{R}/R)^{(s)}$.    
Using Lemma~\ref{lem:divprops}(4), it now suffices to prove that  such a module $N$ satisfies $\overline{\Div}(N) \in \mb{Z}p$.  
 By Proposition~\ref{prop:subtle}, $\wt{R}/R \cong \bigoplus_{i \geq 0} L[-i-1]$ for a $g$-torsionfree
line module $L$.   Thus $N$ has a finite filtration by subfactors of shifts of $L$, and it  suffices to prove 
that $\overline{\Div}(N) \in \mb{Z}p$ for a subfactor $N$ of a shift of $L$.  By  Lemma~\ref{lem:divprops}(3) 
 $\overline{\Div}(N[i]) = \overline{\Div}(N)$, so  we may even assume that $N$ is a subfactor of $L$.   By Lemma~\ref{lem:crit}(2) and Proposition~\ref{prop:subtle},  $L$ is  $2$-critical with $\Div L  =  \tau(p)$.  Thus, by Lemma~\ref{lem:divprops}(4,5),
   either $\overline{\Div} N = 0$ or $\overline{\Div} N = \overline{\Div} L=[p]$ in $\overline{\Pic}(E)$, and we are done.  
\end{proof}

  We now apply the countability of $\Div T(\divd)$ to study line modules and \spe\ ideals.
  Since we may need to  extend the base field, we will need that $\Div (R\otimes_{\kk}K)$ is countable for some 
  uncountable, algebraically closed  field $K\supseteq \kk$; in which case we call $\Div(R)$ \emph{strongly countable}. 
  \label{countable-defn}  Of course, this is automatic if 
  either $\kk$ is uncountable or, by the last two results,   if $R$ is constructed from a Sklyanin-like algebra.

\begin{lemma}\label{lem:finitediv}
Let $R = T(\divd)$ for some effective $\divd$ with $\deg \divd  < \mu$.  Assume that $\Div(R)$ is strongly countable.
Then the set $\big\{ \Div L \st L_R \mbox{ is a $g$-torsionfree cyclic line module} \big\}$ is finite.
\end{lemma}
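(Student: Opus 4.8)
The strategy is to parametrize $g$-torsionfree cyclic line modules by a bounded amount of divisor data and then show that only countably many divisors can arise as $\Div L$, which forces finiteness by a dimension or counting argument after extending scalars to an uncountable field $K$. First I would reduce to the case that $\kk$ is uncountable: by hypothesis $\Div(R)$ is strongly countable, so $\Div(R\otimes_\kk K)$ is countable for some uncountable algebraically closed $K\supseteq\kk$; using Remark~\ref{uniqueness-remark} (that $R(\divd)\otimes_\kk K = (T\otimes_\kk K)(\divd)$) and the fact that a $g$-torsionfree cyclic line module for $R$ base-changes to one for $R\otimes_\kk K$ with the same associated divisor (as a divisor on $E\otimes_\kk K$), it suffices to prove the statement over $K$. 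So I may assume $\kk$ is uncountable and $\Div(R)$ itself is countable.

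\textbf{Key steps.} Now let $L$ be a $g$-torsionfree cyclic line module. By Lemma~\ref{lem:crit}(2), $L$ is admissible, $2$-critical of GK-dimension $2$, and $\Div L = q$ is a single point. Since $L$ is $g$-torsionfree, $\Div L = \divc_B(L/Lg)$ is effective of degree $\deg_E(\divc_B(L/Lg))$, which is governed by the Hilbert series: $L/Lg$ has Hilbert series $1/(1-t)$, hence is a point module over $B = R/gR = B(E,\mc{M}(-\divd),\tau)$, so $\divc_B(L/Lg)$ is a single point $q\in E$ by Lemma~\ref{lem:pt-crit}(1,2). Thus $\Div L \in \{\text{single points of }E\}$, and in particular $\Div L$ lies in the image of $\Div(R)$ inside $\Pic E$ — but more usefully, $\Div L = q$ determines $q$ up to nothing: $q$ itself is recovered from $\pi(L/Lg) = \mc{O}_q$. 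So the assignment $L \mapsto q = \Div L$ takes values in $E(\kk)$, and its image is a subset of the (countable) set $\{\Div M : M_R \text{ admissible}\} = \Div(R)$ viewed appropriately. The point is now that $\Div(R)$ being countable means only countably many points $q$ can occur as $\Div L$ for \emph{some} line module; but I must still rule out that this countable set is infinite.

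\textbf{The main obstacle.} The crux is upgrading "countably many" to "finitely many". The mechanism, following the pattern of \cite{Rog09}, is: each $g$-torsionfree cyclic line module $L$ with $\Div L = q$ is isomorphic to $R/J$ for a saturated right ideal $J$ of $R$ with $\dim_\kk(R/J)_n = n+1$, and by Proposition~\ref{prop:subtle}(3) (applied to an appropriate one-point blowup) such $J$ is forced to be $\bigoplus_n M_R(n+1, q')_n$ for the relevant point $q'$ with $\tau(q') = q$ — i.e. $L$ is the exceptional line module for the blowup $R(q')\subseteq R$. Then $\Div L$ runs over $\tau(\{q' : \deg\divd + q' \text{ still "fits"}\})$, and the genuinely constraining input is that for $R(q')$ to make sense one needs $\deg(\divd + q') = \deg\divd + 1 \le \mu$, which is automatic, so this alone does not give finiteness. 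The real finiteness must come from: if infinitely many distinct points $q_i$ each admit a $g$-torsionfree cyclic line module $L_i$ with $\Div L_i = q_i$, then since $\kk$ is uncountable we could choose a generic such point, and a specialization/constructibility argument (the "generic line module" exists only on a proper closed or countable locus) contradicts countability of $\Div(R)$. Concretely: the family of cyclic line modules is parametrized by a projective scheme (the line module "point scheme" analogue), and the function $L\mapsto \Div L$ is a morphism to $E$ on this parameter scheme; its image is constructible, hence either finite or cofinite in a curve; cofiniteness would force uncountably many values of $\Div L$ over the uncountable field $K$, contradicting countability of $\Div(R)$. Hence the image is finite. I expect assembling this constructibility argument — identifying the correct parameter scheme for $g$-torsionfree cyclic line modules and checking $L\mapsto\Div L$ is a morphism — to be the main technical obstacle; the alternative, purely ring-theoretic route via Proposition~\ref{prop:subtle} and induction on $\deg\divd$ (reducing $R = T(\divd)$ to $\wt R = T(\divd - p)$ and using Proposition~\ref{prop-induct} to track how the line-module divisors change) is likely the cleaner path and is what I would write up in detail.
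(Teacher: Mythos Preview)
Your Route 1 --- parametrizing cyclic line modules by a finite-type scheme and arguing that over an uncountable field the image of $L \mapsto \Div L$ in $E$ must be finite --- is exactly the paper's approach and is correct. The paper handles the two points you flag as obstacles: (i) by strong noetherianity (Theorem~\ref{thm:Rnoeth}) and \cite[Corollary~E4.5]{AZ2}, cyclic line modules are parametrized by a projective line scheme; (ii) rather than proving $\theta: W \to E$ is a morphism, the paper simply shows each fibre $\theta^{-1}(q)$ is closed (since $\Div L = q$ iff $J + gR$ equals the fixed ideal defining $P(q)$), and then invokes the fact that a $\kk$-scheme of finite type over an uncountable $\kk$ cannot be a countable union of proper closed subsets. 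One subtlety you gloss over: showing that $g$-torsionfreeness is an \emph{open} condition on the line scheme needs the uniform degree bound from \cite[Corollary~E4.5(2)]{AZ2}, so the condition can be tested in finitely many degrees; this is what cuts out the open subscheme $W$ on which the argument runs.

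Your Route 2, however, has a genuine gap and you should not pursue it. Proposition~\ref{prop:subtle}(3) says that \emph{the} exceptional line module for a one-point blowup $R(q') \subseteq R$, when cyclic, has the specific form $R/\bigoplus_n M(n+1, \tau(q'))_n$. It does \emph{not} assert the converse: there is no claim that every $g$-torsionfree cyclic line module over $R$ with $\Div L = q$ arises as an exceptional line module for some blowup, and in general it need not. Likewise, Proposition~\ref{prop-induct} controls $\overline{\Div}(R)$ as a set (countability), but gives no mechanism for singling out which of the countably many elements actually occur as $\Div L$ for a line module; countability alone is not finiteness, and Remark~\ref{uncountable} shows the conclusion can genuinely fail without the strong countability hypothesis. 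So the induction on $\deg \divd$ does not furnish the missing finiteness step. Write up Route 1.
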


\begin{proof}
This is very similar to the proof of \cite[Theorem~9.7]{Rog09}, the main difference being that   line modules are automatically $g$-torsionfree 
and cyclic in the context of that paper.  In contrast, 
Example~\ref{ex5}  shows that line modules can even be $g$-torsion in our case.

The ring $R$ is strongly noetherian by Theorem~\ref{thm:Rnoeth} and so, by  \cite{AZ2}, isomorphism classes of cyclic line modules over $R$ are parametrised by a projective scheme, the \emph{line scheme}.  More specifically, for each $m$ the set of graded subspaces 
$\bigoplus_{j=0}^m J_j\subseteq \bigoplus_{j=0}^m R_j$  
 such that $\dim J_i = \dim R_i - i$ for each $i$, and such that $J_i R_j \subseteq J_{i+j}$ for all $i,j$ with $i + j \leq m$, 
is parametrised by a projective scheme $X_m$.  There is a truncation map $\phi_m: X_{m+1} \to X_m$ for each $m$ 
induced by forgetting the $(m+ 1)^{\text{st}}$ coordinate, and \cite[Corollary~E4.5]{AZ2} implies that there is $m_0 \in \NN$ so that $\phi_m$ is an isomorphism for $m \geq m_0$.  Then  $X_{m_0}$ is the line scheme.

If $L = R/J$ is a line module, then $L/Lg$ has Hilbert series at least as large as the Hilbert series $(1-t)^{-1}$
 of a point module; and it has the Hilbert series of a point module if and only if $L$ is $g$-torsionfree.
As we run over the set of $J$'s such that $R/J$ is a line module, $\dim_\kk J_m + gR_{m-1}$ is a lower semi-continuous function 
with maximum value $\dim_\kk R_m - 1$.  Thus requiring   $\dim_\kk J_m + gR_{m-1} = \dim_\kk R_m - 1$ for any fixed $m$ is a further open  
condition on the set of such $J$.   Now apply \cite[Corollary~E4.5(2)]{AZ2} to the Hilbert series of a point module.   It shows that there is $n$ such 
that for any right ideal $I$ of $R$ such that $\dim_{\kk} (R/I)_m = 1$ for all $0 \leq m \leq n$, then $\dim_{\kk} (R/I)_m = 1$ for all $m \geq 0$ (and 
so $R/I$ is a point module). This implies that the condition that a line module $R/J$ be $g$-torsionfree can be tested in a fixed finite set of degrees.  
Thus it is an intersection of finitely many open conditions, and so the set of $g$-torsionfree line modules is an open subset $W$ of the line scheme; 
in particular 
it is a $\kk$-scheme of finite type.

The rest of the proof is basically the same as  \cite[Theorem~9.7]{Rog09}, simply using $W$ 
in place of the entire line scheme, but we sketch some of the details for the reader's convenience.  As in 
 that proof,  we can extend the base field if necessary and 
  assume that $\kk$ is uncountable.    There is a map $\theta: W \to E$  sending a $g$-torsionfree cyclic line module $L = R/J$ to its divisor 
$\Div L$, which is the single point $p\in E$ corresponding to the point module $P(p)=L/gL$.  For any fixed   $q \in E$, $\Div L = q$ 
if and only if $J + gR = I$, where $P(q)=R/I$.  Thus 
the fibre $\theta^{-1}(q)$  is a closed subset in $W$.
Since $\Div T$ is countable, $W$ is therefore a union of countably many proper closed subsets. By, for example,  \cite[Lemma~6.5]{NS}
this forces the  union to be finite.  Equivalently, 
 the set of divisors associated to $g$-torsionfree cyclic line modules  must be finite.
\end{proof}

 \begin{remark}\label{uncountable}  Note that $\Div T$  can be uncountable for  algebras satisfying Hypothesis~\ref{hyp:main}.
For example, consider $T=B(E,\mc{M},\tau)[g]$, where $g$ is an indeterminate in degree one and, as usual, $\deg\mc{M}\geq 2$.  For each point $p \in E$ and corresponding right point module $P(p) = B/I$, the $T$-module $L = T/I[g]$ is a $g$-torsionfree line module with $\Div L = p$.
Hence $\{\Div L \,|\, L$ is a $g$-torsionfree line module$\}$ has cardinality at least as large as that of $E$.  
Thus for this example, Lemma~\ref{lem:countable} fails if $\kk$ is uncountable. 
On the other hand, if $\kk$ is countable then it shows that Lemma~\ref{lem:finitediv} can fail if one  replaces the strongly countable hypothesis by a countable one. 
   \end{remark}

We now turn to the study of sporadic ideals. 
The punchline of the proof, Proposition~\ref{prop-endgame}, shows how the minimal \spe\ ideal for $R = T(\divd + p)$ 
is determined by the minimal \spe\ ideal for $\wt{R} = T(\divd)$, together with the structure of the 
$g$-torsionfree GK-1 factor modules of the right and left  exceptional line modules   coming from  Proposition~\ref{prop:subtle}.
  The next result shows how to  control these factors.  The first part of the next
  result is a generalisation of \cite[Lemma~10.1]{Rog09}.

\begin{lemma}\label{lem:specann}
Let $R = T(\divd)$ where $T$ satisfies Hypothesis~\ref{hyp:main} and $\deg \divd < \mu$, and assume that the set 
$\{\Div M \st M\ \text{is a $g$-torsionfree cyclic line module for}\ R \}$ is finite.      Let $L$ be any 
$g$-torsionfree (not necessarily cyclic) line module for $R$.
\begin{enumerate}
\item $L$ has a unique smallest submodule $N$ such that $L/N$ is $g$-torsionfree 
 with $\GKdim L/N=1$.
\item Consider the right annihilator $I = \operatorname{r-ann}(L/N)$.  Then $I$ is a \spe\ ideal that annihilates every 
finitely generated  $g$-torsionfree   subfactor $M$ of a  direct sum of shifts of $L$ with $\GKdim M= 1.$
\end{enumerate}
\end{lemma}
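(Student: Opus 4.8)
The plan is to exploit two structural facts about $R=T(\divd)$: that $L$ is $2$-critical, hence $2$-pure (Lemma~\ref{lem:crit}(2)), and that the TCR $\overline R=B(E,\sM(-\divd),\tau)$ is ``just infinite'' — since $\tau$ has infinite order the only $\tau$-invariant closed subschemes of $E$ are $\emptyset$ and $E$, so every nonzero graded two-sided ideal of $\overline R$ has finite $\kk$-codimension. Throughout I will use the following reformulation, valid because $L$ is $g$-torsionfree: for $N\subseteq L$, the quotient $L/N$ is $g$-torsionfree iff $N$ is $g$-divisible ($N\cap Lg=Ng$), and then, since multiplication by $g$ is injective on $L/N$ so $\dim_\kk(L/N)_n$ is nondecreasing, $L/N$ has GK-dimension $1$ (not $2$, and not $0$ unless $L/N=0$) iff $\dim_\kk(L/N)_n$ is bounded, iff $\overline{L/N}=L/(N+Lg)$ is finite-dimensional but nonzero. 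So, writing $\mathcal S$ for the set of $g$-divisible $N\subsetneq L$ with $\dim_\kk L/(N+Lg)<\infty$, I must show $\mathcal S$ has a unique smallest element.

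For part (1): (i) $\mathcal S\neq\emptyset$. Here I would use the Van den Bergh correspondence (Theorem~\ref{thm:vdb}) at the point $q=\Div L$ (a single point, by the proof of Lemma~\ref{lem:crit}): $g$-divisible submodules $N\subsetneq L$ with finite-dimensional reduction correspond to co-finite-length $C_q$-submodules of $\widehat{\pi(L)}_q$ containing $\widehat{\pi(L)}_qN$, and the explicit lower-triangular-matrix picture makes such submodules manifestly available; equivalently, one picks a homogeneous $x\in L\setminus Lg$ with $\bar x\notin t(\overline L)$, notes $\bar x\overline R$ has finite codimension in the point module $\overline L$ (Lemma~\ref{lem:pt-crit}), and takes $N$ to be the preimage in $L$ of $\operatorname{tors}_g(L/xR)$, checking it is proper. (ii) $\mathcal S$ is closed under finite intersections: for $N_1,N_2\in\mathcal S$ we have $N_1\cap N_2\neq 0$ by $2$-purity of $L$, $L/(N_1\cap N_2)$ embeds in $L/N_1\oplus L/N_2$ so it is $g$-torsionfree of GK-dimension $\le 1$, and it is infinite-dimensional since it is $g$-torsionfree and nonzero; hence it lies in $\mathcal S$. (iii) A smallest element exists: set $e(N)=\lim_n\dim_\kk(L/N)_n$ and choose $N_0\in\mathcal S$ with $e(N_0)$ minimal; for any $N\in\mathcal S$ the surjection $L/(N_0\cap N)\to L/N_0$ has $e(N_0\cap N)\ge e(N_0)$, hence $=e(N_0)$ by minimality, so its kernel $(N_0+N)/N\subseteq L/N$ is finite-dimensional, hence zero ($L/N$ is $g$-torsionfree); thus $N_0\subseteq N$, and $N:=N_0$ is the required unique smallest submodule.

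For part (2): put $\Lambda=L/N$ and $I=\operatorname{r-ann}(\Lambda)$. First, $I$ is sporadic: $I\neq 0$ since $\Lambda$, being of GK-dimension $1$, is killed by some nonzero homogeneous $c\in R$ (localise at the nonzero homogeneous elements); as $\Lambda$ is $g$-torsionfree and faithful over $R/I$, the ring $R/I$ is $g$-torsionfree, so $h_{R/I}(t)=h_{\overline R/\overline I}(t)/(1-t)$ and $\GKdim R/I=1+\GKdim\overline R/\overline I$; and $\overline I\neq 0$ — otherwise $I=gJ$ for an ideal $J$, whence $\Lambda Jg=0$, $\Lambda J=0$, $J\subseteq I=gJ$ and $J=0$, a contradiction — so $\overline I$ has finite codimension in $\overline R$ and $\GKdim R/I=1$. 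For the annihilation statement: each shift $L[j]$ is again a $g$-torsionfree line module whose smallest submodule (part (1)) is $N[j]$ by uniqueness, so any $g$-torsionfree GK-$1$ quotient of $L[j]$ is a quotient of $\Lambda[j]$ and hence killed by $I$ (the graded ideal $\operatorname{r-ann}$ is shift-invariant). Given a finitely generated $g$-torsionfree GK-$1$ subfactor $M=U/V$ of a finite direct sum $F=\bigoplus_i L[n_i]$ (only finitely many summands meet $M$), I would filter $F$ by partial sums, push $M$ forward onto a subfactor of $\bigoplus_i\Lambda[n_i]$ (killed by $I$) with kernel a $g$-torsionfree GK-$\le 1$ subfactor of $\bigoplus_i N[n_i]$, and iterate: since $\pi(N)$ agrees in $\rqgr R$ with $\pi$ of a shifted line module (as $N$ is $g$-torsionfree and admissible, with $\overline N$ essentially a point module and $\Div N=\Div L$), and the finiteness hypothesis confines the divisors appearing to a finite set, an induction on the number of summands and on the ``depth'' of $N$ in $L$ reduces everything to $g$-torsionfree GK-$1$ quotients of shifts of line modules, all killed by $I$. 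Finally, to pass from ``every layer of a finite filtration of $M$ is killed by $I$'' to ``$MI=0$'' rather than just $MI^k=0$, I would use that $I$ is idempotent (equivalently here, semiprime), deduced from the corresponding property of $\overline I$ in the just-infinite ring $\overline R$ together with the $\chi$-conditions of Theorem~\ref{thm:Rnoeth}.

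The main obstacle is the reduction in part (2) of an arbitrary $g$-torsionfree GK-$1$ subfactor of $\bigoplus_i L[n_i]$ to quotients of shifts of line modules, together with the bookkeeping that yields $MI=0$; this is precisely where the finiteness-of-divisors hypothesis and the idempotence of $I$ are used. A second, smaller, point is the bare nonemptiness of $\mathcal S$ in part (1), which genuinely requires the explicit form of the Van den Bergh functor rather than a naive lifting of ideals from $\overline R$ (such lifts tend not to be $g$-divisible).
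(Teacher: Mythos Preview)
Your argument for part~(1)(iii) has a genuine error that is not merely cosmetic. You choose $N_0\in\mathcal S$ with $e(N_0)$ \emph{minimal}; but the smallest submodule $N_0$ corresponds to the \emph{largest} quotient $L/N_0$, i.e.\ to $e(N_0)$ \emph{maximal}. With ``minimal'' your deduction ``$e(N_0\cap N)\ge e(N_0)$, hence $=e(N_0)$ by minimality'' is circular: minimality of $e(N_0)$ over $\mathcal S$ gives $e(N_0)\le e(N_0\cap N)$, the same inequality you already had from the surjection, so no equality follows. Swapping to ``maximal'' repairs the logic, but then you must explain why $e$ is bounded above on $\mathcal S$ --- and you never do. In fact your part~(1) does not invoke the finiteness hypothesis at all (you say explicitly at the end that the hypothesis is used only in part~(2)), and without it the statement is simply false: for $T=B(E,\mathcal M,\tau)[g]$ and $L=(B/I)[g]$ with $B/I$ a point module, the submodules $N_k=(B/I)_{\ge k}[g]$ lie in $\mathcal S$ with $e(N_k)=k$ unbounded, so no smallest element exists (cf.\ Remark~\ref{uncountable}).

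The paper's proof of part~(1) is precisely the missing step: it shows that for good submodules $N$ vanishing below some fixed degree $n_0$, the image $N/Ng$ in the point module $L/Lg$ is a shifted cyclic point module, so $N$ itself is a shifted cyclic $g$-torsionfree line module $L'[-i]$ with $\Div L'=\tau^i(\Div L)$; the finiteness hypothesis then bounds $i$, hence bounds $e(N)$. The residual case (chains whose members don't vanish in low degree) is handled by noting they all contain a fixed $XR$, whose comultiplicity bounds $e$. Your intersection argument (ii) and the implication ``$e$ bounded $\Rightarrow$ smallest element'' are fine and match the paper's implicit use; what is missing is the actual bound. Your discussion of $\mathcal S\ne\emptyset$ via the Van den Bergh functor is unnecessary here (the paper does not address it separately; if $\mathcal S=\emptyset$ part~(2) is vacuous). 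For part~(2) the paper simply cites \cite[Lemma~10.2]{Rog09}; your sketch is in the right spirit but the termination of your ``iteration on depth of $N$ in $L$'' again rests on the multiplicity bound you have not established.
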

\begin{proof}

(1)  A module $N\subset L$ such that $L/N$ is $g$-torsionfree of GK-dimension 1 will be called a \emph{good submodule}.
To prove the result,    it suffices to prove that $L$ has DCC on good submodules.
  For any good submodule $N$, $\dim_{\kk}L/(N + Lg)<\infty$ and so 
  $\dim_\kk (L/N)_n$ is a constant, say $d$, 
for $n \gg 0$.  We call $d$ the \emph{multiplicity} of $L/N$.  To prove Part (1) it 
also suffices to show that the possible multiplicity of $L/N$ for good submodules $N$ is bounded above.

Set $B=R/gR$ and 
  consider $P = L/Lg\in \rgr B$.  Then $h_{P}(t)=1/(1-t)$; so $P$ is a (not necessarily cyclic) point module.    
We claim that there is $n_0 \geq 0$ such that every nonzero submodule of $P_{\geq n_0}$ is a shifted cyclic point module.
Write $R/Rg= B(E, \mc{L}, \tau)$, where $\mc{L} = \mc{M}(-\divd)$.  By 
Lemma~\ref{lem:pt-crit}(1,2), there is a point module of the form $P(p) = \bigoplus_{n \geq 0} H^0(E, \kk(p) \otimes \mc{L}_n)$ such 
that $P_{\geq m} \cong P(p)_{\geq m}$.  Thus we can replace $P$ by $P(p)$. Now if $P_m B_1 = 0$, for some $m \geq 0$, then by  the 
definition of multiplication in $B$,  the image of the multiplication map 
$f: H^0(E, \mc{L}_m) \otimes H^0(E, \mc{L}^{\tau^m}) \to H^0(E, \mc{L}_{m+1})$ is contained in 
$H^0(E, \mc{I}_p \otimes \mc{L}_{m+1})$, where $\kk(p) = \mc{O}_E/\mc{I}_p$.   If $\deg \mc{L} \geq 2$ this never happens, 
by Lemma~\ref{lem:sec-mult}.  If $\deg \mc{L} = 1$, then  by a similar argument as in Proposition~\ref{prop:subtle}, 
 this happens if and only if $\mc{L}^{\tau^m} \cong \sO_E(p)$. 
This can happen for at most one $m \geq 2$.  Thus some tail $P_{\geq n_0}$ has the property that $P_i B_1 = P_{i + 1}$ for 
all $i \geq n_0$.   In particular, every nonzero submodule of $P_{\geq n_0}$ is a shifted cyclic point module, proving the claim. 

Suppose that $N\subset L$ is a good submodule with $N_{\leq n_0} = 0$, where $n_0$ is defined by the last paragraph.
Tensoring $0 \to N \to L \to L/N \to 0$ with $R/Rg$ gives an exact sequence $0 \to N/Ng \to L/Lg \to L/(N + Lg) \to 0$, by Lemma~\ref{lem:divprops}(2).  
By definition,  $N/Ng\subseteq (L/Lg)_{\geq n_0}$, and so   is a shifted cyclic point module.  Since $N$ is $g$-torsionfree, this implies 
that $N$ is a shifted cyclic line module.  
Now $\Div L/N = 0$, since $L/N$ is $g$-torsionfree with $\GKdim(L/N)=1$.  Thus $\Div L = \Div N = p$, by Lemma~\ref{lem:divprops}(4).
   Writing $N \cong L'[-i]$ for a line module $L'$, then $\Div L' = \tau^{i}(p)$ by Lemma~\ref{lem:divprops}(3).   
We have therefore found a $g$-torsionfree cyclic line module for $R$ with divisor $\tau^{i}(p)$.  Note that $i$ is also the multiplicity of $L/N$.
Since by assumption the set of divisors associated to $g$-torsionfree cyclic line modules is finite, there is an upper bound on $i$ and thus on the multiplicity of $L/N$ for good submodules $N$ with $N_{\leq n_0} = 0$.

Now suppose that we have a descending chain $N^{(1)} \supseteq N^{(2)} \supseteq \dots$ of good submodules of 
$L$.  If 
$N^{(i)}_{\leq n_0} = 0$ for some $i$, then for all  $j>i$ the multiplicity of $L/N^{(j)}$ is bounded above by the previous paragraph, and 
thus the chain must stabilise.   Otherwise 
$N^{(i)}_{\leq n_0}\not=0$   for all $i$; but  this implies that the exists  a fixed finite-dimensional nonzero vector subspace $X$ of $L$
 such that  $N^{(i)}_{\leq n_0}  = X$ for all $i \gg 0$.   The submodule 
$XR$ of $L$ is contained in every $N^{(i)}$, and $\GKdim L/XR = 1$ since $L$ is 2-critical by Lemma~\ref{lem:crit}(2).  
Thus the multiplicity of $L/XR$  is an upper bound for the multiplicity of all   the $L/N^{(i)}$, and again the chain stabilises.

(2)  Exactly the same proof as in \cite[Lemma~10.2]{Rog09} applies.
\end{proof}

\begin{proposition}
\label{prop-endgame}
Let $T$ satisfy Hypothesis~\ref{hyp:main} and suppose in addition that $\Div(T)$ is strongly countable, 
and that $T$ has a minimal \spe\ ideal.  Let $R = T(\divc)$ where $\deg \divc < \mu$.

\begin{enumerate}
\item $\Div(R)$ and $\Div^{\ell}(R)$ are also strongly countable.

 \item $R$ has a minimal \spe\ ideal, which  can be determined inductively  as follows.  
Write $R = T(\divd + p) \subseteq \wt{R} = T(\divd)$, where $\deg \divd \leq \mu - 2$.
Suppose that $H$ is a minimal \spe\ ideal of $\wt{R}$.   Let $L$, respectively $L'$, 
 be the right, respectively left, exceptional line module for the blowup $R \subseteq \wt{R}$,  
  as in Remark~\ref{line-defn}.  Then Lemma~\ref{lem:specann}(2) holds for   $L$ on the right and $L'$ on the left, and
produces  \spe\ ideals $I$, respectively $G$.   Then $G(H \cap R)I$ is a minimal \spe\ ideal for $R$.
\end{enumerate}
\end{proposition}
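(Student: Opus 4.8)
The plan is to establish both parts simultaneously by induction on $\deg \divc$, with Part~(1) supplying the input needed for Part~(2). Throughout I write $\wt R = T(\divd)$ when $\divc = \divd + p$.

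\emph{Part (1).} Fix an uncountable algebraically closed $K \supseteq \kk$ witnessing that $\Div(T)$ is strongly countable, so $\Div(T\otimes_\kk K)$ is countable. Since $\deg\divc \le \mu - 1$, write $\divc = p_1 + \dots + p_k$; by Proposition~\ref{prop:iterate} and Remark~\ref{uniqueness-remark}, $R\otimes_\kk K = (T\otimes_\kk K)(\divc)$ is an iterated one-point blowup in which each step has the form $T(\dive + q) \subseteq T(\dive)$ with $\deg\dive \le k-1 \le \mu - 2 < \mu - 1$, so Proposition~\ref{prop-induct} applies at every step and gives that $\Div(R\otimes_\kk K)$ is countable; hence $\Div(R)$ is strongly countable. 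For the left-sided statement, note that $\bbar\Div(T) = \bbar\Div^\ell(T)$ by Corollary~\ref{cor:leftright} and that $\Pic E \to \bbar\Pic E$ is countable-to-one (as in the proof of Lemma~\ref{lem:countable}), so $\Div^\ell(T)$ is strongly countable; now rerun the argument using the left-sided analogue of Proposition~\ref{prop-induct}, valid by the left--right symmetry recorded around \eqref{T-leftright}.

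\emph{Part (2): setup and that $G(H\cap R)I$ is \spe.} Induct on $\deg\divc$; the base case $\divc = \emptyset$ is the hypothesis on $T$. For the step, $\wt R$ satisfies Hypothesis~\ref{hyp:main}, has strongly countable $\Div$ and $\Div^\ell$ by Part~(1), and by induction has a minimal \spe\ ideal $H$. By Part~(1) and Lemma~\ref{lem:finitediv}, the sets of divisors of $g$-torsionfree cyclic line modules for $R$, on the right and on the left, are finite, so Lemma~\ref{lem:specann} applies to the right exceptional line module $L$ and the left exceptional line module $L'$ (Proposition~\ref{prop:subtle}, Remark~\ref{line-defn}), producing \spe\ ideals $I = \operatorname{r-ann}(L/N)$ and $G = \operatorname{l-ann}(L'/N')$. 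To see that $G(H\cap R)I$ is \spe: $R/(H\cap R)$ embeds in $\wt R/H$, so $\GKdim R/(H\cap R)\le 1$, while $\GKdim R/I \le 1$ and $\GKdim R/G\le 1$; filtering $R \supseteq I \supseteq (H\cap R)I \supseteq G(H\cap R)I$, the three successive quotients are, respectively, a cyclic $R/I$-module, a left homomorphic image of a finite direct sum of copies of $R/(H\cap R)$, and a left homomorphic image of a finite direct sum of copies of $R/G$, so each has GK-dimension $\le 1$ and hence $\GKdim R/G(H\cap R)I\le 1$; since $G(H\cap R)I$ is a nonzero ideal of the domain $R$ contained in the \spe\ ideal $I$ it has infinite codimension, so $\GKdim R/G(H\cap R)I = 1$.

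\emph{Part (2): minimality.} Let $J$ be a \spe\ ideal of $R$; we must show $J \supseteq (G(H\cap R)I)_{\ge m}$ for some $m$. Consider the two-sided ideal $\wt R J \wt R$ of $\wt R$ and its image $\bar K$ in the $(R,R)$-bimodule $\wt R/R$. As a right $R$-module $\wt R/R \cong \bigoplus_{i\ge 1} L[-i]$ with $L$ $2$-critical (Lemma~\ref{lem:crit}(2)), and left multiplication by $\wt R_1$ carries the $i$-th summand onto (a shift of) the next one, as in the proof of Proposition~\ref{prop:subtle}; hence every nonzero sub-bimodule of $\wt R/R$ is cofinite. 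So either $\wt RJ\wt R \subseteq R$ — a degenerate case handled directly, with $\wt RJ\wt R$ a \spe\ ideal of $R$ absorbed by $J$ exactly as below — or $\wt R/(R+\wt RJ\wt R)$ is finite-dimensional, so $\wt RJ\wt R$ is, up to finite codimension, a \spe\ ideal of $\wt R$, and minimality of $H$ gives $\wt RJ\wt R \supseteq H_{\ge m}$, whence $R\cap \wt RJ\wt R \supseteq (H\cap R)_{\ge m}$. It then remains to absorb the gap between $R\cap\wt RJ\wt R$ and $J$: the module $W = (R\cap\wt RJ\wt R)/J$ has GK-dimension $\le 1$ (it is a submodule of $R/J$), and as a right $R$-module it is a subfactor of $(\wt RJ\wt R)/J$, which is built from shifts of $L$, while as a left $R$-module it is built from shifts of $L'$; peeling off $g$-torsion and applying Lemma~\ref{lem:specann}(2) on the right and the left shows $G\,W\,I = 0$, i.e. $G(R\cap\wt RJ\wt R)I \subseteq J$. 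Combining with $R\cap\wt RJ\wt R \supseteq (H\cap R)_{\ge m}$ and comparing Hilbert series to settle the finitely many low degrees (as in the proof of Proposition~\ref{prop:iterate}) yields $(G(H\cap R)I)_{\ge m'} \subseteq J$, completing the induction.

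\emph{Main obstacle.} The delicate point is the last step: the $(R,R)$-bimodule $\wt R/R$ has GK-dimension $2$ even though each of its one-sided structures is a direct sum of shifts of a $2$-critical line module, and it is precisely the sandwiching by $G$ on the left and $I$ on the right that collapses this extra dimension. Making this rigorous requires separating $g$-torsion carefully before Lemma~\ref{lem:specann}(2) can be applied, and keeping track of degree shifts so that the containment $G(H\cap R)I \subseteq J$ holds in all large degrees rather than merely up to a subfactor.
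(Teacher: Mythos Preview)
Your treatment of Part~(1) and the setup of Part~(2) (that $G(H\cap R)I$ is \spe) is correct and matches the paper's proof. The paper then simply cites \cite[Theorem~10.4]{Rog09} for minimality, so your attempt to write out that argument directly is welcome; the overall strategy (push $J$ up to $\wt R$, invoke minimality of $H$, then absorb back using $G$ and $I$) is the right one. However, two steps are not justified as written.

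First, the claim that ``every nonzero sub-bimodule of $\wt R/R$ is cofinite'' via left multiplication by $\wt R_1$ does not make sense: $R$ is not a left $\wt R$-submodule of $\wt R$, so $\wt R_1$ does not act on $\wt R/R$, and the filtration in Proposition~\ref{prop:subtle} does not give what you assert about the direct summands. The conclusion you want --- that $\GKdim \wt R/\wt RJ\wt R \le 1$ --- is nonetheless easy: since $_R\wt R$ is finitely generated, $\wt R/J\wt R \cong (R/J)\otimes_R \wt R$ is a quotient of finitely many shifted copies of $R/J$ as a left $R$-module, hence has GK-dimension~$\le 1$, and $\wt R/\wt RJ\wt R$ is a further quotient.

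Second, and more seriously, your assertion that $W=(R\cap\wt RJ\wt R)/J$ is ``built from shifts of $L$'' as a right $R$-module is circular: you argue $W$ is a subfactor of $(\wt RJ\wt R)/J$, but the obvious filtration of the latter has $W$ itself as one piece and a submodule of $\wt R/R$ as the other, so you learn nothing about $W$. The correct mechanism is to work with $J\wt R$ rather than $\wt RJ\wt R$: using a surjection $\bigoplus R[-d_i]\twoheadrightarrow J$ of \emph{left} $R$-modules, one gets that $J\wt R/J$ is a quotient of $J\otimes_R(\wt R/R)$, which is in turn a quotient of $\bigoplus(\wt R/R)[-d_i]$ as a \emph{right} $R$-module; hence $(J\wt R\cap R)/J$ really is a subfactor of shifts of $L$, and Lemma~\ref{lem:specann}(2) applies to its $g$-torsionfree quotient. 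One then needs the symmetric step on the left with $\wt RJ$ and $L'$, and a careful bookkeeping argument to handle the $g$-torsion and combine the three containments; ``peeling off $g$-torsion'' and ``comparing Hilbert series to settle the finitely many low degrees'' are placeholders, not arguments. This is exactly the content of \cite[Theorem~10.4]{Rog09}.
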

\begin{proof}
(1) Take an uncountable algebraically closed field extension $K\supseteq \kk$. 
By Remark~\ref{uniqueness-remark}, $T(\divc)\otimes_{\kk} K =(T\otimes_\kk K)(\divc)$ for any divisor $\divc $ 
and so the earlier results apply to $R\otimes_{\kk}K$. 
In particular,  by Proposition~\ref{prop:iterate}, we can form $R\otimes_\kk K$ iteratively via a series of one point blowups starting with 
$T\otimes_\kk K$.   Then $\Div(R)$ is strongly countable by Proposition~\ref{prop-induct} and induction.   
By Corollary~\ref{cor:leftright} and  induction on a left-sided 
version of Proposition~\ref{prop-induct}, $\Div^{\ell}(R)$ is strongly countable.

(2) Clearly it suffices to work inductively as stated.  By  Lemma~\ref{lem:finitediv} 
  the hypotheses of Lemma~\ref{lem:specann} hold for $R$ and   $L$, and thus Lemma~\ref{lem:specann}(2) 
produces a \spe\ ideal $I$ which kills all   $g$-torsionfree subfactors $N$ of direct sums of shifts of $L$  with $\GKdim N=1$.  
The same argument on the left constructs $G$.
Now   the   proof  of \cite[Theorem~10.4]{Rog09} applies, and shows that $G(H \cap R)I$ is a minimal \spe\ ideal for $R$.  
\end{proof}

The following theorem shows that the previous result applies to the most important examples 
of interest.

\begin{theorem}\label{8-special}
Let $T = S^{(\gamma)}$, where $S$ is a generic cubic or quadratic Sklyanin algebra, with central element $g \in S_{\gamma}$.  Then
 $T$ satisfies the hypothesis of  Proposition~\ref{prop-endgame} (in fact, $T$ has no \spe\ ideals).  Thus  every blowup $R = T(\divd)$ 
 with $\deg \divd < \mu$ has a minimal \spe\ ideal.  
\end{theorem}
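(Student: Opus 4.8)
The plan is to verify that $T = S^{(\gamma)}$ satisfies all the hypotheses imposed on $T$ in Proposition~\ref{prop-endgame}, namely that (i) $T$ satisfies Hypothesis~\ref{hyp:main}, (ii) $\Div(T)$ is strongly countable, and (iii) $T$ has a minimal \spe\ ideal; once this is done, the final assertion ``every blowup $R = T(\divd)$ with $\deg \divd < \mu$ has a minimal \spe\ ideal'' follows immediately from Proposition~\ref{prop-endgame}(2). Condition (i) is already recorded in the text following Hypothesis~\ref{hyp:main}: a generic cubic or quadratic Sklyanin algebra $S$ has a central element $g \in S_\gamma$ (with $\gamma = 4$, resp.\ $3$) such that $S/gS \cong B(E, \mc{L}, \sigma)$ with $|\sigma| = \infty$, and then $T = S^{(\gamma)}$ satisfies Hypothesis~\ref{hyp:main} with $\mc{M} = \mc{L} \otimes \mc{L}^\sigma \otimes \dots \otimes \mc{L}^{\sigma^{\gamma-1}}$ and $\tau = \sigma^\gamma$; moreover $S$ is Sklyanin-like in the sense of Section~\ref{DIVISORS}.

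For (ii): a generic Sklyanin algebra $S$ is Artin--Schelter regular of global dimension $3$, hence has finite global dimension, so Lemma~\ref{lem:countable} applies directly and shows that $\Div(T) = \Div(S^{(\gamma)})$ is countable. To upgrade ``countable'' to ``strongly countable'' in the sense defined before Lemma~\ref{lem:finitediv}, I would pass to an uncountable algebraically closed field extension $K \supseteq \kk$: the base change $S \otimes_\kk K$ is again a generic Sklyanin algebra over $K$ (the genericity condition $|\sigma| = \infty$ is preserved, and AS-regularity is stable under base field extension), with the same $E$, $\sigma$, $g$ after base change, so $T \otimes_\kk K = (S \otimes_\kk K)^{(\gamma)}$ and Lemma~\ref{lem:countable} applied over $K$ gives that $\Div(T \otimes_\kk K)$ is countable. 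Hence $\Div(T)$ is strongly countable.

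For (iii), the parenthetical claim is the real content: I must show $T = S^{(\gamma)}$ has \emph{no} \spe\ ideal at all, i.e.\ no graded ideal $I$ with $\GKdim T/I = 1$; by the convention on p.~\pageref{sporadic-defn}, $T$ is then its own minimal \spe\ ideal. Suppose for contradiction that $I \trianglelefteq T$ with $\GKdim T/I = 1$. Then $I S$ is a nonzero two-sided ideal of $S$ (the Veronese $\Xyz$; more precisely, $J := $ the ideal of $S$ generated by $I$, or the largest ideal with $J^{(\gamma)} = I$), and one computes $\GKdim S/J \leq 1$ as well, since $S$ is a finite module over $T$ on both sides so GK-dimension is controlled by the Veronese. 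But a generic Sklyanin algebra $S$ is well known to be a \emph{simple-spectrum} kind of algebra: its only graded ideals $J$ with $\GKdim S/J \leq 1$ are $0$ and (shifts/saturations of) powers of the augmentation ideal, i.e.\ there is no graded ideal $J$ with $\GKdim S/J = 1$ — this is part of the structure theory of three-dimensional Sklyanin algebras when $|\sigma| = \infty$ (the point is that $S/gS = B(E,\mc{L},\sigma)$ has no such ideals because $\sigma$ has infinite order, every nonzero graded ideal of $B$ meets $H^0(E, \mc{L}_n)$ in a subspace of finite codimension, and one lifts this through the central extension $0 \to gS \to S \to B \to 0$ using that $g$ is a nonzerodivisor). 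I would cite the relevant statement from the Sklyanin-algebra literature (Artin--Tate--Van den Bergh, or the explicit discussion in \cite{Rog09} or \cite{RSS2}) rather than reprove it. The main obstacle is precisely this step: pinning down the exact reference for ``a generic 3-dimensional Sklyanin algebra has no graded ideal of GK-cofactor dimension $1$'' and checking it descends from $B(E,\mc{L},\sigma)$ through the Veronese; the countability and base-change bookkeeping are routine by comparison. Once all three conditions are in place, Proposition~\ref{prop-endgame}(1,2) gives, by the iterated one-point-blowup description of Proposition~\ref{prop:iterate}, that $R = T(\divd)$ has a minimal \spe\ ideal for every effective $\divd$ with $\deg \divd < \mu$, completing the proof.
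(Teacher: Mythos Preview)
Your verification of conditions (i) and (ii) is essentially identical to the paper's proof: the paper also base-changes to an uncountable algebraically closed $K$, observes that $S\otimes_\kk K$ is still a generic Sklyanin algebra (citing \cite[Theorem~7.1]{ATV2} for the preservation of $|\sigma|=\infty$), and applies Lemma~\ref{lem:countable}.

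For (iii), your target statement --- that $T$ (equivalently $S$) has no graded ideal $J$ with $\GKdim S/J = 1$ --- is exactly what the paper proves, and if you simply cite it as a known fact your proof is complete. However, the justification you sketch in parentheses is genuinely insufficient: knowing that $B = S/gS$ has no \spe\ ideals and that $g$ is a nonzerodivisor does \emph{not} lift to $S$. The ring $T = B[g]$ of Remark~\ref{uncountable} is a counterexample: it satisfies Hypothesis~\ref{hyp:main}, $g$ is a regular central element, and $B$ has no \spe\ ideals, yet $J = B_{\geq 1}[g]$ is a \spe\ ideal of $T$ since $T/J \cong \kk[g]$ has GK-dimension $1$. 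The missing ingredient is that every nonzero graded ideal of $S$ must contain a power of $g$, and this is precisely the simplicity of $S[g^{-1}]_0 = T[g^{-1}]_0$, which the paper imports from \cite[Theorem~I]{ATV2}. Once you know this, any nonzero $I\lhd T$ has $T/I$ $g$-torsion; then the layers of $T/I$ are proper factor rings of $B$, hence finite-dimensional, and there are only finitely many of them, forcing $\GKdim T/I = 0$. So your plan is right, but the one-line deduction you offer for the key step would fail; replace it with the simplicity statement from \cite{ATV2}.
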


\begin{proof}  Let $K\supseteq\kk$ be an uncountable algebraically closed field extension. 
It is clear that $S\otimes_{\kk}K$ is still a  Sklyanin algebra, since the defining relations do not depend upon the choice of field and it is  still generic by \cite[Theorem~7.1]{ATV2}. 
Hence it  is a Sklyanin-like algebra of  global dimension 3.
 Thus $\Div(T)$ is strongly countable by Lemma~\ref{lem:countable}.
Further, \cite[Theorem~I]{ATV2} shows that $S[g^{-1}]_0 = T[g^{-1}]_0$ is simple. 
Since $T/I$ cannot be $g$-torsion for a special ideal $I$,  it follows   that $T$ cannot have a  \spe\ ideal. 
Thus   $T$ satisfies the hypotheses of Proposition~\ref{prop-endgame}.
\end{proof}

To end the section, we give an example to show that the complexities in Lemma~\ref{lem:finitediv} are in fact needed:  for the rings  
   $R=T(\divd)$ where $\divd$ has degree $\mu -1$, a line module need not be $g$-torsionfree 
   nor $2$-critical  in general.  
   If $\deg \divd < \mu-1$, then, as previously remarked, all line modules over $T(\divd)$ are $g$-torsionfree.

\begin{example}\label{ex5} 
 Let  $R=T(\divd)$ where $T$ satisfies Hypothesis~\ref{hyp:main} and $\divd$ 
has degree $\mu -1$. Then there exists a cyclic line module $M \in \rgr R$ which is $g$-torsion, and which
 has a shifted cyclic point module as a submodule. \end{example}

\begin{proof}
 By Theorem~\ref{thm:Rnoeth}(1),  $T(\divd)$ has   Hilbert series $h_{T(\divd)}(t)=  \frac{t^2-t+1}{(1-t)^3}$ and so 
      $h_B(t)=   \frac{t^2-t+1}{(1-t)^{2}}$ for $B=T/gT$.   Now consider $M=R/g^2R$.  As $g^2$ is regular of degree 2,  
  $  h_M(t)=\frac{t^2-t+1}{(1-t)^3}(1-t^2) = 
  \frac{1+t^3}{(1-t)^2}
  .$
Pick any $0\not=x\in M$ that  lies in degree 2 in the submodule $gR/g^2R  \cong B[-1].$ 
 Then $xB=xR\cong B[-2]$ has Hilbert series  $  \frac{t^2(t^2-t+1)}{(1-t)^{2}}$   and so $xB_+=xR_+$ has the  Hilbert series  
  $  \frac{t^2(t^2-t+1)}{(1-t)^{2}} - t^2 =  \frac{t^3}{(1-t)^{2}}$  of a  shifted line module.
 
 Finally, set $L=M/xR_+=R/(g^2R+xR_+)$. This  is  cyclic  with 
 $h_L(T)=   \frac{1+t^3-t^3}{(1-t)^2}
 =\frac{1}{(1-t)^2}.$   If $L'=gR/(g^2R+xR_+)\subset L$, then 
  $\overline{L}=L/L'\cong B$. Moreover  $L'  =gR/(g^2R+xR_+)$  is also   cyclic  with 
$h_{L'}(t)=  \frac{t}{(1-t)}$, so $L'$  is a shifted point module. \end{proof}

%%%%%%%%%%%%%%%%%%%%%%%%%%%%%
%%%%%%%%%%%%%%%%%%%%%%%%%%%%%

\section{On the nonexistence of \spe\ ideals}\label{SPECIAL}

In Theorem~\ref{8-special} we exhibited  a large number of algebras with a minimal special ideal, and this leads to a natural question:
when do these algebras $T$ have no \spe\ ideal?    The question is delicate in general, and  a partial answer is given in \cite[Section~11]{VdB}
  when $T = S^{(3)}$ for the quadratic Sklyanin algebra $S$, but the answer  uses much 
 of the machinery of that paper.  In this section, 
we give an elementary argument which can be used to answer the question 
in a few easy cases.  We   restrict our attention to the following situation.

\begin{notation}\label{sec9-notation} Throughout the section, 
we fix a ring $T$ satisfying Hypothesis~\ref{hyp:main}.  
Let $\divd$ be an effective divisor on $E$ with $\deg \divd < \mu -2$, and let $p \in E$.
Set $\wt{R} = T(\divd)$ and $R = T(\divd + p)$; thus $R \cong \wt{R}(p)$ by Proposition~\ref{prop:iterate}. 
Also, by Theorem~\ref{thm:Rnoeth}, $\wt{R}$ and $R$ are generated in degree 1.
Throughout   $M(k, p)= M_{\wt{R}}(k,p)$ in the notation of  Definition~\ref{def:M(k,D)} and 
 $M'(k, p)= M'_{\wt{R}}(k,p)$ in the notation of  Definition~\ref{def:M'(k,D)}.
\end{notation}

\begin{proposition}
\label{prop:nospecial} Keep  $R = \wt{R}(p) \subseteq \wt{R}$ as in Notation~\ref{sec9-notation} 
and assume that $\wt{R}$ has no \spe\ ideals and that $\Div(\wt{R})$ is strongly countable.
Suppose in addition that: 
\begin{enumerate}
\item[(C1)] for   $q \in \{ \tau^i(p) \st i \leq 1 \}$, there exist $k \geq 1$ and $K \subseteq \wt{R}_k$ with 
$K \wt{R}_1 = \wt{R}(q)_{k+1} = M(k+1, q)_{k+1}$;
\item[(C2)] for  $q \in \{ \tau^{-i}(p) \st i \leq 1 \}$, there exist $k \geq 1$ and $K \subseteq \wt{R}_k$ with $\wt{R}_1 K = \wt{R}(q)_{k+1} = M'(k+1, q)_{k+1}$.
\end{enumerate}
Then $R$   has no \spe\ ideals.

\end{proposition}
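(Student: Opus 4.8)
The plan is to combine the structural description of $\wt R/R$ from Proposition~\ref{prop:subtle} with the machinery of \spe\ ideals developed in Proposition~\ref{prop-endgame}. Recall that by Proposition~\ref{prop-endgame}(2), $R$ has a minimal \spe\ ideal of the form $G(H \cap R)I$, where $H$ is a minimal \spe\ ideal of $\wt R$, and $I$ (respectively $G$) is the \spe\ ideal produced by Lemma~\ref{lem:specann}(2) applied to the right exceptional line module $L$ (respectively the left exceptional line module $L'$) of the blowup $R \subseteq \wt R$. Since by hypothesis $\wt R$ has no \spe\ ideals, the minimality convention forces $H = \wt R$, so the candidate minimal \spe\ ideal of $R$ is simply $GI$. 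Thus the whole problem reduces to showing that $GI = R$, i.e. that $I = R$ and $G = R$, which in turn (by Lemma~\ref{lem:specann}(2)) amounts to showing that $L$ has no $g$-torsionfree subfactor of GK-dimension $1$ — equivalently, that the submodule $N \subseteq L$ of Lemma~\ref{lem:specann}(1), which is the smallest submodule with $L/N$ $g$-torsionfree of GK-dimension~$1$, satisfies $N = L$; and symmetrically for $L'$.

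First I would set up the ingredients: since $\deg \divd < \mu - 2$, Proposition~\ref{prop:subtle} applies and gives that $\wt R/R \cong \bigoplus_{i \geq 1} L[-i]$ for a $g$-torsionfree \emph{cyclic} line module $L$ (we are not in the exceptional case of Proposition~\ref{prop:subtle}(2), since that requires $\deg \divd = \mu - 2$), with $\Div L = \tau(p)$, and by Proposition~\ref{prop:subtle}(3), $L \cong R/J$ where $J = \bigoplus_{n \geq 0} M_{\wt R}(n+1, \tau(p))_n$. The key point is to use conditions (C1) and (C2) to show that $L$ (resp.\ $L'$) is actually \emph{$2$-critical as a module} in a strong enough sense that it admits \emph{no} $g$-torsionfree GK-$1$ proper factor at all. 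More precisely: by Lemma~\ref{lem:crit}(2), $L$ is already $2$-critical, so any proper factor $L/N$ with $N \neq 0$ has GK-dimension $\leq 1$; the content of the argument is to rule out that any such $L/N$ is $g$-torsionfree. Suppose $N \subsetneq L$ is nonzero with $L/N$ $g$-torsionfree and $\GKdim L/N = 1$. Then $N$ is itself (after a shift) a $g$-torsionfree line module, and tensoring $0 \to N \to L \to L/N \to 0$ with $\wt R/g\wt R$ (using $\Tor_1 = 0$ by $g$-torsionfreeness, Lemma~\ref{lem:divprops}(2)) shows $N/Ng$ is a tail of the point module $L/Lg$. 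One then traces this through the explicit divisor description of $J$: condition (C1) says precisely that for the relevant points $q$ in the forward orbit $\{\tau^i(p) : i \leq 1\}$, the ideal $M(k+1,q)_{k+1}$ is reached by multiplying $M(k+1,q)_k$ (or a subspace $K$ thereof) by $\wt R_1$, which forces the relevant annihilator ideal $I$ to be saturated-equal to all of $R$; in other words (C1) blocks the existence of a $g$-torsionfree GK-$1$ factor of $L$. Symmetrically, (C2) blocks this on the left for $L'$.

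Concretely, the cleanest route is probably: show $I = R$ by showing that the \spe\ ideal $I = \operatorname{r-ann}(L/N)$ from Lemma~\ref{lem:specann}(2), if proper, would give a $g$-torsionfree cyclic line module $L''$ (a shift of $N$) whose divisor $\Div L''$ lies in the $\tau$-orbit of $\tau(p)$; then the line module $L''$ produces, via Proposition~\ref{prop:subtle}(3) and condition (C1) applied to the point $q = \Div L''$ (which lies in $\{\tau^i(p)\}$, and one checks $i \leq 1$ using that $\Div L = \tau(p)$ and $N \subseteq L$ forces $\Div N = \tau^{j+1}(p)$ for some $j \geq 0$, hence we're in the backward part of the orbit relative to... — one has to be careful with orbit directions here), the identity $K \wt R_1 = M(k+1,q)_{k+1}$, which contradicts $L''$ having a proper $g$-torsionfree factor or forces $L'' = N$ improperly. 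I would then conclude $N = L$, so $L$ has no $g$-torsionfree GK-$1$ factor, hence $I = R$; by the symmetric argument with (C2), $G = R$; therefore $GI = R$ is the minimal \spe\ ideal of $R$, i.e. $R$ has no \spe\ ideal (by the minimality convention at the start of Section~\ref{8POINTS2}, "no \spe\ ideal" and "minimal \spe\ ideal equals the whole ring" are the same statement).

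The main obstacle I anticipate is bookkeeping with $\tau$-orbit directions and with the precise divisor layerings: making sure that the point $q$ arising as the divisor of a putative $g$-torsionfree factor of $L$ (or $L'$) genuinely lies in the range $\{\tau^i(p) : i \leq 1\}$ (resp.\ $\{\tau^{-i}(p) : i \leq 1\}$) where the hypotheses (C1), (C2) give us leverage, and then translating the "existence of $K$ with $K\wt R_1 = M(k+1,q)_{k+1}$" into the vanishing of the relevant \spe\ ideal. There is also the subtlety — flagged in Remark~\ref{uniqueness-remark} and used implicitly in Proposition~\ref{prop-endgame}(1) — that one may need to pass to an uncountable algebraically closed field extension $K \supseteq \kk$ to apply Lemma~\ref{lem:finitediv}; one should note that conditions (C1), (C2) are preserved under such base change (since the $M(k,\divd)$ and the multiplication maps base-change well, again by Remark~\ref{uniqueness-remark}), so this causes no real difficulty. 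Modulo this orbit-direction bookkeeping, the argument is essentially an application of Proposition~\ref{prop-endgame}(2) plus the observation that (C1)/(C2) are exactly the conditions needed to kill the ideals $I$ and $G$.
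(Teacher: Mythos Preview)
Your reduction via Proposition~\ref{prop-endgame} to showing $I = R$ and $G = R$ is correct and matches the paper. However, your proposed mechanism for actually proving $I = R$ --- tracking the divisor of a putative line submodule $N \subseteq L$ and hoping (C1) yields a contradiction via some divisor bookkeeping --- is not a proof, and it is not the approach the paper takes. You never explain \emph{how} the identity $K\wt R_1 = M(k+1,q)_{k+1}$ forces the annihilator to be all of $R$; this is the entire content of the argument, not mere bookkeeping.

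The paper's argument is more direct and uses different machinery. To show every proper factor of $L = R/J$ is $g$-torsion, one picks $x \in R_n \setminus J_n$ and shows $R/(J + xR)$ is $g$-torsion. The point of (C1) for $q = \tau(p)$ is that it forces $M(n+1,\tau(p))$ to be generated in degree $\leq n$ for $n \gg 0$, so that $J_n\wt R = M(n+1,\tau(p))_{\geq n}$. One then analyzes the $\wt R$-module $V = R_n\wt R/J_n\wt R$ via the $C_p$-functor of Theorem~\ref{thm:vdb}: its image is a uniserial $C_p$-module, so the nonzero image of $x\wt R$ in $V$ must contain the simple socle, which one identifies as coming from $g^n\wt R$. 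This gives $g^n\wt R_\ell \subseteq x\wt R_\ell + J_n\wt R_\ell$ for some $\ell$. To descend this to $R$, one needs a separate lemma (Lemma~\ref{lem:compress}), proved inductively from (C1) and (C2), providing $K'$ with $\wt R_\ell K' = R_{\ell+m}$; multiplying on the right by $K'$ yields $g^n R_{\ell+m} \subseteq xR + J$. Note in particular that the proof of $I = R$ uses \emph{both} (C1) (for $q = \tau(p)$) and (C2) (via Lemma~\ref{lem:compress}(2)), not just (C1) as you suggest; the roles are interleaved, not cleanly separated left/right.
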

Before proving the proposition,  we  make some comments about its statement  and give a lemma.

\begin{remark}
\label{rem:nospecial}
\begin{enumerate}
\item
Since $M(k+1, q)$ is a saturated right ideal of $\wt{R}$, if  condition (C1) holds for $K$ then necessarily 
$K \subseteq M(k + 1, q)_k$ and so the condition is also satisfied by $K'=M(k+1,q)_k$. 
Thus (C1) is equivalent to saying that   $M(k + 1, q)$ can be generated in  one degree lower than is necessary (it is generated in degree $k + 1$ by  Lemma~\ref{lem:Mmult}).  
\item If (C1) holds for some $k$ and $K$, then we can also satisfy (C1) for any larger $k$; indeed, 
if $U = \wt{R}(q)$ then   $[U_m K ]\wt{R}_1 = U_m U_{k+1} = U_{k + 1 + m}$ for any $m \geq 0$. 
\end{enumerate}
\end{remark}

\begin{lemma}
\label{lem:compress}
Let $\wt{R}=T(\divd)$ as above  
  and  assume that (C1) and (C2) hold. 
\begin{enumerate}
\item For any $\ell \geq 1$, there is $m \geq 1$ 
and $K \subseteq \wt{R}_m$ such that $K \wt{R}_{\ell} = R_{\ell + m}$.  
\item For any $\ell \geq 1$ there is $m' \geq 1$ and $K' \subseteq \wt{R}_{m'}$ 
such that $\wt{R}_{\ell} K' = R_{\ell + m'}$.
\end{enumerate}
\end{lemma}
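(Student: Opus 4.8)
The plan is to reduce both statements to a single assertion about the generators, as right or left $\wt R$-modules, of the blow-up ideals $M(n,p)$ and $M'(n,p)$ of $\wt R$. Recall from Notation~\ref{sec9-notation} and \eqref{T-leftright} (applied to the ring $\wt R$, which satisfies Hypothesis~\ref{hyp:main} by Theorem~\ref{thm:Rnoeth}) that $R_n=M(n,p)_n=M'(n,p)_n$, the second equality being Proposition~\ref{lem:MandMprime}(2) in degree $n$. By Remark~\ref{rem:nospecial}(1), condition (C1) is equivalent to saying that, for each $q\in\{\tau^i(p):i\le 1\}$, the right ideal $M(k+1,q)$ of $\wt R$ is generated in degree $\le k$ — one lower than the bound from Lemma~\ref{lem:Mmult}; by Remark~\ref{rem:nospecial}(2) this then holds for all large $k$, i.e.\ there is $k_q$ with $M(N,q)_{N-1}\wt R_1=M(N,q)_N$ for all $N\ge k_q+1$. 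Similarly (C2) says the left $\wt R$-modules $M'(k+1,q)$, $q\in\{\tau^{-i}(p):i\le 1\}$, are generated in degree $\le k$. It therefore suffices to prove
$$(\star_\ell)\colon\quad\text{for every }q\in\{\tau^i(p):i\le 1\},\ M(N,q)\text{ is generated in degrees }\le N-\ell\text{ for all }N\gg 0,$$
together with its left-handed analogue for the $M'(N,q)$, $q\in\{\tau^{-i}(p):i\le 1\}$. Granting $(\star_\ell)$ with $q=p$, we take $N$ large, set $m=N-\ell$ and $K=M(N,p)_m\subseteq\wt R_m$; then $K\wt R_\ell=M(N,p)_N=R_N=R_{\ell+m}$, which is (1), and (2) follows identically from the left statement and $R_N=M'(N,p)_N$.

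I would prove $(\star_\ell)$ by induction on $\ell$. The case $\ell=0$ is Lemma~\ref{lem:Mmult}, and $\ell=1$ is the reformulation of (C1) above. For the inductive step, the induction hypothesis already supplies $M(N,q)_{j-1}\wt R_1=M(N,q)_j$ for all $j\ge N-\ell+1$, so it is enough to establish the single extra relation $M(N,q)_{N-\ell-1}\wt R_1=M(N,q)_{N-\ell}$ for $N\gg 0$. To get this I would decompose $M(N,q)$ using the equality case of Lemma~\ref{lem:Mmult}, splitting off from the left a factor $M(a,q)_a$ of bounded index $a$; apply the induction hypothesis (for $\ell$) to the remaining right-hand factor, which is a blow-up ideal at a twist $\tau^{-a}(q)$ of $q$, to push its generation one degree lower; and then move the freed-up $\wt R_1$ into the final slot using the commutation identities obtained by combining Lemma~\ref{lem:Mmult2} with Proposition~\ref{lem:MandMprime} (for example $\wt R_n\,M(k+1,q')_m=M(k+1,\tau^{-n}(q'))_{n+m}$ for $m\ge k+1$), before invoking the $\ell=1$ case — i.e.\ (C1) — for whatever twists of $p$ appear. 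The left-handed statement is the mirror argument, using (C2) and the left versions of Lemmas~\ref{lem:Mmult} and \ref{lem:Mmult2}. Assembling these as in the first paragraph yields parts (1) and (2).

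The hard part will be the inductive step of $(\star_\ell)$: one must track precisely, degree by degree, which $\tau$-twist of $p$ controls the relevant graded piece of $M(N,p)$, and verify that only twists $\tau^i(p)$ with $i\le 1$ are ever needed on the right (respectively $i\ge -1$ on the left), so that (C1) (respectively (C2)) always applies. The asymmetry between the two hypotheses reflects exactly the asymmetry between $\tau^{-1}$ and $\tau$ in right versus left divisor layerings, and it is this that must be respected in the bookkeeping. A secondary point is to choose the thresholds "$N\gg 0$" uniformly across the finitely many twists of $p$ that enter at stage $\ell$, so that the induction genuinely closes.
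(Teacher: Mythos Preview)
Your reduction to $(\star_\ell)$ is fine, and $(\star_\ell)$ is in fact true, but your inductive step does not go through. The problem is that every tool you invoke --- the equality case of Lemma~\ref{lem:Mmult}(2), and the commutation identity $\wt R_n\, M(k+1,q')_m = M(k+1,\tau^{-n}(q'))_{n+m}$ you quote --- only holds when the degree subscript is at least the index of the $M$. Concretely, Lemma~\ref{lem:Mmult}(2) gives $M(k,q)_m M(\ell',\tau^{m-k}(q))_n = M(k+\ell',q)_{m+n}$ only when $m\ge k$ and $n\ge\ell'$, so any product decomposition of $M(N,q)_j$ via that lemma forces $j\ge N$; you therefore cannot ``split off $M(a,q)_a$ on the left'' from $M(N,q)_{N-\ell}$ or $M(N,q)_{N-\ell-1}$. (Incidentally, splitting off $M(a,q)_a$ from $M(N,q)_N$ leaves the right factor at $q$, not at $\tau^{-a}(q)$: with $m=k=a$ the twist $\tau^{m-k}$ is the identity.) If instead you decompose only in degree $N$ and then apply the induction hypothesis to the right factor, you merely recover $M(N,q)_N \subseteq M(N,q)_{N-\ell}\,\wt R_\ell$, which is the hypothesis, not the step to $\ell+1$. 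There is no evident way to push an extra $\wt R_1$ through $M(N-a,q)_{N-a-\ell}$, since your own commutation identity requires the subscript to be at least $N-a$.

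The paper avoids $(\star_\ell)$ altogether and inducts directly on the statement of the lemma. Given $K\subseteq\wt R_m$ with $K\,\wt R_\ell = R_{\ell+m}$, it sets $K' = K\cdot M(k+1,\tau^{-\ell}(p))_k$ for some $k$ satisfying (C1) at $q=\tau^{-\ell}(p)$. Then
\[
K'\,\wt R_{\ell+1} \;=\; K\bigl[M(k+1,\tau^{-\ell}(p))_k\,\wt R_1\bigr]\wt R_\ell \;=\; K\,M(k+1,\tau^{-\ell}(p))_{k+1}\,\wt R_\ell,
\]
and two applications of Lemma~\ref{lem:Mmult}(2) (as in Corollary~\ref{cor:multcomp}) give the key identity
\[
M(k+1,\tau^{-\ell}(p))_{k+1}\,\wt R_\ell \;=\; M(k+1,\tau^{-\ell}(p))_{k+\ell+1} \;=\; \wt R_\ell\, M(k+1,p)_{k+1} \;=\; \wt R_\ell\, R_{k+1},
\]
whence $K'\,\wt R_{\ell+1} = K\,\wt R_\ell\, R_{k+1} = R_{\ell+m}\,R_{k+1} = R_{\ell+m+k+1}$, using that $R$ is generated in degree~$1$. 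The point is that the commutation is applied to $M(k+1,\tau^{-\ell}(p))$ only in its top degree $k+1$, where Lemma~\ref{lem:Mmult}(2) does give equalities; your plan tried to commute through a below-top-degree piece, where no such equality is available.
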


\begin{proof}
By symmetry, we need only prove   Part~(1).  When $\ell = 1$, this is just (C1) with $q = p$.  So, by induction,
suppose the result holds 
for some $\ell\geq 1$, say for  $K \subseteq \wt{R}_m$.  Choose $k \geq 1$ such that  (C1) holds for 
 $q = \tau^{-\ell}(p)$.  By Remark~\ref{rem:nospecial}(1), this is equivalent to   
$M(k +1, \tau^{-\ell}(p))_k \wt{R}_{1} = M(k+1, \tau^{-\ell}(p))_{k+1}$.
Now  Corollary~\ref{cor:multcomp} implies that
\[
M(k+1, \tau^{-\ell}(p))_{k+1} \wt{R}_{\ell} = M(k+1, \tau^{-\ell}(p))_{k+\ell + 1} = \wt{R}_{\ell} M(k+1, p)_{k+1}.
\] 
Finally,   we calculate
\begin{align*}
K M(k + 1, \tau^{-\ell}(p))_k \wt{R}_{\ell +1} \ =&\  K M(k + 1, \tau^{-\ell}(p))_k \wt{R}_1 \wt{R}_{\ell}\ = \ 
K M(k+1, \tau^{-\ell}(p))_{k+1} \wt{R}_{\ell} \\
& =\ K \wt{R}_{\ell} M(k+1, p)_{k+1} \ = \  
R_{\ell+m} R_{k+1} \ = \  R_{\ell + m + k + 1},  
\end{align*}
proving the induction step.
\end{proof}

\begin{proof}[Proof of Proposition~\ref{prop:nospecial}]
By Proposition~\ref{prop-endgame} applied  to the ring $\wt{R}$, 
 the minimal \spe\ ideal of $R$ has the form $G(H \cap R)I$ where $H$ is the minimal \spe\ ideal of $\wt{R}$,  
 and $G$ (respectively $I$) is the \spe\ ideal of $R$ which kills all $g$-torsionfree proper factors of the left (respectively right) 
  exceptional line module for the blowup $R \subseteq \wt{R}$.
By hypothesis,   $H=\wt{R}$ and so, by symmetry, it suffices 
to  prove that 
$I = R$.  

Let $L = R/J$ be the  exceptional right  line module for the blowup $R \subseteq \wt{R}$. By   Proposition~\ref{prop:subtle}(2,3), 
 $L$ is cyclic and $J = \bigoplus_{n \geq 0} M(n+1, \tau(p))_n$.   
Let $x \in R_n \setminus J_n$ for some $n \geq 0$.  If we show that $R/(J + xR)$ is $g$-torsion for all such $x$, then $L$ will have no 
$g$-torsionfree proper factors and we will be done.   It is enough to   prove this for all 
$n \gg 0$, since any nonzero submodule of $R/J$ will contain elements of all large degrees.

Now we use condition (C1) for  $q = \tau(p)$ and  some fixed $k$.    
By Remark~\ref{rem:nospecial}(1,2),    for  $n \geq k$  
the right $\wt{R}$-module $M(n+1, \tau(p))$ is generated in degrees $\leq n$.  Thus 
$J_n \wt{R} = M(n+1, \tau(p))_{\geq n}$ if $n \geq k$.

Let   $V = R_n\wt{R}/J_n\wt{R}$ for some $n \geq k$.  Then
$R_n\wt{R} = M(n,p)_n \wt{R} = M(n,p)_{\geq n}$, using Lemma~\ref{lem:Mmult}.  Hence 
 $V \cong [M(n, p)/M(n+1, \tau(p))]_{\geq n}$ and so, 
by Theorem~\ref{thm:vdb},
$\wh{\pi(V)}_p$ is isomorphic 
to a left-infinite row vector 
\[
( \dots, 0, 0, \dots, \overbrace{k[[x]]/(x), \dots, k[[x]]/(x)}^{n+1})
\]
with the obvious right $C_p$-action.   This is a non-zero uniserial $C_p$-module, and so 
 $\pi(V)$   is  a uniserial object  in $\rqgr \wt{R}$.  Since  $x \in R_n \setminus J_n$, the $\wt{R}$-module
  $W= (x\wt{R} + J_n)/J_N\wt{R} \subseteq V$ is non-zero and   hence   is infinite dimensional   since $M(n+1, \tau(p))$ is saturated.  
 Thus $ \pi(W) \not=0$  
   and so  must  contain the simple socle of $\pi(V)$.  This 
socle is easily seen to be $\pi[(g^n\wt{R} + J_n\wt{R})/(J_n \wt{R})]$.    Hence
$  g^n \wt{R}_{\ell} \subseteq x\wt{R}_{\ell} + J_n \wt{R}_{\ell}$ for some $\ell$.

Now use Lemma~\ref{lem:compress}(2) to write $\wt{R}_{\ell} K = R_{\ell + m}$, for some 
$K \subseteq \wt{R}_m$.  Then  $J_n \wt{R}_{\ell} K + x\wt{R}_{\ell} K \supseteq g^n \wt{R}_{\ell} K$, 
or in other words $J_n R_{\ell + m} + xR_{\ell +m} \supseteq g^n R_{\ell + m}$.  Thus
 $R/(J + xR)$   is $g$-torsion, and we are done.
\end{proof}

Now we give a few examples where the criteria of the proposition can be checked. Many of these 
results can also be obtained from \cite[Section~11]{VdB}, but we prefer to give elementary proofs.

\begin{example} \label{example1}
 Let $T = S^{(3)}$ for  a generic quadratic Sklyanin algebra $S$.  Then  $T(p)$ has 
no \spe\ ideals.  
\end{example}

\begin{proof}  By Theorem~\ref{8-special} and its proof, $T$ has no \spe\ ideals and  $\Div(T)$ is strongly countable.
Recall that   $S/gS \cong B(E, \mc{L}, \sigma)$, and so $T/gT \cong B(E, \mc{M}, \tau)$ where $\tau = \sigma^3$ and
 $\mc{M} = \mc{L}\otimes\mc{L}^\sigma \otimes \mc{L}^{\sigma^2}$.
For $q \in E$, the subspace $W(q) = H^0(E, \mc{L}(-q)) \subseteq H^0(E, \mc{L}) = S_1$ is called \emph{a point space}.
  By \cite[Lemma~4.2]{Rog09},  $W(q) S = I$, where $S/I$ is the point module corresponding to
 $q$.  In particular, 
$W(q) S_2 = M(2, q)_1 = T(q)_1$.  (Note that a subscript on a graded piece of $S$ indicates the $S$-degree, 
but a subscript on a   right 
ideal of $T$ indicates the $T$-degree.)  
We also have $W(q) S_1 = S_1 W(\sigma(q))$ by   \cite[Lemma~4.1(1)]{Rog09}, or by
Lemma~\ref{lem:sec-mult}.
Then for any $q$,  
\[
W(q) W(\sigma^{-2}(q)) S_1 S_3 = [W(q) S_2] [W(q) S_2] = T(q)_2 = M(2, q)_2.
\]
This shows that  (C1), and by symmetry (C2),  holds   for all $q \in E$.   
Now apply Proposition~\ref{prop:nospecial}.
\end{proof}

\begin{example}\label{example2}
 Let $T = S^{(3)}$ as in Example~\ref{example1}.  Suppose that $p$ and $r$ are points on distinct $\tau$-orbits.  
Then   $T(p + r)$ has no \spe\ ideals.
\end{example}

\begin{proof}  By \cite[Lemma~4.1(2)]{Rog09} or   Lemma~\ref{lem:sec-mult}, 
the  point spaces satisfy  
$W(p) W(\sigma(q)) = W(q) W(\sigma(p))$ whenever $p \neq \tau(q)$ and $q \neq \tau(p)$.

Write $\wt{R} = T(r)$ and $R = \wt{R}(p) = T(p + r)$.    Note that $\wt{R}_1 = W(r)S_2$ as in the previous example and 
that $\wt{R}(q)_1 =  W(q)W(\sigma(r))S_1$ by \cite[Lemma~4.6(1)]{Rog09}.  
Then for any $q$ on the $\tau$-orbit of $p$,   
\begin{gather*}
[W(q)W(\sigma^{-2}(q))W(\sigma^2(r))]\wt{R}_1 
= [W(q)W(\sigma(r))W(\sigma^{-1}(q))][W(r)S_2] \\
= [W(q)W(\sigma(r))S_1][W(q) W(\sigma(r))S_1] 
=  \wt{R}(q)_1 \wt{R}(q)_1 = \wt{R}(q)_2.
\end{gather*}
  Thus (C1) holds, 
and the proof of (C2) is analogous.
By Example~\ref{example1} $\wt{R}$ has no \spe\ ideals, and   $\Div(\wt{R})$ is strongly countable by Proposition~\ref{prop-induct}, so
the result follows from Proposition~\ref{prop:nospecial}.\end{proof}

It is not hard to show on the other hand that $T(p + \sigma^i(p))$ always has a \spe\ ideal if $i \neq 0$ (see \cite[Proposition~11.2(1)]{Rog09}).   We expect that $T(2p)$ has no \spe\ ideals, though
 Proposition~\ref{prop:nospecial} does not apply.

\begin{example}
 Suppose  instead that $T = S^{(4)}$ for  the generic cubic Sklyanin algebra $S$, with central element $g \in S_4$.    
Then $T(p)$ has no \spe\ ideals for any point $p\in E$.
\end{example}

\begin{proof} The proof is similar to the previous examples.  Again, $T$ has no \spe\ ideals, and $\Div(T)$ is strongly countable, by Theorem~\ref{8-special}. 
For $q\in E$, set
$V(q)  = H^0(E, \mc{L}_2(-q)) \subseteq H^0(E, \mc{L}_2) = S_2$.  The rule 
$V(q)S_1 = S_1V(\sigma(q))$ follows from Lemma~\ref{lem:sec-mult} while, $T(q)_1 = V(q) S_2$ follows from  \cite[Proposition~6.7]{ATV2}.
Then  
\[
[V(q)V(\sigma^{-2}(q))]T_1 = V(q)S_2V(q) S_2 = T(q)_2.
\]
   Since this holds for all $q$, condition (C1) is verified and (C2) is similar.  
Now apply Proposition~\ref{prop:nospecial}.  
\end{proof}

 \section*{Index of Notation}\label{index}
\begin{multicols}{2}
{\small  \baselineskip 14pt

$\ppe $, equal in high degree \hfill\pageref{ppe-defn}

$\alpha$-pure  \hfill\pageref{pure-defn}

Admissible module  \hfill\pageref{def-admiss}

Allowable divisor layering  $\divd^{\bullet}$   \hfill\pageref{allowable}

Auslander Gorenstein and CM  \hfill\pageref{AG-defn}

 $\divc(M) $ divisor  \hfill\pageref{def-C}
 
 $\Div$, $\overline{\Div}$ , $\overline{\Pic}$  \hfill\pageref{def-admiss}
 
 $\Div^\ell$, $\overline{\Div}^\ell$    \hfill\pageref{def-admiss2}
 
 $\divd_n=\divd+\divd^\tau+\cdots+\divd^{\tau^{n-1}}$
  for a divisor $\divd$  \hfill\pageref{def:M(k,D)}

$E$ elliptic curve  \hfill\pageref{hyp:main}

Exceptional line modules \hfill\pageref{line-defn}

Functors $F_q$, $ G_{\divd}  $ \hfill\pageref{F-defn}

$g$-divisible \hfill\pageref{g-div-defn}

Line module \hfill\pageref{line-defn1}

$\mathfrak{J}(\divd^\bullet)$, $\mathcal{J}(\divd^\bullet)$, $J(\divd^\bullet)$
 \hfill\pageref{def:J}}

$M(k,\divd)$ \hfill\pageref{def:M(k,D)}

$\mu=\deg\mathcal{M}$    \hfill\pageref{hyp:main}

Point module, shifted point module  \hfill\pageref{point-defn}

$\pi: \rgr R\to\rqgr R $  \hfill\pageref{section-defn}

$Q(i,r,d,p)$   \hfill\pageref{def-Q}

Section functor $\omega$,   \hfill\pageref{section-defn}

Saturation  $I^{sat}$,  saturated right ideal  \hfill\pageref{lem:T-props}

 Sklyanin algebras    \hfill\pageref{hyp:main}

Sklyanin-like algebra   \hfill\pageref{def-C}

Sporadic ideal, minimal sporadic ideal  \hfill\pageref{sporadic-defn}

Strongly countable   \hfill\pageref{countable-defn}

$T$, $T/gT\cong B(E,\mc{M},\tau)$ \hfill\pageref{hyp:main}

$T(\divd)$,  $T_{\ell }  \ast T(\divd)$ \hfill\pageref{def-blowup}
  
\end{multicols}

  %%%%%%%%%%%%%%%%%%%%%%%

\bibliographystyle{amsalpha}

%\bibliography{../../../Dropbox/biblio}

\providecommand{\bysame}{\leavevmode\hbox to3em{\hrulefill}\thinspace}

\providecommand{\MR}{\relax\ifhmode\unskip\space\fi MR }

% \MRhref is called by the amsart/book/proc definition of \MR.

\providecommand{\MRhref}[2]{%

  \href{http://www.ams.org/mathscinet-getitem?mr=#1}{#2}

}

\providecommand{\href}[2]{#2}

\end{document}